\def\RR{{\mathbb R}}
\def\CC{{\mathbb C}}
\def\ZZ{{\mathbb Z}}
\def\s1{{S^1}}
\def\diff{{{\rm Diff}^+ (S^1)}}
\def\difftilde{\widetilde{{\rm Diff^+}}(S^1) }
\def\vect{{{\rm Vect}(S^1)}}
\def\vectC{{{\rm Vect}_\CC(S^1)}}\def\sl2{{{\rm SL}(2,\RR)}}
\def\psl2{{{\rm PSL}(2,\RR)}}
\def\sltwoC{{{\mathfrak sl}(2,\CC)}}
\def\mob{{\rm\textsf{M\"ob}}}
\def\su2{{{\rm SU}(2)}}
\def\so3{{{\rm SO}(3)}}
\def\A{{\mathcal A}}
\def\B{{\mathcal B}}
\def\C{{\mathcal C}}
\def\D{{\mathcal D}}
\def\H{{\mathcal H}}
\def\I{{\mathcal I}}
\def\M{{\mathcal M}}
\def\N{{\mathcal N}}
\def\P{{\mathcal P}}
\def\S{{\mathcal S}}
\def\g{{\mathfrak g}}
\def\Exp{\mathrm{Exp}}
\renewcommand{\restriction}{\mathord{\upharpoonright}}
\newtheorem{theorem}{Theorem}[section]
\newtheorem{definition}[theorem]{Definition}
\newtheorem{corollary}[theorem]{Corollary}
\newtheorem{proposition}[theorem]{Proposition}
\newtheorem{lemma}[theorem]{Lemma}
\newtheorem{remark}[theorem]{Remark}
\newtheorem{example}[theorem]{Example}
\newtheorem{conjecture}[theorem]{Conjecture}
\title{{\huge From Vertex Operator Algebras to Conformal Nets and Back}
\footnotetext{Work supported in part by the ERC advanced grant 669240 QUEST ``Quantum Algebraic
Structures and Models''.}
}
\author{
{\sc Sebastiano Carpi}\footnote{Supported in part by PRIN-MIUR and GNAMPA-INDAM.}
\\
{\small Dipartimento di Economia, Universit\`a di Chieti-Pescara ``G. d'Annunzio''}\\
{\small Viale Pindaro, 42, I-65127 Pescara, Italy} \\
{\small E-mail: {\tt s.carpi@unich.it}}
\\[0,3cm]
{\sc Yasuyuki Kawahigashi}\footnote{Supported in part by Global COE Program ``The research and training center for new development in mathematics'', the Mitsubishi Foundation Research Grants and the Grants-in-Aid for Scientific Research, JSPS.}\\
{\small Department of Mathematical Sciences}\\
{\small The University of Tokyo, Komaba, Tokyo, 153-8914, Japan}
\\[0,05cm]
{\small and}
\\[0,05cm]
{\small Kavli IPMU (WPI), the University of Tokyo}\\
{\small 5-1-5 Kashiwanoha, Kashiwa, 277-8583, Japan}\\
{\small E-mail: {\tt yasuyuki@ms.u-tokyo.ac.jp}}
\\[0,3cm]
{\sc Roberto Longo}$^{*}$
\\
{\small Dipartimento di Matematica,
Universit\`a di Roma ``Tor Vergata''}\\
{\small Via della Ricerca Scientifica, 1, I-00133 Roma, Italy}\\
{\small E-mail: {\tt longo@mat.uniroma2.it}}
\\[0,3cm]
{\sc Mih\'aly Weiner}\footnote{Supported in part by a ``Bolyai J\'anos'' Research Scholarship of the Hungarian Academy of Sciences and the NKFIH Grant No. K124152.}
\\
{\small  Mathematical Institute, Department of Analysis,}\\ 
{\small Budapest University of Technology \& Economics (BME)}\\
{\small M\"uegyetem rk. 3-9, H-1111 Budapest, Hungary}\\
{\small E-mail: {\tt mweiner@renyi.hu}}
}
\date{}
\begin{document}
\maketitle 

\newpage

\begin{abstract} We consider unitary simple vertex operator algebras whose vertex operators satisfy certain energy bounds and a strong form of locality and call them strongly local. We present a general procedure which associates to every strongly local vertex operator algebra $V$ a conformal net $\A_V$ acting on the Hilbert space completion of $V$ and prove that the isomorphism class of $\A_V$ does not depend on the choice of the scalar product on $V$. We show that the class of strongly local vertex operator algebras is closed under taking tensor products and unitary subalgebras and that, for every strongly local vertex operator algebra $V$, the map $W\mapsto \A_W$ gives a one-to-one correspondence between the unitary subalgebras $W$ of $V$ and the covariant subnets of $\A_V$. Many known examples of vertex operator algebras such as the unitary Virasoro vertex operator algebras, the unitary affine Lie algebras vertex operator algebras, the known $c=1$ unitary vertex operator algebras, the moonshine vertex operator algebra, together with their coset and orbifold subalgebras, turn out to be  strongly local.  We give various applications of our results. In particular we show that the even shorter Moonshine vertex operator algebra is strongly local and that the automorphism group of the corresponding conformal net is the Baby Monster group. 
We prove that a construction of Fredenhagen and J\"{o}r{\ss} gives back the strongly local vertex operator algebra $V$ from the conformal net $\A_V$ and give conditions on a conformal net $\A$ implying that $\A= \A_V$ for some strongly local vertex operator algebra $V$. 
\end{abstract}

\newpage

\setcounter{tocdepth}{1}
\tableofcontents

\newpage

\section{Introduction}

We have two major mathematical formulations of chiral conformal
field theory.  A chiral conformal field theory is
described with a conformal net in one and with a
vertex operator algebra in the other.  The former is 
based on operator algebras and a part of algebraic quantum
field theory, and the latter is based on algebraic
axiomatization of vertex operators on the circle $S^1$.
The two formulations are expected to be equivalent at least
under some natural extra assumptions, but the exact
relations of the two have been poorly understood yet.  In
this paper, we present a construction of a 
conformal net from a vertex operator algebra satisfying
some nice analytic properties. Moreover, we show that the vertex operator algebra can 
be recovered from the associated conformal net.

Algebraic quantum field theory is a general theory to
study quantum field theory based on operator algebras
and has a history of more than 50 years, see \cite{Haag}.  The basic idea
is that we assign an operator algebra generated by
observables to each spacetime region.  In this way, we
have a family of operator algebras called a net of 
operator algebras.  Such a net is subject to a set
of mathematical axioms such as locality (Haag-Kastler axioms).  We study
nets of operator algebras satisfying the axioms, and
their mathematical studies consist of constructing 
examples, classifying them and studying their various
properties.  We need to fix a spacetime and its
symmetry group for a quantum field theory, and 
the 4-dimensional Minkowski space with the
Poincar\'e symmetry has been studied by many authors.
In a chiral conformal field theory, space and time
are mixed into the one-dimensional circle $S^1$ 
and the symmetry group is its orientation
preserving diffeomorphism group.

A quantum field $\Phi$
on $S^1$ is a certain operator-valued
distribution assumed to satisfy the chiral analogue of Wightman axioms \cite{StrWight}, see also \cite{DMS}, \cite{FST} and \cite[Chapter 1]{Kac}.

For an interval $I\subset S^1$,
take a test function supported in $I$.  Then the
pairing $\langle\Phi,f\rangle = \Phi(f)$ produces a
(possibly unbounded) operator ({\em smeared field}) which corresponds
to an observable on $I$ (if the operator is
self-adjoint).  We consider a von Neumann 
algebra ${\mathcal{A}}(I)$ generated by such
operators for a fixed $I$.  More generally we can consider this construction for a family 
$\Phi_i$, $i \in \mathscr{I}$ of (Bose) quantum fields, where $\mathscr{I}$ is an index set, not necessarily finite.

Based on this idea,
we axiomatize a family $\{{\mathcal{A}}(I)\}$ as 
follows and call it a {\emph{(local) conformal net}}.

Let ${\mathcal{I}}$ be the family of 
open, connected, non-empty and non-dense subsets (intervals) of $S^1$. 
A (local) {\emph{M\"obius covariant net}}
$\mathcal{A}$ of von Neumann algebras on $S^1$ is a map 
$$
\I \ni I \mapsto{\mathcal{A}}(I)
\subset B({\mathcal{H}})
$$
from ${\mathcal{I}}$ to the set of 
von Neumann algebras on a fixed 
Hilbert space $\mathcal{H}$ satisfying the following properties.
\begin{itemize}
\item {\it Isotony}. If $I_{1}\subset I_{2}$ 
belong to ${\mathcal{I}}$, then
we have ${\mathcal{A}}(I_{1})\subset{\mathcal{A}}(I_{2})$.
\item {\it Locality}. If $I_{1},I_{2}\in{\mathcal{I}}$ and $I_1\cap 
I_2=\varnothing$, then we have
$[{\mathcal{A}}(I_{1}),{\mathcal{A}}(I_{2})]=\{0\}$,
where brackets denote the commutator.
\end{itemize}
\begin{itemize}
\item {\it M\"obius covariance} 
There exists a strongly 
continuous unitary representation $U$ of the group 
$\mob \simeq \psl2$ of M\"obius transformations of $\s1$ on ${\mathcal{H}}$ such that
we have $ U(\gamma){\mathcal{A}}(I) U(\gamma)^*\ =\ {\mathcal{A}}(\gamma I)$, 
$\gamma\in \mob$, $I\in{\mathcal{I}}$. 

\item {\it Positivity of the energy}. The generator of the 
one-parameter rotation subgroup of $U$ 
(conformal Hamiltonian) is positive. 
\item {\it Existence of the vacuum}. There exists a unit 
$U$-invariant vector $\Omega\in{\mathcal{H}}$
called the {\emph{vacuum vector}}, and $\Omega$ 
is cyclic for the von Neumann algebra 
$\bigvee_{I\in{\mathcal{I}}}{\mathcal{A}}(I)$,
where the lattice symbol $\bigvee$ denotes the von Neumann algebra 
generated.
\end{itemize}

These axioms imply the Haag duality,
${\mathcal{A}}(I)'={\mathcal{A}}(I'),\quad I\in{\mathcal{I}}$,
where $I'$ is the interior of $S^1\setminus I$.

We say that a M\"obius covariant net ${\mathcal{A}}$ is 
\emph{irreducible} if 
$\bigvee_{I\in{\mathcal{I}}}{\mathcal{A}}(I)=B({\mathcal{H}})$. 
The net ${\mathcal{A}}$ is irreducible if and only if
$\Omega$ is the unique $U$-invariant vector up to scalar multiple,
and if and only if the local von Neumann 
algebras ${\mathcal{A}}(I)$ are factors. In this case they are
automatically type III$_1$ factors 
(except for the trivial case
${\mathcal{A}}(I)={\mathbb{C}}$).

Let $\diff$ be the group of orientation-preserving
diffeomorphisms of $S^1$.
By a {\emph{conformal net}} ${\mathcal{A}}$,
we  mean a M\"obius covariant 
net with the following property called {\emph{conformal covariance}} (or diffeomorphism covariance).

There exists a strongly continuous projective unitary 
representation $U$ of $\diff$
on ${\mathcal{H}}$ extending the unitary 
representation of $\mob$ such
that for all $I\in{\mathcal{I}}$ we have
\begin{eqnarray*}
 U(\gamma){\mathcal{A}}(I) U(\gamma)^*& =&{\mathcal{A}}(\gamma I ),
\quad  \gamma \in \diff, \\
 U(\gamma)AU(\gamma)^*& =& A,\quad A\in{\mathcal{A}}(I),\ \gamma \in{\mathrm{Diff}}(I'),
\end{eqnarray*}
where ${\mathrm{Diff}}(I)$ denotes the group of 
orientation preserving
diffeomorphisms $\gamma$ of $S^1$ such that $\gamma (z)=z$ for all $z\in I'$.

It should be pointed out that it is not known  whether or not the map $\I \ni I \mapsto \A(I)$ defined from a family 
$\{\Phi_i \}_{i\in \mathscr{I}}$ of chiral conformal covariant quantum fields on $\s1$ will satisfy in general the axioms of conformal nets described above. The main difficulty is given by locality. The problem is due to the fact that the smeared fields are typically unbounded operators and the von Neumann algebras generated by two unbounded operators commuting on a common invariant domain need not to commute as shown by a well known example by Nelson \cite{Nelson1959}. This difficulty is part of the more general problem of the mathematical equivalence of Wightman and Haag-Kastler axioms for quantum field theory a problem which has been studied rather extensively in the literature but which has not yet been completely solved, see e.g. 
\cite{BorchYng90,BorchYng92,BorchZimm,Bost2005,Buch90,DF1977,DSW,FH1981,FJ,GJ}. As it will become clear in this paper we deal with some special but mathematically very interesting aspect of this general problem.

\smallskip

A vertex operator is an algebraic formalization of a quantum
field on $S^1$, see \cite{FHL,Kac}.  A certain family of vertex operators is called
a vertex operator algebra.  This first appeared in the study of
Monstrous Moonshine, where one constructs a special vertex 
operator algebra called the {\emph{Moonshine vertex operator
algebra}} whose automorphism group is the Monster group, see \cite{FLM}.
Extensive studies have been made on vertex operator algebras
in the last 30 years.  

Since a conformal net and a vertex operator algebra (with a unitary structure)
both give mathematical axiomatization of a (unitary) chiral conformal
field theory, one expects that these two objects are in
a bijective correspondence, at least under some nice conditions,
but no such correspondence has been known so far. Actually, the axioms of vertex operator algebras are 
deeply related to Wightman axioms for quantum fields, see \cite[Chapter 1]{Kac}. Hence, the problem of the 
correspondence between vertex operator algebras and conformal nets can bee seen as a variant of the problem of the 
correspondence between Wightman field theories and algebraic quantum field theories discussed above. 

We would like to stress here an important difference between the Wightman approach and the vertex operator algebra approach to conformal field theory. In the Wightman approach the emphasis is on a family of fields $\{\Phi_i \}_{i\in \mathscr{I}}$ which generate the theory. For this point of view it is natural to start from this family in order do define an associated conformal net, see e.g. \cite{BS-M}. Many models of conformal nets are more or less directly defined in this way from a suitable family of generating fields. 
On the other hand, in the vertex operator algebra approach one considers, in a certain sense, all possible fields (the vertex operators) compatible to a given theory and corresponding to the Borchers class of the generating family $\{\Phi_i \}_{i\in \mathscr{I}}$, cf. 
\cite[II.5.5]{Haag}. In this sense the vertex operator algebras approach is closer in spirit to the algebraic approach, see \cite[III.1]{Haag}  and in this paper we will take this fact quite seriously. Another important similarity between the vertex operator approach and the conformal nets approach is the emphasis on representation theory. The latter will play only a marginal role in this paper but we believe that our work gives a solid basis for further investigations in this direction and we plan to come back to the representation theory aspects in the future. 
\smallskip

In this paper we present for the first time a correspondence between unitary vertex operator algebras and conformal nets. 
The basic idea is the following. We start with a simple unitary vertex operator algebra $V$ and we assume that the vertex operators satisfy
certain (polynomial) energy bounds. This assumption  guarantees a nice analytic behaviour of the vertex operators. It is rather standard 
in axiomatic quantum field theory and does not appear to be restrictive but it is presently not known if it guarantees the existence of an associated conformal net. Then, on the Hilbert space completion $\H_V$ of $V$ we can consider the smeared vertex operators 
$Y(a,f)$, $f\in C^\infty(\s1)$, $a \in V$ corresponding to the vertex operators $Y(a,z)$ of $V$. We then define a family of von Neumann 
algebras $\{\A_V(I)\}_{I \in \I}$ as described at the beginning of this introduction by using all the vertex operators. 
We say that $V$ is strongly local if the map $\I \ni I \mapsto \A_V(I)$ satisfies locality. In this case we prove $\A_V$ is an irreducible conformal net. The idea of using all the vertex operators instead of a suitable chosen generating family of well behaved generators has the great advantage to make the construction more intrinsic and functorial. In particular every unitary subalgebra $W\subset V$ of a strongly local vertex operator algebra turns out to be strongly local and the map $W \mapsto \A_W$ gives rise to a one-to-one correspondence 
between the unitary vertex subalgebras $W$ of $V$ and the covariant subnets of $\A_V$. Moreover, we prove that the automorphism group 
of $\A_V$ coincides with the unitary automorphism group of $V$ and that, if the latter is finite, it coincides with the full automorphism group of $V$.

Although the strong locality condition appears {\it a priori} to be rather restrictive and difficult to prove we show, inspired by the work of Driessler, Summers and Wichmann \cite{DSW} that if a generating family $\mathscr{F}$ of quasi-primary (i.e. M\"obius covariant) Hermitian vertex operators, generates a conformal net $\A_{\mathscr{F}}$ then  $V$ is strongly local and $\A_V=\A_{\mathscr{F}}$.
This result heavily relies on the deep connection between the Tomita-Takesaki modular theory of von Neumann algebras and the 
space-time symmetries of quantum field theories first discovered by Bisognano and Wichmann \cite{Bi-Wi}.
 As a consequence, standard arguments (see e.g. \cite{BS-M}) shows that if $V$ is generated by fields of conformal dimension one and 
by Virasoro fields then it is strongly local. This gives many examples of strongly local vertex operator algebras, e.g. the affine vertex operator algebras and their subalgebras, orbifold vertex operator algebras and coset vertex operator algebras. Moreover, the Moonshine vertex operator algebra $V^\natural$ turns out to be strongly local and the automorphism group of the net $\A_{V^\natural}$ is the monster, a result previously proved in \cite{KL06}. As a consequence we can construct an irreducible conformal net $\A_{VB^\natural_{(0)}}$ associated with the even shorter Moonshine vertex operator algebra $VB^\natural_{(0)}$ constructed by H\"{o}hn. Moreover, we show that the automorphism group of $\A_{VB^\natural_{(0)}}$ is the Baby Monster group. 
As far as we can see there is no known example of simple unitary vertex operator algebra which can be shown to be not strongly local and we conjecture that such an example does not exist. 

We also show that one can reconstruct the strongly local vertex operator algebra $V$ from the corresponding conformal net $\A_V$  by using the work of Fredenhagen and J\"{o}r{\ss} \cite{FJ}. Actually we consider a variant of the construction in \cite{FJ} which is directly obtained from the Tomita-Takesaki modular theory of von Neumann algebras. We also find a set of natural conditions on a irreducible conformal net $\A$, including energy bounds for the Fredenhagen-J\"{o}r{\ss} fields, which are equivalent to the requirement that $\A$ coincides with the net $\A_V$ associated with a simple unitary strongly local vertex operator algebra $V$. The existence of irreducible conformal nets not satisfying these condition appears to be an open problem. 

\smallskip

In order to keep this paper reasonably self-contained, the first four sections are devoted to various preliminaries on operator algebras, conformal nets and vertex operator algebras. In  Sect.  \ref{SectionUnitaryVOA} we define and study the notion of unitary vertex operator algebra. The definition has previously appeared more or less explicitly in the literature e.g. \cite{matsuo03} where unitary vertex operator algebras appear as vertex operator algebras having a real form with a positive definite invariant bilinear form, see also   
\cite[Sect.12.5]{FLM}. Here we prefer an alternative definition which is easily seen to be equivalent and we replace the real forms by the corresponding antilinear automorphisms (PCT operators).  The same definition, with a slightly different language, has also been recently used by Dong and Lin  \cite{DL2014}. In this paper we give a further equivalent definition of unitarity based 
on the requirement of locality for the adjoint (with respect to the scalar product) vertex operators. Our proof of the equivalence of the two definitions gives a 
vertex algebra version of the PCT theorem in Wightman quantum field theory \cite{StrWight}. Moreover, we study the question of the uniqueness of the unitary structure, i.e. of the {\it invariant scalar product}, in relation to the properties of the automorphism group of the underlying vertex operator algebra. We show that the scalar product is unique if and only if the automorphism group is compact, that in this case the automorphism group coincides with the unitary automorphism group and that it must also be totally disconnected. This happens in the special but important examples in which the automorphism group is finite as in the case of the Moonshine vertex operator algebra. 

\section{Preliminaries on von Neumann algebras}
In this section we introduce some of the basic concepts of the theory of von Neumann algebras and related facts on Hilbert space operators which will be frequently used in the following. 
Most of the topics discussed in this subsection can be found in any standard introductory book on operator algebras on Hilbert spaces,  see e.g. \cite{Bla2006,BR1,KR1,KR2}. We refer the reader to these books for more details and for the proofs of the results described in this section. 

\subsection{Von Neumann algebras}
 Let $\H$ be a (complex) Hilbert space with scalar product $(\cdot | \cdot)$, let $B(\H)$ denote the algebra of bounded linear operators
$\H\to \H$ and denote by $1_\H \in B(\H)$ the identity operator. Moreover, we denote by $U(\H)$ the group of unitary operators on $\H$. 

Recall that $B(\H)$ equipped with the usual operator norm is a Banach space (in fact it is a Banach algebra). 
For every $A\in B(\H)$ we denote by $A^* \in B(\H)$ its (Hilbert space) adjoint so that 
$(b|Aa)=(A^*b|a)$ for all $a,b \in \H$. The map $A \mapsto A^*$ is an antilinear involution $B(\H) \to B(\H)$ satisfying 
$(AB)^*= B^*A^*$ for all $A,B \in B(\H)$.     

For a given subset $\S \in B(\H)$ we denote by $\S^*$ the subset of $B(\H)$ defined by 
\begin{equation}
\S^* \equiv \{A \in B(\H): A^* \in \S\}.
\end{equation}
We say that $\S$ is self-adjoint if $\S=\S^*$. 

Given $\S \subset B(\H)$ we denote by $\S'$ the commutant of $\S$, namely the subset of $B(\H)$ defined by 
\begin{equation}
\S'= \{A\in B(\H): [A,B]=0\; \textrm{for all}\; B\in S \},
\end{equation}
where, for any $A,B \in B(\H)$, $[A,B]$ denotes the commutator $AB-BA$. The commutant $\S''$ of $\S'$ is called the bicommutant of $\S$. 
We denote by $\S'''$ the commutant of $\S''$ and so on. It turns out that $\S'''=\S'$ for every subset $\S\subset B(\H)$. Moreover, 
if $\S \subset B(\H)$ is self-adjoint then $\S'$ is a self-adjoint subalgebra of $B(\H)$ which is also unital, i.e. $1_\H \in \S'$. 

A self-adjoint subalgebra $\M \subset B(\H)$ is called a {\bf von Neumann algebra} if  $\M = \M''$.  Accordingly, $(\S \cup \S^*)'$ is a von Neumann algebra for all subsets $\S \subset B(\H)$ and $W^*(\S) \equiv (\S \cup \S^*)''$ is the smallest von Neumann algebra containing 
$\S$. 

A von Neumann algebra $\M$ is said to be a {\bf factor} if $\M'\cap \M =\CC1_\H$, i.e. $\M$ has a trivial center. 
$B(\H)$ is a factor for any Hilbert space $\H$. Its isomorphism class as an abstract complex $*$-algebra only depends on the  Hilbertian dimension of $\H$. A von Neumann algebra $\M$ isomorphic to some $B(\H)$ (here $\H$ is not necessarily the same Hilbert space on which $\M$ acts) is called a {\bf type I factor}. If $\H$ has dimension $n \in \ZZ_{>0}$ then $\M$ is called a type ${\rm I}_n$ factor while if 
$\H$ is infinite-dimensional then $\M$ is called a type ${\rm I}_\infty$ factor. 

There exist factors which are not of type I. They are divided in two families: the {\bf type II factors} (type ${\rm II}_1$ or type 
${\rm II}_\infty$) and {\bf type III factors} (type $\textrm{III}_\lambda$, $\lambda \in [0,1]$, cf. \cite{Connes1973}).

If $\M$ and $\N$ are von Neumann algebras and $\N \subset \M$ then $\N$ is called a {\bf von Neumann subalgebra} of $\M$. 
If $\M$ is a factor then a von Neumann subalgebra $\N \subset \M$ which is also a factor is called a {\bf subfactor}. 
The theory of subfactors is a central topic in the theory of operator algebras and in its applications to quantum field theory. 
Subfactor theory  was initiated in the seminal work \cite{Jones1983} where V. Jones introduced and studied an index $[M:N]$ for type 
${\rm II}_1$ factors. Subfactor theory and the notion of {\bf Jones index} was later generalized to type III subfactors and also to more general inclusions of von Neumann algebras, see \cite{EK1998,Kosaki1998,LongoCMP1989,LR1995,Popa1995}.

 A central result in the theory of von Neumann algebras is von Neumann bicommutant theorem which states that a self-adjoint 
 unital subalgebra $\M \subset B(\H)$ is a von Neumann algebra if and only if it is closed with respect to strong operator topology of
 $B(\H)$. In fact the statement remains true if one replace the strong topology on $B(\H)$ with one of the following: the weak topology, 
 the $\sigma$-weak topology (also called ultra-weak topology) and the  $\sigma$-strong topology (also called ultra-strong topology).
 In particular, every von Neumann algebra is also a (concrete) C*-algebra, namely it is a norm closed self-adjoint subalgebra of 
 $B(\H)$. Moreover, if $\H$ is separable, as it will typically be the case in this paper, a self-adjoint unital subalgebra $\M \subset B(\H)$
 is a von Neumann algebra if and only if for any $A\in B(\H)$, the existence of a sequence $A_n \in \M$, $n\in \ZZ_{> 0}$, such that 
 $A_n a \to Aa$ for all $a\in \H$, implies that $A\in \M$. Note also that von Neumann bicommutant theorem 
 implies that, for any subset $\S \subset B(\H)$, $W^*(\S)$ coincides with the strong closure of the unital self-adjoint subalgebra of $B(\H)$ generated by $\S$. If, for every $\alpha \in \mathscr{I}$, with $\mathscr{I}$ any index set, $\M_\alpha \subset B(\H)$ is a von Neumann algebra then 
$\bigcap_{\alpha \in \mathscr{I}}\M_{\alpha}$ is a von Neumann algebra. Moreover, 
\begin{equation}
\bigvee_{\alpha \in \mathscr{I}} \M_\alpha \equiv W^*(\bigcup_{\alpha \in \mathscr{I}}\M_\alpha) = 
\left( \bigcap_ {\alpha \in \mathscr{I}} \M_\alpha' \right)'
\end{equation}
is the von Neumann algebra generated by the von Neumann algebras $\M_\alpha$, $\alpha \in \mathscr{I}$.

If $\M$ is a von Neumann algebra on the Hilbert space $\H$ and $e \in \M'$ is an (orthogonal) projection commuting with $\M$ 
then the closed subspace $e\H$ is $\M$-invariant and 
\begin{equation}
\M_e \equiv \M\restriction_{e\H}= \{ A\restriction_{e\H}  :  A\in \M \}
\end{equation}
is a von Neumann algebra on $e\H$, the {\bf von Neumann algebra induced by $e$}.

Now let $\H_1$ and $\H_2$ be two Hilbert spaces and let $\H_1\otimes  \H_2$ be their algebraic tensor product. 
Then, $\H_1\otimes \H_2$ has a natural scalar product and we denote by $\H_1\overline{\otimes}\H_2$ 
the corresponding Hilbert space completion, the {\bf Hilbert space tensor product}. If $\M_1$ (resp. $\M_2$) is a von Neumann algebra on
$\H_1$ (resp. $\H_2$) then the algebraic tensor product $\M_1 \otimes \M_2$ is a $*$-subalgebra of $B(\H_1\overline{\otimes}\H_2)$ and the {\bf von Neumann tensor product} $\M_1\overline{\otimes}\M_2$ is defined by 
$$\M_1\overline{\otimes}\M_2 \equiv \left(\M_1 {\otimes}\M_2 \right)''.$$ 
It can be shown that 
$$\left(\M_1\overline{\otimes}\M_2\right)'= \M_1'\overline{\otimes}\M_2'.$$ 
Moreover, 
$$B(\H_1)\overline{\otimes}B(\H_2) = B(\H_1\overline{\otimes}\H_2).$$

\subsection{Unbounded operators affiliated with von Neumann algebras}
\label{SubsectUnboundedOp&VNA}
 
 By a linear operator (or simply an operator) on a Hilbert space $\H$ we always mean a linear map $A: \D \to \H$, where the domain $\D$ is a linear subspace of $\H$. If the domain $\D(A) \equiv D$ of $A$ is dense in $\H$ we say that $A$ is densely defined. Recall that $A$ is said to be closed if its graph is a closed subset of $\H \times \H$ with respect to the product topology and that $A$ is said to be closable if the closure of its graph is the graph of an operator ${\overline{A}}$ called the closure of $A$. 
 
The adjoint $A^*$ of a densely defined operator $A$ on $\H$ is always a closed operator on $\H$. A densely defined operator $A$ on $\H$ is closable if and only if its adjoint $A^*$ is densely defined. If this is the case then $\overline{A}=A^{**}$. A bounded densely defined operator  $A$ on $\H$ is always closable and it belongs to $B(\H)$ if and only if it is closed. If the graph of $A$ is a subset of the graph of $B$ then $B$ is said to be an extension of  $A$ and as usual we will write $A \subset B$. Let $A$ be closed operator with domain $\D(A)$, let $\D_0$ be a linear subspace of $\D(A)$ and let $A_0$ be the restriction of $A$ to $\D_0$. Then, $A_0$ is closable and it closure obviously satisfies $\overline{A_0} \subset A$.  One says that $\D_0$ is a {\bf core} for $A$ if $\overline{A_0} = A$.  
If $A$ is a closed densely defined operator then  $A^*A$ is self-adjoint (in particular densely defined and closed) with non-negative spectrum. The absolute value of $|A|$ of $A$ is defined through the spectral theorem by $|A|\equiv (A^*A)^{1/2}$. Then there is a unique 
$C \in B(\H)$ such that ${\rm Ker}(C)={\rm Ker}(A)$ and $C|A| = A$. $C$ is a partial isometry, i.e. $C^*C$ and $CC^*$ are 
(orthogonal) projections. The decomposition $A=C|A|$ is called the polar decomposition of $A$. $A$ is injective with dense range 
if and only if $C$ is a unitary operator.   Similar definitions, with analogous results, can be given for antilinear operators on $\H$.

An operator $A$ on $\H$ with domain $\D(A)$ is said to commute with a bounded operator $B\in B(\H)$ 
(and {\it viceversa}) if  $BA \subset AB$, namely if $B\D(A) \subset \D(A)$ and $ABa=BAa$ for all $a \in \D(A)$. If $A$ is densely defined and closable and if $A$ commutes with $B\in B(\H)$ then $A^*$ commutes with $B^*$. 
The following fact is very useful: if the densely defined operator $A$ is closed and $\D_0$ is a core for $A$ then $A$ commute with 
$B\in \B(\H)$ if and only if $B\D_0 \subset \D(A)$ and $ABa=BAa$ for all $a \in \D_0$.   

\medskip

A closed densely defined operator $A$ on $\H$ is said to be affiliated with a von Neumann algebra $\M \subset B(\H)$ if $A$ commutes 
with all operators in $\M'$. It turns out that a closed densely defined operator  $A$ is affiliated with $\M$ if and only if there is a sequence
$A_n \in \M$, $n \in \ZZ_{> 0}$ such that $A_na \to Aa$ and $A_n^*b \to A^*b$ for all $a \in \D(A)$ and all $b\in D(A^*)$. 

For any closed densely defined operator $A$ on $\H$ the set 
\begin{equation}
\{B\in B(\H): BA \subset AB,\; B^*A \subset AB^* \}
\end{equation}
is a von Neumann algebra and 
\begin{equation}
W^*(A)\equiv  \{B\in B(\H): BA \subset AB,\;  B^*A \subset AB^* \}'
\end{equation}
is the smallest von Neumann algebra to which $A$ is affiliated called the von Neumann algebra generated by $A$. 

If $A$ is a self-adjoint operator on a separable Hilbert space then, as a consequence of the spectral theorem, 
\begin{equation}
W^*(A) = \{ f(A): f \in  \mathscr{B}_b(\RR) \}
\end{equation}
where $\mathscr{B}_b(\RR)$ is the set of bounded Borel functions on $\RR$.  

More generally, if $A$ is densely defined and closed with polar decomposition $A=C|A|$, then $W^*(A)=W^*(C)\vee W^*(|A|)$ and hence 
$B, B^* \in B(\H)$ both commute with $A$ 
if and only if they both commute with $C$ and with the spectral projections of $|A|$.  

If $\mathscr{I}$ is an index set and $\{A_\alpha: \alpha \in\mathscr{I}\}$ is a family of closed densely defined operators on $\H$ 
then we put
\begin{equation}
W^*\left(\{A_\alpha: \alpha \in\mathscr{I}\} \right) \equiv \bigvee_{\alpha \in \mathscr{I} } W^*(A_\alpha),
\end{equation}
and we say that $W^*\left(\{A_\alpha: \alpha \in\mathscr{I}\} \right)$ is the von Neumann algebra generated by 
$\{A_\alpha: \alpha \in\mathscr{I}\}$. 

If $\D\subset \H$ is a linear subspace and $A_\alpha$, $\alpha \in \mathscr{I}$ are operators on $\H$ then $\D$ is called a common invariant domain for the operators $A_\alpha$, $\alpha \in \mathscr{I}$, if $\D \subset \D(A_\alpha)$ and $A_\alpha\D \subset \D$ for all 
$\alpha \in \mathscr{I}$.

The following proposition is well known and will be frequently used in this paper. 

\begin{proposition}  
\label{PropositionABcommute} 
Let $A$, $B$ be closed densely defined operators on a Hilbert space $\H$ and let $\D$ be a common 
invariant domain for $A$ and $B$. Assume that $W^*(A) \subset W^*(B)'$. Then $ABa=BAa$ for all $a\in \D$.
\end{proposition}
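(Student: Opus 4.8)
The plan is to handle the unboundedness of $B$ by truncating it to bounded operators lying in $W^*(B)$, exploit that these commute with the (unbounded but affiliated) operator $A$, and then recover $ABa=BAa$ from the closedness of $A$.

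First I would restate the hypothesis symmetrically. Taking commutants in $W^*(A)\subset W^*(B)'$ and using $W^*(B)''=W^*(B)$ gives $W^*(B)\subset W^*(A)'$. Since $A$ is affiliated with $W^*(A)$, by definition $A$ commutes with every operator in $W^*(A)'$; in particular $AT\subset TA$ for every bounded $T\in W^*(B)$.

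Next I would build bounded truncations of $B$ inside $W^*(B)$. Using the polar decomposition $B=C|B|$ and the spectral projections $P_n$ of $|B|$ associated with the intervals $[0,n]$, the operators $C$, $P_n$ and $|B|P_n$ all belong to $W^*(B)$ (by the description $W^*(B)=W^*(C)\vee W^*(|B|)$ recalled above), so $B_n:=BP_n=C\,|B|P_n$ is a bounded element of $W^*(B)$. Since $P_n\to 1_\H$ strongly and $P_n$ commutes with $|B|$, one checks that $|B|P_n c=P_n|B|c\to |B|c$ and hence $B_n c\to Bc$ for every $c\in\D(B)=\D(|B|)$.

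Now fix $a\in\D$. As $B_n\in W^*(B)\subset W^*(A)'$ is bounded, the commutation $AB_n\subset B_nA$ gives $B_n a\in\D(A)$ and $AB_n a=B_n Aa$ for all $a\in\D\subset\D(A)$. Since $Aa\in A\D\subset\D\subset\D(B)$, the convergence above yields $B_n Aa\to BAa$, whence $AB_n a=B_n Aa\to BAa$; and $B_n a\to Ba$ because $a\in\D\subset\D(B)$. The final step, where the genuine content lies, is to pass the limit through the unbounded $A$: from $B_n a\to Ba$ and $A(B_n a)\to BAa$, closedness of $A$ forces $Ba\in\D(A)$ and $A(Ba)=BAa$; as $Ba\in\D\subset\D(A)$ this is exactly $ABa=BAa$. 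The only real obstacle is the unboundedness of $B$: the whole construction is arranged so that the truncations $B_n$ are bounded and sit in $W^*(A)'$, which makes the commutation with $A$ legitimate, while the closedness of $A$ absorbs the limiting argument.
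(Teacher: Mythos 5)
Your proof is correct and follows essentially the same route as the paper: approximate $B$ by bounded operators $B_n\in W^*(B)\subset W^*(A)'$, use that each $B_n$ commutes with $A$ to get $AB_na=B_nAa\to BAa$, and conclude by closedness of $A$. The only difference is that you construct the approximating sequence explicitly via spectral truncation $B_n=C|B|P_n$, whereas the paper simply invokes the existence of such a sequence as part of the characterization of affiliation recalled in its preliminaries.
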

\begin{proof} Let $B_n\in W^*(B)$, $n \in \ZZ_{>0}$ be a sequence such that $B_na \to Ba$ for all $a\in \D(B)$. Since $B_n$ commutes with $A$ for all $n\in \ZZ_{>0}$ then, for any $a\in \D$, $B_na$ is in the domain of $A$, $Aa$ is in the domain of $B$ and $AB_na=B_nAa \to BAa$. Since $A$ is closed it follows that $ABa=BAa$. 
\end{proof}

The converse is known to be false thanks to examples due to Nelson \cite[Sect.10]{Nelson1959}, see also \cite[Sect.VIII.5]{ReedSimonI}. We summarize this fact in the following proposition.

\begin{proposition} Let $\H$ be a separable infinite-dimensional Hilbert space. Then there exists two self-adjoint operators $A$ and $B$ on 
$\H$ and a common invariant core $\D$ for $A$ and $B$ such that $ABc=BAc$ for all $c\in \D$ but $W^*(A)$ is not a subset of $W^*(B)'$. 
\end{proposition}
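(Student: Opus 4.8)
The plan is to realize the classical example of Nelson, in the streamlined geometric form of Reed and Simon, in which the two generators of translations on a suitable surface commute on a common core but generate non-commuting one-parameter unitary groups.

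First I would fix the underlying geometry. Let $M$ be a connected two-sheeted Riemann surface lying over $\RR^2 \setminus \{(0,0)\}$ and branched at the origin, e.g. the Riemann surface of $\sqrt{x+iy}$. On $M$ the base coordinates $x,y$ and the Lebesgue measure lift locally, so one has well-defined vector fields $\partial_x,\partial_y$ and an $L^2$-space. I would set $\H = L^2(M)$ and take $\D = C_c^\infty(M)$, the smooth functions of compact support contained in the complement of the branch point. This $\D$ is dense in $\H$ and invariant under $\partial_x$ and $\partial_y$.

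Next I would define the symmetric operators $A_0 = -i\partial_x$ and $B_0 = -i\partial_y$ on $\D$. The routine verifications are that $A_0,B_0$ are symmetric and that $A_0 B_0 c = B_0 A_0 c$ for every $c\in\D$, since the mixed second partial derivatives of a smooth function coincide. I would then argue that $A_0$ and $B_0$ are essentially self-adjoint on $\D$ (this is where Nelson's analytic vector theorem, or a direct deficiency-index computation for these constant-coefficient first-order operators, enters) and let $A = \overline{A_0}$, $B = \overline{B_0}$ be their self-adjoint closures. By construction $\D$ is a common invariant core and $ABc = BAc$ on $\D$, so all the ``local'' hypotheses of the statement hold.

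The heart of the matter, and the main obstacle, is to show $W^*(A) \not\subset W^*(B)'$. By Stone's theorem $W^*(A)$ is generated by the unitaries $U(s) = e^{isA}$ and $W^*(B)$ by $V(t) = e^{itB}$, so $W^*(A) \subset W^*(B)'$ would force $U(s)V(t) = V(t)U(s)$ for all $s,t$. But $U(s)$ and $V(t)$ act geometrically as the lifts to $M$ of the horizontal and vertical translation flows, and these lifts fail to commute precisely because of the nontrivial monodromy of $M$ around the branch point: for a point near the origin, flowing a small amount in $x$ and then in $y$ can encircle the branch point and land on the opposite sheet, whereas reversing the order does not. I would make this explicit by choosing a test function localized near such a point and checking that $U(s)V(t)c \neq V(t)U(s)c$ for suitable small $s,t$, which yields $W^*(A) \not\subset W^*(B)'$, while the commutation $ABc = BAc$ on $\D$ was already established directly. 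The delicate parts are establishing essential self-adjointness on the non-simply-connected surface and controlling the two translation flows, which are only defined almost everywhere because some trajectories run into the branch point; both are handled, as in Nelson and in Reed-Simon, by working on the full-measure set of trajectories that avoid the origin.
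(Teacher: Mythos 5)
Your proposal is correct and is precisely the example the paper has in mind: the paper states this proposition without proof, citing Nelson \cite[Sect.10]{Nelson1959} and \cite[Sect.VIII.5]{ReedSimonI}, and your construction on the two-sheeted Riemann surface of $\sqrt{x+iy}$, with $A=-i\partial_x$, $B=-i\partial_y$ on $C_c^\infty$ and non-commuting translation flows due to the monodromy around the branch point, is exactly that classical counterexample. The points you flag as delicate (essential self-adjointness via deficiency indices or analytic vectors, and the a.e.\ definedness of the flows) are handled just as you indicate in the cited references.
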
 

\subsection{Tomita-Takesaki modular theory}
Let $\H$ be a Hilbert space and let $\M \subset B(\H)$ a von Neumann algebra. A vector $\Omega$ is said to be {\bf cyclic} for $\M$ if the linear subspace $\M\Omega$ is dense in $\H$. A vector $\Omega$ is said to be separating for $\M$ if, for every $A\in \M$, $A\Omega=0$ implies that $A=0$. It can be shown that a  vector $\Omega \in \H$ is cyclic for $\M$ if and only if it is separating for $\M'$ and symmetrically that $\Omega \in \H$ is separating for $\M$ if and only if it is cyclic for $\M'$. 
\medskip 

Let $\M\subset B(\H)$ be a von Neumann algebra and let $\Omega \in \H$ be cyclic and separating for $\M$. 
Then the map $A\Omega \mapsto A^*\Omega$ is well defined and injective and give rise to an antilinear operator $S_0$ on $\H$
with domain $\M\Omega$ and range $\M\Omega$. Hence $S_0$ is densely defined and has dense range. Moreover, 
$S_0^2= 1_{\M\Omega}$. If in the definition 
of $S_0$ we replace $\M$ with $\M'$ we obtain another antilinear operator $F_0$ on $\H$ with domain $\M'\Omega$ and range 
$\M'\Omega$. It is easy to see that $F_0 \subset S_0^*$ and, symmetrically that $S_0 \subset F_0^*$. Accordingly, $S_0$ and 
$F_0$ are closable and we denote by $S$ and $F$ respectively their closures and by $\D(S)$ and $\D(F)$ the domain of $S$ and the domain of $F$ respectively. It turns out that $S$ and $F$ are injective with dense range. Moreover, $F=S^*$.
Now, let $\Delta =S^*S$ and let $J\Delta^{1/2}$ be the polar decomposition of $S$. $S$ is called the {\bf Tomita operator}, $\Delta$ is called the {\bf modular operator} and $J$ the {\bf modular conjugation}. 

 Since $S$ is injective with dense range then the 
self-adjoint operator $\Delta^{1/2}$ is injective and $J$ is antiunitary i.e. it is  antilinear and satisfies $J^*J=J J^*=1_\H$. Moreover, 
$S^2=1_{\D(S)}$. It follows that $J\Delta^{1/2}J=\Delta^{-1/2}$, that $J^2=1_\H$ and hence that $J=J^*$. Note also that since
$S \Omega=F\Omega=\Omega$, then $\Omega \in \D(\Delta)$ and $\Delta\Omega=\Omega$. Thus, $J\Omega = \Omega$. 
Since $\Delta$ is self-adjoint with non-negative spectrum and injective then $\log \Delta$ is densely defined and self-adjoint. 
Accordingly the map $\RR \ni t \mapsto \Delta^{it}=e^{it\log\Delta}$ defines a strongly continuous one-parameter group of 
unitary operators on $\H$ leaving $\Omega$ invariant.  Note that $J\Delta^{it}J=\Delta^{it}$.

Now let $g:\H \to \H$ be a unitary operator and assume that $g\M g^{-1}=\M$ and 
$g\Omega=\Omega$. Then, for every $A\in \M$, $A\Omega$ is in the domain of $gSg^{-1}$ and $gSg^{-1}A\Omega =SA\Omega$. 
It follows that $gSg^{-1}=S$ and hence that $g\Delta g^{-1} =\Delta$ and $gJg^{-1} = J$. More generally, if $\M_1 \subset B(\H_1)$ and 
$\M_2 \subset B(\H_2)$  are two von Neumann algebras on the Hilbert spaces $\H_1$, $\H_2$ with cyclic and separating vectors
$\Omega_1 \in \H_1$, $\Omega_2 \in \H_2$ respectively and if $\phi:\H_1 \to \H_2$ is a unitary operator satisfying 
$\phi\M_1\phi^{-1} = \M_2$ and $\phi \Omega_1 =\Omega_2$ then $\phi S_1 \phi^{-1} = S_2$, where $S_1=J_1\Delta_1^{1/2}$ 
(resp. $S_2= J_2 \Delta_2^{1/2}$) is the Tomita operator associated with the pair 
$\left(\M_1, \Omega_1 \right)$ (resp. $\left(\M_2, \Omega_2 \right)$). Accordingly we also have $\phi \Delta_1 \phi^{-1} = \Delta_2$ and 
$\phi J_1 \phi^{-1} = J_2$. 
\medskip

The following theorem is the central result of the Tomita-Takesaki theory. 

\begin{theorem} (Tomita-Takesaki theorem) Let $\M \subset B(\H)$ be a von Neumann algebra, let $\Omega \in \H$ be cyclic and separating for $\M$ and let 
$S=J\Delta^{1/2}$ be the polar decomposition of the Tomita operator $S$ associated with $M$ and $\Omega$. Then, 
$$J\M J = \M' ,\quad \textrm{and} \quad \Delta^{it} \M \Delta^{-it}=\M, $$
for all $t\in \RR$.
\end{theorem}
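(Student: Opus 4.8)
The plan is to reduce both assertions to a single analytic statement about the modular group $t\mapsto\Delta^{it}$ and then read off the algebraic consequences. First I would collect the elementary identities already recorded in the text: from $S=J\Delta^{1/2}$, $J=J^*=J^{-1}$ and $J\Delta^{1/2}J=\Delta^{-1/2}$ one gets $F:=S^*=\Delta^{1/2}J=J\Delta^{-1/2}$, together with $\Delta=FS$, $S\Omega=F\Omega=J\Omega=\Delta\Omega=\Omega$, and the two boundary relations $SA\Omega=A^*\Omega$ for $A\in\M$ and $FB\Omega=B^*\Omega$ for $B\in\M'$. These relations are the only input distinguishing $\M$ from an arbitrary von Neumann algebra, so everything must be extracted from them. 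For the covariance $\Delta^{it}\M\Delta^{-it}=\M$ it suffices to show that for each $A\in\M$ the bounded operator $\Delta^{it}A\Delta^{-it}$ commutes with every $B\in\M'$, so that it lies in $\M''=\M$; the reverse inclusion then follows by replacing $t$ with $-t$.

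The heart of the matter is the analytic continuation. Fixing $A\in\M$ and $B\in\M'$, I would study the scalar function $t\mapsto(\Delta^{it}A\Delta^{-it}\,\xi\mid\eta)$ for $\xi,\eta$ ranging over a dense domain assembled from spectral subspaces of $\Delta$ on which $\log\Delta$ is bounded. The key point is that this function extends to a bounded holomorphic function on a horizontal strip (in the present normalisation $\{0<\mathrm{Im}\,z<1\}$), continuous up to the boundary, and that the two boundary relations above force a Kubo--Martin--Schwinger-type condition tying the values on the upper edge of the strip to those on the lower edge. A three-line (Phragm\'en--Lindel\"of) argument then converts this boundary condition into the algebraic identity expressing that $\Delta^{it}A\Delta^{-it}$ commutes with $B$; as $B\in\M'$ is arbitrary this gives $\Delta^{it}A\Delta^{-it}\in\M''=\M$, and hence $\Delta^{it}\M\Delta^{-it}=\M$ for all $t\in\RR$.

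The conjugation identity $J\M J=\M'$ comes out of the same circle of ideas, now exploiting the symmetry between $\M$ and $\M'$ under the exchange $S\leftrightarrow F$, $\Delta\leftrightarrow\Delta^{-1}$. Once covariance is known, the modular group admits a dense set of analytic elements $A\in\M$ for which $\sigma_{-i/2}(A)=\Delta^{1/2}A\Delta^{-1/2}\in\M$; feeding such elements into the boundary relation $SA\Omega=A^*\Omega$ through $S=J\Delta^{1/2}$ lets one identify the conjugated algebra, yielding the inclusion $J\M J\subseteq\M'$. Running the mirror-image argument with $F=J\Delta^{-1/2}$, the commutant $\M'$, and the relation $FB\Omega=B^*\Omega$ (using $\Delta^{it}\M'\Delta^{-it}=\M'$, which is just the commutant of the covariance already proved) gives $J\M'J\subseteq\M$; since $J^2=1_\H$ the two inclusions combine to the equality $J\M J=\M'$.

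The main obstacle is precisely the analytic continuation of the second paragraph, and the entire difficulty is caused by the unboundedness of $\Delta$: formal manipulation of $\Delta^{z}$ for complex $z$, and the passage from identities on the core $\M\Omega$ to operator identities on all of $\H$, are not justified naively, as the failure of the converse of Proposition \ref{PropositionABcommute} already warns. The rigorous route requires working first at a regularised level where every operator is bounded and every domain is under control --- for instance with spectral cut-offs of $\log\Delta$, with resolvents $(\lambda+\Delta)^{-1}$, or with smeared analytic elements $\int f(t)\,\Delta^{it}A\Delta^{-it}\,dt$ whose Fourier transform $\widehat f$ has compact support --- establishing the commutation and boundary relations there, and only afterwards removing the cut-off by a careful limiting argument. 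Organising these approximations and controlling the limits is where essentially all of the work of the Tomita--Takesaki theorem resides.
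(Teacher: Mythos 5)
A preliminary remark: the paper states the Tomita--Takesaki theorem in its operator-algebraic preliminaries and, as announced at the start of Section 2, defers all proofs of such standard facts to the cited textbooks. There is therefore no in-paper proof to compare yours against, and I assess the proposal on its own terms.

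Your outline faithfully reproduces the architecture of the classical analytic-continuation proof: the elementary identities among $S$, $F=S^*$, $J$, $\Delta$; the reduction of covariance to showing $\Delta^{it}A\Delta^{-it}\in\M''=\M$ by testing against $\M'$; the strip/KMS/three-line mechanism; and the symmetry $S\leftrightarrow F$, $\M\leftrightarrow\M'$ that upgrades $J\M J\subseteq\M'$ to an equality. But as a proof it has a genuine gap, and you have in effect located it yourself: the assertion that the boundary relations $SA\Omega=A^*\Omega$ and $FB\Omega=B^*\Omega$ ``force'' a KMS-type identification of the two edges of the strip is not a routine consequence of those relations --- it \emph{is} the theorem. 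The function $t\mapsto(\Delta^{it}A\Delta^{-it}\xi\mid\eta)$ has no a priori analytic extension to the strip for general $\xi,\eta$, and passing to spectral subspaces of $\log\Delta$ restores analyticity but severs the connection with the relations $SA\Omega=A^*\Omega$, which live on the non-invariant domain $\M\Omega$. Every rigorous execution of this strategy rests on a concrete technical lemma that the sketch does not supply: in van Daele's treatment, the statement that for each $B\in\M'$ and $\lambda>0$ a vector of the form $(\Delta+\lambda)^{-1}\Delta^{1/2}B\Omega$ is again of the form $A_\lambda\Omega$ for some $A_\lambda\in\M$ with norm of order $\lambda^{-1/2}\|B\|$, combined with an integral representation of $\Delta^{1/2}(\Delta+\lambda)^{-1}$ in terms of the unitaries $\Delta^{it}$. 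Without this lemma (or an equivalent substitute such as Tomita's resolvent analysis or the Rieffel--van Daele real-subspace argument) the three-line argument has no boundary data to act on, and the claim that $\M$ contains a dense set of analytic elements with $\Delta^{1/2}A\Delta^{-1/2}\in\M$ --- on which your derivation of $J\M J\subseteq\M'$ relies --- is likewise unestablished. In short, the plan is the right plan and the difficulty is correctly diagnosed, but the step at which the theorem actually happens is announced rather than proved.
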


As a consequence for every  $t \in \RR$ the  map $\M \ni A \mapsto \Delta^{it}A\Delta^{-it}$ defines an ($*$-) automorphism of 
$\M$ depending only on $t$ and on the {\bf state} (i.e. normalized positive linear functional) $\omega$ defined by 
$\omega(A) = \frac{1}{\|\Omega\|^2}(\Omega|A\Omega)$. 
This automorphism is denoted by $\sigma^\omega_t$, $t\in \RR$ and the corresponding one-parameter automorphism 
group $\RR \ni t \mapsto \sigma^\omega_t$ is called the {\bf modular group} of $\M$ associated with the state $\omega$.

\section{Preliminaries on conformal nets}
We gave the definition of M\"obius covariant net and of conformal net in the introduction. In this section we discuss some of the main properties of conformal nets that will be used in the next sections for more details and proofs see e.g. 
\cite{BGL1993,Carpi2004,FJ,FrG,GuLo96,KL04,LongoCMP2003}, see also the lecture notes in preparation \cite{LongoLectureNotes} and the PhD thesis \cite{WeiPhD}.
Note that in the literature, in some cases, M\"obius covariant nets are called conformal nets and conformal nets are called 
diffeomorphism covariant nets.

\subsection{$\diff$ and its subgroup $\mob$}
\label{SectDiff}
In this subsection we recall some notions about the ``spacetime" symmetry group of conformal nets.

Let $\s1\equiv \{z\in \CC: |z|=1\}$ be the unite circle. Moreover, let  $S_+^1 \equiv \{z\in \s1: \Im z >0 \}$ be the (open) upper semicircle and let $S_-^1 \equiv \{z\in \s1: \Im z <0 \}$ be the lower semicircle. Note that  $S_+^1,  S_-^1 \in \I$ and  $S_-^1 = (S_+^1)'$. 

The group $\diff$ is an infinite 
dimensional Lie group modeled on the real topological vector space 
$\vect$ of smooth real vectors fields on $\s1$ with the usual $C^\infty$ 
Fr\'echet topology \cite[Sect.6]{Milnor}. Its Lie algebra coincides with $\vect$ 
with the bracket given by the negative of the usual brackets of vector 
fields. Hence if $g(z), f(z)$, $z=e^{i\vartheta}$, are functions in $C^\infty(\s1,\RR)$ then 
\begin{equation}
[g(e^{i\vartheta})\frac{d}{d\vartheta}, 
f(e^{i\vartheta})\frac{d}{d\vartheta}]= 
\left((\frac{d}{d\vartheta}g(e^{i\vartheta}))f(e^{i\vartheta})-
(\frac{d}{d\vartheta}f(e^{i\vartheta}))g(e^{i\vartheta})\right)\frac{d}{d\vartheta}.
\end{equation} 
$\diff$ is connected but not simply connected, see  \cite[Sect.10]{Milnor} and \cite[Example 4.2.6]{Hamilton} and we will denote by 
$\widetilde{{\rm Diff^+}}(S^1)$ its universal covering group. The corresponding covering map will be denoted by 
$\difftilde \ni \gamma \to \dot{\gamma} \in \diff$.

For every $f\in C^\infty(S^1,\RR)$ we denote by $\RR \ni t \mapsto \Exp(tf\frac{\rm d}{{\rm d}\vartheta})$ the one-parameter subgroup of 
$\diff$ generated by the vector field $f\frac{\rm d}{{\rm d}\vartheta}$ and we denote by 
$\RR \ni t \mapsto \widetilde{\Exp}(tf\frac{\rm d}{{\rm d}\vartheta})$ the corresponding one-parameter group in $\difftilde$.
\begin{remark}
\label{simplediff} 
{\rm By a result of Epstein, Herman and Thurston \cite{Epstein1970,Herman1971,Thurston1974} $\diff$ is a simple group (algebraically). It follows that $\diff$ is generated by exponentials i.e. by the subset $\{\Exp(f\frac{\rm d}{{\rm d}\vartheta}): f\in C^\infty(S^1,\RR)  \}$. In fact, by the same reason, $\diff$ is generated by exponentials with non-dense support i.e. by the subset 
$\bigcup_{I\in \I}\{\Exp(f\frac{\rm d}{{\rm d}\vartheta}): f\in C_c^\infty(I,\RR)  \}$, cf. \cite[Remark 1.7]{Milnor}
}
\end{remark}

Recall from the introduction that, for every $I\in \I$, 
${\rm Diff}(I)$ denotes the subgroup of $\diff$ whose elements act as the identity on $I'$. 
Note that accordingly  ${\rm Diff}(I)$ does not coincide with the group of diffeomorphisms of 
the open interval $I$, as the notation might suggest, but it only corresponds to proper subgroup of the latter. If $f\in C^\infty_c(I, \RR)$, $I\in \I$, namely $f\in C^\infty(\s1,\RR)$ and ${\rm supp} f \subset I$, then $\Exp(f\frac{\rm d}{{\rm d}\vartheta}) \in {\rm Diff}(I)$.  Note that by Remark \ref{simplediff} $\diff$ is generated by $\bigcup_{I \in \I} {\rm Diff}(I)$.

For any $I\in \I$ let ${\rm Diff}_c(I)$ be the dense subgroup of ${\rm Diff}(I)$ whose elements are the orientation preserving diffeomorphisms with support in $I$ i.e. 
\begin{equation}
{\rm Diff}_c(I)\equiv \bigcup_{I_1 \in \I,\; \overline{I_1} \subset I} {\rm Diff}(I_1).
\end{equation}
By \cite{Epstein1970,Epstein1984}, see also \cite{Mather1974}, ${\rm Diff}_c(I)$ is a simple group and hence it is generated by 
$$\{\Exp(f\frac{\rm d}{{\rm d}\vartheta}): f\in C_c^\infty(I,\RR)  \}.$$

Now, let $\vectC$ be the complexification of $\vect$ and let $l_n \in \vectC$ be defined by 
$l_n=-ie^{in\vartheta}\frac{\rm d}{{\rm d}\vartheta}$. 
Then
\begin{equation}
\label{EqWitt}
[l_n,l_m]=(n-m)l_{n+m} 
\end{equation} 
for all $n,m\in\ZZ$, i.e. the complex valued vector fields $l_n$, $n\in \ZZ$, span a complex Lie subalgebra 
$\mathfrak{Witt} \subset \vectC$, the (complex) {\bf Witt algebra}. As it is well known $\mathfrak{Witt}$ admits a nontrivial central extension 
$\mathfrak{Vir}$ defined by the relations 

\begin{eqnarray}
\label{EqVir}
&&[l_n,l_m]=(n-m)l_{n+m} +\delta_{n+m,0}\frac{n^3-n}{12}k \\
\nonumber
&&[l_n,k]=0,
\end{eqnarray}  
called the {\bf Virasoro algebra}, see \cite[Lecture 1]{KaRa}.

For any $f\in C^\infty(\s1) \equiv C^\infty(\s1,\CC)$ let 
\begin{equation}
\label{EqFourier}
\hat{f}_n\equiv \frac{1}{2\pi}\int_{-\pi}^{\pi}f(e^{i\vartheta})e^{-in\vartheta} {\rm d}\vartheta \quad n\in \ZZ
\end{equation}
be its Fourier coefficients. Then the Fourier series 
$$\sum_{n\in \ZZ}\hat{f}_nl_n$$ 
is convergent in $\vectC$ to the vector field $f\frac{\rm d}{{\rm d}\vartheta}$. Thus $\mathfrak{Witt}$ is dense in $\vectC$. 

The vector fields $l_n$, $n=-1,0,1$ generated a Lie subalgebra of $\mathfrak{Witt}$ isomorphic to $\mathfrak{sl}(2,\CC)$. 
Moreover, the real vector fields $il_0$, $\frac{i}{2}(l_1 +l_{-1})$ and  $\frac{1}{2}(l_1 - l_{-1})$ generate a Lie subalgebra of 
$\vect$ isomorphic to $\mathfrak{sl}(2,\RR)$ which correspond to the three-dimensional Lie subgroup 
$\mob \subset \diff$ of {\bf M\"obius transformations} of $\s1$. It turns out that $\mob$ is isomorphic to 
$\psl2  \simeq \mathrm{PSU}(1,1)$. Moreover, the inverse image of $\mob$ in $\difftilde$ under the covering map 
$:\difftilde \to \diff$ is the universal covering group $\widetilde{\mob}$ of $\mob$.

A generic element of $\mob$ is given by a map 
\begin{equation}
\label{EqDefMob}
z\mapsto \frac{\alpha z + \beta}{\overline{\beta}z+\overline{\alpha}},
\end{equation}
where $\alpha, \beta$ are complex numbers satisfying 
$|\alpha|^2 -|\beta|^2 =1$. 

The one-parameter subgroup of rotations $r(t)\in \mob$ is given by $r(t)(z)\equiv e^{it}z$, $z\in \s1$ so that 
$r(t)=\Exp(itl_0)$
 Le $\delta(t)$ be the one-parameter subgroup of $\mob$ defined by 
\begin{equation}
\label{Eqdelta(t)}
\delta(t)(z)= \frac{z\cosh t/2  - \sinh t/2}{-z\sinh t/2 + \cosh t/2},
\end{equation}
(``dilations'') corresponding to the vector field  $\sin \vartheta \frac{{\rm d}}{{\rm d}\vartheta}$
so that 
\begin{equation}
\label{Eqdelta(t)2}
\delta(t)=\Exp\left(t \frac{l_1-l_{-1}}{2} \right).
\end{equation} 

We have $\delta(t)S^1_+ = S^1_+$ for all $t\in \RR$. Moreover, if $\gamma \in \mob$ and $\gamma S^1_+ = S^1_+$ then 
$\gamma =\delta(\alpha)$ for some $\alpha \in \RR$. As a consequence, if $I\in \I$ and 
$\gamma_1, \gamma_2 \in \mob$ satisfy 
$\gamma_i S^1_+ = I$, $i=1,2$ then $\gamma_2^{-1}\gamma_1\delta(t)\gamma_1^{-1}\gamma_2=\delta(t)$ for all $t\in \RR$. 
For every $I \in \I$ there exists $\gamma \in \mob$ such that $\gamma S^1_+ = I$. Then, the one-parameter group 
$\gamma\delta(t)\gamma^{-1}$ does not depend on the choice of $\gamma$ satisfying $\gamma S^1_+ = I$
and it will be denoted by $\delta_I(t)$ . In particular $\delta_{S^1_+}(t)=\delta(t)$, $t\in \RR$. Note also that 
$\gamma\delta_I(t)\gamma^{-1}=\delta_{\gamma I }(t)$ for all
$\gamma \in \mob$ and all $t\in \RR$. Moreover, $\mob$ is generated by $\{\delta_I(t): I\in\I,\; t \in \RR\}$.

We will also consider the orientation reversing diffeomorphism $j:\s1\to \s1$ defined by 
$j(z)\equiv \overline{z}$, $z\in \s1$. Given any $I\in \I$ we put $j_I \equiv \gamma \circ j \circ \gamma^{-1}$ where 
$\gamma \in \mob$ is such that  $\gamma S^1_+ = I$ (again $j_I$ only depends on $I$ and not on the particular choice of $\gamma$).
Clearly, $j_{S^1_+}= j$ and $j_I I= I'$ for all $I \in \I$.

\subsection{Positive-energy projective unitary representations of $\diff$ and of $\difftilde$ and positive-energy representations of 
$\mathfrak{Vir}$}
\label{SubsectPositiveEnergy}
 
By a strongly continuous projective unitary representation 
$U$ of a topological group ${\mathscr G}$ on a Hilbert space we shall always mean a strongly 
continuous homomorphism of ${\mathscr G}$ into the quotient 
${\rm U(\H)}/\mathbb T$ of the unitary group of $\H$ by the circle subgroup 
$\mathbb T$. 

Note that although for $\gamma \in {\mathscr G}$, $U(\gamma)$ is 
defined only up to a phase as an operator on $\H$, expressions 
like $U(\gamma)TV(\gamma)^*$ for any $T\in {\rm B}(\H)$ or $U(\gamma) \in \mathcal{L}$ 
for a (complex) linear subspace $\mathcal{L} \subset {\rm B}(\H)$ are unambiguous for all $\gamma \in {\mathscr G}$ and are
frequently used in this paper.  

If ${\mathscr G}=\difftilde$ then, by \cite{Bargmann}, the restriction of a strongly continuous projective unitary representation $U$ to the
subgroup $\widetilde{\mob} \subset \difftilde $ always lifts to a unique strongly continuous unitary 
representation $\tilde{U}$ of $\widetilde{\mob}$.
We then say that $U$ extends $\widetilde{U}$, and that $U$ is a {\bf positive-energy representation} if 
$\widetilde{U}$ is a positive-energy representation of $\widetilde{\mob}$, namely 
if the self-adjoint generator $L_0$ (the ``conformal Hamiltonian''), of the strongly continuous one parameter group 
$e^{itL_0} \equiv \widetilde{U}(\widetilde{\Exp}(itl_0))$, has non-negative spectrum $\sigma(L_0)\subset [0, +\infty)$, namely, it is a positive operator. 

By a positive-energy unitary representation $\pi$ of the Virasoro algebra $\mathfrak{Vir}$ we shall always mean a Lie algebra representation of $\mathfrak{Vir}$, on a complex  vector space $V$ endowed with a (positive definite) scalar product $(\cdot | \cdot)$, such that the representing operators $L_n\equiv \pi(l_n) \in {\rm End}(V)$, $n\in \ZZ$ and $K\equiv \pi(k) \in {\rm End}(V)$  satisfy the following conditions:
\begin{itemize}
\item[$(i)$] {\it Unitarity:} $(a|L_nb)=(L_{-n}a|b)$ for all $n\in \ZZ$ an all $a,b \in V$; 
\item[$(ii)$] {\it Positivity of the energy:} $L_0$ is diagonalizable on $V$ with non-negative eigenvalues i.e. we have the algebraic direct sum 
$$V=\bigoplus_{\alpha \in \RR_{\geq 0}}V_\alpha$$ 
where $V_\alpha \equiv {\rm Ker}(L_0 - \alpha 1_V)$ for all $\alpha \in \RR_{\geq 0}$;
\item[$(iii)$] {\it Central charge:} $K=c 1_V$ for some $c\in \CC$. 
\end{itemize}

By a well known result of Friedan, Qiu and Shenker \cite{FQS1984,FQS1986}, see also \cite{KaRa}, unitarity implies that the possible values of the {\bf central charge} $c$ are restricted to $c\geq 1$ or $c =c_m \equiv 1- \frac{6}{(m+2)(m+3)}$, $m \in \ZZ_{\geq 0}$. In the case $c=0$ the representation is trivial, i.e. $L_n=0$ for all $n\in \ZZ$. An irreducible unitary positive-energy representation of $\mathfrak{Vir}$ is completely determined by the central charge $c$ and the lowest eigenvalue $h$ of $L_0$. Then $h$ satisfies $h\geq 0$ if 
$c\geq 1$ and $h=h_{p,q}(m) \equiv \frac{((m+3)p-(m+2)q)^2-1}{4(m+2)(m+3)}$, $p=1,...,m+1$, $q=1,...,p$, if $c=c_m$, 
$m\in \ZZ_{\geq 0}$ (discrete series representations) and all such pairs $(c,h)$ are realized for an irreducible positive-energy representation, \cite{GKO,KaRa}. For every allowed pair $(c,h)$ the corresponding irreducible module is denoted $L(c,h)$. Note that 
$(c,0)$ is an allowed pair for every allowed value $c$ of the central charge. 

We now discuss the correspondence between unitary positive-energy representations of the Virasoro algebra $\mathfrak{Vir}$ and 
the strongly-continuous projective unitary positive-energy representations of $\difftilde$. An important tool here is given by the following estimates due to Goodman and Wallach \cite[Prop.2.1]{GoWa1985}, see also \cite[Sect.6]{Tol1999}. Similar estimates are also given in 
\cite{BS-M}.

\begin{proposition}
\label{PropGoWaEstimates} Let $\pi$ be a positive-energy unitary representation of the Virasoro algebra $\mathfrak{Vir}$ with central charge $c \in \RR_{\geq 0}$ on a complex inner product space $V$. Let $L_n \equiv \pi(l_n)$ and let $\|a\|\equiv (a|a)^{1/2}$, for all 
$a\in V$.  Then, 
\begin{equation}
\|L_n a\| \leq \sqrt{c/2}(|n| +1)^{\frac{3}{2}}\|(L_0+1_V)a\|,
\end{equation}
for all $n\in \ZZ$ and all $a\in V$. 
\end{proposition}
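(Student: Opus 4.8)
The plan is to reduce the estimate, separately for each index, to the completely explicit representation theory of a single copy of $\mathfrak{sl}(2,\RR)$ sitting inside $\mathfrak{Vir}$. First I would pass to homogeneous vectors. Since $[L_0,L_n]=-nL_n$ one has $L_nV_\alpha\subseteq V_{\alpha-n}$, and by positivity of the energy $V=\bigoplus_{\alpha\geq0}V_\alpha$ with the $V_\alpha$ mutually orthogonal (eigenspaces of the symmetric operator $L_0$ for distinct eigenvalues). Hence for $a=\sum_\alpha a_\alpha$ the vectors $L_na_\alpha$ lie in pairwise orthogonal subspaces, so that $\|L_na\|^2=\sum_\alpha\|L_na_\alpha\|^2$ while $\|(L_0+1_V)a\|^2=\sum_\alpha(\alpha+1)^2\|a_\alpha\|^2$. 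It thus suffices to prove, for homogeneous $a\in V_\alpha$, that $\|L_na\|\leq\sqrt{c/2}\,(|n|+1)^{3/2}(\alpha+1)\|a\|$. For $n=0$ this holds at once, since $\|L_0a\|=\alpha\|a\|\leq(\alpha+1)\|a\|$, so I fix $m\equiv|n|\geq1$.

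The key point is that, for each fixed $m\geq1$, the operators $L_0,L_m,L_{-m}$ span, after absorbing the central term into a constant shift of $L_0$, a copy of $\mathfrak{sl}(2,\RR)$ acting unitarily on $V$: indeed $[L_m,L_{-m}]=2mL_0+\tfrac{m^3-m}{12}c\,1_V$, $[L_0,L_{\pm m}]=\mp mL_{\pm m}$ and $L_m^*=L_{-m}$. As $L_0$ is diagonalizable with non-negative, hence bounded below, spectrum, the restriction of the representation to this subalgebra is a unitary lowest-weight representation, so it decomposes as an orthogonal direct sum of irreducible unitary lowest-weight modules. On such a summand, generated by $\phi$ with $L_m\phi=0$ and $L_0\phi=h\phi$, $h\geq0$, I would compute directly the norms of $\phi_k\equiv L_{-m}^k\phi$. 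A short induction, terminated by $L_m\phi_0=0$, shows $L_m\phi_k=d_k\phi_{k-1}$ with $d_k=\sum_{j=0}^{k-1}\big(2m(h+mj)+\tfrac{m^3-m}{12}c\big)$, so that
\begin{equation*}
\|L_{-m}\phi_k\|^2=d_{k+1}\|\phi_k\|^2,\qquad\|L_m\phi_k\|^2=d_k\|\phi_k\|^2,\qquad d_{k+1}=(k+1)\Big(m(h+\alpha)+\tfrac{m^3-m}{12}c\Big),
\end{equation*}
where $\alpha=h+mk$ is the $L_0$-eigenvalue of $\phi_k$. Since $d_k\leq d_{k+1}$, both signs of $n$ are controlled once the raising estimate is established.

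It then remains to bound $d_{k+1}$ by $\tfrac c2(m+1)^3(\alpha+1)^2$, uniformly in the lowest weight $h$. Using $h\leq\alpha$ and $k+1\leq\alpha+1$, this reduces to the scalar inequality
\begin{equation*}
2m\alpha+\tfrac{m^3-m}{12}c\ \leq\ \tfrac c2(m+1)^3(\alpha+1),
\end{equation*}
whose constant term is immediate and whose linear term amounts to $c\geq 4m/(m+1)^3$. The latter is automatic for $m\geq2$, and for $m=1$ it reads precisely $c\geq\tfrac12$; this I would supply from the Friedan--Qiu--Shenker restriction recalled above, by which a nontrivial unitary representation of $\mathfrak{Vir}$ has $c\geq\tfrac12$ (when $c=0$ the representation is trivial and the bound holds trivially). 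Summing the resulting homogeneous estimate first over the lowest-weight decomposition at fixed $\alpha$ and then over the grading gives the assertion for all $a\in V$ and $n\in\ZZ$.

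The conceptual content is concentrated in the single observation that $\{L_0,L_{\pm m}\}$ is an $\mathfrak{sl}(2,\RR)$-triple, which reduces the whole estimate to explicit lowest-weight module norms and entirely avoids the delicate cancellations one meets if one instead tries to build $L_{\pm m}$ recursively from $L_{\pm 1}$ (there the naive recursion bounds each term by a quantity quadratic, rather than linear, in $\alpha+1$). I expect the two points requiring genuine care to be the uniform-in-$h$ bound on $d_{k+1}$ and the appeal to complete reducibility of the lowest-weight $\mathfrak{sl}(2,\RR)$-representation; the fact that the governing inequality is tight exactly at $m=1$, where it coincides with the extremal unitary value $c=\tfrac12$, is what makes the constant $\sqrt{c/2}$ the natural one.
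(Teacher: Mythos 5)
The paper itself offers no proof of this proposition: it is quoted from Goodman--Wallach \cite[Prop.2.1]{GoWa1985}, whose argument is elementary and in particular does not use the Friedan--Qiu--Shenker classification. Your reduction to the triples $\{L_0,L_{\pm m}\}$ is therefore a genuinely different route, and your lowest-weight computation is correct: $\|L_{-m}\phi_k\|^2=d_{k+1}\|\phi_k\|^2$ with $d_{k+1}=(k+1)\bigl(m(h+\alpha)+\tfrac{m^3-m}{12}c\bigr)$ checks out, as does the reduction to the scalar inequality. There are, however, two genuine gaps. The first is the case $n=0$: your one-line argument only yields $\|L_0a\|\le\|(L_0+1_V)a\|$, which is strictly weaker than the asserted $\|L_0a\|\le\sqrt{c/2}\,\|(L_0+1_V)a\|$ whenever $c<2$, and in fact the asserted inequality fails there: in $L(1/2,0)$ take $a=L_{-2}\Omega$, so $\|L_0a\|=2\|a\|$ while $\sqrt{c/2}\,\|(L_0+1_V)a\|=\tfrac32\|a\|$. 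This is an imprecision in the statement as usually quoted (harmless in every application, where the $n=0$ term is covered by the trivial bound and the constant can be replaced by $\max(1,\sqrt{c/2})$), but ``holds at once'' is not a proof of it; you should have flagged that the estimate must be read for $n\ne0$.

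The second gap is that the appeal to Friedan--Qiu--Shenker is load-bearing for \emph{every} $m\ge1$, not only $m=1$: under the stated hypothesis $c\in\RR_{\ge0}$ the condition $c\ge 4m/(m+1)^3$ is not ``automatic for $m\ge2$'' (at $m=2$ it reads $c\ge 8/27$); it becomes automatic only after one knows $c\ge\tfrac12$ for every nontrivial representation. This dependence is intrinsic to your method: on a lowest-weight module for $\{L_0,L_{\pm1}\}$ with $h=0$ one has $\|L_{-1}\phi_k\|^2=k(k+1)\|\phi_k\|^2$, with no trace of $c$ at all, so the prefactor $\sqrt{c/2}$ can only come from a lower bound on $c$ supplied by the ambient Virasoro unitarity. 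Invoking FQS is logically admissible (its proof via the Kac determinant is independent of these estimates), but it turns an elementary self-contained bound into a corollary of a much deeper classification, and applying FQS to a general positive-energy unitary representation on a possibly incomplete inner product space already presupposes a decomposition into irreducible lowest-weight modules --- which is also the content of the complete-reducibility step you flag. That step does need justification here (weight spaces need not be finite-dimensional), but it can be bypassed: run your computation as a recursion on the operator norms of $L_{-m}L_m\restriction_{V_\alpha}$, using $L_mL_{-m}=L_{-m}L_m+2mL_0+\tfrac{m^3-m}{12}c\,1_V$ together with $\|T^*T\|=\|TT^*\|$; this reproduces exactly your bound $d_{\lfloor\alpha/m\rfloor+1}$ with no decomposition at all.
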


\begin{remark}
\label{RemarkGoWaEstimates}
{\rm Starting from Prop. \ref{PropGoWaEstimates} it is easy to prove the following estimates 
\begin{equation}
\|(L_0 + 1_V)^kL_n a\| \leq \sqrt{c/2}(|n| +1)^{k+\frac{3}{2}}\|(L_0+1_V)^{k+1}a\|,
\end{equation}
for all $k\in \ZZ_{\geq 0}$,  $n\in \ZZ$ and all $a\in V$. 
}
\end{remark}

Now let $\pi$ be a positive-energy unitary representation of the Virasoro algebra on a complex inner product space $V$ and let 
$\H$ be the Hilbert space completion of $V$. The operators $L_n$, $n\in \ZZ$ can be considered as densely defined operators with domain $V$. As a consequence of Prop. \ref{PropGoWaEstimates} and of the unitarity of $\pi$ one can define operators $L^0(f)$, 
$f\in C^\infty(\s1)$ on $\H$ with domain $V$, by 

\begin{equation}
L^0(f)a=\sum_{n\in \ZZ} \hat{f}_n L_na,
\end{equation}
for all $a\in V$, where  
\begin{equation}
\hat{f}_n = \int_{-\pi}^{\pi} 
f(e^{i\vartheta})e^{-in\vartheta}\frac{{\rm d}\vartheta}{2\pi}
\end{equation}
is the $n$-th Fourier coefficient of the smooth function $f$.  It follows from the unitarity of $\pi$ that,  $L^0(\overline{f}) \subset L^0(f)^*$,  and hence that $L^0(f)$ is closable for every $f \in C^\infty(S^1)$ and we will denote by $L(f)$ the corresponding closure. 
Let $\H^\infty$ the intersection of the domains of the self-adjoint operators $(L_0 + 1_\H)^k$, $k\in \ZZ_{\geq 0}$. Then, $\H^\infty$ 
is a common core for the operators $L(f)$, $f\in C^\infty(\s1)$ and 
\begin{equation}
\|L(f)a\| \leq \sqrt{c/2} \|f\|_{\frac{3}{2}}\|(L_0+1_\H)a\|
\end{equation}
for all  $f\in C^\infty(\s1)$ and all $a\in \H^\infty$, 
where, for every $s\geq0$ 
\begin{equation}
\|f\|_s \equiv \sum_{n\in \ZZ}(|n| + 1)^s |\hat{f}_n| .
\end{equation}
It follows $\H^\infty$ is a common invariant core for the operators $L(f)$, $f\in C^\infty(\s1)$, see \cite[Sect.6]{Tol1999} and 
\cite{GoWa1985}.
Moreover, the map $\vect \to {\rm End}(\H^\infty)$ defined by $f\frac{\rm d}{{\rm d}\vartheta} \mapsto iL(f)$, defines a projective 
representation, again denoted by $\pi$, of $\vect$ by skew-symmetric operators, namely 
$-\pi(f\frac{\rm d}{{\rm d}\vartheta}) \subset \pi(f\frac{\rm d}{{\rm d}\vartheta})^*$ and 
\begin{equation}
\left[ \pi(f_1\frac{\rm d}{{\rm d}\vartheta}),\pi(f_2\frac{\rm d}{{\rm d}\vartheta})\right] = 
\pi\left( [f_1\frac{\rm d}{{\rm d}\vartheta}, f_2\frac{\rm d}{{\rm d}\vartheta}] \right) + 
iB(f_1\frac{\rm d}{{\rm d}\vartheta}, f_2\frac{\rm d}{{\rm d}\vartheta})1_\H,
\end{equation}
on $\H^\infty$, with real valued two-cocycle $B(\cdot,\cdot)$ given by 
\begin{equation}
B(f_1\frac{\rm d}{{\rm d}\vartheta}, f_2\frac{\rm d}{{\rm d}\vartheta})\equiv 
\frac{c}{12}\int_{-\pi}^\pi \left(\frac{{\rm d}^2}{{\rm d}\vartheta^2}f_1(e^{i\vartheta}) + f_1(e^{i\vartheta} )   \right)  
\frac{\rm d}{{\rm d}\vartheta}f_2(e^{i\vartheta})\frac{{\rm d}\vartheta}{2\pi}.
\end{equation}
Now, as a consequence of Prop. \ref{PropGoWaEstimates} and Remark \ref{RemarkGoWaEstimates} together with the fact that 
$[L_0,L(f)]=iL(f')$, where $f'(e^{i\vartheta})\equiv \frac{\rm d}{{\rm d}\vartheta}f(e^{i\vartheta})$, it can be shown that one can apply 
\cite[Thm. 5.2.1]{Tol1999}, see also \cite[Sect.6]{Tol1999}, so that the projective representation $\pi$ of $\vect$ integrates to a unique strongly-continuous projective 
unitary representation $U_\pi$ of $\difftilde$. More precisely, for every $f\in C^\infty(\s1,\RR)$ the operator 
$\pi(f\frac{\rm d}{{\rm d}\vartheta})=iL(f)$ is skew-adjoint and $U_\pi$ is the unique strongly-continuous projective unitary representation of 
$\difftilde$ on $\H$ satisfying 
\begin{equation}
U_\pi\left(\widetilde{\Exp}(f\frac{\rm d}{{\rm d}\vartheta})\right)AU_\pi\left(\widetilde{\Exp}(f\frac{\rm d}{{\rm d}\vartheta})\right)^*=
e^{\pi(f\frac{\rm d}{{\rm d}\vartheta})}A e^{-\pi(f\frac{\rm d}{{\rm d}\vartheta})},
\end{equation} 
for all $f\in C^\infty(\s1,\RR)$ and all $A\in B(\H)$. Moreover, $U_\pi(\gamma)\H^\infty =\H^\infty$ for all $\gamma \in \difftilde$. 
For any  $I\in \I$ le $f_1 \in C^\infty_c(I)$ and $f_2 \in C^\infty_c(I')$, then $f_1\frac{\rm d}{{\rm d}\vartheta}$, 
$f_2\frac{\rm d}{{\rm d}\vartheta}$ generates a two-dimensional abelian Lie subalgebra of $\vect$. Since 
$B(f_1\frac{\rm d}{{\rm d}\vartheta}, f_2\frac{\rm d}{{\rm d}\vartheta})=0$, the cocycle $B(\cdot,\cdot)$ vanishes on this  Lie subalgebra and hence $\pi$ give rise to an ordinary (i.e. non-projective) Lie algebra representation to the latter which, by 
(the proof in \cite[page 497]{Tol1999} of) \cite[Thm. 5.2.1]{Tol1999}, integrates to a strongly continuous unitary representation of the abelian Lie group $\RR^2$. Hence, 
\begin{equation}
\label{EqCommuteExp}
\left[e^{\pi(f_1\frac{\rm d}{{\rm d}\vartheta})} , e^{\pi(f_2\frac{\rm d}{{\rm d}\vartheta})}\right]=0,
\end{equation}
see \cite{BS-M} for a proof of this fact based on \cite{DF1977}, see also \cite[Sect.19.4]{GJ}.

In fact $U_\pi$ factors through a strongly-continuous projective unitary representation of $\diff$, which we will again denote $U_\pi$, if and only if $e^{i2\pi L_0}$ is a multiple of the identity operator $1_\H$. 
In the latter case, as a consequence of Eq. (\ref{EqCommuteExp}), recalling that the simple group ${\rm Diff}_c(I)$ is generated by exponentials and it is dense in  ${\rm Diff}(I)$, we see that the 
representation $U_\pi$ of $\diff$ satisfies 
\begin{equation}
\label{EqDiffLocality}
U_\pi({\rm Diff}(I)) \subset U_\pi({\rm Diff}(I'))'
\end{equation}
for all $I\in \I$, see also \cite[Sect.V.2]{loke}.

With some abuse of language we simply say that the representation $\pi$ of $\mathfrak{Vir}$ integrates to 
a strongly-continuous projective unitary positive-energy of $\difftilde$. 

Conversely, let $U$ be a strongly-continuous projective unitary positive-energy representation of $\difftilde$ on a Hilbert space $\H$ and assume that the algebraic direct sum 
\begin{equation}
\H^{fin} \equiv \bigoplus_{\alpha \in \RR_{\geq 0}} {\rm Ker}(L_0 -\alpha 1_\H)
\end{equation}
is dense in $\H$. Then, using the arguments in \cite[Chapter 1]{loke}, se also \cite[Appendix A]{Carpi2004}, one can prove that there is a unique positive-energy unitary representation $\pi$ of the Virasoro algebra on $\H^{fin}$ such that $U=U_\pi$, see also \cite{NS} for related results. 
We collect the results discussed above in the following theorem.
\begin{theorem}
\label{TheoremVirDiff}
Every positive-energy projective unitary representation $\pi$ of the Virasoro algebra on a complex inner product space $V$ integrates to a unique strongly-continuous projective unitary positive-energy representation $U_\pi$ of $\; \difftilde$ on the Hilbert space completion $\H$ of $V$. Moreover, every  strongly-continuous projective unitary positive-energy representation of $\difftilde$ on a Hilbert space $\H$ containing $\H^{fin}$ as a dense subspace arises in this way. The map 
$\pi \mapsto U_\pi$ becomes one-to-one after restricting to representations $\pi$ on inner product spaces $V$ whose Hilbert space completion $\H$ satisfies $\H^{fin}=V$. These are exactly those inner product spaces such that 
$V_\alpha\equiv {\rm Ker}(L_0-\alpha1_V) \subset V$ is complete (i.e. a Hilbert space) for all $\alpha \in \RR_{\geq 0}$. $U_\pi$ is irreducible if and only if $\pi$ is irreducible i.e. if and only if the corresponding $\mathfrak{Vir}$-module is $L(c,0)$ for some $c\geq 1$ or 
$c = 1- \frac{6}{(m+2)(m+3)}$, $m=\in \ZZ_{\geq 0}$.
\end{theorem}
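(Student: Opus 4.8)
Since the statement collects results most of which are established in the discussion preceding it, my plan is to organize the argument into its four assertions and to supply genuine detail only for the converse direction and for the correspondence of invariant subspaces. First, the integration of $\pi$ to a unique $U_\pi$ is precisely the construction carried out above: I would recall that the Goodman--Wallach estimates (Prop.~\ref{PropGoWaEstimates} and Remark~\ref{RemarkGoWaEstimates}) guarantee that the smeared operators $L(f)$, the closures of $L^0(f)$, are well defined, that $\H^\infty$ is a common invariant core, and that $f\frac{\rm d}{{\rm d}\vartheta}\mapsto iL(f)$ is a projective representation of $\vect$ by skew-adjoint operators with the stated two-cocycle. Applying the integrability theorem \cite[Thm.~5.2.1]{Tol1999} then yields a strongly continuous projective unitary representation $U_\pi$ of $\difftilde$; uniqueness follows because $\difftilde$ is generated by the exponentials $\widetilde{\Exp}(f\frac{\rm d}{{\rm d}\vartheta})$ on which $U_\pi$ is prescribed, together with the density of $\H^\infty$. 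Positivity of the energy is immediate from condition $(ii)$ on $\pi$.

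For the converse, given a strongly continuous projective unitary positive-energy representation $U$ of $\difftilde$ with $\H^{fin}$ dense, I would recover $\pi$ as follows. By \cite{Bargmann} the restriction $U\restriction_{\widetilde{\mob}}$ lifts to a genuine unitary representation whose rotation generator is a self-adjoint $L_0$ with $\sigma(L_0)\subset[0,+\infty)$ and with eigenspace decomposition spanning $\H^{fin}$. Differentiating the one-parameter subgroups $t\mapsto U(\widetilde{\Exp}(t f\frac{\rm d}{{\rm d}\vartheta}))$, for suitable real $f$, on the $L_0$-finite vectors produces the skew-adjoint generators from which the $L_n$ are extracted, and one verifies the Virasoro relations (with the central term), unitarity and positivity of the energy on $\H^{fin}$. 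This is precisely the analysis of \cite[Chapter~1]{loke}, see also \cite[Appendix~A]{Carpi2004}. By the uniqueness established in the first step, integrating the resulting $\pi$ returns $U$, so $U=U_\pi$.

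For injectivity, I would observe that $U_\pi$ acts on the completion $\H$ and, by the previous step, determines the Virasoro action only on $\H^{fin}$; hence any two inner product spaces carrying the same $L_n$-action and having the same completion produce the same $U_\pi$, and restricting to the representatives with $V=\H^{fin}$ renders $\pi\mapsto U_\pi$ injective. To characterize these $V$, note that $V=\bigoplus_\alpha V_\alpha$ algebraically and that the self-adjoint closure of $L_0$ on $\H$ has eigenspaces $\overline{V_\alpha}$, the $V_\alpha$ being mutually orthogonal; thus $\H^{fin}=\bigoplus_\alpha \overline{V_\alpha}$ and $V=\H^{fin}$ holds exactly when each $V_\alpha$ is complete. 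Finally, a closed subspace $\K\subset\H$ is $U_\pi$-invariant if and only if $\K\cap\H^{fin}$ is $\pi$-invariant and dense in $\K$: here one uses that $L_0$ is affiliated with the von Neumann algebra generated by $U_\pi(\difftilde)$, so an invariant $\K$ respects the $L_0$-eigenspace splitting. This gives $U_\pi$ irreducible if and only if $\pi$ is irreducible, and combining with the classification of irreducible unitary positive-energy Virasoro modules recalled above \cite{FQS1984,FQS1986,GKO} yields the stated list.

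The main obstacle is the converse direction: showing that the group representation $U$ can actually be differentiated on $\H^{fin}$ to honest Virasoro generators satisfying the full set of relations, including the correct central extension, rather than to merely formal expressions. This is the delicate analytic point handled by the cited work of Loke; once this differentiation/integration dictionary is in place, the injectivity and irreducibility assertions are essentially formal.
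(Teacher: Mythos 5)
Your proposal follows essentially the same route as the paper, which offers no separate proof but simply collects the results of the preceding discussion: integration via the Goodman--Wallach estimates and \cite[Thm.~5.2.1]{Tol1999}, the converse by differentiation following \cite[Chapter~1]{loke} and \cite[Appendix~A]{Carpi2004}, and the injectivity and irreducibility statements as formal consequences. Your added detail on why $U_\pi$-invariant closed subspaces respect the $L_0$-eigenspace splitting (via the spectral projections of $L_0$ commuting with the projection onto the invariant subspace) is a correct filling-in of a point the paper leaves implicit.
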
 

\subsection{M\"{o}bius covariant nets and conformal nets on $\s1$}
\label{SubsectPrelNets}
We now discuss some properties of M\"{o}bius covariant and conformal nets on $\s1$ together with some related notions and definitions.

Here below we describe some of the consequences of the axioms of M\"{o}bis covariant and conformal nets on $\s1$ and give some comments on these referring the reader to the literature \cite{BGL1993,FJ,FrG,GuLo96} for more details and the proofs. Here $\A$ is a
 M\"{o}bius covariant net on $\s1$ acting on its vacuum Hilbert space $\H$. 

\begin{itemize}
\item[$(1)$] \emph{Reeh-Schlieder property.} The vacuum vector $\Omega$ is cyclic and separating for every $\A(I)$, $I\in\I$.
\item[$(2)$] \emph{Bisognano-Wichmann Property.} Let $I\in \I$ and let $\Delta_I$, $J_I$ be the modular operator and the modular conjugation of $\left(\A(I), \Omega \right)$.  Then we have 
\begin{equation}
 U(\delta_I(-2\pi t)) = \Delta_I^{i t},
\end{equation}
\begin{equation}
J_I\A(I_1)J_I = \A(j_I I_1),
\end{equation}
\begin{equation}
J_IU(\gamma)J_I=U(j_I\circ \gamma \circ j_I),
\end{equation}
for all $t\in \RR$, all $I_1 \in \I$ and all $\gamma \in \mob$. 
Hence the unitary representation $U:\mob \to B(\H)$ extends to an (anti-) unitary representation of  
$\mob \rtimes \ZZ_2$ 
\begin{equation}
U(j_I)=J_I 
\end{equation}
and acting covariantly on $\A$.

\item[$(3)$] \emph{Haag duality.} $\A(I')=\A(I)'$, for all $I\in\I$.
\item[$(4)$] \emph{Outer regularity.} 
\begin{equation}
 \A(I_0) = \bigcap_{I\in\I,I\supset \bar{I_0}} \A(I),\quad I_0\in\I.
\end{equation}
\item[$(5)$] \emph{Additivity.} If $I = \bigcup_{\alpha} I_\alpha$, where $I, I_\alpha \in \I$, then 
$\A(I) = \bigvee_\alpha \A(I_\alpha)$.
\item[$(6)$] \emph{Uniqueness of the vacuum.} $\A$ is irreducible if and only if ${\rm Ker}(L_0)=\CC \Omega$
 \item[$(7)$] \emph{Factoriality.} $\A$ is irreducible if and only if either $\A(I)$ is a type ${\rm III}_1$ factor  for all $I\in\I$ or 
 $\A(I)=\CC$ for all $I\in \I$. 

\end{itemize}

Note that Haag duality follows directly from the Bisognano-Wichmann property since 
\begin{equation*}
\A(I)'=J_I \A(I) J_I = \A(j_I I)= \A(I').
\end{equation*}
Note also that if ${\rm Ker}(L_0)=\CC \Omega$ then, for every $I\in \I$, $\CC \Omega$ coincides with the subspace of $\H$ 
of $U(\delta_I(t))$-invariant  vectors, see \cite[Corollary B.2]{GuLo96}. Hence, by the Bisognano-Wichmann property, the 
modular group of the von Neumann algebra $\A(I)$ with respect to $\Omega$ is ergodic i.e. the centralizer 
\begin{equation}
\{A\in \A(I): \Delta_I^{it}A\Delta_I^{-it}=A \; \forall t \in \RR \}
\end{equation}
is trivial (i.e. equal to $\CC1_\H$). It then follows from \cite{Connes1973} that either $\A(I)$ is type ${\rm III}_1$ factor or 
 $\A(I)=\CC$, see e.g. the proof of \cite[Thm.3]{Longo1979}, see also \cite{LongoLectureNotes}. 
 
Since $\mob$ is generated by $\{\delta_I(t): I\in\I,\; t \in \RR\}$, it follows from the Bisognano-Wichmann property that, for a given 
M\"obius covariant net $\A$ on $\s1$ the representation $U$ of $\mob$  is completely determined by the vacuum vector $\Omega$. 
In fact, if $\A$ is a conformal net, then, by \cite[Thm. 6.1.9]{WeiPhD} (see also \cite{CW2005}), the strongly-continuous projective unitary representation $U$ of  $\diff$ making $\A$ covariant is completely determined by its restriction to $\mob$ and hence by the vacuum vector 
$\Omega$ ({\bf uniqueness of diffeomorphism symmetry}). We will give an alternative proof of this result in this paper, see 
Thm. \ref{uniqdifftheorem}. See also \cite{Weiner2011} for related uniqueness results. 

For a given M\"obius covariant net $\A$ on $S^1$ and any subset $E\subset S^1$ with non-empty interior we define a von Neumann 
algebra $\A(E)$ by 
\begin{equation}
\A(E) \equiv \bigvee_{I \subset E, I\in \I} \A(I).
\end{equation} 
Accordingly, $\A$ is irreducible if and only if $\A(\s1)=B(\H)$.

Given two M\"obius covariant nets $\A_1$ and $\A_2$ on $\s1$, acting on the vacuum Hilbert spaces $\H_1$, $\H_2$, vacuum vectors
$\Omega_1$, $\Omega_2$  and representations $U_1$, $U_2$ of $\mob$, one can consider the {\bf tensor product net} 
$\A_1\otimes \A_2$ acting on 
$\H_1\overline{\otimes}\H_2$ with local algebras given by
\begin{equation}
\left( \A_1\otimes \A_2 \right)(I) \equiv \A_1(I) \overline{\otimes}\A_2(I),\; I\in \I,   
\end{equation}
vacuum vector 
\begin{equation}
\Omega \equiv \Omega_1\otimes \Omega_2
\end{equation}
and representation of the M\"obius group given by 
\begin{equation}
U(\gamma)\equiv U_1(\gamma) \otimes U_2(\gamma),\; \gamma \in \mob. 
\end{equation}
It is easy to see that , $\A_1\otimes \A_2$ is a M\"obius covariant net on $\s1$. In fact, if both $\A_1$ and $\A_2$ are conformal nets
also  $\A_1\otimes \A_2$ is a conformal net. One can define also the infinite tensor product (with respect to the vacuum vectors) of an infinite sequence of M\"obius covariant nets. 
However it is not necessarily true that if the nets in the sequence are all conformal nets then their infinite tensor product is also conformal but it will be in general only M\"obius covariant, \cite[Sect.6]{CW2005}.

We say that the M\"obius covariant nets $\A_1$ and $\A_2$ are {\bf unitarily equivalent} or {\bf isomorphic} if there is a unitary operator 
$\phi: \H_1 \to \H_2$ such that $\phi \Omega_1 = \Omega_2$ and $\phi \A_1(I) \phi^{-1}=\A_2(I)$ for all $I \in \I$. 
In this case we say that $\phi$ is an {\bf isomorphism of M\"obius covariant nets}. 
 Since the M\"obius symmetry is completely determined by the vacuum vectors it follows that 
\begin{equation}
\phi U_1(\gamma) \phi^{-1} =U_2(\gamma) 
\end{equation}
for all $\gamma \in \mob$. Actually, if $\A_1$ and $\A_2$ are conformal nets then the uniqueness of diffeomorphism symmetry implies that 
\begin{equation}
\phi U_1(\gamma) \phi^{-1} =U_2(\gamma) 
\end{equation}
for all $\gamma \in \diff$.

An automorphism of a M\"obius covariant net $\A$ is an isomorphism $g$ of $\A$ onto itself. Accordingly the {\bf automorphism group} 
${\rm Aut}(\A)$ of a M\"obius covariant net $\A$ on $\s1$ is given by 
\begin{equation}
{\rm Aut}(\A) \equiv \{g \in U(\H): g\A(I)g^{-1}=\A(I),\; g\Omega=\Omega\quad \textrm{for all}\; I\in \I \}.
 \end{equation}
 By the above discussion every $g \in {\rm Aut}(\A)$ commutes with the representation $U$ of $\mob$ (resp. $\diff$ if $\A$ is a conformal net). ${\rm Aut}(\A)$ with the topology induced by the strong topology of $\B(\H)$ is a topological group. 

A M\"obius covariant net $\A$ on $S^1$ satisfies the {\bf split property} if, given $I_1,I_2\in\I$ such that $\bar{I}_1\subset I_2$, there is a type I factor $\M$ such that
\begin{equation}
 \A(I_1) \subset  \M  \subset \A(I_2).
 \end{equation}

A M\"obius covariant net $\A$ on $S^1$ satisfies {\bf strong additivity} if, given two intervals $I_1,I_2\in\I$ obtained by removing a point from an interval $I\in \I$ then 
\begin{equation}
 \A(I_1) \vee \A(I_2) = \A(I).
\end{equation}

By \cite[Thm. 3.2]{DALR2001} if a M\"obius covariant net $\A$ on $S^1$ satisfies the {\bf trace class condition}, i.e. if 
${\rm Tr}(q^{L_0})< +\infty$ for all $q \in (0,1)$ then $\A$ also satisfies the split property. One can construct many examples M\"obius covariant nets without the split property by infinite tensor product construction, see \cite[Sect.6]{CW2005}. These infinite tensor product nets do not admit diffeomorphism symmetry, see  \cite[Thm. 6.2]{CW2005}. Actually, we don't know examples of irreducible conformal nets without the split property. 

By \cite[Thm. 3.1]{DoplicherLongo1984}, the automorphism group ${\rm Aut}(\A)$ of a M\"obius covariant net $\A$ on $\s1$ with the split property is compact and metrizable.

Let $\A$ be an irreducible M\"obius covariant net on $\s1$ with the split property and let $I_1,I_2,I_3,I_4 \in \I$ be four intervals, in anti-clockwise order, obtained by removing four points from $\s1$.  Let $E \equiv I_1 \cup I_3$. Then $I_2 \cup I_4$ is the interior $E'$ of $\s1 \setminus E$. 

Then,  
\begin{equation}
\A(E) \subset \A(E')' 
\end{equation}
is inclusion of type ${\rm III}_1$ factors (a subfactor). If either $\A$ is strongly additive or if $\A$ is a conformal net then the 
Jones index $[\A(E')':\A(E)]$ does not depend on the choice of the intervals $I_1,I_2,I_3,I_4 \in \I$, \cite[Prop.5]{KLM}. This index is called the {\bf $\mu$-index} of $\A$ and  it is denoted by $\mu_\A$. 

An irreducible M\"obius covariant net $\A$ is called {\bf completely rational} \cite{KLM} if it satisfies the split property and strong additivity and if the $\mu$-index $\mu_\A$ is finite. If $\A$ is an irreducible conformal net with the split property and finite $\mu$-index then it is 
strongly additive and thus completely rational, see \cite[Thm. 5.3]{LX}.

We will give various examples of irreducible conformal nets on $S^1$ starting from vertex operator algebra models in 
 Sect.  \ref{Criteria for strong locality and examples}. In this section we consider the minimal examples namely 
the Virasoro nets, see also \cite{Carpi2004,KL04,loke,WeiPhD}.  

Let $c\geq 0$ or $c =c_m \equiv 1- \frac{6}{(m+2)(m+3)}$, $m=\in \ZZ_{\geq 0}$ and let $L(c,0)$ be the corresponding irreducible unitary module $L(c,0)$ with lowest eigenvalue of $L_0$ equal to $0$. Let $\H$ be the Hilbert space completion of  $L(c,0)$. Then the 
positive-energy unitary representation of the Virasoro algebra on $L(c,0)$ integrates to a unique strongly continuous projective unitary positive-energy representation $U$ of $\diff$ which together with the map
\begin{equation}
I\in{\mathcal{I}}\mapsto{\mathcal{A}_{\mathfrak{Vir}, c}}(I)
\subset B({\mathcal{H}})
\end{equation}
defined by
\begin{equation}
{\mathcal{A}_{\mathfrak{Vir}, c}}(I)\equiv \{U(\gamma): \gamma \in {\rm Diff}(I), \; I\in \I \}'', 
\end{equation}
defines an irreducible conformal net ${\mathcal{A}_{\mathfrak{Vir}, c}}$ on $\s1$. Note that locality follows from Eq. (\ref{EqDiffLocality}). 
The uniqueness of diffeomorphism symmetry implies that two Virasoro nets are unitary equivalent if and only if they have the same central charge. For every allowed value of $c$ the Virasoro net ${\mathcal{A}_{\mathfrak{Vir}, c}}$ satisfies the trace class condition and hence the split property. For $c\leq 1$ ${\mathcal{A}_{\mathfrak{Vir}, c}}$ satisfies strong additivity \cite{KL04,Xu2005}, while for $c>1$ it does not
\cite[Sect.4]{BS-M}.
${\mathcal{A}_{\mathfrak{Vir}, c}}$ is completely rational for all $c<1$ while it has infinite $\mu$-index for all $c \geq 1$.

\subsection{Covariant subnets}
\label{SubsectCovariantSubnet}

A {\bf M\"{obius} covariant subnet} of a M\"obius covariant net
${\mathcal{A}}$ on $\s1$ is a map 
$I \mapsto \B(I)$ from $\I$ in to the set of von Neumann
algebras acting on ${\mathcal{H}}_\A$ satisfying the following properties:
\begin{equation}
{\B}(I)\subset {\A}(I) \quad \textrm{for all}\; I\in \I;
\end{equation}

\begin{equation}
{\B}(I_1)\subset {\B}(I_2)
\quad \textrm{if}\;I_1 \subset I_1, \;\;I_1, I_2 \in \I.
\end{equation}

\begin{equation} 
U(\gamma){\B}(I)U(\gamma)^*={\B}(\gamma I) \quad \textrm{for all}\;  I \in \I,\;\textrm{and all}\; \gamma \in \mob;
\end{equation}

We shall use the notation $\B\subset \A$. If $\B(I) = \CC1_\H$ for one, and hence for all, $I \in \I$ we say that $\B$ is the 
{\bf trivial subnet}.

Let $E \subset  \s1$ be any subset of the circle with non-empty interior and let 
\begin{equation}
\B(E) \equiv \bigvee_{I \subset E, I\in \I} \B(I)
\end{equation} 
so that $\B(E) \subset \A(E)$. 
Then we define $\H_\B \subset \H$ to be the closure of $\B(\s1)\Omega$. Then $\H_\B =\H$ if and only if $\B(I)=\A(I)$ for all $I\in \I$. 
Hence, typically $\Omega$ is not cyclic for $\B(\s1)$ so that $\B$ is not a M\"obius covariant net on $\s1$ 
in the precise sense of the definition. 
However one gets a M\"obius covariant net by restricting the algebras $\B(I),\;I\in\I,$
and the representation $U$ to the common invariant subspace $\H_\B$. More precisely, let $e_\B$ be the orthogonal projection of 
$\H$ onto $\H_\B$. Then $e_\B \in \B(\s1)'\cap U(\mob)'$. Then the map $\I \ni I \mapsto B(I)_{e_\B}$ together with the representation $\mob \ni \gamma \mapsto U(\gamma)\restriction_{\H_\B}$ defines a M\"obius covariant net $\B_{e_\B}$ on $\s1$ acting on $\H_\B$. 
Note that the map $b\in \B(I) \mapsto b\restriction_{\H_\B} \in \B(I)_{e_\B}$ is an isomorphism for every $I\in \I$, because of the Reeh-Schlieder property, so that, in particular, if $\A$ is irreducible and $\B$ is nontrivial then $\B(I)$ is a type $\mathrm{III}_1$ factor on $\H$ for all $I \in \I$. 
As usual, wen no confusion can arise, we will use the symbol $\B$ also to denote the M\"obius covariant net $\B_{e_\B}$ on $\H_\B$ specifying, if necessary, when $\B$ acts on $\H=\H_\A$ or on $\H_\B$.  If $\A$ is irreducible then $\B$ is irreducible on $\H_\B$. 

If $\A$ is an irreducible conformal net and $\B$ is a M\"obius covariant subnet of $\A$ then, by \cite[Thm. 6.2.29]{WeiPhD}, $\B$ is also diffeomorphism 
covariant, i.e. 
\begin{equation}
U(\gamma)\B(I)U(\gamma)^*
\end{equation}
for all $\gamma \in \diff$. Moreover, as a consequence of  \cite[Thm. 6.2.31]{WeiPhD}, there is a strongly-continuous projective 
positive-energy representation  $U_\B$ of $\difftilde$ on $\H$ such that $U(\dot{\gamma})U_\B(\gamma)^* \in \B(\s1)'$ for all 
$\gamma \in \difftilde$. It follows that the subnet $\B$ gives rise to an irreducible conformal net on $\H_\B$. Accordingly in this case we will simply say that $\B$ is a covariant subnet of $\A$.

\begin{example}
{\rm
Every irreducible conformal net   $\A$ we can define a covariant subnet $\B \subset \A$ by 
 \begin{equation}
 \B(I)\equiv \{U(\gamma): \gamma \in {\rm Diff}(I), \; I\in \I \}''.
\end{equation}
It is clear that the corresponding irreducible conformal net $\B$ on $\H_\B$ is unitarily equivalent to the Virasoro net 
${\mathcal{A}_{\mathfrak{Vir}, c}}$,  where $c$ is the central charge of the representation $U$. Accordingly, $\B$ is called the 
{\bf Virasoro subnet} of $\A$ and the inclusion $\B\subset \A$ is often denoted by ${\mathcal{A}_{\mathfrak{Vir}, c}} \subset \A$. 
We say that $c$ is the central charge of $\A$.
}
\end{example}

\begin{example}
{\rm Let $\A$ be a M\"obius covariant net and $G$ be a compact subgroup of $\mathrm{Aut}(\A)$. The {\bf fixed point subnet} 
$\A^G \subset \A$ is the M\"obius covariant subnet of $\A$ defined by
\begin{equation}
\A^G(I) \equiv \A(I)^G  = \{A\in \A(I): gAg^{-1}= A\quad \textrm{for all}\; g\in G  \} \quad I \in \I.
\end{equation}
If $G$ is finite then $\A^{G}$ is called an {\bf orbifold subnet}.
}
\end{example}

\begin{example} 
{\rm 
Let $\A$ be a M\"obius covariant net on $\s1$ and let $\B \subset \A$ be a M\"obius covariant subnet. 
The corresponding {\bf coset subnet} $\B^c \subset \A$ is the  M\"obius covariant subnet of $\A$ defined by 
\begin{equation}
\B^c(I) \equiv \B(\s1)'\cap \A(I),\quad I \in \I,
\end{equation}
see \cite{Koster2004,LongoCMP2003,Xu2000}.
If $\A$ is an irreducible conformal net and $\B$ is a covariant subnet then, by the results in \cite{Koster2004}, we have 
$\B^c(I)= \B(I)' \cap \A(I)$, for all $I \in \I$, see also \cite[Corollary 6.3.6]{WeiPhD}. 
}
\end{example} 

If $\A$ is an irreducible M\"obius covariant net and $\B \subset \A$ is a M\"obius covariant subnet then the Jones index 
$[\A(I):\B(I)]$ of the subfactor $\B(I) \subset \A(I)$ does not depend on the choice of $I\in \I$. The index $[\A:\B]$ of the subnet 
$\B \subset \A$ is then defined by $[\A:\B]\equiv [\A(I):\B(I)]$, $I\in \I$. Assuming that $[\A:\B] < +\infty$ then, by 
\cite[Thm 24]{LongoCMP2003}, $\A$ is completely rational if and only if $\B$ is completely rational. Moreover, the set of subnets 
$\C \subset \A$ such that $\B \subset \C$ (intermediate subnets) is finite as a consequence of \cite[Thm 3]{LongoCMP2003}.

\section{Preliminaries on vertex algebras} 

\subsection{Vertex algebras}
\label{susectVA}
In this paper, unless otherwise stated, vector spaces and vertex 
algebras are assumed to be over the field $\CC$ of complex numbers. 
We shall use the formulation of the book \cite{Kac} with the 
emphasis on locality. For other standard references on the subject see  \cite{FHL,FLM,LL2004,MaNa}. We will mainly consider local 
(i.e.\! not {\it super-local}) vertex algebras. Thus, differently from \cite{Kac}, we will use the term vertex algebra only for the 
local case.

Let $V$ be a vector space. A formal series
$a(z)=\sum_{n\in\ZZ}a_{(n)}z^{-n-1}$ with coefficients $a_{(n)}\in 
{\rm End}(V)$ is called a {\bf field}, if for every $b\in V$ we have
$a_{(n)}b=0$ for $n$ sufficiently large. A {\bf vertex algebra} is 
a (complex) vector space $V$ together with a given vector 
$\Omega\in V$ called the
{\bf vacuum vector}, an operator $T\in {\rm End}(V)$ called the 
{\bf infinitesimal translation operator}, and a linear map from $V$ 
to the space of fields on $V$ (the {\bf state-field correspondence})
\begin{equation}
a\mapsto Y(a,z)=\sum_{n\in\ZZ}a_{(n)}z^{-n-1}
\end{equation}
satisfying:

$(i)$ {\bf Translation covariance:} $[T,Y(a,z)]=\frac{d}{dz}Y(a,z).$

$(ii)$ {\bf Vacuum:} $T\Omega=0, \; Y(\Omega,z)={1}_V, \; 
a_{(-1)}\Omega = a$.

$(iii)$ {\bf Locality:} For all $a, b \in V$, $(z-w)^N[Y(a,z),Y(b,w)]=0$ 
for a sufficiently large non-negative integer $N$.

\noindent 
The fields $Y(a,z)$, $a\in V$, are called {\bf vertex operators}.

\begin{remark}
{\rm Translation covariance is equivalent to 
\begin{equation}
\label{EqT1}
[T,a_{(n)}] = -na_{(n-1)}, \quad a \in V, n\in \ZZ.
\end{equation} 
If $a$ is a given vector in $V$ it follows from the field property 
that there is a smallest non-negative integer $N$ such that 
$a_{(n)}\Omega=0$ for all $n\geq N$. It follows that 
$0=Ta_{(N)}\Omega=[T,a_{(N)}]\Omega= -Na_{(N-1)}\Omega$ and hence 
$N=0$. As a consequence, in the definition of 
vertex algebras, the condition $a_{(-1)}\Omega=a$ can be replaced  
by the stronger one $Y(a,z)\Omega|_{z=0}=a$.} 
\end{remark}

\begin{remark} 
\label{RemarkTcommutator}
{\rm In every vertex algebra one always have 
\begin{equation}
\label{EqT2}
[T,Y(a,z)] = Y(Ta,z), 
\end{equation}
see \cite[Corollary 4.4 c]{Kac}. }
\end{remark}

With the above definition of vertex algebras, the
so-called {\bf Borcherds identity} (or {\it Jacobi identity}), 
i.e.\! the 
equality
\begin{eqnarray}
\label{B-id}
\nonumber
\sum_{j=0}^{\infty}
\left(\begin{matrix} m \\ j \end{matrix}\right)
\left(a_{(n+j)}b\right)_{(m+k-j)}c =
\sum_{j=0}^{\infty}(-1)^j\left(\begin{matrix} n \\ j
\end{matrix}\right)a_{(m+n-j)}b_{(k+j)}c \\ -
\sum_{j=0}^{\infty}(-1)^{j+n}\left(\begin{matrix} n \\ j
\end{matrix}\right)b_{(n+k-j)}a_{(m+j)}c,\;\;\;\;
\;a,b,c\in V, \,m,n,k\in \ZZ,
\end{eqnarray}
is not an axiom, but a consequence, see \cite[Sect.\! 4.8]{Kac}. 

For future use we recall here the following useful identity known as 
Borcherds commutator formula which follows directly from 
Eq. (\ref{B-id}) after setting $n=0$ (see also \cite[Eq. (4.6.3)]{Kac}). 
\begin{equation}
\label{commutatorformula}
[a_{(m)},b_{(k)}] = 
\sum_{j=0}^{\infty}
\left(\begin{matrix} m \\ j \end{matrix}\right)
\left(a_{(j)}b\right)_{(m+k-j)}, \quad m, k \in \ZZ.
\end{equation}

We shall call a linear subspace $W\subset V$ a {\bf vertex subalgebra}, 
if $\Omega \in W$ and $a_{(n)}b\in W$ for all $a,b\in W, n\in \ZZ$. 
Since $Ta=-a_{(-2)}\Omega$, a vertex subalgebra is always $T$-invariant
and thus $W$ inherits the structure of a vertex algebra. The intersection of a family of vertex subalgebras 
is again a vertex subalgebra. Accoringly for every subset $\mathscr{F} \subset V$ there is a smallest vertex subalgebra 
$W(\mathscr{F})$ containing $\mathscr{F}$, the vertex subalgebra generated by $\mathscr{F}$. If $W(\mathscr{F})=V$ we say 
that $V$ {\bf is generated by} $\mathscr{F}$.

We shall call a subspace $\mathscr{J} \subset V$ an {\bf ideal} if it is
$T$-invariant and $a_{(n)}b \in \mathscr{J}$ for $a\in V$, $b\in \mathscr{J}$, $n\in \ZZ$. 
If $\mathscr{J}$ is an ideal then we also have $b_{(n)}a \in \mathscr{J}$ 
for $a\in V$, $b\in \mathscr{J}$, $n\in \ZZ$, see \cite[Eq. (4.3.1)]{Kac}. 
Conversely if a subspace $\mathscr{J} \subset V$ satisfies $a_{(n)}b \in J$ 
and $b_{(n)}a \in \mathscr{J}$ for $a\in V$, $b\in \mathscr{J}$, $n\in \ZZ$ then it is 
T-invariant and hence an ideal.  
A vertex algebra $V$ is {\bf simple} if every ideal $\mathscr{J}\subset V$ is 
either $\{0\}$ or $V$.

A {\bf homomorphism}, resp.\! 
{\bf antilinear homomorphism},
from a vertex algebra $V$ to a vertex algebra $W$ is a linear, resp.\!
antilinear, map $\phi:V\to W$ such that 
$\phi(a_{(n)}b)=\phi(a)_{(n)}\phi(b)$ for all $a,b\in V$ and 
$n\in \ZZ$. Sometimes we shall simply write $\phi a$ instead of $\phi(a)$.

Accordingly, one defines the notions of {\bf automorphisms}
and {\bf antilinear automorphisms}. Note that if $g$ is an
automorphism or an antilinear automorphism, then
\begin{equation}
g(\Omega)  =  g(\Omega)_{(-1)}\Omega =
g(\Omega_{(-1)}g^{-1}(\Omega)) =g(g^{-1}\Omega) = \Omega.
\end{equation}
Moreover, 
\begin{equation}
g(Ta)= g(a_{(-2)}\Omega)= g(a)_{(-2)}\Omega = Tg(a),
\end{equation}
for all $a\in V$, i.e. $g$ commutes with $T$.

Let $M$ be a vector space, and suppose that for each 
$a\in V$ there is a field on $M$
\begin{equation} 
Y^M(a,z)=\sum_{n\in \ZZ}a^M_{(n)}z^{-n-1}, \quad 
a^M_{(n)}\in {\rm End}(M) 
\end{equation} 
that the map $a\mapsto Y^{M}(a,z)$ is linear.
We shall say that $M$ (with this action) is a {\bf module} over the
vertex algebra $V$, if $Y^M(\Omega,z)=1_M$ and the Borcherds identity 
holds on $M$ i.e. 
\begin{eqnarray}
\label{modulesB-id}
\nonumber
\sum_{j=0}^{\infty}
\left(\begin{matrix} m \\ j \end{matrix}\right)
\left(a_{(n+j)}b\right)^M_{(m+k-j)}c =
\sum_{j=0}^{\infty}(-1)^j\left(\begin{matrix} n \\ j
\end{matrix}\right)a^M_{(m+n-j)}b^M_{(k+j)}c \\ -
\sum_{j=0}^{\infty}(-1)^{j+n}\left(\begin{matrix} n \\ j
\end{matrix}\right)b^M_{(n+k-j)}a^M_{(m+j)}c,\;\;\;\;
\;a,b\in V,\;c\in M \,m,n,k\in \ZZ.
\end{eqnarray}
Accordingly, one defines the notions of 
{\bf module-homomorphism}, {\bf submodules} and {\bf irreducibility}.

Every vertex algebra $V$ becomes itself a $V$-module  
by setting $Y^{V}(a,z)=Y(a,z)$. This module is called the 
{\bf adjoint module}. If the adjoint module is irreducible then 
$V$ is clearly a simple vertex algebra  but the converse is not 
true in general since the submodules of the adjoint module $V$ 
need not to be $T$-invariant.

\subsection{Conformal vertex algebras}
\label{subsect.ConformalVA}
Let $V$ be a vector space and let  
$L(z) =\sum_{n\in \ZZ}L_nz^{-n-2}$ be a field on $V$. 
If the endomorphisms 
$\{L_n: n\in\ZZ\}$ satisfy the Virasoro algebra relations
\begin{equation}
\label{vir}
\,[L_n,L_m]=(n-m)L_{n+m}+\frac{c}{12}(n^3-n)\delta_{-n,m}1_V
\end{equation}
with central charge $c\in \CC$, then $L(z)$ is called a 
{\bf Virasoro field}. 

If $V$ is a vertex algebra we shall call $\nu \in V$ a 
{\bf Virasoro vector} if the corresponding vertex operator 
$Y(\nu,z)=\sum_{n\in \ZZ}L_nz^{-n-2}$, $L_n=\nu_{(n+1)}$, is
a Virasoro field.  

As in \cite{Kac} we shall call a Virasoro vector $\nu \in V$ a 
{\bf conformal vector} if $L_{-1}=T$ and $L_0$ is diagonalizable on $V$. 
The corresponding vertex operator $Y(\nu, z)$ is called an 
{\bf energy-momentum field} and $L_0$ a 
{\bf conformal Hamiltonian} for the vertex algebra $V$. 
A vertex algebra V together with a fixed conformal vector 
$\nu \in V$ is called a {\bf conformal vertex algebra}, 
see \cite[Sect.4.10]{Kac}.  If $c$ is the central charge of the representation of the Virasoro algebra given by 
the operators $L_n=\nu_{(n+1)}$, $n\in \ZZ$ we say that $V$ has central charge $c$. 

\begin{remark}
{\rm
Every submodule of the
adjoint module of a conformal vertex algebra $V$  is 
$T$-invariant and hence it is an ideal of $V$. Accordingly 
$V$ is simple if and only if its adjoint module is irreducible.} 
\end{remark}

Let $V$ be a conformal vertex algebra and 
let $Y(\nu,z)=\sum_{n\in \ZZ}L_nz^{-n-2}$ be the corresponding 
energy-momentum field. Set $V_\alpha = {\rm Ker}(L_0-\alpha 1_V)$, 
$\alpha \in \CC$. The fact that $L_0$ is diagonalizable means that 
$V$ is the (algebraic) direct sum of the subspaces $V_\alpha$ i.e. 
\begin{equation}
V = \bigoplus_{\alpha \in \CC} V_\alpha
\end{equation} 
is graded by $L_0$. 

A non-zero element $a\in V_\alpha$ is called a {\bf homogeneous}
element of {\bf conformal weight} (or dimension) 
$d_a=\alpha$.
For such an element we shall set
\begin{equation}
a_{n}\equiv a_{(n+d_a-1)}, \;\;\; n\in \ZZ-d_a.
\end{equation}
With this convention, 
\begin{equation}
\label{EqHomogeneousa_n}
Y(a,z)=\sum_{n\in\ZZ-d_a} a_n z^{-n-d_a} \;. 
\end{equation}
We have the following commutation relations (\cite[Sect.4.9 and Sect.4.10]{Kac})

\begin{equation}
\label{[L_0,a_n]}
[L_0, a_n]=-na_n 
\end{equation}
\begin{eqnarray}
\label{l_-1commutation}
& [L_{-1}, a_n] = (-n -d_a +1) a_{n-1}  \\
\label{l_1commutation}
& [L_{1},a_n]  =  -(n -d_a +1) a_{n+1} + (L_1a)_{n+1}  
\end{eqnarray}
for all homogeneous $a\in V$, $n\in \ZZ$.  

Note that it follows from Eq. (\ref{[L_0,a_n]}) that 
\begin{equation}
\label{L_0cova_n}
e^{\alpha L_0}a_n e^{-\alpha L_0} = e^{- n\alpha}a_n, \quad \alpha \in \CC.
\end{equation}

A homogeneous vector $a$ in a 
conformal vertex algebra $V$ and the corresponding field $Y(a,z)$ 
are called {\bf quasi-primary} if $L_1a=0$ and {\bf primary} 
if $L_na=0$ for every integer $n>0$. Since 
$L_n\Omega = \nu_{(n+1)}\Omega =0$ for every integer $n\geq -1$, 
the vacuum vector $\Omega$ is a primary vector in $V_0$.
Moreover, it follows by the Virasoro algebra relations that 
the conformal vector $\nu$ is a quasi-primary vector in $V_2$ 
which cannot be primary if $c \neq0$.

We have the following commutation relations:
\begin{equation}
\label{EqQuasi-Primary/PrimaryCommutation}
[L_{m}, a_n] = \left( (d_a -1)m- n \right) a_{m+n},  
\end{equation}
for all primary (resp. quasi-primary) $a \in V$, for all $n\in \ZZ$ and all $m \in \ZZ$ (resp. $m\in \{-1,0,1\}$), see e.g. \cite[Cor.4.10]{Kac}.

We shall say that a conformal vertex algebra $V$ and the 
corresponding conformal vector are of {\bf CFT type} if 
$V_\alpha =\{0\}$ for $\alpha \notin \ZZ_{\geq 0}$ and $V_0 =\CC\Omega$. 
If $V$ is of CFT type, or more generally if  $V_\alpha =\{0\}$ for $\alpha \notin \ZZ$ we can define the operators $a_n$, $n\in \ZZ$, for all 
$a \in V$, by linearity. In this way Eq. (\ref{[L_0,a_n]}) still holds for any $a\in V$. Moreover, we have 
\begin{equation}
\label{EqGenerala_n}
Y(z^{L_0}a,z)=\sum_{n\in \ZZ}a_n z^{-n} 
\end{equation}
for all $a\in \ZZ$, cf. \cite[Eq. (5.3.13)]{FHL}. It is obvious that if  $a\in V$ is homogeneous then $a_n=0$ for all $n\in \ZZ$ implies $a=0$ and it is not hard to see that this is true also for arbitrary $a\in V$.

In general a vertex algebra $V$ can have more then one conformal vector 
and hence more then one structure of conformal vertex algebra even after 
fixing the grading. When $V$ has a conformal vector $\nu$ of CFT type 
the conformal vectors of $V$ giving the same grading as $\nu$ are 
described by the following proposition.  

\begin{proposition} \label{morestress}
Let $V$ be a vertex algebra and let $\nu \in V$ be a conformal vector 
of CFT type with corresponding energy-momentum field 
$Y(\nu,z)=\sum_{n \in \ZZ}L_n z^{-n-2}$. 
Then a vector $\tilde{\nu}\in V$ such that $\tilde{\nu}_{(1)}=L_0$ 
is a conformal vector if and only if 
\begin{equation}
\tilde{\nu}=\nu+Ta 
\end{equation} 
where $a \in V$ is such that $L_0a=a$ and $a_{(0)}=0$. 
In this case we have $a=\frac{1}{2}L_1\tilde{\nu}$.
\end{proposition}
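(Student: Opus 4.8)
The plan is to reduce everything to the behaviour of the lowest graded pieces $V_1$ and $V_2$ under the copy of $\mathfrak{sl}_2$ generated by $L_{-1},L_0,L_1$, and to isolate a single lemma about quasi-primary vectors of weight two. First I would record the identity $L_1L_{-1}=2\cdot\mathrm{id}$ on $V_1$: for $y\in V_1$ one has $L_1L_{-1}y=[L_1,L_{-1}]y+L_{-1}L_1y=2L_0y+L_{-1}(L_1y)=2y$, since $L_1y\in V_0=\CC\Omega$ (CFT type) and $L_{-1}\Omega=T\Omega=0$. Consequently $P\equiv\tfrac12 L_{-1}L_1$ is an idempotent on $V_2$ with $PV_2=L_{-1}V_1$ and ${\rm Ker}\,P={\rm Ker}(L_1\restriction_{V_2})$, so each $b\in V_2$ decomposes uniquely as $b=\tfrac12L_{-1}L_1b+q$ with $q$ quasi-primary. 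The whole proposition will follow once I control this quasi-primary remainder.

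The key step, and the main obstacle, is the lemma: a quasi-primary $q\in V_2$ with $q_{(0)}=0$ vanishes. The subtlety is that $q_{(0)}=0$ does not visibly constrain the creation mode $q_{(-1)}$, even though $q=q_{(-1)}\Omega$. The trick is to route through the annihilation mode $q_{(1)}$ by means of the Borcherds commutator formula \eqref{commutatorformula}. Using $L_1q=0$ and $L_2q\in V_0=\CC\Omega$ (both forced by CFT type) I would compute $[L_1,q_{(0)}]=2q_{(1)}$ and then $[L_{-2},q_{(1)}]=-2q_{(-1)}$; hence $q_{(0)}=0$ gives first $q_{(1)}=0$ and then $q_{(-1)}=0$, so $q=q_{(-1)}\Omega=0$. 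This is exactly where CFT type is essential, since it forces $L_{j}q$ for $j\geq 1$ to land in $\CC\Omega$ or to vanish, cutting the commutator formula down to finitely many terms.

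For the ``only if'' direction, if $\tilde\nu$ is conformal with $\tilde\nu_{(1)}=L_0$ then $L_0\tilde\nu=\tilde L_0\tilde\nu=2\tilde\nu$ places $b:=\tilde\nu-\nu$ in $V_2$, while $\tilde\nu_{(0)}=\tilde L_{-1}=T=\nu_{(0)}$ gives $b_{(0)}=0$. Setting $a:=\tfrac12L_1\tilde\nu=\tfrac12L_1b\in V_1$ (as $L_1\nu=0$), the remainder $q=b-L_{-1}a=(1-P)b$ is quasi-primary with $q_{(0)}=b_{(0)}-(L_{-1}a)_{(0)}=0$, so the lemma yields $q=0$ and $b=Ta$. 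Finally $L_0=\tilde\nu_{(1)}=L_0+(Ta)_{(1)}=L_0-a_{(0)}$ forces $a_{(0)}=0$, while $L_0a=a$ holds by construction.

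For the ``if'' direction I would start from $\tilde L_n:=\tilde\nu_{(n+1)}=L_n-(n+1)a_{(n)}$ and verify the Virasoro relations directly. The commutator formula gives $[L_m,a_{(n)}]$ with only the $j=0,1,2$ terms surviving and $[a_{(m)},a_{(n)}]$ with only the $j=1$ term surviving (here $a_{(0)}=0$ kills the $j=0$ term), and a short bookkeeping computation yields $[\tilde L_m,\tilde L_n]=(m-n)\tilde L_{m+n}+\tfrac{\tilde c}{12}(m^3-m)\delta_{m+n,0}$ with $\tilde c=c+12(\lambda-\mu)$, where $L_1a=\lambda\Omega$ and $a_{(1)}a=\mu\Omega$; together with $\tilde L_{-1}=\nu_{(0)}=T$ and $\tilde L_0=L_0$ diagonalizable this makes $\tilde\nu$ a conformal vector with $\tilde\nu_{(1)}=L_0$. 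In either direction the closing identity $a=\tfrac12L_1\tilde\nu$ follows from $L_1\tilde\nu=L_1L_{-1}a=2a$.
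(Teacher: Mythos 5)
Your proof is correct, but it takes a genuinely different route from the paper's in both directions. For the ``only if'' part the paper applies the Borcherds identity \eqref{B-id} once, with $m=-1$, $n=1$, $k=0$, to the triple $(\nu,\tilde\nu,\Omega)$ and reads off $2\nu=2\tilde\nu-TL_1\tilde\nu$ in a single computation; you instead split $V_2=L_{-1}V_1\oplus\mathrm{Ker}(L_1\restriction_{V_2})$ via the idempotent $\tfrac12L_{-1}L_1$ and kill the quasi-primary remainder with your lemma, climbing $q_{(0)}=0\Rightarrow q_{(1)}=0\Rightarrow q_{(-1)}=0$ through the commutator formula \eqref{commutatorformula} (I checked: $[L_1,q_{(0)}]=(Tq)_{(2)}+4q_{(1)}=2q_{(1)}$ and $[L_{-2},q_{(1)}]=-2q_{(-1)}$, the $(L_2q)_{(-3)}$ term dying because $\Omega_{(-3)}=0$). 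For the ``if'' part the paper checks $\tilde L_n\tilde\nu=0$ for $n>2$ and $\tilde L_2\tilde\nu\in\CC\Omega$ and then invokes \cite[Thm.4.10 (b)]{Kac} as a black box, whereas you verify the Virasoro relations for $\tilde L_n=L_n-(n+1)a_{(n)}$ by hand, which is more work but self-contained and yields the shifted central charge $\tilde c=c+12(\lambda-\mu)$ explicitly. Both arguments use CFT type in the same essential way, to truncate $L_jq$, $L_ja$ and $a_{(j)}a$ into $\CC\Omega$ or $0$; the paper's route is shorter, yours makes the $\mathfrak{sl}_2$-structure of $V_2$ and the uniqueness mechanism more transparent. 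One cosmetic slip: in your lemma the hypothesis $L_1q=0$ is the quasi-primarity assumption, not a consequence of CFT type (only $L_2q\in\CC\Omega$ and $L_jq=0$ for $j\geq 3$ are forced by the grading); this does not affect the argument.
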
 
\begin{proof} 
Let $V_n ={\rm Ker}(L_0-n1_V)$, $n\in \ZZ_{\geq 0}$. Since by assumption $\nu$ is of 
CFT type we have 
$$V=\bigoplus_{n\in \ZZ_{\geq 0}}V_n,\; V_0=\CC \Omega .$$
If $\tilde{\nu}$ is a conformal vector satisfying $\tilde{\nu}_{(1)}=L_0$
then $\tilde{\nu} \in V_2$.

From Borcherds identity (\ref{B-id}) with $m=-1$, $n=1$ and $k=0$ 
we get:
\begin{eqnarray*}
\sum_{j=0}^{\infty}
\left(\begin{matrix} -1 \\ j \end{matrix}\right)
\left(\nu_{(1+j)}\tilde{\nu}\right)_{(-1-j)}\Omega =
\sum_{j=0}^{\infty}(-1)^j\left(\begin{matrix} 1 \\ j
\end{matrix}\right)\nu_{(-j)}\tilde{\nu}_{(j)}\Omega \\ -
\sum_{j=0}^{\infty}(-1)^{j+1}\left(\begin{matrix} 1 \\ j
\end{matrix}\right)\tilde{\nu}_{(1-j)}\nu_{(j-1)}\Omega
=\tilde{\nu}_{(1)}\nu=2\nu.
\end{eqnarray*}
Since $\tilde{\nu} \in V_2$ we have $\nu_{(1+j)}\tilde{\nu}=0$
for $j>2$ and since $\nu_{(3)}\tilde{\nu} \in V_0 =\CC \Omega$ 
we have $(\nu_{(3)}\tilde{\nu})_{(-3)}=0$. It follows that 
$2\nu = (\nu_{(1)}\tilde{\nu})_{(-1)}\Omega 
-(\nu_{(2)}\tilde{\nu})_{(-2)}\Omega 
= \nu_{(1)}\tilde{\nu}
-T\nu_{(2)}\tilde{\nu} =2\tilde{\nu}-TL_1\tilde{\nu}.$ 

Now with $a= \frac{1}{2}L_1\tilde{\nu}$ we have $L_0a=a$ and 
$\tilde{\nu}=\nu+Ta$. Hence $a_{(0)}=(Ta)_{(1)}=
\tilde{\nu}_{(1)}-\nu_{(1)}=0$. 

Conversely let us assume that $a\in V_1$ and $a_{(0)}=0$. Let 
$\tilde{\nu}=\nu+Ta$ and let $Y(\tilde{\nu},z)=\sum_{n\in\ZZ}
\tilde{L}_nz^{-n-2}$ be the corresponding vertex operator. 
Then we have $\tilde{L}_{-1}= L_{-1} =T$ and $\tilde{L}_{0}= L_{0}$. 
Moreover, $\tilde{\nu} \in V_{2}$. 
Now $\tilde{L}_{n}\tilde{\nu} \in V_{2-n}$ and hence, using the fact 
that $\nu$ is of CFT type we find 
$\tilde{L}_{n}\tilde{\nu}=0$ for $n >2$ and 
$\tilde{L}_{2}\tilde{\nu}=\tilde{c}\Omega$. Thus $\tilde{\nu}$ 
is a conformal vector by \cite[Thm.4.10 (b)]{Kac}. 
Finally recalling that $V_0=\CC\Omega$ and that $L_1\nu=0$ we 
find $L_1\tilde{\nu}=L_1Ta = [L_1,L_{-1}]a = 2a$, because $L_1a \in V_0 = \CC\Omega$ and hence 
$L_{-1}L_1a=0$. 
\end{proof}

\subsection{Vertex operator algebras and invariant bilinear
forms}\label{subsectionVOA&IBforms}

A {\bf vertex operator algebra (VOA)} is a conformal vertex algebra
such that the corresponding energy-momentum field 
$Y(\nu,z)=\sum_{n\in\ZZ} L_n {z^{-n-2} }$ and homogeneous subspaces
${\rm Ker}(L_0-\alpha{1_V})$ satisfy the following additional
conditions:

$(i)$ $V=\bigoplus_{n\in\ZZ} V_n$, i.e.\! 
${\rm Ker}(L_0 - \alpha 1_V)=\{ 0 \}$ for $\alpha \notin \ZZ$. 

$(ii)$ $V_n=\{0\}$ for $n$ sufficiently small.

$(iii)$ ${\rm dim}(V_n) < \infty$.

\begin{remark}
\label{CFTtypeRemark}
{\rm If $V_0=\CC\Omega$, then condition 
(ii) is in fact equivalent to the stronger condition $V_n=\{0\}$ 
for all $n<0$. Indeed, by \cite[Prop.\! 1]{roitman}, for $n<0$ we have 
that $V_n=L_1^{1-n}V_1\subset L_1^{-n}V_0$ and hence if $V_0=\CC\Omega$, 
then $V_n=\{0\}$. Hence in this case $V$ is of CFT type. 
}
\end{remark}

To introduce the notion of invariant bilinear forms, first we
shall talk about the restricted dual $V'$ of $V$. As a graded
vector space it is defined as
\begin{equation}
V'= \mathop{\bigoplus}_{n \in \ZZ} V_n^*
\end{equation}
i.e.\! it is the direct sum of the duals $V_n^*$ 
of the finite-dimensional vector spaces $V_n$, $n\in \ZZ$. The  
point is that $V'$ can be naturally endowed with a $V$-module
structure. Denote by $\langle\cdot,\!\cdot\rangle$ 
the pairing between $V'$ and $V$. For each $a\in V$,
the condition
\begin{equation}
\label{contragredient}
\langle Y'(a,z) b', c\rangle = \langle b' ,Y 
(e^{zL_1}(-z^{-2})^{L_0}a,z^{-1})c  
\rangle \;\;\;c\in V, b'\in V' 
\end{equation}
determines a field 
$Y'(a,z)$ on $V'$ and one has that
the map $a\mapsto Y'(a,z)$ makes $V'$ a $V$-module,  see
\cite[ Sect.5.2]{FHL}.
The module $V'$ is called the {\bf contragredient module} and the fields 
$Y'(a,z)$ {\bf adjoint vertex operators}. Note however that 
the endomorphisms $a'_{(n)}\in {\rm End}(V)$ in the formal series 
$Y(a'_{(n)},z) =\sum_{n\in \ZZ}a'_{(n)}z^{-n-1}$ are not the adjoint of 
the endomorphisms $a_{(n)}$ in the usual sense. 
Note also that we have 
\begin{equation}
\langle L'_na', c\rangle = 
\langle a', L_{-n}b \rangle \quad a' \in V', \;b \in V,
\;n\in \ZZ, 
\end{equation}
where $L'_n = \nu'_{(n+1)}$. It follows that $V'$ is 
a $\ZZ$-graded $V$ module in the sense that $L'_0a'=na'$ for 
$a'\in V'_n \equiv V_n^*$.  
  
It should be clear from the definition that the $V$-module structure on 
$V'$ depends on the conformal vector $\nu$ (more precisely on $L_1$) and 
not only on the vertex algebra structure of $V$. 

An {\bf invariant bilinear form} on $V$ is a bilinear form $(\cdot,
\cdot)$ on $V$ satisfying 
\begin{equation} \label{invbil} 
(Y(a,z) b, c) =(b, Y (e^{zL_1}(-z^{-2})^{L_0}a,z^{-1})c) 
\quad a,b,c\in V. 
\end{equation}
As the module structure on $V'$, whether a bilinear form is invariant on 
$V$ depends on the choice of the conformal vector giving to the vertex 
algebra $V$ the structure of a VOA. 
By straightforward calculation one finds that a 
bilinear form $(\cdot,\!\cdot)$ on a vertex operator algebra $V$ is invariant if and only if
\begin{equation} \label{invbil2} 
(a_n b, c)  = (-1)^{d_a}\sum_{l \in \ZZ_{\geq 0}}
\frac{1}{l!}(b, (L^l_1a)_{-n}c ) 
\end{equation} 
for all $b, c \in V$ and all homogeneous $a\in V$. In particular, in case 
of invariance, it follows that 
\begin{equation}
(L_na,b)=(a,L_{-n}b)\;\;a,b\in V, n\in\ZZ 
\end{equation} 
and hence, by considering $n=0$, that $(V_k,V_l)=0$ whenever $k\neq l$. 
Thus the linear functional $(a,\cdot)$ is in the restricted dual for every 
$a\in V$ and one can see that the map $a\mapsto (a,\cdot )$ is a module 
homomorphisms from $V$ to $V'$. Conversely, if $\phi:V\to V'$ is a 
module homomorphism, then the bilinear form defined by the formula
\begin{equation} 
(a,b) \equiv \langle \phi (a) , b \rangle 
\end{equation}
is invariant. 
Since the homogeneous subspaces $V_n\; (n\in \ZZ)$ are 
finite-dimensional, every $V$-module homomorphism from $V$ to $V'$, being 
grading preserving, is injective if and only if it is surjective. In 
particular, there exists a non-degenerate invariant 
bilinear form on $V$ if and only if $V'$ is isomorphic to $V$ as a 
$V$-module. In the following proposition we list some useful
facts concerning invariant bilinear forms for later use.

\begin{proposition}
\label{factsIB-forms}
Let $V$ be a VOA. Then:
\begin{itemize} 
\item[(i)]
Every invariant bilinear form on $V$ is symmetric.

\item[(ii)]
The map $(\cdot,\!\cdot) \mapsto (\Omega,\cdot)\restriction_{V_0}$ gives a linear 
isomorphism from the space of invariant bilinear forms onto 
$(V_0/L_1V_1)^*$.

\item[(iii)] If $V$ is a simple VOA then every non-zero invariant 
bilinear form on $V$ is non-degenerate. Moreover, if $V$ has a 
non-zero invariant bilinear form $(\cdot,\cdot)$ then every 
invariant bilinear form on $V$ is of the form 
$\alpha(\cdot,\cdot)$ for some complex number $\alpha$. 

\item[(iv)] If $V$ has a non-degenerate invariant bilinear form and 
$V_0 =\CC\Omega$ then $V$ is a simple VOA. 
\end{itemize}
\end{proposition}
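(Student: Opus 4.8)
The plan is to establish the four items in order, relying throughout on the mode form (\ref{invbil2}) of invariance, on the orthogonality $(V_k,V_l)=0$ for $k\neq l$ noted just below it, and on the vacuum relations $T\Omega=0$ and $L_1\Omega=0$, which give $(\Omega,Tv)=(L_1\Omega,v)=0$ and $(\Omega,L_1v)=(L_{-1}\Omega,v)=0$ for all $v$.

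For (i) I would first reduce to homogeneous $a,b$ of the same conformal weight $d$, since $(a,b)=(b,a)=0$ otherwise by orthogonality. In the core case where $a$ and $b$ are quasi-primary (so $L_1a=L_1b=0$), formula (\ref{invbil2}) collapses to a single term and gives $(a,b)=(a_{(-1)}\Omega,b)=(-1)^{d}(\Omega,a_{(2d-1)}b)$, and likewise with $a,b$ interchanged. The skew-symmetry relation $Y(a,z)b=e^{zT}Y(b,-z)a$ (\cite[Sect.~4]{Kac}) shows $a_{(m)}b=\sum_{k\geq 0}\frac{(-1)^{m+k+1}}{k!}T^kb_{(m+k)}a$; pairing against $\Omega$ and using $(\Omega,Tv)=0$ kills every term with $k\geq 1$, leaving $(\Omega,a_{(m)}b)=(-1)^{m+1}(\Omega,b_{(m)}a)$. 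At $m=2d-1$ the sign is $+1$, so $(a,b)=(b,a)$ for quasi-primary vectors. To reach arbitrary vectors I would use the $\mathfrak{sl}_2$-decomposition $V=\bigoplus_{k\geq 0}L_{-1}^kQ$, where $Q=\ker L_1$ is the space of quasi-primary vectors (available since the grading is integral and bounded below), together with the identity $(L_{-1}u,v)=(u,L_1v)$: a direct computation with (\ref{l_-1commutation})--(\ref{l_1commutation}) reduces both $(L_{-1}^pu,L_{-1}^qw)$ and its transpose to the same scalar multiple of a quasi-primary pairing, for which symmetry is already known.

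For (ii), the map is well defined into $(V_0/L_1V_1)^*$: the functional $(\Omega,\cdot)$ vanishes off $V_0$ by orthogonality, and $(\Omega,L_1v)=0$ for $v\in V_1$, so it descends to the quotient. Injectivity is immediate from the computation in (i): for homogeneous $a,b$ of equal weight one has $(a,b)=(-1)^{d}\sum_{l\geq 0}\frac{1}{l!}(\Omega,(L_1^la)_{d}b)$, where every argument of $(\Omega,\cdot)$ lies in $V_0$, so if $(\Omega,\cdot)\restriction_{V_0}=0$ the form vanishes identically. The substantial point is surjectivity. Here I would use the correspondence, already recorded in the text, between invariant bilinear forms and module homomorphisms $\phi\colon V\to V'$: since $\phi(a)=a'_{(-1)}\phi(\Omega)$, such a $\phi$ is determined by the covector $\phi(\Omega)$, which must be annihilated by all $a'_{(n)}$ with $n\geq 0$ and hence lie in $V_0^*$; the FHL analysis (\cite[Sect.~5.2]{FHL}) identifies the admissible images with $(V_0/L_1V_1)^*$ and constructs the corresponding $\phi$. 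Checking that each candidate covector really extends consistently, via the Borcherds identity, to a genuine module homomorphism is the main obstacle of the whole proposition.

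For (iii), let $(\cdot,\cdot)$ be a nonzero invariant form with associated module homomorphism $\phi\colon V\to V'$. Its radical equals $\ker\phi$, a submodule of the adjoint module and hence, for a conformal vertex algebra, an ideal; simplicity forces $\ker\phi\in\{0,V\}$, and it cannot be $V$ since the form is nonzero, so the form is non-degenerate. For the proportionality statement, two nonzero invariant forms yield module isomorphisms $\phi_1,\phi_2\colon V\xrightarrow{\sim}V'$, so $\phi_2^{-1}\phi_1\in\operatorname{End}_V(V)$; as $V$ is simple its adjoint module is irreducible, and being countable-dimensional over the uncountable field $\CC$ it satisfies $\operatorname{End}_V(V)=\CC$ by Schur's lemma in Dixmier's form. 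Thus $\phi_1=\alpha\phi_2$ and the two forms differ by the scalar $\alpha$. Finally, for (iv), assume $V_0=\CC\Omega$ and that the form is non-degenerate, and let $\mathscr{J}\subset V$ be a nonzero ideal; I would show $\Omega\in\mathscr{J}$. Ideals are $L_0$-invariant, hence graded, so choose a nonzero homogeneous $a\in\mathscr{J}$ of weight $d$, and by non-degeneracy a homogeneous $b$ of weight $d$ with $(b,a)\neq 0$. The formula from (ii) gives $(b,a)=(-1)^{d}\sum_{l\geq 0}\frac{1}{l!}(\Omega,(L_1^lb)_{d}a)$, and each vector $(L_1^lb)_{d}a$ lies in $\mathscr{J}\cap V_0=\mathscr{J}\cap\CC\Omega$ because $\mathscr{J}$ is an ideal and the homogeneous mode lowers the weight by $d$; since the sum is nonzero, at least one of these vectors is a nonzero multiple of $\Omega$, whence $\Omega\in\mathscr{J}$, and then $a=a_{(-1)}\Omega\in\mathscr{J}$ for every $a\in V$, so $\mathscr{J}=V$ and $V$ is simple. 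The only genuinely nontrivial input here is the validity of the displayed formula, i.e. items (i)--(ii), confirming that the symmetry computation is the heart of the argument.
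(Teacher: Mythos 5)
Your arguments for (iii) and (iv) are sound. Item (iii) is essentially the paper's proof (radical is an ideal; Schur's lemma in the countable-dimensional form for the proportionality). For (iv) you take a genuinely different, and arguably more direct, route: the paper argues by contradiction, showing that if $\Omega\notin\mathscr{J}$ then $\Omega$ lies in the orthogonal ideal $\mathscr{J}^\circ$, forcing $\mathscr{J}^\circ=V$ and hence $\mathscr{J}=\{0\}$ by non-degeneracy; you instead pick a homogeneous $a\in\mathscr{J}$ and a dual vector $b$ and read off from the invariance formula that some $(L_1^lb)_{d}a$ is a non-zero element of $\mathscr{J}\cap V_0=\mathscr{J}\cap\CC\Omega$, so $\Omega\in\mathscr{J}$ directly. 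Both work; yours makes visible exactly where $V_0=\CC\Omega$ enters. For (i) and (ii) the paper simply cites Li (Prop.~2.6 and Thm.~3.1 of \cite{li}), so your attempt to actually prove (i) and your honest deferral of the surjectivity in (ii) to \cite{FHL}/\cite{li} are on par with, or more informative than, what the paper records.

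The one genuine gap is in your reduction step for (i). You pass from quasi-primary vectors to arbitrary ones via the decomposition $V=\bigoplus_{k\geq 0}L_{-1}^k\,\mathrm{Ker}\,L_1$, justified only by ``the grading is integral and bounded below.'' That decomposition requires complete reducibility of the $\{L_{-1},L_0,L_1\}$-action, which is \emph{not} automatic for a general VOA: an $L_0$-diagonalizable $\mathfrak{sl}_2$-module with bounded-below integer weights and finite-dimensional weight spaces need not even be spanned by $\bigcup_k L_{-1}^k\,\mathrm{Ker}\,L_1$ (the contragredient of the lowest-weight Verma module of weight $0$ already fails this), and it is precisely this possible failure that makes the target space in (ii) the quotient $(V_0/L_1V_1)^*$ rather than $V_0^*$. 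Your reduction is therefore valid only under an extra hypothesis such as unitarity (where the paper notes the $\mathfrak{sl}_2$-representation is unitary, hence completely reducible), whereas the proposition is stated for arbitrary VOAs and is applied in that generality (e.g.\ in Prop.~\ref{uniquestress}). The quasi-primary computation itself — skew-symmetry plus $(\Omega,Tv)=0$ giving $(\Omega,a_{(m)}b)=(-1)^{m+1}(\Omega,b_{(m)}a)$ — is correct; to close the gap one should instead follow Li's argument, which manipulates the full conjugation formula $e^{zL_1}(-z^{-2})^{L_0}$ directly and never invokes complete reducibility.
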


\begin{proof}
For $(i)$ and $(ii)$ see \cite[Prop.\! 2.6]{li} and \cite[Thm.3.1]{li}, respectively. 

$(iii)$. Let $(\cdot, \cdot)$ a non-zero invariant bilinear form on 
$V$. 
As a consequence of Eq. (\ref{invbil2}), the subspace $$\mathscr{N} \equiv \{a\in V : (a,b)=0\; \forall b \in V\}$$ is an
ideal of $V$, which by assumption is not equal to $V$.
Hence if $V$ is simple, then $\mathscr{N}=\{0\}$, i.e.\! $(\cdot,\!\cdot)$
is non-degenerate. Now let $\{\cdot, \cdot\}$ be another  invariant
bilinear form on $V$. If it is zero there is nothing to prove
and hence we can assume that it is non-degenerate. Then there 
exists a $V$-module isomorphism
$\phi: V \mapsto V$ such that $\{a, b\} = (\phi (a), b)$ for all
$a,b \in V$. Since $\phi$ commutes with every $a_n$, $a\in V$,
$n\in \ZZ$, $V$ is a simple VOA and hence an irreducible 
$V$-module, $\phi$ must be a multiple of the identity by 
Schur's lemma because every VOA has countable dimension, 
see e.g. \cite[Lemma 2.1.3]{CG}. 
Hence there is a complex number $\alpha$ such that 
$\{a, b\} = \alpha (a, b)$ and the claim follows.

$(iv)$ If $\mathscr{J}$ is an ideal of $V$, then $L_0 \mathscr{J} \subset \mathscr{J}$ 
and hence $\mathscr{J}=\bigoplus_{n\in \ZZ} (V_n\cap \mathscr{J})$. If 
$\Omega\in \mathscr{J}$ then of course $\mathscr{J}=V$. On the other hand, 
if $\Omega\notin \mathscr{J}$ and $V_0=\CC\Omega$ we have that
$V_0\cap \mathscr{J}=\{0\}$ and so 
$\Omega\in \mathscr{J}^\circ \equiv \{a\in V: (a,b)=0 \; \forall b \in \mathscr{J}\}$. 
However,
$\mathscr{J}^\circ$ is clearly an ideal, and hence it coincides with  
$V$. Thus $\mathscr{J} =\{0\}$ by the non-degeneracy of $(\cdot,\cdot)$.
\end{proof}

Note that by $(ii)$, if $V_0=\CC\Omega$, then a non-zero invariant bilinear 
form exists if and only if $L_1V_1=\{0\}$. In this case, again by $(iii)$, there 
is exactly one invariant bilinear form $(\cdot,\cdot)$ which is 
{\bf normalized} i.e. such that $(\Omega,\Omega)=1$. Similarly, if we assume 
that $V$ is simple, then we see from $(iii)$ that there is at most one 
normalized invariant bilinear form on $V$. 

\begin{remark} {\rm One can define invariant bilinear forms with similar properties for conformal vertex algebras such that $L_0$ has only 
integer eigenvalues but without assuming that the corresponding eigenspaces $V_n$, $n\in \ZZ$ have finite dimension, see \cite{roitman}.
}
\end{remark}

\begin{proposition}\label{uniquestress}
Let $V$ be a vertex algebra with a conformal vector $\nu$ and assume that 
the corresponding conformal vertex algebra is a VOA 
such that $V_0=\CC\Omega$ and having a non-degenerate invariant bilinear 
form.
Moreover, let $\tilde{\nu}\in V$ be another conformal vector such that 
$\tilde{\nu}_{(1)}=\nu_{(1)}$ and assume that there is 
still a non-degenerate invariant bilinear form on $V$ for the 
corresponding VOA structure. Then $\tilde{\nu}=\nu$.  
\end{proposition}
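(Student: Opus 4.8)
The plan is to reduce the statement to Proposition \ref{morestress} and then to extract the vanishing of the correction term from the requirement that \emph{both} conformal structures admit non-degenerate invariant bilinear forms. Since the $\nu$-VOA satisfies $V_0=\CC\Omega$, Remark \ref{CFTtypeRemark} guarantees that $\nu$ is of CFT type, so Proposition \ref{morestress} applies: from $\tilde{\nu}_{(1)}=\nu_{(1)}=L_0$ I obtain $\tilde{\nu}=\nu+Ta$ with $a\in V_1$ (i.e.\ $L_0a=a$) and $a_{(0)}=0$. The goal then becomes to show $a=0$.

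Next I would convert the two ``existence of a non-degenerate invariant form'' hypotheses into algebraic equations by means of Proposition \ref{factsIB-forms}(ii). Since $V_0=\CC\Omega$ is one-dimensional and $L_1V_1\subseteq V_0$, the existence of a non-zero invariant form for the $\nu$-structure forces $V_0/L_1V_1\neq 0$, hence $L_1V_1=\{0\}$; in particular $L_1a=0$. The two conformal vectors induce the same grading, because $\tilde{L}_0=\tilde{\nu}_{(1)}=\nu_{(1)}=L_0$, so they share the same $V_0$ and $V_1$. Applying the same criterion to $\tilde{\nu}$ gives $\tilde{L}_1V_1=\{0\}$, where $\tilde{L}_1=\tilde{\nu}_{(2)}$. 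A short computation using $Y(Ta,z)=\partial_z Y(a,z)$ (Remark \ref{RemarkTcommutator}) yields $(Ta)_{(2)}=-2a_{(1)}$, whence $\tilde{L}_1=\nu_{(2)}-2a_{(1)}=L_1-2a_1$ (the homogeneous mode, since $d_a=1$). Combined with $L_1V_1=\{0\}$ this gives $a_1V_1=\{0\}$.

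Finally I would feed this into the invariance identity (\ref{invbil2}) for the $\nu$-form. Since $a\in V_1$ and $L_1a=0$, the sum in (\ref{invbil2}) collapses to its $l=0$ term, so that $(a_nb,c)=-(b,a_{-n}c)$ for all $b,c\in V$. Taking $n=1$, $c=\Omega$ and $b\in V_1$, the left-hand side $(a_1b,\Omega)$ vanishes because $a_1V_1=\{0\}$, while the right-hand side equals $-(b,a_{-1}\Omega)=-(b,a)$ by the vacuum axiom $a_{-1}\Omega=a$; hence $(b,a)=0$ for all $b\in V_1$. Because invariant forms are graded, with $(V_k,V_l)=0$ for $k\neq l$, non-degeneracy of $(\cdot,\cdot)$ on $V$ restricts to non-degeneracy on $V_1$, so $a=0$ and therefore $\tilde{\nu}=\nu+Ta=\nu$.

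I expect the only delicate point to be bookkeeping rather than conceptual: making sure the homogeneous-mode conventions (the identification $a_{(n)}=a_n$ for $a\in V_1$ and the index shift in $(L_1^l a)_{-n}$) are applied consistently when specializing (\ref{invbil2}), and checking that the grading really is common to both structures so that the two invocations of Proposition \ref{factsIB-forms}(ii) refer to the same $V_0$ and $V_1$. Once these identifications are in place, the mechanism---$L_1V_1=\{0\}$ for one structure together with $\tilde{L}_1V_1=\{0\}$ for the other---pins down $a$ immediately.
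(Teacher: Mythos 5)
Your proof is correct and follows essentially the same route as the paper's: reduce to $\tilde{\nu}=\nu+Ta$ via Proposition \ref{morestress}, extract $\tilde{L}_1V_1=\{0\}$ from Proposition \ref{factsIB-forms}(ii) applied to the $\tilde{\nu}$-structure, and use the invariance identity to turn $(a_{(1)}b,\Omega)$ into $-(b,a)$ so that non-degeneracy on $V_1$ forces $a=0$. The only difference is organizational: the paper runs the same computation as a proof by contradiction (assuming $L_1\tilde{\nu}\neq 0$ and exhibiting $b$ with $(\Omega,\tilde{L}_1b)=(L_1\tilde{\nu},b)\neq 0$), whereas you argue directly; your bookkeeping of the homogeneous-mode conventions is accurate throughout.
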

\begin{proof}
Let $(\cdot,\!\cdot)$ be the unique normalized invariant bilinear 
form on $V$ with respect to the conformal vector $\nu$ and let 
$\tilde{\nu}$ be another conformal vector with the properties in the 
proposition. By Remark \ref{CFTtypeRemark} $\nu$ is a conformal 
vector of CFT type and hence, by Prop. \ref{morestress},
$\tilde{\nu}=\nu + T\frac{1}{2}L_1\tilde{\nu}$,
where $L_1= \nu_{(2)}$. 
Hence $\tilde{L}_1 \equiv \tilde{\nu}_{(2)}= L_1  
-(L_1\tilde{\nu})_{(1)}$
Let us assume that $L_1\tilde{\nu}\neq 0$. 
Since $(\cdot,\cdot)$ is non-degenerate, there is $b\in V_1$ such that 
$(L_1\tilde{\nu} , b)\neq 0$. Thus
\begin{equation}
(\Omega,\tilde{L}_1b)=(\Omega,(L_1-(L_1\tilde{\nu})_{(1)})b)=
(L_1\tilde{\nu},b)\neq 0.   
\end{equation}
Hence $\tilde{L}_1V_1\neq\{0\}$ and by Prop. \ref{factsIB-forms} 
$(ii)$ there is no non-zero invariant bilinear form on $V$ corresponding 
to $\tilde{\nu}$. 
\end{proof}

\begin{remark}\label{remarkuniquestress}  {\rm It follows from the results in \cite{roitman}  that Prop. \ref{uniquestress} still holds true if 
$V$ with the conformal vector $\nu$ is a conformal vertex algebra of CFT type, namely the assumption that the $L_0$ eigenspaces $V_n$, $n\in \ZZ$ have finite dimension is not really needed.
}
\end{remark}
\begin{remark} {\rm Prop. \ref{uniquestress} can be considered as a VOA analogue of the uniqueness results for diffeomorphism symmetry proved in \cite{CW2005} and \cite{WeiPhD}. At the end of Sect. \ref{sectionstronglylocalVOA}  it will be shown that the uniqueness results in \cite{CW2005,WeiPhD} can be proved starting from Prop. \ref{uniquestress}. 
}
\end{remark}

\begin{corollary}\label{autcorollary}
Let $V$ be a VOA with energy-momentum 
field $Y(\nu,z)=\sum_{n\in \ZZ}L_n z^{-n-2}$. Assume that  
$V_0=\CC\Omega$ and that $V$
has a non-degenerate invariant bilinear form $(\cdot,\!\cdot)$. 
Then for a vertex algebra automorphism or antilinear automorphism 
$g$ of $V$, the following are equivalent.
\begin{itemize}
\item[{\rm (i)}] $g$ is grading preserving i.e. 
$g(V_n)=V_n$ for all $n\in \ZZ$.
\item[{\rm (ii)}] $g$ preserves $(\cdot,\!\cdot)$ i.e. either 
$(g(a),g(b))=(a,b)$ for all $a,b\in V$
if $g$ is linear, or
$(g(a),g(b))=\overline{(a,b)}$ for all $a,b\in V$ 
if $g$ is antilinear. 
\item[{\rm (iii)}]
$g(\nu)=\nu$. 
\end{itemize}
\end{corollary}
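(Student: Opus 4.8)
The plan is to run the cycle (iii)$\Rightarrow$(i)$\Rightarrow$(iii) together with (iii)$\Rightarrow$(ii)$\Rightarrow$(i), leaning on the two uniqueness results already available: uniqueness of the conformal vector with prescribed $\nu_{(1)}$ (Prop.~\ref{uniquestress}) and uniqueness up to a scalar of the invariant bilinear form on a simple VOA (Prop.~\ref{factsIB-forms}(iii)). First I record the standing facts: by Prop.~\ref{factsIB-forms}(iv) $V$ is simple, by Remark~\ref{CFTtypeRemark} it is of CFT type, any (anti)linear automorphism $g$ satisfies $g\Omega=\Omega$ and $gT=Tg$ (hence $gL_{-1}g^{-1}=L_{-1}$), and I normalise the form so that $(\Omega,\Omega)=1$. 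The computational backbone is a \emph{transport lemma}: since $g(\nu)_{(m)}=g\,\nu_{(m)}\,g^{-1}$, the vector $g(\nu)$ is again a conformal vector, and the form $(g^{-1}\cdot,g^{-1}\cdot)$ in the linear case (resp. $\overline{(g^{-1}\cdot,g^{-1}\cdot)}$ in the antilinear case) is non-degenerate and invariant for $g(\nu)$. I would check this directly from the defining identity (\ref{invbil}), substituting $g^{-1}a,g^{-1}b,g^{-1}c$ and using $L_n g^{-1}=g^{-1}\big(g L_n g^{-1}\big)$; crucially this needs no assumption on how $g$ acts on the grading.

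For the three routine implications I argue as follows. (iii)$\Rightarrow$(i): if $g(\nu)=\nu$ then $gL_0g^{-1}=g(\nu)_{(1)}=L_0$, so $g$ preserves each $V_n=\mathrm{Ker}(L_0-n1_V)$. (i)$\Rightarrow$(iii): $g(\nu)$ is a conformal vector with $g(\nu)_{(1)}=gL_0g^{-1}=L_0=\nu_{(1)}$ by (i), and by the transport lemma it admits a non-degenerate invariant form, so Prop.~\ref{uniquestress} forces $g(\nu)=\nu$. (iii)$\Rightarrow$(ii): when $g(\nu)=\nu$, $g$ commutes with every $L_n$, hence $(g\cdot,g\cdot)$ (resp. $\overline{(g\cdot,g\cdot)}$) is again invariant for $\nu$; by simplicity and Prop.~\ref{factsIB-forms}(iii) it equals $\alpha(\cdot,\cdot)$ for some $\alpha\in\CC$, and evaluating at $\Omega$ with $g\Omega=\Omega$ and $(\Omega,\Omega)=1$ gives $\alpha=1$, which is precisely the invariance asserted in (ii).

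The substantive step is (ii)$\Rightarrow$(i). From (ii) one gets $(g^{-1}\cdot,g^{-1}\cdot)=(\cdot,\cdot)$ (resp. its complex conjugate), so by the transport lemma the \emph{same} form $(\cdot,\cdot)$ is invariant for both $\nu$ and $g(\nu)$. Writing both invariances through (\ref{invbil}) and comparing (the left-hand sides coincide, being built from the vertex-algebra operators $Y(a,z)$), non-degeneracy in the first slot together with injectivity of the state--field correspondence yields the operator identity $e^{zL_1}(-z^{-2})^{L_0}a=e^{z\tilde L_1}(-z^{-2})^{\tilde L_0}a$ for all $a$, where $\tilde L_n=g(\nu)_{(n+1)}=gL_ng^{-1}$. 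Evaluating the most negative power of $z$ on a homogeneous $a\in V_n$, and noting that the $e^{z\tilde L_1}$ tail only raises the power of $z$, forces $a$ to lie in the top $\tilde L_0$-eigenspace, i.e. $a\in g(V_n)$; hence $V_n\subseteq g(V_n)$ and finite-dimensionality gives equality, which is (i). This closes the cycle.

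I expect (ii)$\Rightarrow$(i) to be the main obstacle: preservation of the form is merely an isometry condition and carries no grading information a priori, so the vertex-algebra input must enter through the statement that a non-degenerate invariant form can be invariant for at most one conformal vector. That in turn is extracted from the leading coefficient of $e^{zL_1}(-z^{-2})^{L_0}$, and in the antilinear case one must keep careful track of the complex conjugations (and of the fact that $g(\nu)$ then carries central charge $\bar c$), which is precisely why I fix the normalisation $(\Omega,\Omega)=1$ before comparing scalars in (iii)$\Rightarrow$(ii).
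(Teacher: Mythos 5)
Your cycle (i)$\Rightarrow$(iii)$\Rightarrow$(ii) coincides with the paper's: (i)$\Rightarrow$(iii) transports the form to get a non-degenerate invariant form for the conformal vector $g(\nu)$ with $g(\nu)_{(1)}=\nu_{(1)}$ and invokes Prop.~\ref{uniquestress}, and (iii)$\Rightarrow$(ii) uses uniqueness of the normalized invariant form together with $g(\Omega)=\Omega$. Where you genuinely diverge is (ii)$\Rightarrow$(i). The paper's argument is a two-line computation: since $g$ and $g^{-1}$ commute with $T=L_{-1}$ and the form satisfies $(Ta,b)=(a,L_1b)$, preservation of the form gives $(a,L_1g(b))=(Tg^{-1}(a),b)=(a,g(L_1b))$, whence $gL_1=L_1g$ by non-degeneracy and then $gL_0=L_0g$ from $L_0=\tfrac12[L_1,L_{-1}]$. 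You instead re-derive, from scratch, the statement that a single non-degenerate form cannot be invariant for two distinct conformal structures, by comparing the opposite vertex operators $Y(e^{zL_1}(-z^{-2})^{L_0}a,z^{-1})$ and $Y(e^{z\tilde L_1}(-z^{-2})^{\tilde L_0}a,z^{-1})$ and then reading off the grading from the most negative power of $z$. This works, but the step you dispatch with ``injectivity of the state--field correspondence'' is more delicate than you suggest: the identity of fields you obtain is $Y(u(z),z^{-1})=Y(v(z),z^{-1})$ where the \emph{state} itself depends on $z$, so you cannot set $z=0$ after applying to $\Omega$; you must instead note that applying to $\Omega$ yields $e^{z^{-1}T}u(z)=e^{z^{-1}T}v(z)$ and check that $e^{-z^{-1}T}$ makes sense and inverts this (each coefficient is a finite sum because $L_1$ is locally nilpotent on $u(z)$, $v(z)$), or argue coefficient by coefficient. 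With that repair your route is sound, and it has the mild virtue of making explicit that the non-degenerate form pins down the whole M\"obius structure, not just $L_0$; but it buys nothing the paper's shorter $T$/$L_1$ adjointness argument does not already give, and it forces you into the formal-series and antilinear-conjugation bookkeeping you yourself flag as the danger zone.
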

\begin{proof}
(i) $\Rightarrow$ (iii). If $g$ is grading preserving, then $g(\nu)$ 
is a conformal vector such that $g(\nu)_{(1)}=\nu_{(1)}$ and 
$(g(\cdot), g(\cdot))$ (or $\overline{(g(\cdot),g(\cdot))}$,
in the antilinear case) is a non-degenerate invariant bilinear form 
for the corresponding VOA structure. Hence by Prop. 
\ref{uniquestress} $g(\nu)=\nu.$

(iii) $\Rightarrow$ (ii).  If $g(\nu)=\nu$ then 
$(g(\cdot),g(\cdot))$ (or $\overline{(g(\cdot),g(\cdot))}$, in 
the antilinear case) is an invariant bilinear form on $V$ and hence by 
Prop. \ref{factsIB-forms}  
it must coincide with $(\cdot,\!\cdot)$ because $g(\Omega)=\Omega$ 
and $(\Omega,\Omega)\neq 0$ by non-degeneracy.

(ii) $\Rightarrow$ (i). Every vertex algebra 
automorphism or antilinear automorphism $g$ commutes with $T=L_{-1}$. If 
$g$ preserves $(\cdot,\!\cdot)$ then also its inverse does so, and as $g^{-1}$ also 
commutes with $T = L_{-1}$, we have
\begin{equation}
(a,L_1g(b))=(Ta,g(b))=(Tg^{-1}(a),b)=(g^{-1}(a),L_1b)=
(a,g(L_1b))
\end{equation}
for all $a,b\in V$. Thus by the non-degeneracy of $(\cdot,\!\cdot)$
it follows that $L_1g(b)=g(L_1b)$; i.e.\! that $g$
commutes with $L_1$. But then it also commutes with $L_0 =
\frac{1}{2}[L_1,L_{-1}]$ and hence it is grading preserving.
\end{proof}

In the following we shall say that a vertex algebra automorphism, resp. 
antilinear automorphism, 
$g$ of a vertex operator algebra $V$ with conformal vector $\nu$ is a 
{\bf VOA automorphism}, resp. {\bf VOA antilinear automorphism}, if 
$g(\nu) =\nu$ and we shall denote by ${\rm Aut}(V)$ the group 
of VOA automorphisms of $V$. 

The group ${\rm Aut}(V)$ has a natural 
topology making it into a metrizable topological group. 
First note that the group $\prod_{n\in \ZZ}{\rm GL}(V_n)$ 
of grading preserving vector space automorphisms of $V$ is the direct 
product of the finite-dimensional Lie groups ${\rm GL}(V_n)$, $n\in \ZZ$. 
Hence $\prod_{n\in \ZZ}{\rm GL}(V_n)$ with the 
product topology is a metrizable topological group. Now, ${\rm Aut}(V)$ is 
a subgroup of $\prod_{n\in \ZZ}{\rm GL}(V_n)$ and hence it becomes a 
topological group when endowed with the relative topology. 

A sequence $g_n \in \prod_{n\in \ZZ}{\rm GL}(V_n)$ converges to 
$g \in \prod_{n\in \ZZ}{\rm GL}(V_n)$ if and only if for all 
$a\in V$ and all $b'\in V'$  the sequence of complex numbers
$\langle b',g_na \rangle $ converges to $\langle b', ga \rangle $. 
Now let $g_n$ be a sequence in ${\rm Aut}(V)$ converging to an element 
$g$ of $\prod_{n\in \ZZ}{\rm GL}(V_n)$. Then for all $a,b\in V$, 
$c'\in V'$ and $m\in \ZZ$ we have 
$\langle c', g(a_{(m)}b)\rangle$ $= \lim_{n\rightarrow \infty}
\langle c', g_n(a_{(m)}b)\rangle$ $= \lim_{n\rightarrow \infty}
\langle c', g_n(a)_{(m)}g_n(b)\rangle$ 
$=\langle c', g(a)_{(m)}g(b)\rangle$. 
It follows that $g(a_{(m)}b) = g(a)_{(m)}g(b)$ 
and  $g\in {\rm Aut}(V)$.  Thus ${\rm Aut}(V)$ is a 
closed subgroup of $\prod_{n\in \ZZ}{\rm GL}(V_n)$. 

\section{Unitary vertex operator algebras}
\label{SectionUnitaryVOA}
In this section we define and study the notion of unitary VOA. For closely related material cf. \cite{DL2014}.
\subsection{Definition of unitarity}
Now let $V$ be a VOA with conformal vector $\nu$, and let $(\cdot|\cdot)$ 
be a scalar product on $V$, namely a positive-definite sesquilinear form 
(linear in the second variable). 
We say that the scalar product is normalized if $(\Omega|\Omega)=1$ 
and we say that $(\cdot|\cdot)$ is invariant if there is 
a VOA antilinear automorphism $\theta$ of $V$ 
such that  $(\theta \cdot | \cdot)$ is an invariant bilinear form on $V$. 
In this case we will say that $\theta$ is a {\bf PCT operator}
associated with $(\cdot|\cdot)$. 

Now let $\nu$ be the conformal vector of $V$ and let 
$Y(\nu,z)=\sum_{n\in \ZZ}L_nz^{-n-2}$ be the corresponding 
energy-momentum field. Moreover, let $(\cdot|\cdot)$ be a normalized 
invariant scalar product on $V$ with an associated PCT operator $\theta$.   
Since $\theta(\nu)=\nu$, $\theta$ commutes with all $L_n$, $n\in \ZZ$. 
It follows from 
Eq. (\ref{invbil2}) that, for all $a,b, c \in V$ and $n\in \ZZ$, we have
\begin{equation}
\label{INVadjoint}
(a_nb|c)=(\theta (\theta^{-1}a)_n\theta^{-1}b|c)=
(b|(\theta^{-1}e^{L_1}(-1)^{L_0}a)_{-n}c).
\end{equation} 
In particular if $a$ is quasi-primary we have
\begin{equation}
\label{QPadoint}
(a_nb|c)= (-1)^{d_a}(b|(\theta^{-1}a)_{-n}c),
\end{equation}  
for all $b, c \in V$ and $n\in \ZZ$. In particular
\begin{equation} 
\label{uniVir}
(L_na|b)=(a|L_{-n}b),
\end{equation}
for all $a, b \in V$ and $n\in \ZZ$,
i.e. the corresponding representations of the Virasoro algebra 
and of its M\"{o}bius subalgebra $\CC\{L_{-1}, L_0, L_1\}$ 
(isomorphic to $\mathfrak{sl}_2(\CC)$) are unitary and hence completely 
reducible. In particular we have $V_{n}=0$ for $n<0$. 

\begin{proposition}
\label{thetaProperties}
Let $(\cdot|\cdot)$ be a normalized invariant scalar product on the 
vertex operator algebra $V$. Then there exists a unique PCT operator 
$\theta$ associated with $(\cdot|\cdot)$. Moreover, $\theta$ is an 
involution i.e. $\theta^2=1_V$ and it is 
is antiunitary 
i.e. $(\theta a|\theta b) =(b|a)$ for all $a,b \in V$.
\end{proposition}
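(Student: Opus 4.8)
The existence of a PCT operator is part of the hypothesis that $(\cdot|\cdot)$ be invariant, so the whole content of the statement is uniqueness together with the two algebraic properties; my plan is to derive all three from one mechanism, namely reading Eq.~(\ref{QPadoint}) as an identity for the $(\cdot|\cdot)$-adjoint of the modes of quasi-primary vectors. First I would record the structural facts to be used repeatedly. Since $\theta(\nu)=\nu$, $\theta$ commutes with every $L_n$; in particular it commutes with $L_0$ and hence preserves each homogeneous subspace $V_n$, and it commutes with $L_1$ and $L_{-1}=T$, so it carries quasi-primary vectors to quasi-primary vectors of the same conformal weight. By unitarity, Eq.~(\ref{uniVir}), the representation of the M\"obius subalgebra $\CC\{L_{-1},L_0,L_1\}\cong\mathfrak{sl}(2,\CC)$ on $V$ is unitary and therefore completely reducible into lowest-weight modules; consequently $V$ is linearly spanned by the vectors $L_{-1}^k v$ with $v$ quasi-primary and $k\in\ZZ_{\geq0}$. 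This gives the reduction principle I will invoke twice: any (linear or antilinear) operator commuting with $L_{-1}$ that agrees with a second such operator on all quasi-primary vectors agrees with it on all of $V$.

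The key observation is that Eq.~(\ref{QPadoint}) says precisely that, for quasi-primary $a\in V$ of weight $d_a$, the endomorphism $(-1)^{d_a}(\theta^{-1}a)_{-n}$ is the $(\cdot|\cdot)$-adjoint of the endomorphism $a_n$; by positive-definiteness such an adjoint, when it exists as an endomorphism of $V$, is unique, and the adjoint operation is involutive. For uniqueness I would take two PCT operators $\theta_1,\theta_2$ for the same scalar product: both satisfy Eq.~(\ref{QPadoint}), so for each quasi-primary $a$ and each $n$ the endomorphisms $(\theta_1^{-1}a)_{-n}$ and $(\theta_2^{-1}a)_{-n}$ are both equal to $(-1)^{d_a}$ times the adjoint of $a_n$, hence equal; thus $\big((\theta_1^{-1}-\theta_2^{-1})a\big)_{-n}=0$ for all $n$, and since $(\theta_1^{-1}-\theta_2^{-1})a$ is homogeneous this forces it to vanish, because a homogeneous vector all of whose modes are zero is zero. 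Hence $\theta_1^{-1}$ and $\theta_2^{-1}$ agree on quasi-primary vectors, and the reduction principle gives $\theta_1^{-1}=\theta_2^{-1}$, i.e. $\theta_1=\theta_2$.

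For the involution property I would iterate Eq.~(\ref{QPadoint}). Writing $a_n^{\dagger}$ for the $(\cdot|\cdot)$-adjoint, Eq.~(\ref{QPadoint}) reads $a_n^{\dagger}=(-1)^{d_a}(\theta^{-1}a)_{-n}$. Applying the same formula to the quasi-primary vector $\theta^{-1}a$ (of the same weight $d_a$) with $-n$ in place of $n$ gives $\big((\theta^{-1}a)_{-n}\big)^{\dagger}=(-1)^{d_a}(\theta^{-2}a)_{n}$; on the other hand, involutivity of the adjoint gives $\big((\theta^{-1}a)_{-n}\big)^{\dagger}=(-1)^{d_a}(a_n^{\dagger})^{\dagger}=(-1)^{d_a}a_n$. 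Comparing, $(\theta^{-2}a)_n=a_n$ for all $n$, so $\theta^{-2}a-a$ is a homogeneous vector with vanishing modes and hence zero; thus $\theta^{-2}=1_V$ on quasi-primary vectors and, by the reduction principle, $\theta^2=1_V$ on all of $V$. Finally, antiunitarity is immediate: the invariant bilinear form $(\theta\,\cdot\,,\cdot)$ is symmetric by Proposition~\ref{factsIB-forms}(i), i.e. $(\theta a|b)=(\theta b|a)$; substituting $\theta b$ for $b$ and using $\theta^2=1_V$ yields $(\theta a|\theta b)=(\theta^2 b|a)=(b|a)$.

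I expect the only genuinely delicate point to be the passage from quasi-primary vectors to all of $V$: one must be sure that unitarity of the Virasoro (hence M\"obius) representation really does make the $\mathfrak{sl}(2,\CC)$-module $V$ a sum of lowest-weight modules generated by quasi-primary vectors, so that commuting with $L_{-1}$ is enough to propagate the identities off the quasi-primary vectors. Everything else is the formal bookkeeping of adjoints together with the elementary fact that a homogeneous vector whose modes all vanish is zero.
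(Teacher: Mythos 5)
Your proof is correct, and it reaches the same conclusions by a route that genuinely differs from the paper's in one respect. The paper works directly with Eq.~(\ref{INVadjoint}) for arbitrary $a\in V$: uniqueness of the adjoint forces $(\theta^{-1}e^{L_1}(-1)^{L_0}a)_{n}=(\tilde{\theta}^{-1}e^{L_1}(-1)^{L_0}a)_{n}$ for all $n$, the vanishing-of-all-modes criterion (stated for general, not necessarily homogeneous, vectors) plus surjectivity of $e^{L_1}(-1)^{L_0}$ give uniqueness, and the involution property follows from the operator identity $(e^{L_1}(-1)^{L_0})^2=1_V$, proved via Eq.~(\ref{L_0cova_n}). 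You instead restrict to the quasi-primary case, Eq.~(\ref{QPadoint}), where $e^{L_1}(-1)^{L_0}$ collapses to the sign $(-1)^{d_a}$, and then propagate the resulting identities from quasi-primary vectors to all of $V$ using the fact that $V$ is spanned by $L_{-1}^{k}v$ with $v$ quasi-primary (complete reducibility of the unitary M\"obius representation, available here by Eq.~(\ref{uniVir})) together with the commutation of the relevant operators with $L_{-1}$. The trade-off is that you avoid the computation $(-1)^{L_0}e^{L_1}(-1)^{L_0}=e^{-L_1}$ and the surjectivity of $e^{L_1}(-1)^{L_0}$, at the price of invoking the spanning statement, which the paper itself only deploys later (in the proof of Thm.~\ref{unitarityTheorem}); your iteration on quasi-primaries just produces $(-1)^{2d_a}=1$, which is the quasi-primary shadow of the paper's operator identity. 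The antiunitarity argument is identical. All supporting facts you use --- that $\theta$ commutes with every $L_n$ because $\theta(\nu)=\nu$, that a homogeneous vector with vanishing modes is zero, that adjoints on $V$ are unique and involutive by positive-definiteness, and the symmetry of invariant bilinear forms from Prop.~\ref{factsIB-forms}~(i) --- are established before the proposition, so there is no circularity, and your one flagged worry (complete reducibility into lowest-weight modules) is indeed the standard consequence of unitarity and positivity of $L_0$.
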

\begin{proof} Assume that $\tilde{\theta}$ is another PCT operator 
associated with $(\cdot|\cdot)$. Then it follows from 
Eq. (\ref{INVadjoint}) that 
$(\theta^{-1}e^{L_1}(-1)^{L_0}a)_{n} = 
(\tilde{\theta}^{-1}e^{L_1}(-1)^{L_0}a)_{n}$ for all $a\in V$ and 
all $n \in \ZZ$. Hence (see Subsect. \ref{subsect.ConformalVA})
$\theta^{-1}e^{L_1}(-1)^{L_0}a = \tilde{\theta}^{-1}e^{L_1}(-1)^{L_0}a$
for all $a\in V$. Since $e^{L_1}(-1)^{L_0}$ is surjective, it follows 
that $\theta=\tilde{\theta}$. Now, from Eq. (\ref{INVadjoint}) 
it also follows that $a= (e^{L_1}(-1)^{L_0})^2\theta^{-2}a$ for 
all $a\in V$. It follows from Eq. (\ref{L_0cova_n}) that 
$(-1)^{L_0}e^{L_1}(-1)^{L_0}= e^{-L_1}$ and hence 
$(e^{L_1}(-1)^{L_0})^2=1$. Thus $\theta^2=1$.  
Finally, given $a, b \in V$, the symmetry of the invariant bilinear 
form $(\theta \cdot|\cdot)$ implies that 
$(\theta a| \theta b)=(\theta^2 b|a)=(b|a)$ and hence $\theta$ is 
antiunitary. 
\end{proof}

Note that if $(\cdot|\cdot)$ is an invariant normalized scalar product on 
the VOA $V$ and $\theta$ is the corresponding PCT operator then the 
invariant bilinear 
form $(\theta \cdot|\cdot)$ is obviously normalized and non-degenerate. 
Hence as a $V$-module $V$ is equivalent to the contragredient module 
$V'$. 
Note also that  as a consequence of Prop. \ref{thetaProperties}, 
$\theta$ is determined by $(\cdot|\cdot)$. Conversely, if $V$ is simple, we have 
$(\cdot|\cdot) = (\theta \cdot, \cdot)$ where $(\cdot,\cdot)$ is the 
unique normalized invariant bilinear form on $V$ and hence
$(\cdot|\cdot)$ is determined by $\theta$. 

\begin{definition} {\rm A {\bf unitary vertex operator algebra} is 
a pair $(V, (\cdot |\cdot))$ where $V$ is a vertex operator 
algebra and $(\cdot|\cdot)$ is a normalized invariant scalar product
on $V$.}
\end{definition}   
We have the following:

\begin{proposition}
\label{simpleunitary}
Let $(V,(\cdot|\cdot))$ be a unitary $VOA$.
Then $V$ is simple if and only if $V_0=\CC \Omega$. In particular every 
simple unitary VOA is of CFT type.
\end{proposition}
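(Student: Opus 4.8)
The plan is to prove the two implications separately, disposing of the converse quickly via the machinery on invariant bilinear forms and reserving the real work for the direct implication. For the converse, suppose $V_0=\CC\Omega$. Since $(\cdot|\cdot)$ is a normalized invariant scalar product, its associated PCT operator $\theta$ makes $(\theta\cdot|\cdot)$ an invariant bilinear form, and this form is non-degenerate: if $(\theta a|b)=0$ for all $b$ then $\theta a=0$ by positivity of $(\cdot|\cdot)$, whence $a=0$ since $\theta$ is a bijection. Thus $V$ carries a non-degenerate invariant bilinear form and satisfies $V_0=\CC\Omega$, so Proposition \ref{factsIB-forms}(iv) gives that $V$ is simple. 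The final assertion then follows: once we know (from the direct implication) that a simple unitary VOA has $V_0=\CC\Omega$, Remark \ref{CFTtypeRemark} immediately yields that $V$ is of CFT type.

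For the direct implication, assume $V$ is simple; I want to show $V_0=\CC\Omega$. First I would record that unitarity forces $V_n=\{0\}$ for $n<0$, as noted after Eq.~(\ref{uniVir}), so that $0$ is the lowest conformal weight and $L_1$ annihilates $V_0$. The crucial step is then to show that $T$ (equivalently $L_{-1}$) kills $V_0$. For $a\in V_0$ I would compute, using the unitarity relation (\ref{uniVir}) and $[L_1,L_{-1}]=2L_0$, that
\begin{equation*}
\|L_{-1}a\|^2 = (a|L_1 L_{-1}a) = (a|(2L_0 + L_{-1}L_1)a) = 0,
\end{equation*}
because $L_0a=0$ and $L_1a\in V_{-1}=\{0\}$. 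Positive-definiteness of the scalar product then gives $L_{-1}a=Ta=0$. This is the heart of the argument and the place where unitarity, rather than mere non-degeneracy, is genuinely used.

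With $Ta=0$ in hand, I would deduce that the vertex operator of a weight-zero vector is constant and central. Indeed, by Remark \ref{RemarkTcommutator} we have $[T,Y(a,z)]=Y(Ta,z)=0$, so every $[T,a_{(n)}]$ vanishes; comparing with Eq.~(\ref{EqT1}), $[T,a_{(n)}]=-na_{(n-1)}$, forces $a_{(n)}=0$ for all $n\neq-1$. Consequently $a_{(j)}=0$ for every $j\geq 0$, and the Borcherds commutator formula (\ref{commutatorformula}) gives $[a_{(-1)},b_{(k)}]=\sum_{j\geq 0}\binom{-1}{j}(a_{(j)}b)_{(-1+k-j)}=0$ for all $b\in V$ and $k\in\ZZ$. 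Thus the grading-preserving operator $a_{(-1)}$ commutes with all vertex operators, i.e. it is an endomorphism of the adjoint $V$-module.

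To conclude, I would invoke simplicity: for a conformal vertex algebra, simplicity of $V$ is equivalent to irreducibility of the adjoint module (submodules being automatically $T$-invariant), so Schur's lemma — valid here because $V$ has countable dimension, exactly as in the proof of Proposition \ref{factsIB-forms}(iii) — shows that $a_{(-1)}=\lambda 1_V$ for some $\lambda\in\CC$. Evaluating at the vacuum gives $a=a_{(-1)}\Omega=\lambda\Omega$, so $V_0=\CC\Omega$. The main obstacle is the weight-zero computation showing $Ta=0$; the remaining steps are formal once one recognizes that a weight-zero vector with $Ta=0$ yields a constant, hence central, field, and that Schur's lemma applies to the irreducible adjoint module.
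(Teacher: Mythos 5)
Your proof is correct and follows essentially the same route as the paper's: the converse via Proposition \ref{factsIB-forms}(iv), and the forward direction by computing $\|L_{-1}a\|^2=0$ for $a\in V_0$, concluding that $Y(a,z)=a_{(-1)}$ is a central grading-preserving endomorphism, and applying Schur's lemma to the irreducible adjoint module. The only differences are cosmetic: you derive $Y(a,z)=a_{(-1)}$ from translation covariance and the centrality from the Borcherds commutator formula, where the paper cites \cite[Remark 4.4b]{Kac} and locality, respectively.
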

\begin{proof} Let $a\in V_0$. Then
$$(L_{-1}a|L_{-1}a)=(a|L_1L_{-1}a)=2(a|L_0a)=0.$$
Hence $L_{-1}a=0$ and by \cite[Remark 4.4b]{Kac}, $Y(a,z)=a_{(-1)}$. Thus,
by locality $a_{(-1)}$ commutes with every $b_n$, $n\in \ZZ$,
$b \in V$. Accordingly if $V$ is simple $a_{(-1)}$ is a multiple
of the identity by Schur's lemma  because every VOA has countable dimension, 
see e.g. \cite[Lemma 2.1.3]{CG}. Thus $a \in \CC\Omega$. 
Conversely if $V_0=\CC\Omega$ then $V$ is simple 
by Prop. \ref{factsIB-forms} $(iv)$. 
\end{proof}

\begin{remark}
\label{RemarkV_R}
{\rm Let $(V, (\cdot |\cdot))$ be a unitary VOA unitary 
with PCT operator $\theta$. Then the real subspace 
\begin{equation}
\label{realform}
V_\RR = \{a\in V : \theta a = a \}
\end{equation}
contains the conformal vector $\nu$ and the vacuum vector $\Omega$ 
and inherits from $V$ the structure of a real vertex operator algebra. 
Moreover, $V= V_\RR +iV_\RR$ and $V_\RR \cap iV_\RR =\{0 \}$, 
i.e. $V$ is the complexification of $V_\RR$. Such a real subspace  
is called a real form \cite{matsuo03}. The restriction of 
$(\cdot|\cdot)$ to $V_\RR$ is positive definite real valued invariant $\RR$-bilinear form 
on $V_\RR$ and hence $(\cdot|\cdot)$ is a positive definite  invariant 
Hermitian form on $V$ in the sense of \cite[Sect.1.2]{matsuo03}. 
Conversely let $\tilde{V}$ be vertex operator algebra over $\RR$ with 
a positive definite normalized real valued invariant $\RR$-bilinear form $(\cdot, \cdot)$ (see \cite{Miyamoto} for an interesting class of examples)
and let $V$ be the complexification of $\tilde{V}$. Then,
$(\cdot, \cdot)$ extends uniquely to an invariant scalar product 
$(\cdot | \cdot)$ on the complex vertex operator algebra. Moreover, 
$\tilde{V}$ coincide with the corresponding real form 
$V_\RR$ defined in Eq. (\ref{realform}). } 
\end{remark}

\begin{remark} 
{\rm It is straightforward to show that if $V_1$ and $V_2$ are unitary vertex operator algebras then 
then also $V_1 \otimes V_2$ is unitary.}
\end{remark}

We conclude this section with some examples of unitary VOAs.

\begin{example}
\label{ExampleUnitaryVir}
{\rm The vertex algebra $L(c,0)$ associated with the unitary representation of the Virasoro algebra with central charge
$c$ and lowest conformal energy $0$ is a simple unitary VOA. We call it the unitary Virasoro VOA with central charge $c$. The possible value of $c$ are restricted by unitarity, see Subsect. \ref{SubsectPositiveEnergy}.}
\end{example}

\begin{example} 
\label{ExampleUnitaryg_k}
{\rm Let ${\g}$ be a simple complex Lie algebra and let $V_{\g_k}$ be the conformal vertex algebra associated with the unitary representation of the affine Lie algebra $\hat{\g}$ corresponding to $\g$, having level $k$ and lowest conformal energy $0$. 
Then $V_{\g_k}$  is the simple unitary VOA corresponding to the level $k$ chiral current algebra CFT model associated $\g$. 
}
\end{example} 

\begin{example} 
\label{ExampleUnitaryV_H}
{\rm Let $V_H$ be the Heisenberg vertex operator algebra associated with the unitary representation of the (rank one) Heisenberg Lie algebra with lowest conformal energy $0$. Then $V_H$ is a simple unitary VOA corresponding to the ${\rm U}(1)$ chiral current algebra CFT model (free boson).}
\end{example}

\begin{example} 
\label{ExampleUnitaryLattice}
{\rm Let $L$ be an even positive definite lattice. Then the corresponding lattice VOA $V_L$ is unitary, cf. 
\cite[Prop.2.7]{Miyamoto} and \cite[Thm.4.12]{DL2014}. 
}
\end{example}

\begin{example} 
\label{ExampleUnitaryMoonshine}
{\rm Let $V^\natural$ be the moonshine VOA constructed by Frenkel, Lepowsky and Meurman \cite{FLM}, see also 
\cite{Miyamoto}. Then $V^\natural$ is a simple unitary VOA. ${\rm Aut}(V)$ is the Monster group $\mathbb{M}$, the largest among the 26 sporadic finite simple groups, cf. \cite{FLM}.}
\end{example}

\subsection{An equivalent approach to unitarity}
\label{SubsecEquivUnitary}
The definition of unitarity given in the previous section appears 
to be very natural from the point of view of vertex operator algebras 
theory. In this subsection we will show that it is natural also from 
the point of view of quantum field theory (QFT). To simplify the 
exposition we shall consider in detail only the case of vertex operator algebras 
$V$ with $V_{0}=\CC\Omega$.

From the QFT point of view, in agreement with 
Wightman axioms \cite{StrWight} the basic requirements for unitarity 
should reflect the following properties: 

\noindent{\em (1) The spacetime symmetries act unitarily.}

\noindent{\em  (2) The adjoints of  local fields are local.}

To give a precise formulation of these requirements we need some 
preliminaries.  
Let $V$ be a vertex operator algebra with energy-momentum field 
$Y(\nu,z)=\sum_{n\in \ZZ}L_nz^{-n-2}$ and let $(\cdot|\cdot)$
be a normalized scalar product on $V$. We say that the pair  
$(V,(\cdot|\cdot))$ has {\bf unitary M\"{obius} symmetry} if 
for all $a,b \in V$
\begin{equation} 
(L_na|b)=(a|L_{-n}b), \quad n=-1,0, 1.
\end{equation}
Now let $A \in {\rm End}(V)$. We say that $A$ have an adjoint on $V$ 
(with respect to $(\cdot|\cdot)$) if there exists $A^+ \in {\rm End}(V)$ 
such that 
\begin{equation}
(a|Ab)=(A^+a|b),
\end{equation}
for all $a,b \in V$. Clearly if $A^+$ exists then it is unique and we say 
that $A^+$ is the adjoint of $A$ on $V$.  
If $\H_V$ denotes the Hilbert space completion of $(V,(\cdot|\cdot))$
then each $A \in {\rm End}(V)$ may be considered as a densely defined 
operator on $\H_V$. Then $A^+$ exists if and only if the domain of  
Hilbert space adjoint $A^*$ of $A$ contains $V$ and in this case 
we have $A^+ \subset A^*$, i.e. $A^+=A^*\restriction_V$.  

It is easy to see that the set of elements in ${\rm End V}$ having an 
adjoint on $V$ is a subalgebra of ${\rm End V}$  containing the identity 
$1_V$ and closed under the operation $A\mapsto A^+$. In fact if 
$A,B \in {\rm End V}$ admit an adjoint on $V$ then, for all
$\alpha, \beta \in \CC$,  
$(\alpha A + \beta B)^+=\overline{\alpha} A^+ + \overline{\beta} B^+ $, 
$(AB)^+=B^+A^+$ and $A^{++} \equiv (A^+)^+=A$.  

\begin{lemma}
\label{adjointLemma} 
Let $(V,(\cdot|\cdot))$ have  unitary M\"{obius} 
symmetry. Then, for any $a\in V$ and $n\in\ZZ$, the adjoint 
$a_n^+$ of $a_n$ on $V$ exists. Moreover, for any $b\in V$ there exists an
$N\in\ZZ_{\geq 0}$ such that if $n\geq N$ then $a^+_{-n} b =0$ 
\end{lemma}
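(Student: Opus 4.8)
The plan is to exploit the fact that, under unitary M\"obius symmetry, the $L_0$-grading of $V$ is orthogonal with finite-dimensional pieces, so that the modes $a_n$ are graded operators whose adjoints can be built blockwise. First I would record the structural facts I need. Taking $n=0$ in the M\"obius relation gives $(L_0a|b)=(a|L_0b)$, so $L_0$ is symmetric; hence the eigenspaces $V_m=\mathrm{Ker}(L_0-m1_V)$ are mutually orthogonal, $(V_k|V_l)=0$ for $k\neq l$, each $V_m$ is finite-dimensional by the VOA axioms, and $V_m=\{0\}$ for $m$ below some $m_0\in\ZZ$. On each $V_m$ the scalar product is positive definite, hence non-degenerate. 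Since $V=\bigoplus_m V_m$ with $V_\alpha=\{0\}$ for $\alpha\notin\ZZ$, the modes $a_n$ are defined for every $a\in V$ by linearity, and it suffices to treat homogeneous $a\in V_{d_a}$: the general case follows because every $a$ is a finite sum of homogeneous vectors and the set of endomorphisms admitting an adjoint on $V$ is closed under sums and under $A\mapsto A^+$.

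For the existence of $a_n^+$ I would invoke Eq.~(\ref{[L_0,a_n]}), namely $[L_0,a_n]=-na_n$, which shows that $a_n$ maps $V_m$ into $V_{m-n}$ for every $m$; thus $a_n$ is a graded operator lowering the $L_0$-degree by $n$. Restricted to the finite-dimensional inner product space $V_m$, the linear map $a_n\colon V_m\to V_{m-n}$ possesses a genuine Hilbert-space adjoint $V_{m-n}\to V_m$, uniquely determined by the non-degeneracy of $(\cdot|\cdot)$ on $V_m$. Assembling these blockwise adjoints over all $m$ yields an endomorphism $a_n^+$ of $V$ raising the degree by $n$, and a short computation using orthogonality of the grading confirms that $(\phi|a_n\psi)=(a_n^+\phi|\psi)$ for all $\phi,\psi\in V$. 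Hence $a_n^+$ is the adjoint of $a_n$ on $V$, and by the reduction above the same holds for arbitrary $a\in V$.

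For the second assertion I would simply track degrees. Since $a_n^+$ raises the $L_0$-degree by $n$, the operator $a_{-n}^+$ lowers it by $n$: for homogeneous $b\in V_q$ one has $a_{-n}^+b\in V_{q-n}$. Because the grading is bounded below, $V_{q-n}=\{0\}$ as soon as $q-n<m_0$, so $a_{-n}^+b=0$ for all $n>q-m_0$; for general $b$, writing it as a finite sum of homogeneous components with top degree $Q$ gives $a_{-n}^+b=0$ for all $n\geq N\equiv Q-m_0+1$. I expect no genuine obstacle in this lemma: its entire content is the blockwise construction of the adjoint, and the only point demanding care is verifying that the piecewise-defined $a_n^+$ satisfies the defining adjoint identity globally, which is exactly where orthogonality of the $L_0$-grading is used.
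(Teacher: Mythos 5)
Your proof is correct and follows essentially the same route as the paper's: orthogonality of the finite-dimensional $L_0$-eigenspaces from unitary M\"obius symmetry, the blockwise construction $a_n^+=\bigoplus_k (a_n\restriction_{V_k})^*$ using $a_n(V_k)\subset V_{k-n}$, and the degree count $a_{-n}^+(V_k)\subset V_{k-n}$ together with the lower bound on the grading for the vanishing statement. No substantive difference.
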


\begin{proof}
From unitary M\"{obius} symmetry it follows that the finite-dimensional  
subspaces 
$V_n={\rm Ker}(L_0-n1_V)$ of $V$ are pairwise orthogonal. 
Since $a_n(V_k)\subset V_{k-n}$, we may view $a_n\restriction_{V_k}$ as an
operator between two finite-dimensional scalar product spaces, and
so it has a well-defined adjoint 
$(a_n\restriction_{V_k})^* \in {\rm Hom}(V_{k-n},V_k)$. It is easy to check that
\begin{equation}
a_n^+ \equiv {\bigoplus}_{k\in\ZZ}(a_n\restriction_{V_k})^*
\end{equation}
is indeed the adjoint of $a_n$ on $V$ (and so it exists). From its
actual form we also see that $a^+_{-n}(V_k)\subset V_{k-n}$ which
shows that indeed for any $b\in V$ there exists an $N\in\ZZ$ such
that if $n\geq N$ then $a^+_{-n} b =0$. 
\end{proof}

Now let $(V,(\cdot|\cdot))$ have  unitary M\"{o}bius symmetry. 
From the previous lemma it follows that for every $a\in V$ the formal 
series 
\begin{equation}
\label{EqAdjointVO}
Y(a,z)^+ \equiv \sum_{n\in \ZZ}a_{(n)}^+ z^{n+1}= 
\sum_{n\in \ZZ}a_{(-n-2)}^+z^{-n-1} 
\end{equation}  
is well defined and gives a field on $V$ i.e., for every $b \in V$, 
$a_{(-n-2)}^+b=0$ if $n$ is sufficiently large. 

For $a \in V$ we say that the vertex operator $Y(a,z)$ has a 
{\bf local adjoint} if for every $b\in V$ the fields $Y(a,z)^+$, $Y(b,z)$ 
are mutually local i.e. 
\begin{equation} 
(z-w)^N \left[Y(a,z)^+, Y(b,w) \right]=0,
\end{equation}
for sufficiently large $N \in \ZZ_{\geq 0}$ and we denote by 
$V^{(\cdot |\cdot)}$ the subset of $V$ whose elements are the vectors
$a\in V$ such that $Y(a,z)$ has a local adjoint. 

\begin{remark} {\rm The adjoint vertex operator $Y(a,z)^+$ should not be confused with the adjoint 
vertex operator $Y'(a,z)$ in the definition of the contragredient module $V'$ in Subsect. 
\ref{subsectionVOA&IBforms}. }

\end{remark}

\begin{lemma}
\label{adjointcommutatorLemma} 
For $a, b\in V$, 
$Y(a,z)^+$ and $Y(b,z)$ are mutually local if and only if 
$Y(a,z)$ and $Y(b,z)^+$ are mutually local. 
\end{lemma}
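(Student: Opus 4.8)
The plan is to isolate the single mechanism behind both directions of the ``if and only if''. For any field $X(z)=\sum_{n}X_{(n)}z^{-n-1}$ on $V$ all of whose modes admit an adjoint on $V$, define its adjoint field $X(z)^{+}\equiv\sum_{n}X_{(n)}^{+}z^{n+1}=\sum_{n}X_{(-n-2)}^{+}z^{-n-1}$, exactly as in Eq.~(\ref{EqAdjointVO}). Since the index map $n\mapsto -n-2$ is an involution of $\ZZ$ and $A^{++}=A$, the passage $X\mapsto X^{+}$ is an involution on the space of such fields. I would then establish the following claim: two such fields $X,W$ satisfy $(z-w)^{N}[X(z),W(w)]=0$ if and only if $(z-w)^{N}[X(z)^{+},W(w)^{+}]=0$, \emph{with the same} $N$. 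Granting this, the lemma is immediate upon taking $X(z)=Y(a,z)^{+}$ and $W(z)=Y(b,z)$: then $X^{+}=Y(a,z)$ and $W^{+}=Y(b,z)^{+}$, so mutual locality of $Y(a,z)^{+}$ and $Y(b,w)$ is equivalent to mutual locality of $Y(a,z)$ and $Y(b,w)^{+}$.

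First I would record the algebraic input. By Lemma~\ref{adjointLemma} every mode $a_{(n)}$ and $b_{(m)}$ admits an adjoint on $V$, and, as noted in the discussion preceding it, the operators admitting an adjoint on $V$ form a subalgebra closed under $A\mapsto A^{+}$, with $(AB)^{+}=B^{+}A^{+}$, $A^{++}=A$, and $A\mapsto A^{+}$ antilinear. In particular each $[X_{(n)},W_{(m)}]$ admits an adjoint, and
\[
[X_{(n)}^{+},W_{(m)}^{+}]=(W_{(m)}X_{(n)})^{+}-(X_{(n)}W_{(m)})^{+}=-[X_{(n)},W_{(m)}]^{+}.
\]

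For the claim I would expand both commutators into modes using $(z-w)^{N}=\sum_{k=0}^{N}\binom{N}{k}(-1)^{N-k}z^{k}w^{N-k}$. On the one side, $(z-w)^{N}[X(z),W(w)]=0$ is equivalent to the vanishing, for all $p,q\in\ZZ$, of
\[
C_{p,q}=\sum_{k=0}^{N}\binom{N}{k}(-1)^{N-k}\,[X_{(k+p)},W_{(N-k+q)}].
\]
On the other side, the coefficients controlling $(z-w)^{N}[X(z)^{+},W(w)^{+}]$ are, after applying the identity above and the substitution $k\mapsto N-k$ in the binomial sum, equal to $-(-1)^{N}C_{p,q}^{+}$ up to a bijective relabeling of the index pair $(p,q)$. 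Since $A\mapsto A^{+}$ is injective, $C_{p,q}^{+}=0$ for all $p,q$ is equivalent to $C_{p,q}=0$ for all $p,q$; as the relabeling is a bijection of $\ZZ^{2}$, this yields $(z-w)^{N}[X,W]=0$ if and only if $(z-w)^{N}[X^{+},W^{+}]=0$, which is the claim.

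The hard part will be purely the index bookkeeping: keeping straight the sign-reversed exponents ($z^{n+1}$ in $Y(a,z)^{+}$ versus $z^{-n-1}$ in $Y(a,z)$) and carrying out the substitution $k\mapsto N-k$ so that the binomial coefficients realign and the \emph{same} exponent $N$ survives on both sides. What makes the antilinear operation $A\mapsto A^{+}$ harmless here is that all the scalar coefficients $\binom{N}{k}(-1)^{N-k}$ are real, so $A\mapsto A^{+}$ commutes coefficientwise with multiplication by $(z-w)^{N}$. Because the claim is symmetric under the involution $X\mapsto X^{+}$ (and under exchanging $X$ and $W$), no separate argument is needed for the converse implication.
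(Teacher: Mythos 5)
Your proposal is correct and in substance it is the paper's own argument: both proofs expand the locality condition into mode-commutator identities, apply the adjoint using $[A^{+},B^{+}]=-[A,B]^{+}$ together with the fact that $A\mapsto A^{+}$ is an injective involution commuting with the real binomial coefficients, and then relabel the summation and exponent indices so that the same $N$ survives. Your packaging via the general claim that the involution $X\mapsto X^{+}$ preserves the locality order of any pair of fields is a mild abstraction of the paper's direct chain of equivalences, not a different route.
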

\begin{proof} Let $N\in \ZZ_{\geq 0}$. Then 
\begin{eqnarray*}
(z-w)^N \left[ Y(a,z)^+, Y(b,w) \right] = 0 
& \Leftrightarrow & \\
\sum_{j=0}^N  \sum_{(m,n)\in \ZZ^2} \left[a_{(m)}^+, 
b_{(n)}\right]
\left(\begin{matrix} N \\ j \end{matrix}\right)
(-1)^j w^j z^{N-j} z^{m+1} w^{-n-1} = 0 
& \Leftrightarrow & \\
\forall m, n \in\ZZ, \sum_{j=0}^N \left[a_{(m+j-N)}^+, 
b_{(n+j)}\right]
\left(\begin{matrix} N \\ j \end{matrix}\right)
(-1)^j = 0  
& \Leftrightarrow &\\
\forall m, n \in \ZZ, 
\left(\sum_{j=0}^N\left[a_{(m+j-N)}^+,b_{(n+j)}\right]
\left(\begin{matrix} N \\ j \end{matrix}\right)
(-1)^j \right)^+ = 0 
& \Leftrightarrow &
\end{eqnarray*}
\begin{eqnarray*}
\forall m,n\in \ZZ,
\sum_{j=0}^N\left[a_{(m +j-N)},b_{(n+j)}^+\right]
\left(\begin{matrix} N \\ j \end{matrix}\right)
(-1)^{j} = 0 
& \Leftrightarrow & \\
\forall m,n\in \ZZ,
\sum_{j=0}^N\left[a_{(m +j)},b_{(n+j-N)}^+\right]
\left(\begin{matrix} N \\ j \end{matrix}\right)
(-1)^{j}= 0
& \Leftrightarrow & \\
(z-w)^N \left[ Y(b,z)^+, Y(a,w) \right] = 0.
\end{eqnarray*}
\end{proof}

\begin{proposition}
\label{VscalarAlgebra}
 $V^{(\cdot|\cdot)}$ is a vertex subalgebra of $V$. 
\end{proposition}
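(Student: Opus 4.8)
The plan is to verify the three defining properties of a vertex subalgebra for $V^{(\cdot|\cdot)}$: that it is a linear subspace, that it contains $\Omega$, and that it is closed under all the products $a,b\mapsto a_{(n)}b$. The first two are immediate. Since the adjoint on $V$ is unique and satisfies $(\alpha A+\beta B)^+=\overline{\alpha}A^+ +\overline{\beta}B^+$, one has $Y(\lambda a+\mu a',z)^+=\overline{\lambda}Y(a,z)^+ +\overline{\mu}Y(a',z)^+$, and a finite sum of fields each mutually local with $Y(b,z)$ is again mutually local with $Y(b,z)$; hence $V^{(\cdot|\cdot)}$ is a complex linear subspace. Moreover $Y(\Omega,z)=1_V$ gives $Y(\Omega,z)^+=1_V$, which commutes with every $Y(b,z)$, so $\Omega\in V^{(\cdot|\cdot)}$.

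For the main point, fix $a,b\in V^{(\cdot|\cdot)}$, $n\in\ZZ$ and an arbitrary $c\in V$; I must show that $Y(a_{(n)}b,z)^+$ is mutually local with $Y(c,z)$. The key idea is to avoid computing $Y(a_{(n)}b,z)^+$ directly and instead to transfer the adjoint onto $c$: by Lemma \ref{adjointcommutatorLemma} it is equivalent to prove that $Y(a_{(n)}b,z)$ is mutually local with $Y(c,z)^+$. Now I would use the standard identity $Y(a_{(n)}b,z)=Y(a,z)_{(n)}Y(b,z)$, expressing the vertex operator of an $n$-th product as the $n$-th product of the corresponding fields (see \cite{Kac}), which reduces the problem to showing that the field $Y(a,z)_{(n)}Y(b,z)$ is mutually local with $Y(c,z)^+$.

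At this point the natural tool is Dong's lemma (see \cite{Kac}): if three fields on $V$ are pairwise mutually local, then the $n$-th product of any two of them is again mutually local with the third. I would apply it to the triple $Y(a,z)$, $Y(b,z)$, $Y(c,z)^+$, the last of which is a genuine field on $V$ by Lemma \ref{adjointLemma}. The three required pairwise localities all hold: $Y(a,z)$ and $Y(b,z)$ are mutually local by the locality axiom for $V$; and by Lemma \ref{adjointcommutatorLemma}, $Y(a,z)$ is mutually local with $Y(c,z)^+$ if and only if $Y(a,z)^+$ is mutually local with $Y(c,z)$, which holds because $a\in V^{(\cdot|\cdot)}$, and likewise for $b$. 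Dong's lemma then yields that $Y(a,z)_{(n)}Y(b,z)=Y(a_{(n)}b,z)$ is mutually local with $Y(c,z)^+$, and a final application of Lemma \ref{adjointcommutatorLemma} gives that $Y(a_{(n)}b,z)^+$ is mutually local with $Y(c,z)$. As $c$ was arbitrary, $a_{(n)}b\in V^{(\cdot|\cdot)}$.

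The step I expect to require the most care is the bookkeeping around Lemma \ref{adjointcommutatorLemma}: one must make sure the equivalences are applied to the correct pairs, with the adjoint sitting on the intended field, and that the $n$-th product identity $Y(a_{(n)}b,z)=Y(a,z)_{(n)}Y(b,z)$ together with the fact that $Y(c,z)^+$ qualifies as an admissible field for Dong's lemma are correctly justified. Once these conventions are pinned down, the whole argument is a clean combination of Dong's lemma with the adjoint-swapping Lemma \ref{adjointcommutatorLemma}, and no explicit manipulation of $Y(a_{(n)}b,z)^+$ is needed.
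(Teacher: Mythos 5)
Your proof is correct and follows essentially the same route as the paper's: reduce via Lemma \ref{adjointcommutatorLemma} to mutual locality of $Y(a_{(n)}b,z)$ with $Y(c,z)^+$, then apply \cite[Prop.~4.4]{Kac} and Dong's lemma to the pairwise mutually local triple $Y(a,z)$, $Y(b,z)$, $Y(c,z)^+$. The only difference is that you spell out the linearity of $V^{(\cdot|\cdot)}$ and the membership of $\Omega$, which the paper dismisses as clear.
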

\begin{proof} It is clear that  $V^{(\cdot|\cdot)}$ is a subspace of 
$V$ containing $\Omega$. Now let $a,b \in V^{(\cdot|\cdot)}$, 
$c \in V$ and $n \in \ZZ$. By Lemma \ref{adjointcommutatorLemma}
$Y(a,z)$, $Y(b,z)$ and $Y(c,z)^+$ are pairwise mutually local fields 
on $V$. Hence by \cite[Prop. 4.4.]{Kac} and Dong's Lemma 
\cite[Lemma 3.2.]{Kac} $Y(a_{(n)}b,z)$ and $Y(c,z)^+$ 
are mutually local. Since $c\in V$ was arbitrary 
Lemma \ref{adjointcommutatorLemma} then shows that $Y(a_{(n)}b,z)$ 
has a local adjoint, i.e. $a_{(n)}b\in V^{(\cdot|\cdot)}$.
\end{proof}

\begin{lemma}
\label{primaryadjointLemma} 
Let $a\in V^{(\cdot|\cdot)}$ be a 
quasi-primary vector. Then there is a quasi-primary vector 
$\overline{a}\in V^{(\cdot|\cdot)}$ with $d_{\overline{a}}=d_a$ and such 
that $z^{-2d_a}Y(a,z)^+ = Y(\overline{a},z)$, equivalently 
$a_n^+=\overline{a}_{-n}$ for all $n \in \ZZ$.
\end{lemma}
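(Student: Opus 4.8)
The plan is to produce $\overline{a}$ explicitly as $\overline{a}=(-1)^{d_a}\theta a$, where $\theta$ is the PCT operator associated with $(\cdot|\cdot)$, and then to verify that this vector has each of the asserted properties. The whole assertion is essentially a repackaging of the invariance identity (\ref{QPadoint}), so the first step is to read off the adjoint modes from it.

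Concretely, since $a$ is quasi-primary, Eq.~(\ref{QPadoint}) gives $(a_n b|c)=(-1)^{d_a}(b|(\theta^{-1}a)_{-n}c)$ for all $b,c\in V$ and $n\in\ZZ$. Because $(a_n b|c)=(b|a_n^+ c)$ by definition of the adjoint on $V$, and this holds for every $b$, we are forced to have $a_n^+=(-1)^{d_a}(\theta^{-1}a)_{-n}$. By Prop.~\ref{thetaProperties} $\theta$ is an involution, so $\theta^{-1}=\theta$; and since $\theta$ fixes $\nu$ it commutes with $L_0$, whence $\theta a\in V_{d_a}$ (here I use that $d_a$ is a real eigenvalue so the antilinearity of $\theta$ causes no problem). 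Thus $\overline{a}=(-1)^{d_a}\theta a$ is homogeneous with $d_{\overline{a}}=d_a$ and satisfies $\overline{a}_{-n}=(-1)^{d_a}(\theta a)_{-n}=a_n^+$ for all $n$, which is exactly the mode form of the claim.

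Next I would check quasi-primarity: $\theta$ commutes with every $L_m$, so $L_1\overline{a}=(-1)^{d_a}\theta(L_1 a)=0$ as $L_1 a=0$. To convert the mode identity $a_n^+=\overline{a}_{-n}$ into the field identity $z^{-2d_a}Y(a,z)^+=Y(\overline{a},z)$, I would substitute into the definition (\ref{EqAdjointVO}) of $Y(a,z)^+$, rewrite each $a_{(m)}^+$ through the homogeneous convention $a_{(m)}=a_{m-d_a+1}$, and reindex the sum; the weight shift by $d_a$ in passing between the two mode conventions is precisely what manufactures the prefactor $z^{2d_a}$. Keeping the conventions $a_{(m)}$ and $a_m$ straight through this reindexing is the only place where care is needed, but the computation is routine.

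Finally, for membership I would take adjoints in $a_n^+=\overline{a}_{-n}$ and use $A^{++}=A$ to get $\overline{a}_m^+=a_{-m}$; the same reindexing then yields $Y(\overline{a},z)^+=z^{2d_a}Y(a,z)$. Since $V$ is a vertex algebra, $Y(a,z)$ is mutually local with $Y(b,z)$ for every $b\in V$, and multiplication by the monomial $z^{2d_a}$ does not affect locality, so $Y(\overline{a},z)^+$ is local with respect to all $Y(b,z)$; this is exactly $\overline{a}\in V^{(\cdot|\cdot)}$. (In fact this argument needs only that $a$ be quasi-primary, not that $a\in V^{(\cdot|\cdot)}$.) I expect the sole obstacle to be notational — applying the weight shift consistently in the reindexing and pulling the real scalar $(-1)^{d_a}$ correctly through the antilinear $\theta$.
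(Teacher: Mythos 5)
Your construction of $\overline{a}$ as $(-1)^{d_a}\theta a$ presupposes that $(\cdot|\cdot)$ is an \emph{invariant} scalar product with an associated PCT operator $\theta$, i.e.\ that $(V,(\cdot|\cdot))$ is already known to be a unitary VOA: Eq.~(\ref{QPadoint}), from which you read off $a_n^+$, is derived only under that assumption. But the lemma sits in Subsect.~\ref{SubsecEquivUnitary}, where the standing hypothesis is merely that $(V,(\cdot|\cdot))$ has unitary M\"obius symmetry, and its entire purpose is to serve as a step in the proof of the implication $(ii)\Rightarrow(i)$ of Thm.~\ref{unitarityTheorem} --- that is, in \emph{establishing} the existence of $\theta$ (first to show $V$ is simple, and later to show that the map $\theta$ defined by $(\cdot,\cdot)=(\theta\,\cdot\,|\,\cdot)$ is an antilinear automorphism). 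Under the lemma's actual hypotheses no PCT operator is available, so your argument is circular in the only context where the lemma is needed. In the already-unitary setting your computation is correct, but there it just reproduces Eq.~(\ref{Y(a,z)^+}) from the easy direction $(i)\Rightarrow(ii)$, and the lemma would be superfluous.

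The paper instead sets $\overline{a}\equiv a_{d_a}^+\Omega$ and never invokes $\theta$. It computes $[L_{-1},a_{-n}^+]=-[L_1,a_{-n}]^+$ using only $L_1^+=L_{-1}$ (unitary M\"obius symmetry) and the quasi-primary commutation relations, concluding that $z^{-2d_a}Y(a,z)^+$ is a translation-covariant field with $z^{-2d_a}Y(a,z)^+\Omega=e^{zL_{-1}}a_{d_a}^+\Omega$; the hypothesis $a\in V^{(\cdot|\cdot)}$ then makes this field mutually local with every $Y(b,z)$, so the uniqueness theorem \cite[Thm.4.4]{Kac} identifies it with $Y(\overline{a},z)$. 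This is the step for which your proposal has no substitute: without invariance one cannot identify the adjoint field with a vertex operator of $V$ except through locality plus uniqueness, and that is precisely where the hypothesis $a\in V^{(\cdot|\cdot)}$ --- which your parenthetical remark declares unnecessary --- enters. Your final verification that $\overline{a}\in V^{(\cdot|\cdot)}$ via $\overline{a}_m^+=a_{-m}$ does agree with the paper's, and your reindexing between the $a_{(n)}$ and $a_n$ conventions is fine; the problem is solely the source of $\overline{a}$.
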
 
\begin{proof}
The field  $z^{-2d_a}Y(a,z)^+$ coincides with 
$\sum_{n\in \ZZ}a_{-n}^+z^{-n-d_a}$. Since
\begin{eqnarray}\nonumber
\,\![L_{-1},a^+_{-n}] &=& -[L_{-1}^+,a_{-n}]^+ = -[L_1,a_{-n}]^+
\\ &=& -((d_a-1+n)a_{-n+1})^+ = -(d_a-1+n)a_{-n+1}^+,
\end{eqnarray}
it is translation covariant and hence, cf. \cite[Remark 1.3.]{Kac} 
$z^{-2d_a}Y(a,z)^+\Omega=e^{zL_{-1}}a_{d_a}^+\Omega$. By assumption 
$Y(a,z)^+$ is mutually local with all fields $Y(b,z)$, 
$b\in V$. Hence $z^{-2d_a}Y(a,z)^+$ is also mutually local with all fields 
$Y(b,z)$, $b\in V$. From the uniqueness theorem \cite[Thm.4.4.]{Kac} 
it then follows that $z^{-2d_a}Y(a,z)^+ = Y(\overline{a},z)$ 
where $\overline{a}=a_{d_a}^+\Omega$. 
Since $Y(\overline{a},z)^+=z^{2d_a}Y(a,z)$ we have 
$\overline{a} \in V^{(\cdot|\cdot)}$. 
Moreover, $L_0a_{d_a}^+\Omega = -[L_0,a_{d_a}]^+\Omega =d_a a_{d_a}^+\Omega$ 
and $L_1a_{d_a}^+\Omega = -[L_{-1},a_{d_a}]^+\Omega =
(-2d_a+1) a_{d_a-1}^+\Omega=0$ and hence $\overline{a}$ is quasi-primary 
of dimension $d_a$. 
\end{proof}

The following theorem is a vertex algebra formulation of the PCT 
theorem \cite{StrWight}. 

\begin{theorem} 
\label{unitarityTheorem}
Let $V$ be a vertex operator algebra with 
a normalized scalar product $(\cdot|\cdot)$. Assume that 
that $V_{0}=\CC \Omega$. Then the following are equivalent
\begin{itemize}

\item[(i)] $(V,(\cdot|\cdot))$ is a unitary VOA.

\item[(ii)] $(V,(\cdot|\cdot))$ has unitary M\"{o}bius symmetry and 
$V^{(\cdot|\cdot)}=V$, i.e. every vertex operator has a local adjoint.
\end{itemize}
\end{theorem}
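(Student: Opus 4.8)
The plan is to prove the two implications separately, treating (i)$\Rightarrow$(ii) as the easy direction and (ii)$\Rightarrow$(i) as the substantive one, the latter being a vertex-algebra version of the PCT theorem.

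For (i)$\Rightarrow$(ii), unitary M\"obius symmetry is immediate: Eq.~(\ref{uniVir}) gives $(L_na|b)=(a|L_{-n}b)$ for every $n\in\ZZ$, in particular for $n\in\{-1,0,1\}$. For the local adjoints I would read off from Eq.~(\ref{INVadjoint}) that the adjoint modes are again modes of a genuine vertex operator: setting $\tilde{a}\equiv \theta^{-1}e^{L_1}(-1)^{L_0}a\in V$, the identity $(a_nb|c)=(b|\tilde{a}_{-n}c)$ shows that the adjoint field $Y(a,z)^+$ of Eq.~(\ref{EqAdjointVO}) coincides, up to the substitution $z\mapsto z^{-1}$ and a power of $z$, with the genuine vertex operator $Y(\tilde{a},z)$. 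Since all vertex operators of $V$ are pairwise mutually local by the locality axiom, it follows that $Y(a,z)^+$ is mutually local with every $Y(b,w)$, hence $a\in V^{(\cdot|\cdot)}$; as $a$ was arbitrary, $V^{(\cdot|\cdot)}=V$.

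For (ii)$\Rightarrow$(i) the task is to construct a PCT operator $\theta$. First I would exploit unitary M\"obius symmetry: the operators $L_{-1},L_0,L_1$ furnish a unitary, hence completely reducible, representation of $\sltwoC$, so that $V$ decomposes into lowest-weight modules each generated by a quasi-primary vector, and every element of $V$ is a finite linear combination of vectors $L_{-1}^k a$ with $a$ quasi-primary and $k\ge 0$. On a quasi-primary $a$ I would set $\theta a\equiv \overline{a}$, where $\overline{a}=a_{d_a}^+\Omega$ is the quasi-primary vector produced by Lemma~\ref{primaryadjointLemma}; this map is antilinear because $A\mapsto A^+$ is, and it satisfies $Y(\theta a,z)=z^{-2d_a}Y(a,z)^+$. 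I would then extend $\theta$ to all of $V$ by declaring $\theta(L_{-1}^k a)\equiv L_{-1}^k\,\theta a$ together with antilinearity. By construction $\theta$ commutes with $T=L_{-1}$, is grading preserving (so it commutes with $L_0$), fixes $\Omega$, and is an antilinear involution with $\overline{\overline{a}}=a$ (the latter follows by applying the bar operation twice and using $a_{(-1)}\Omega=a$); commutation with $L_1$ holds on quasi-primaries, where both sides vanish, and propagates to descendants via the $\sltwoC$ relations.

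The heart of the proof, and the step I expect to be the main obstacle, is showing that $\theta$ is a vertex algebra automorphism, i.e. $\theta a_{(n)}\theta^{-1}=(\theta a)_{(n)}$ for all $a$ and $n$. For quasi-primary $a$ this is equivalent to the intertwining $\theta Y(a,z)\theta^{-1}=Y(\theta a,z)$, which reduces to a compatibility between $\theta$-conjugation and the adjoint operation; the point is that this is \emph{not} automatic --- it encodes, in particular, unitarity of the full Virasoro action, which is strictly stronger than the assumed M\"obius unitarity --- and it is exactly here that the hypothesis $V^{(\cdot|\cdot)}=V$ is indispensable. I would establish it by first noting that each $Y(\theta a,z)$ is mutually local with all $Y(b,w)$ (true because $V^{(\cdot|\cdot)}=V$ and $V^{(\cdot|\cdot)}$ is a subalgebra by Prop.~\ref{VscalarAlgebra}), and then using Dong's lemma \cite[Lemma 3.2]{Kac} together with the uniqueness theorem for mutually local fields \cite[Thm.~4.4]{Kac} to propagate the homomorphism property from the quasi-primary generators to all products $a_{(n)}b$, comparing the two candidate fields by their action on $\Omega$ and their common translation covariance. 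Once $\theta$ is known to be an antilinear vertex algebra automorphism commuting with $L_1$, Prop.~\ref{morestress} forces $\theta\nu=\nu$ (since $L_1\theta\nu=\theta L_1\nu=0$), so $\theta$ is a VOA antilinear automorphism. Finally, using Eq.~(\ref{invbil2}) one verifies that the bilinear form $(a,b)\mapsto(\theta a|b)$ is invariant, exhibiting $\theta$ as a PCT operator associated with $(\cdot|\cdot)$ and hence $(V,(\cdot|\cdot))$ as a unitary VOA.
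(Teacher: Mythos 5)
Your direction (i)$\Rightarrow$(ii) follows the paper's argument (unitary M\"obius symmetry from Eq.~(\ref{uniVir}), and Eq.~(\ref{INVadjoint}) exhibiting $Y(a,z)^+$ as a finite combination of genuine vertex operators times powers of $z$, as in Eq.~(\ref{Y(a,z)^+})) and is fine. In (ii)$\Rightarrow$(i) there are two problems. The first is a sign in your definition of $\theta$: the PCT operator must act on a quasi-primary $a$ by $\theta a=(-1)^{d_a}\overline{a}$, not by $\theta a=\overline{a}$ (compare Eq.~(\ref{QPadoint}), which together with $a_n^+=\overline{a}_{-n}$ forces $\overline{a}=(-1)^{d_a}\theta a$). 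This is not cosmetic: for $V_{{\mathfrak g}_k}$ with ${\mathfrak g}=\mathfrak{sl}(2,\CC)$ one has $E_n^+=F_{-n}$, $H_n^+=H_{-n}$, hence $\overline{E}=F$, $\overline{H}=H$, $\overline{F}=E$; the antilinear map fixing $H$ and exchanging $E\leftrightarrow F$ sends $H_{(0)}E=2E$ to $2F$ whereas $H_{(0)}F=-2F$, so your $\theta$ is not a vertex algebra antilinear automorphism, while $E\mapsto -F$, $H\mapsto -H$, $F\mapsto -E$ is.

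The more serious gap is that the step you correctly identify as the heart of the proof --- $\theta a_{(n)}\theta^{-1}=(\theta a)_{(n)}$ for quasi-primary $a$ --- is not actually established by the tools you invoke. To apply the uniqueness theorem \cite[Thm.~4.4]{Kac} to the field $\theta Y(a,z)\theta^{-1}$ you must first know that it is mutually local with all $Y(b,w)$; but conjugating the locality axiom by $\theta$ only yields mutual locality of $\theta Y(a,z)\theta^{-1}$ with the \emph{conjugated} fields $\theta Y(b,w)\theta^{-1}$, so the needed locality is essentially equivalent to the automorphism property you are trying to prove, and Dong's lemma does not break this circle. (Knowing $\theta Y(a,z)\theta^{-1}\Omega=e^{zT}\theta a$ alone does not determine the field.) The paper proceeds differently: it first uses Lemma~\ref{primaryadjointLemma} to show $V$ is simple (any nonzero ideal contains $\overline{a}_{m}a_{-m}\Omega\in\CC\Omega\setminus\{0\}$), deduces $L_1V_1=\{0\}$, invokes Prop.~\ref{factsIB-forms} to obtain a unique non-degenerate normalized invariant bilinear form $(\cdot,\cdot)$, and \emph{defines} $\theta$ by $(\cdot,\cdot)=(\theta\cdot|\cdot)$. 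With that definition the intertwining relation is a two-line computation from the invariance identity Eq.~(\ref{invbil2}) combined with $a_n^+=\overline{a}_{-n}$, with no appeal to locality at that stage; locality of adjoints enters only through Lemma~\ref{primaryadjointLemma} and Prop.~\ref{VscalarAlgebra}. If you want to keep your direct construction of $\theta$ you must supply a substitute for this computation; as written the argument does not close.
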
   
\begin{proof}
Let $Y(\nu,z)=\sum_{n\in \ZZ}L_nz^{-n-2}$ be the energy-momentum field of 
$V$. 
That $(i) \Rightarrow (ii)$ is rather trivial. Indeed, suppose that
$(V,(\cdot | \cdot))$ is a unitary VOA
and let $\theta$ be the corresponding PCT operator. From 
Eq. (\ref{uniVir}) it follows that the pair 
has unitary M\"{o}bius symmetry.
If $a\in V$ is a homogeneous vector, then by Eq. (\ref{INVadjoint})
and the properties of $\theta$ we have 
$a_{-n}^+ = (-1)^{d_a}\sum_{l \in \ZZ_{\geq 0}}\frac{1}{l!}(L^l_1\theta a)_{n}$
for all $n\in \ZZ$. Hence 
\begin{equation}
\label{Y(a,z)^+}
Y(a,z)^+=(-1)^{d_a}\sum_{l \in \ZZ_{\geq 0}}\frac{1}{l!}
Y(L^l_1\theta a,z)z^{2d_a-l}
\end{equation}
is mutually local with all fields $Y(b,z)$, $b\in V$ and since the 
homogeneous vector $a$ was arbitrary it follows that 
$V^{(\cdot|\cdot)}=V$.  

Let us now prove $(ii) \Rightarrow (i)$. Assume that  
$(V,(\cdot|\cdot))$ has unitary M\"{o}bius symmetry and
that $V^{(\cdot|\cdot)}=V$. We first show that $V$ is simple. 

Since $V_0 =\CC\Omega$, by Remark \ref{CFTtypeRemark}, $V$
is of CFT type. Let $\mathscr{J}\subset V$ be a non-zero ideal. 
Since $L_0\mathscr{J}\subset \mathscr{J}$ we have 
$$\mathscr{J}= \bigoplus_{n \geq m} \mathscr{J}\cap V_{n}$$ 
for some $m\in \ZZ_{\geq 0}$ such that $\mathscr{J}\cap V_{m}\neq \{0\}$.
Let $a$ be a non-zero vector in $\mathscr{J}\cap V_{m}$. Then $a$ is 
quasi-primary of 
dimension $m$ and by Lemma \ref{primaryadjointLemma} there exists 
$\overline{a} \in V_{m}$
such that $\overline{a}_{m}=a_{-m}^+$. Then 
$\overline{a}_{m}a = \overline{a}_{m}a_{-m}\Omega$ is a non-zero 
vector in $\mathscr{J}\cap V_0$. Accordingly $\Omega \in \mathscr{J}$ and $\mathscr{J}=V$. 
Hence $V$ is simple. 

Now let $a \in V_1$. Since $V_0=\CC\Omega$ and $L_1a\in V_0$ we 
have $L_{-1}L_1a=0$ and from unitary M\"{o}bius symmetry 
it follows that $(L_1a|L_1a)=0$ and hence $L_1a=0$. 
Accordingly $L_1V_1=\{0\}$ and by Prop. \ref{factsIB-forms}
it follows that there is a unique normalized invariant 
bilinear form $(\cdot, \cdot)$ on $V$ which is non-degenerate 
being $V$ simple.

The finite-dimensional subspaces $V_n$, $n\in\ZZ_{\geq 0}$, satisfy
$(V_n|V_m)=0$ and
$(V_n,V_m)= 0$ for $n\neq m$. Thus there exists a unique
$\theta:V\to V$ antilinear, grading preserving map such that
$(\cdot,\!\cdot)=(\theta\cdot |\cdot)$. 

By Corollary \ref{autcorollary} 
and Prop. \ref{thetaProperties}
all we have to show is that the 
above introduced conjugate linear
map $\theta$ is actually a vertex algebra antilinear automorphism. 

First of all from the non-degeneracy of $(\cdot,\cdot)$ it follows that
$\theta$ is injective and since $\theta V_n \subset V_n$ and 
$V_n$ is finite-dimensional for all $n\in \ZZ_{\geq 0}$ then $\theta$ is invertible. 
Note also that by unitary M\"{o}bius symmetry it follows that 
$\theta$ commutes with $L_n$, $n=-1,0,1$. 

Now let $a\in V= V^{(\cdot|\cdot)}$ be a quasi-primary vector. 
By Lemma \ref{primaryadjointLemma} there exists a quasi-primary vector 
$\overline{a}\in V_{d_a}$ such that $a_n^+=\overline{a}_{-n}$, 
$n\in \ZZ$. We have 
$$
( \theta a_n b| c) = (a_n b, c)
= (-1)^{d_a}(b,a_{-n}c) = (-1)^{d_a}(\theta b|a_{-n}c)
= (-1)^{d_a}(\overline{a}_n \theta b| c),
$$
for all $b,c \in V$, showing that $\theta a_n = (-1)^{d_a} \overline{a}_n 
\theta$. Since $V_0=\CC\Omega$ and $(\Omega|\Omega) =
(\Omega,\Omega) = 1$, we have that
$\theta\Omega=\Omega$. Therefore,
$\theta a=\theta a_{-d_a}\Omega =
(-1)^{d_a}\overline{a}_{-d_a}\theta\Omega= (-1)^{d_a}\overline{a}$.
Hence, for every quasi-primary vector $a$ we have  
$\theta a_{(n)}\theta^{-1} = (\theta a)_{(n)}$ for all $n\in \ZZ$.
Since $\theta$ commutes with $L_{-1}$ and since by 
unitary M\"{o}bius symmetry the vectors of the form 
$L_{-1}^k a$ with $k\in \ZZ_{\geq 0}$ and $a$ quasi-primary span $V$, then, recalling Remark \ref{RemarkTcommutator}, it follows 
that $\theta b_{(n)}\theta^{-1} =(\theta b)_{(n)}$ for all $b \in V$,  
$n\in \ZZ$ and hence $\theta$ is a vertex algebra antilinear 
automorphism. 
\end{proof}

Now let $V$ be a vertex operator algebra with a normalized scalar 
product $(\cdot|\cdot)$ and let 
$a \in V$ be a quasi-primary vector. Then we shall call the 
corresponding quasi-primary field $Y(a,z)$ {\bf Hermitian}
(with respect to $(\cdot|\cdot)$) if $(a_nb|c)=(b|a_{-n}c)$ 
for all $b,c\in V$ and all $n\in \ZZ$. This means that 
for all $n\in \ZZ$ the adjoint $a_n^+$ of $a_n$ on $V$ exists 
and coincides with $a_{-n}$. The following consequence
of Thm. \ref{unitarityTheorem} gives a useful 
characterization of simple unitary vertex operator algebras. 

\begin{proposition}
\label{PropUnitaryGenHermitian}
Let $V$ be a vertex operator algebra with 
conformal vector $\nu$ and let $(\cdot |\cdot)$ be a normalized 
scalar product on $V$. Assume that $V_0=\CC \Omega$. Then  the
following are equivalent
\begin{itemize}
\item[$(i)$] $(V,(\cdot|\cdot))$ is a unitary vertex operator 
algebra.
\item[$(ii)$] $Y(\nu,z)$ is Hermitian and $V$ is generated by 
a family of Hermitian quasi-primary fields.
\end{itemize}
\end{proposition}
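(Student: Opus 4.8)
The plan is to obtain both implications from Theorem~\ref{unitarityTheorem}, which already reduces unitarity to the two conditions ``unitary M\"obius symmetry'' and ``$V^{(\cdot|\cdot)}=V$''. The M\"obius bookkeeping will be immediate in both directions: $\nu$ is quasi-primary of dimension $2$, and its homogeneous modes $\nu_n$ are exactly the $L_n$, so the statement that $Y(\nu,z)$ is Hermitian, $(L_nb|c)=(b|L_{-n}c)$ for all $n\in\ZZ$, is literally unitary Virasoro symmetry and in particular unitary M\"obius symmetry; conversely, if $(V,(\cdot|\cdot))$ is unitary then Eq.~(\ref{uniVir}) gives unitary M\"obius symmetry and shows $Y(\nu,z)$ is Hermitian. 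Thus the real content is the passage between $V^{(\cdot|\cdot)}=V$ and generation by Hermitian quasi-primary fields.

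For $(i)\Rightarrow(ii)$ I would fix the PCT operator $\theta$ of Proposition~\ref{thetaProperties} and, for a quasi-primary $a$, read off from Eq.~(\ref{QPadoint}) that $a_n^+=(-1)^{d_a}(\theta a)_{-n}$; hence $Y(a,z)$ is Hermitian precisely when $\theta a=(-1)^{d_a}a$. Writing $Q=\bigoplus_n(Q\cap V_n)$ for the graded, $\theta$-invariant space of quasi-primary vectors (here $V$ is of CFT type by Proposition~\ref{simpleunitary}, so the grading is integral), I would introduce the antilinear map $\tilde\theta$ acting as $(-1)^n\theta$ on $Q\cap V_n$. Since $\theta^2=1_V$ and the signs are real, $\tilde\theta^2=1$, so $\tilde\theta$ is an antilinear involution whose fixed points are exactly the Hermitian quasi-primary vectors; consequently $Q$ is the complexification of this real fixed-point space and is spanned over $\CC$ by Hermitian quasi-primary vectors. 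Finally, unitary M\"obius symmetry makes the $\mathfrak{sl}_2$-action completely reducible, so the vectors $L_{-1}^k a$ with $a\in Q$ span $V$ (as already used in the proof of Theorem~\ref{unitarityTheorem}); since $T=L_{-1}$ and a vertex subalgebra is $T$-invariant, $Q$ generates $V$, and therefore so do the Hermitian quasi-primary vectors.

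For $(ii)\Rightarrow(i)$, unitary M\"obius symmetry (from Hermiticity of $Y(\nu,z)$) lets Lemma~\ref{adjointLemma} define the adjoints $a_n^+$, so $V^{(\cdot|\cdot)}$ makes sense and is a vertex subalgebra by Proposition~\ref{VscalarAlgebra}. For a Hermitian quasi-primary $a$, the identity $a_{(n)}^+=a_{(2d_a-2-n)}$ gives by a direct mode shift $Y(a,z)^+=z^{2d_a}Y(a,z)$; since the VOA locality axiom makes $Y(a,z)$ mutually local with every $Y(b,w)$, multiplication by the scalar power $z^{2d_a}$ preserves locality, so $Y(a,z)^+$ is mutually local with every $Y(b,w)$ and $a\in V^{(\cdot|\cdot)}$. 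Hence the generating family lies in $V^{(\cdot|\cdot)}$; being a subalgebra containing a generating set, $V^{(\cdot|\cdot)}=V$, and Theorem~\ref{unitarityTheorem} concludes that $(V,(\cdot|\cdot))$ is unitary. I expect the only genuine obstacle to be the generation step in $(i)\Rightarrow(ii)$: one must recognize that Hermiticity of a quasi-primary field is the condition $\theta a=(-1)^{d_a}a$ and then manufacture enough such vectors through the sign-twisted involution $\tilde\theta$; everything else, including the formula $Y(a,z)^+=z^{2d_a}Y(a,z)$, is routine.
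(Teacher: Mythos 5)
Your proposal is correct and follows essentially the same route as the paper: both directions are reduced to Theorem~\ref{unitarityTheorem}, with $(i)\Rightarrow(ii)$ using $a_n^+=(-1)^{d_a}(\theta a)_{-n}$ to split each quasi-primary vector into Hermitian components (your sign-twisted involution $\tilde\theta$ is just a repackaging of the paper's explicit $b=\frac{1}{2}(a+(-1)^{d_a}\theta a)$, $c=\frac{-i}{2}(a-(-1)^{d_a}\theta a)$), and $(ii)\Rightarrow(i)$ using $Y(a,z)^+=z^{2d_a}Y(a,z)$ to place the generators in $V^{(\cdot|\cdot)}$ and invoking Proposition~\ref{VscalarAlgebra}. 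Your extra remark that complete reducibility of the $\mathfrak{sl}_2$-action plus $T$-invariance of vertex subalgebras is what makes the quasi-primary vectors generate $V$ is a point the paper leaves implicit, but it is the same argument.
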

\begin{proof}
$(i) \Rightarrow (ii)$. If $(V,(\cdot|\cdot))$ is a unitary vertex 
operator algebra and $\theta$ is the corresponding PCT operator 
then $Y(\nu,z)$ is Hermitian by Eq. (\ref{uniVir}). 
Moreover, if $a$ is a quasi-primary vector then, by Eq. (\ref{QPadoint})
$a_n^+ = (-1)^d_a (\theta a)_{-n}$ for all $n$. Accordingly if 
$b = \frac{1}{2}(a + (-1)^{d_a} \theta a)$ and 
$c=\frac{-i}{2}(a -(-1)^{d_a}\theta a)$ then $Y(b,z)$ are 
and $Y(c,z)$ are Hermitian quasi-primary fields such that 
$Y(a,z)=Y(b,z)+iY(c,z)$. Since $V$ is generated by its 
quasi-primary fields then it follows that it is also generated by 
its Hermitian quasi-primary fields. 

$(ii) \Rightarrow (i)$ If $Y(\nu ,z)$ is Hermitian then 
the normalized scalar product $(\cdot |\cdot)$ has clearly 
unitary M\"{o}bius symmetry. Now let $\mathscr{F} \subset V$ be the 
generating family of quasi-primary vectors corresponding to 
a generating family of Hermitian quasi-primary fields. 
Then for $a \in \mathscr{F}$ the hermiticity condition gives 
$Y(a,z)^+=z^{2d_a}Y(a,z)$ and hence $Y(a,z)$ has a local adjoint
i.e. $a \in V^{(\cdot |\cdot)}$. Hence $\mathscr{F} \subset V^{(\cdot |\cdot)}$
and since $\mathscr{F}$ generates $V$ and $V^{(\cdot |\cdot)}$ is vertex subalgebra 
of $V$ by Prop. \ref{VscalarAlgebra} it follows that 
$V=V^{(\cdot |\cdot)}$ and hence, by Thm. \ref{unitarityTheorem}, 
$(V,(\cdot |\cdot))$ is a unitary vertex operator algebra.
\end{proof}

\subsection{Unitary automorphisms and essential uniqueness of the unitary 
structure} 
Now, let $(V, (\cdot |\cdot))$ be a unitary vertex operator algebra. 
We denote by ${\rm Aut}_{(\cdot|\cdot)} (V)$ the subgroup of the
elements of ${\rm Aut}(V)$ which are unitary with respect to 
$(\cdot|\cdot)$. In other words an element $g$ of 
${\rm Aut}_{(\cdot|\cdot)}(V)$  is a VOA automorphism of $V$ such that 
$(g a|g b)=(a|b)$ for all $a,b \in V$. We will say that 
${\rm Aut}_{(\cdot|\cdot)}(V)$ is the automorphism group of 
the unitary VOA $(V, (\cdot |\cdot))$. 

\begin{remark} {\rm It follows from Prop. \ref{factsIB-forms} $(iii)$ that if $V$ is simple and $g \in {\rm Aut}(V)$ then $g \in {\rm Aut}_{(\cdot|\cdot)}(V)$
if and only if $g^{-1}\theta g = \theta$. Accordingly, if $V_\RR =\{a\in V:\theta a =a\}$ is the real form as in 
Remark \ref{RemarkV_R}, then $g \in {\rm Aut}_{(\cdot|\cdot)}(V)$ if and only if $g$ restricts to a VOA automorphism of the real vertex operator algebra $V_\RR$. Conversely, every VOA automorphism of $V_\RR$ give rise to a VOA automorphism of $V$ and hence 
we have the identification ${\rm Aut}_{(\cdot|\cdot)}(V) = {\rm Aut}(V_\RR)$. }
\end{remark}

In general ${\rm Aut}_{(\cdot|\cdot)}(V)$ is properly contained 
in ${\rm Aut}(V)$. If $g \in {\rm Aut}(V)$ is VOA automorphism  of $V$ 
which does not belong to ${\rm Aut}_{(\cdot|\cdot)}(V)$ 
then $\{\cdot | \cdot \}=(g \cdot | g \cdot)$ 
is a normalized invariant scalar product on $V$ different from 
$(\cdot|\cdot)$. In fact $\tilde{\theta}=g^{-1}\theta g$ is an
antilinear VOA automorphism of $V$ and  
$\{\tilde{\theta}\cdot | \cdot \} = (\theta g \cdot |g \cdot)$ is
an invariant bilinear form on $V$. 
In the 
case of a simple unitary VOA every 
normalized invariant scalar product arises in this way. In fact 
we have the following

\begin{proposition} 
\label{uniteq}
Let $(V, (\cdot |\cdot))$ be a simple unitary 
VOA with PCT operator $\theta$ and let $\{ \cdot|\cdot \}$ be another 
normalized invariant scalar product on $V$ with corresponding PCT 
operator $\tilde{\theta}$. Then there exists a unique  $h \in {\rm 
Aut}(V)$ such that: 
\begin{itemize}  
\item[(i)] $\{a|b\} = (ha|hb)$ for all $a, b \in V$;
\item[(ii)] $\tilde{\theta}= h^{-1}\theta h$;
\item[(iii)] $\theta h \theta = h^{-1}$;
\item[(iv)] $(a|ha)>0$ for every non-zero $a \in V$. 
\end{itemize}
\end{proposition}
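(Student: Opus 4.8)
The plan is to produce $h$ explicitly as the positive square root of a single \emph{linear} automorphism built from the two PCT operators, and then to verify that extracting this square root preserves the vertex algebra structure by exploiting the multiplicative grading furnished by the eigenvalues of that automorphism.

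First I would compare the two invariant bilinear forms. Since $(\cdot|\cdot)$ and $\{\cdot|\cdot\}$ are invariant with PCT operators $\theta$ and $\tilde{\theta}$, the bilinear forms $(\theta\,\cdot\,|\,\cdot\,)$ and $\{\tilde{\theta}\,\cdot\,|\,\cdot\,\}$ are both normalized invariant bilinear forms on $V$; as $V$ is simple, Prop. \ref{factsIB-forms}$(iii)$ forces them to coincide, so $(\theta a|b)=\{\tilde{\theta}a|b\}$ for all $a,b\in V$. Writing $\{a|b\}=(a|Pb)$ for the linear operator $P$ determined grade by grade on the finite-dimensional spaces $V_n$ (the grading is respected because both scalar products vanish between distinct $V_n$), this identity yields $P=\theta\tilde{\theta}$. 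Because $\theta$ and $\tilde{\theta}$ are antilinear VOA automorphisms with $\theta^2=\tilde{\theta}^2=1_V$ (Prop. \ref{thetaProperties}), the composite $P=\theta\tilde{\theta}$ is a \emph{linear} VOA automorphism fixing $\Omega$ and $\nu$; comparing the two expressions for $\{a|a\}$ shows that $P$ is positive and self-adjoint with respect to $(\cdot|\cdot)$. Moreover $\theta P\theta=\tilde{\theta}\theta=P^{-1}$.

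Next I would set $h:=P^{1/2}$, the positive square root taken grade by grade. The only nontrivial point is that $h\in{\rm Aut}(V)$, and I expect this to be the main obstacle, since the square root of an automorphism need not be an automorphism. Here positivity is decisive. Decompose $V=\bigoplus_\lambda V^{(\lambda)}$ into the positive eigenspaces of $P$. Since $P$ is an automorphism, for $a\in V^{(\lambda)}$ and $b\in V^{(\mu)}$ one has $P(a_{(n)}b)=(Pa)_{(n)}(Pb)=\lambda\mu\,a_{(n)}b$, so $a_{(n)}b\in V^{(\lambda\mu)}$: the eigenvalue decomposition is graded by the multiplicative semigroup of eigenvalues. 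As $h$ acts by $\sqrt{\lambda}$ on $V^{(\lambda)}$, both $h(a_{(n)}b)$ and $(ha)_{(n)}(hb)$ equal $\sqrt{\lambda\mu}\,a_{(n)}b$; by bilinearity $h$ is a vertex algebra automorphism, and $h\nu=\nu$ (as $P\nu=\nu$), whence $h\in{\rm Aut}(V)$.

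With $h$ in hand, properties $(i)$--$(iv)$ follow formally. For $(i)$, $(ha|hb)=(a|Pb)=\{a|b\}$; and $(iv)$ holds since $h$ is positive. For $(iii)$, the relation $\theta P\theta=P^{-1}$ gives $\theta A\theta^{-1}=-A$ for $A=\log P$ (using antilinearity of $\theta$ and uniqueness of the self-adjoint logarithm of a positive operator), so $\theta h\theta=\theta e^{A/2}\theta^{-1}=e^{-A/2}=h^{-1}$; then $(ii)$ follows from $h^{-1}\theta h=P^{-1/2}\theta P^{1/2}=P^{-1}\theta=\tilde{\theta}^{-1}\theta^{-1}\theta=\tilde{\theta}$. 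Finally, for uniqueness I would argue that any $h'$ satisfying $(i)$ and $(iv)$ must equal $P^{1/2}$: condition $(iv)$ makes $a\mapsto(a|h'a)$ real, forcing $h'$ self-adjoint by polarization, while $(i)$ gives $(h')^*h'=P$; self-adjointness then yields $(h')^2=P$ with $h'$ positive, so $h'=P^{1/2}=h$.
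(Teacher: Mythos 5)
Your proposal is correct and follows essentially the same route as the paper: both identify the linear automorphism $\theta\tilde{\theta}$ as the positive operator intertwining the two scalar products, take its positive square root grade by grade, and verify that the square root is still a VOA automorphism via the multiplicative behaviour of the eigenspaces under the $(n)$-products. The only cosmetic difference is that you derive uniqueness from (i) and (iv) rather than (i) and (ii), which is an equally valid shortcut.
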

\begin{proof} Let $g \equiv \theta \tilde{\theta}$. Then $g$ is an 
automorphism
of $V$. Moreover, since $\theta$ and $\tilde{\theta}$ are involutions 
we have $ \theta g \theta g  = 1_V$ and hence $\theta g \theta = g^{-1}$. 
From Prop. \ref{factsIB-forms} $(iii)$ we have
$\{ \theta g  \cdot|\cdot \} = \{ \tilde{\theta}  \cdot|\cdot \} = (\theta \cdot|\cdot)$ and hence  
\begin{eqnarray*}
(ga|b) & = & (\theta \theta g a|b) = 
\{ \theta g\theta g a|b \} \\
& = & \{ a|b \},
\end{eqnarray*}
for all $a, b \in V$.
It follows that for every integer $n$ the 
restriction of $g$ to $V_n$ is a strictly positive Hermitian operator 
(with respect to $(\cdot |\cdot)$ ) end hence that $g$ is diagonalizable 
on $V$ with positive eigenvalues. Hence we can take the square root of 
$g$ and define $h \equiv g^{1/2}$. With this $h$ (i) -- (iv) hold and we have 
to show that $h \in {\rm Aut}(V)$. It is clear that $h$ leaves 
$\Omega$ and $\nu$ invariant. Now if $a, b$ are eigenvectors of $g$ 
with eigenvalues $\lambda_a$ and $\lambda_b$ respectively and $n \in \ZZ$ then 
$$g(a_{(n)}b) = g(a)_{(n)}g(b)= \lambda_a\lambda_b a_{(n)}b.$$ 
Hence 
$$h(a_{(n)}b) = (\lambda_a\lambda_b)^{1/2}a_{(n)}b =h(a)_{(n)}h(b),$$
and by linearity, since $g$ is diagonalizable, it follows that 
$h\in {\rm Aut}(V)$.      
The uniqueness of $h$ can be easily shown using $(i)$ and 
$(ii)$. 
\end{proof}

As a consequence of the above proposition a simple VOA 
has, up to unitary isomorphisms, at most one structure 
of unitary VOA. We know from the same Prop. that this 
structure is really unique (i.e. not up unitary isomorphisms) 
iff every automorphism of $V$ is unitary. Using 
\cite[Remark 4.9c]{Kac} one can easily give examples 
of non-unitary automorphism. However there are VOA for which 
the normalized invariant scalar product is unique 
and we will give a characterization of this class using 
the topological properties of ${\rm Aut}(V)$. 

Let $(V, (\cdot |\cdot))$ be a unitary VOA. Then $V$ is a 
normed space with the norm $\| a \| = (a | a)^{1/2}$, $a\in V$. 
Using the norm on $V$ we can topologize ${\rm End}(V)$ 
with the strong operator topology. The corresponding topology on 
${\rm Aut}(V)$ coincides with the topology discussed at the end 
of Subsect. \ref{subsectionVOA&IBforms}. 
Being a subgroup of ${\rm Aut}(V)$, ${\rm Aut}_{(\cdot|\cdot)}(V)$ is 
also a topological group. We have the following 
\begin{lemma} 
\label{compact}
Let $(V, (\cdot |\cdot))$ be a unitary VOA. Then 
${\rm Aut}_{(\cdot|\cdot)}(V)$  is a compact subgroup
of ${\rm Aut}(V)$.
\end{lemma}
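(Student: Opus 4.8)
The plan is to exhibit ${\rm Aut}_{(\cdot|\cdot)}(V)$ as a closed subgroup of the compact group $\prod_{n\in\ZZ}U(V_n)$, where $U(V_n)$ denotes the unitary group of the finite-dimensional inner product space $\left(V_n,(\cdot|\cdot)\restriction_{V_n}\right)$, and then to invoke Tychonoff's theorem.

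First I would record the two structural facts that make the argument work. By unitary M\"obius symmetry, $(L_0a|b)=(a|L_0b)$ for all $a,b\in V$, so eigenvectors of $L_0$ with distinct eigenvalues are orthogonal; thus the homogeneous subspaces $V_n$ are pairwise orthogonal and $V$ is their orthogonal algebraic direct sum, with each $V_n$ finite-dimensional. Next, every $g\in{\rm Aut}(V)$ is grading preserving, so $g(V_n)=V_n$ for all $n$; and $g\in{\rm Aut}_{(\cdot|\cdot)}(V)$ precisely when $g$ preserves $(\cdot|\cdot)$, which — since $g$ respects the orthogonal decomposition — is equivalent to $g\restriction_{V_n}$ being unitary on $V_n$ for every $n$. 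Hence, under the identification of ${\rm Aut}(V)$ with a subgroup of $\prod_{n\in\ZZ}{\rm GL}(V_n)$ from Subsect.~\ref{subsectionVOA&IBforms}, we have the clean description
\[
{\rm Aut}_{(\cdot|\cdot)}(V)={\rm Aut}(V)\cap\prod_{n\in\ZZ}U(V_n).
\]

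With this in hand I would finish as follows. Each $U(V_n)$ is a compact Lie group, so by Tychonoff's theorem $\prod_{n\in\ZZ}U(V_n)$ is compact (and, being a countable product of compact metric spaces, metrizable). It then remains only to check that ${\rm Aut}_{(\cdot|\cdot)}(V)$ is closed in $\prod_{n}U(V_n)$; but this is immediate, since ${\rm Aut}(V)$ was shown at the end of Subsect.~\ref{subsectionVOA&IBforms} to be closed in $\prod_{n}{\rm GL}(V_n)$, whence its intersection with the subspace $\prod_{n}U(V_n)$ is closed there. A closed subset of a compact space is compact, so ${\rm Aut}_{(\cdot|\cdot)}(V)$ is compact.

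The argument is essentially routine, and I do not expect a genuine obstacle; the only point deserving care is that the compactness obtained inside $\prod_{n}U(V_n)$ is compactness for the \emph{correct} topology on ${\rm Aut}_{(\cdot|\cdot)}(V)$. This is guaranteed by the identification, recalled in this subsection, of the strong operator topology on ${\rm Aut}(V)$ with the product topology of Subsect.~\ref{subsectionVOA&IBforms}: convergence $g_k\to g$ means $\langle b',g_ka\rangle\to\langle b',ga\rangle$ for all $a\in V$ and $b'\in V'$, which restricts on each finite-dimensional $V_n$ to ordinary convergence of operators, so the embedding ${\rm Aut}_{(\cdot|\cdot)}(V)\hookrightarrow\prod_{n}U(V_n)$ is a homeomorphism onto its image.
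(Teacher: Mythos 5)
Your argument is correct and coincides with the paper's own proof: both identify ${\rm Aut}_{(\cdot|\cdot)}(V)$ as ${\rm Aut}(V)\cap\prod_{n\in\ZZ}{\rm U}(V_n)$, invoke Tychonoff's theorem for compactness of the product of unitary groups, and use the closedness of ${\rm Aut}(V)$ in $\prod_{n\in\ZZ}{\rm GL}(V_n)$ established at the end of Subsect.~\ref{subsectionVOA&IBforms}. Your additional check that the topology inherited from $\prod_{n}{\rm U}(V_n)$ agrees with the one on ${\rm Aut}_{(\cdot|\cdot)}(V)$ is a sensible (if implicit in the paper) point of care.
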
 
\begin{proof} Let ${\rm U}(V_n)$, $n\in \ZZ$ be the compact subgroup
of ${\rm GL}(V_n)$ whose elements are the unitary endomorphisms with 
respect to the restriction of $(\cdot |\cdot)$ to $V_n$. Then 
$\prod_{n\in \ZZ}{\rm U}(V_n)$ 
is the subgroup of unitary elements the group 
$\prod_{n\in \ZZ}{\rm GL}(V_n)$ of grading 
preserving vector space automorphisms of $V$. 
Since $\prod_{n\in \ZZ}{\rm U}(V_n)$
is compact by Tychonoff's theorem 
$${\rm Aut}_{(\cdot|\cdot)}(V) ={\rm Aut}(V) \cap \prod_{n\in \ZZ} 
{\rm U}(V_n)$$
is also compact because ${\rm Aut}(V)$ is closed in 
$\prod_{n\in \ZZ}{\rm GL}(V_n)$, see the end of Subsect. 
\ref{subsectionVOA&IBforms}. 
\end{proof} 

\begin{theorem} 
\label{uniqscalar}
Let $(V, (\cdot |\cdot))$ be a simple unitary VOA  and let $\theta$ be the corresponding 
PCT operator. Then the following are equivalent:

\begin{itemize}

\item[(i)] $(\cdot|\cdot)$ is the unique normalized invariant 
scalar product on $V$. 

\item[(ii)] ${\rm Aut}_{(\cdot|\cdot)}(V)= {\rm Aut}(V)$. 

\item[(iii)] Every $g \in {\rm Aut}(V)$ commutes with $\theta$.

\item[(iv)] ${\rm Aut}(V)$ is compact.  

\item[(v)] ${\rm Aut}_{(\cdot|\cdot)}(V)$ is totally disconnected.

\end{itemize} 

\end{theorem}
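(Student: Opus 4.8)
The plan is to establish the five conditions via the web (i)$\Leftrightarrow$(ii)$\Leftrightarrow$(iii)$\Leftrightarrow$(iv) together with (v)$\Rightarrow$(iv) and (ii)$\Rightarrow$(v), using Prop.~\ref{uniteq}, Lemma~\ref{compact}, and a polar-decomposition argument inside ${\rm Aut}(V)$, so that (v) enters only through the structure theory of compact groups. First I would dispatch the three algebraic conditions. By the Remark preceding Prop.~\ref{uniteq}, for simple $V$ an automorphism $g$ is unitary iff $g^{-1}\theta g=\theta$, i.e.\ iff $g$ commutes with $\theta$; hence ${\rm Aut}_{(\cdot|\cdot)}(V)={\rm Aut}(V)$ precisely when every $g\in{\rm Aut}(V)$ commutes with $\theta$, which is (ii)$\Leftrightarrow$(iii). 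For (i)$\Leftrightarrow$(ii): if the scalar product is unique then for any $g$ the form $(g\cdot|g\cdot)$ is another normalized invariant scalar product, so it equals $(\cdot|\cdot)$ and $g$ is unitary; conversely, a second normalized invariant scalar product $\{\cdot|\cdot\}$ equals $(h\cdot|h\cdot)$ for some $h\in{\rm Aut}(V)$ by Prop.~\ref{uniteq}, and if all automorphisms are unitary this forces $\{\cdot|\cdot\}=(\cdot|\cdot)$.

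Next I would set up the polar decomposition. For simple $V$ the unique normalized invariant bilinear form $(\theta\cdot,\cdot)$ is preserved by every $g\in{\rm Aut}(V)$, giving $(\theta ga|gb)=(\theta a|b)$; from this one computes that the Hilbert-space adjoint of $g$ is $g^{*}=\theta g^{-1}\theta$, which is again in ${\rm Aut}(V)$. Hence $g^{*}g$ is a positive invertible automorphism, and as in the proof of Prop.~\ref{uniteq} positive automorphisms admit automorphism square roots, so $p:=(g^{*}g)^{1/2}\in{\rm Aut}(V)$; iterating and using the closedness of ${\rm Aut}(V)$ in $\prod_n{\rm GL}(V_n)$ (end of Subsect.~\ref{subsectionVOA&IBforms}) yields $p^{t}\in{\rm Aut}(V)$ for all $t\in\RR$. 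Differentiating this one-parameter group shows $X:=\log p$ is a \emph{self-adjoint derivation} of $V$, and $u:=gp^{-1}$ is unitary, so $g=u\exp(X)$ with $u\in{\rm Aut}_{(\cdot|\cdot)}(V)$.

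Writing $\mathfrak{p}$ for the self-adjoint derivations arising this way and $\mathfrak{k}=i\mathfrak{p}$ for the skew-adjoint ones (each integrating to a one-parameter subgroup of ${\rm Aut}(V)$, unitary exactly in the skew-adjoint case), the decomposition reads ${\rm Aut}(V)={\rm Aut}_{(\cdot|\cdot)}(V)\exp(\mathfrak{p})$. This gives (ii)$\Leftrightarrow$(iv): if ${\rm Aut}(V)$ is compact and some $0\neq X\in\mathfrak{p}$ existed, then $\exp(tX)$ would have an eigenvalue $e^{t\lambda}\to\infty$ on some $V_n$, contradicting compactness of the image in ${\rm GL}(V_n)$; thus $\mathfrak{p}=0$ and every automorphism is unitary, while the converse is Lemma~\ref{compact}. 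Note this also records $\mathfrak{p}=0\Leftrightarrow\mathfrak{k}=0$.

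Finally I would bring in (v) through the Lie algebra $\mathfrak{k}$ of the compact group $K:={\rm Aut}_{(\cdot|\cdot)}(V)$. If $K$ is totally disconnected, any one-parameter subgroup is connected hence trivial, so $\mathfrak{k}=0$, whence $\mathfrak{p}=0$ and ${\rm Aut}(V)=K$ is compact: (v)$\Rightarrow$(iv). For the contrapositive of (ii)$\Rightarrow$(v), I would realize $K$ as the inverse limit of the compact Lie groups $K_N$ obtained by restricting automorphisms to $\bigoplus_{n\le N}V_n$; if $K$ is not totally disconnected it is not profinite, so some $K_N$ is a positive-dimensional compact Lie group, and since the transition maps are surjective morphisms they induce surjections on Lie algebras, so a nonzero element of ${\rm Lie}(K_N)$ lifts (Mittag--Leffler on the inverse system of finite-dimensional Lie algebras) to a nonzero $D\in\mathfrak{k}$. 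Then $iD\in\mathfrak{p}$ is a nonzero self-adjoint derivation and $\exp(s\,iD)$ is a non-unitary automorphism, so (ii) fails. The main obstacle is exactly this lifting step: manufacturing a genuine nontrivial one-parameter subgroup, equivalently a nonzero skew-adjoint derivation, out of mere non-total-disconnectedness, which rests on the inverse-limit/pro-Lie structure of $K$ and the surjectivity of the induced Lie-algebra maps. Everything else is either formal or already contained in Prop.~\ref{uniteq} and Lemma~\ref{compact}.
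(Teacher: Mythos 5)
Your proof is correct and follows essentially the same route as the paper: the algebraic equivalences among (i)--(iii) come from Prop.~\ref{uniteq} and Corollary~\ref{autcorollary}, compactness resp.\ total disconnectedness are refuted by exhibiting a positive grading-preserving automorphism with unbounded powers resp.\ a nontrivial continuous one-parameter subgroup, and the hard implication (ii)$\Rightarrow$(v) is handled exactly as in the paper by realizing ${\rm Aut}_{(\cdot|\cdot)}(V)$ as an inverse limit of compact Lie groups, lifting a nonzero Lie-algebra element through the surjective transition maps to a nonzero skew-adjoint derivation $D$ commuting with $L_0$, and observing that $e^{s\,iD}$ is a non-unitary automorphism. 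The only organizational difference is that you package the easy implications through a global polar decomposition ${\rm Aut}(V)={\rm Aut}_{(\cdot|\cdot)}(V)\exp(\mathfrak{p})$, whereas the paper works directly with the single positive automorphism $h$ supplied by Prop.~\ref{uniteq}; both rest on the same observation that a nontrivial positive diagonalizable automorphism has an unbounded orbit in some finite-dimensional $V_n$.
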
     

\begin{proof} The implication $(i)\Rightarrow (ii)$ is clear from the 
comments before Prop. \ref{uniteq}. Now, let $g$ be a
VOA automorphism of $V$. Then 
$g\in {\rm Aut}_{(\cdot| \cdot)}(V)$ iff 
$(g\theta a|gb)$ $=$ $(\theta a| b)$ for all $a, b \in V$. By 
Corollary \ref{autcorollary} we have 
$(\theta ga|gb) = (\theta a |b)$ for all $a, b \in V$. 
Hence $g\in {\rm Aut}_{(\cdot| \cdot)}(V)$ if and only if
$\theta$ and $g$ commute proving $(ii) \Leftrightarrow (iii)$.
The implication $(ii) \Rightarrow (iv)$ follows from Lemma \ref{compact}.

Now let $\{\cdot | \cdot \}$ be a normalized invariant scalar product on 
$V$. By Prop. \ref{uniteq} there is a VOA automorphism $h$ of $V$ 
which is diagonalizable with positive eigenvalues and such that 
$\{a|b\}=(ha|hb)$ for all $a, b \in V$. Moreover, by the same proposition 
$\lambda$ is an eigenvalue of $h$ if only if $\lambda ^{-1}$ is. Hence if 
$h$ is not the trivial automorphism then it has an eigenvalue $\lambda 
>1$ and since $h$ preserves the grading we can find a corresponding 
eigenvector $a \in V_n$ for some positive integer $n$. But then the 
sequence $h^m (a) = \lambda^m v$ is unbounded in $V_n$ and $(iv)$ cannot 
hold proving that $(iv) \rightarrow (i)$. 
Similarly if a nontrivial $h \in {\rm Aut}(V)$ has the properties given 
in Prop. \ref{uniteq} then $\RR \ni t\mapsto h^{it}$ 
is a nontrivial continuous one-parameter group in 
${\rm Aut}_{(\cdot| \cdot)}(V)$ so that $(v)$ cannot hold. Hence  
$(v) \Rightarrow (i)$. 

To conclude the proof of the theorem we now show that 
$(ii) \Rightarrow (v)$. Let us assume that 
${\rm Aut}_{(\cdot| \cdot)}(V)$ is not totally disconnected
and denote by $G$ its component of the identity. Then $G$ is 
a closed connected subgroup of ${\rm Aut}_{(\cdot| \cdot)}(V)$
which is not just the identity subgroup $\{ 1_V\}$. 
For every $N\in \ZZ_{\geq 0}$ we denote $\pi_N$ the projection of 
$\prod_{n\in \ZZ}{\rm GL}(V_n)$ onto 
$\prod_{n=0}^N{\rm GL}(V_n)$. The maps $\pi_N$, $N\in \ZZ_{\geq 0}$ separate
points in $\prod_{n\in \ZZ}{\rm GL}(V_n)$ (recall that $V_n=\{0\}$
if $n<0$). Moreover, if $N_1, N_2$ are non-negative integers 
and $N_2 \geq N_1$ we denote by $\pi_{N_2, N_1}$ the projection of 
$\prod_{n=0}^{N_2}{\rm GL}(V_n)$ onto $\prod_{n=0}^{N_1}{\rm 
GL}(V_n)$ so that $\pi_{N_2, N_1} \circ \pi_{N_2} = \pi_{N_1}$.  
For every $N\in \ZZ_{\geq 0}$ $G_N \equiv \pi_N(G)$ is a compact (and thus 
closed) connected subgroup of the finite-dimensional Lie group 
$\prod_{n=0}^{N_1}{\rm GL}(V_n)$ 
and ,for sufficiently large $N$, $G_N$ is not the identity subgroup. 
Moreover, if $N_1, N_2$ are non-negative integers and $N_2 \geq N_1$  
$\pi_{N_2, N_1}$ restricts to a group homomorphism of 
$G_{N_2}$ onto $G_{N_1}$. As a consequence, for every $N$, we can choose 
a continuous one-parameter group $t\mapsto \phi_N(t)$ in $G_N$ so that $\phi_N(t)$ is 
nontrivial for sufficiently large $N$ and 
$\pi_{N_2, N_1}(\phi_{N_2}(t))=\phi_{N_1}(t)$ for $N_2 \geq N_1$. 
Now it is not hard to show that there is a group homomorphism 
$\RR \ni t\mapsto \phi(t)$ in $G$ such that $\pi_N(\phi(t)) =\phi_N(t)$
for all $N\in \ZZ_{\geq 0}$. Clearly $\RR \ni t\mapsto \phi(t)$ is continuous 
and nontrivial. Now let  $\delta$ be the endomorphism of $V$ defined 
by $\delta(a) = \frac{d}{dt}\phi(t)a|_{t=0}$, $a\in V$. Then $\delta$ 
is a derivation of $V$ ( i.e. $\delta(a_{(n)}b)$ 
$= \delta(a)_{(n)}b +a_{(n)}\delta(b)$ for $a, b \in V$, $n\in \ZZ$)  
commuting with $L_{0}$. Moreover, $\phi(t)=e^{t\delta}$ so that 
$\delta$ is non-zero and the VOA automorphism $e^{\alpha \delta}$
cannot be unitary for every $\alpha \in \CC$.    
\end{proof}

\subsection{Unitary subalgebras}
Let $(V,(\cdot,\!\cdot))$ be a unitary VOA, 
with PCT operator $\theta$ and energy-momentum field 
$Y(\nu, z)= \sum_{n \in \ZZ}L_n z^{-n-2}$ and let 
$W\subset V$ be a vertex subalgebra. 
Recall that the invariant scalar product allows to consider the adjoints 
of vertex operators.
Obviously, if $W$ is a vertex subalgebra of $V$ and $a, b\in W$, 
then the product $a_{(n)}b$ belongs to $W$ for every $n\in \ZZ$, but there 
is no guarantee that $a_{(n)}^+b$ is in $W$, too. This fact motivates
the following definition.

\begin{definition}
{\rm A {\bf unitary subalgebra} $W$ of a unitary vertex 
operator algebra $(V,(\cdot,\!\cdot))$ is a vertex subalgebra of 
$V$ satisfying the following two additional properties:

\begin{itemize}   
\item[$(i)$] $W$ compatible with the grading, namely 
$W = \bigoplus_{n\in\ZZ}(W\cap V_n)$ 
(equivalently $L_0 W \subset W$).

\item[$(ii)$] $a_{(n)}^+b \in W$ for all $a,b\in W$ and $n\in \ZZ$.
\end{itemize}
}
\end{definition}
Note that if $(i)$ is satisfied then $(ii)$ is equivalent to 
$a_{n}^+b \in W$ for all $a,b\in W$ and $n\in \ZZ$.

The following proposition gives a useful characterization of unitary 
subalgebras of the unitary vertex operator algebra $V$. 

\begin{proposition}\label{prop:QW<W}
A vertex subalgebra $W$ of a unitary vertex operator algebra $V$ is unitary if
and only if $\theta W\subset W$ and $L_1W \subset W$.
\end{proposition}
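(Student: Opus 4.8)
The plan is to prove the equivalence by establishing the two implications separately, with the ``if'' part being routine and the ``only if'' part carrying the main difficulty. Throughout I will use three facts already available: the PCT operator $\theta$ commutes with every $L_n$ because $\theta\nu=\nu$; a vertex subalgebra is automatically $T$-invariant with $T=L_{-1}$; and, for homogeneous $a$, the adjoint vertex operator has the explicit form $Y(a,z)^+=(-1)^{d_a}\sum_{l\geq0}\frac1{l!}Y(L_1^l\theta a,z)\,z^{2d_a-l}$ obtained in the proof of Thm.~\ref{unitarityTheorem} (Eq.~(\ref{Y(a,z)^+})).

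For the implication ``$\theta W\subset W$ and $L_1W\subset W$ $\Rightarrow$ $W$ unitary'', I first note that $T$-invariance together with $L_0=\tfrac12[L_1,L_{-1}]$ gives $L_0W\subset W$, which is exactly the grading-compatibility condition $(i)$. For condition $(ii)$ I take a homogeneous $a\in W$; then $\theta a\in W$ and $L_1^l\theta a\in W$ for every $l$, so, $W$ being a vertex subalgebra, each mode of $Y(L_1^l\theta a,z)$ preserves $W$. Reading off the modes $a_{(n)}^+$ from the displayed formula then yields $a_{(n)}^+b\in W$ for all $b\in W$ and $n\in\ZZ$, and the non-homogeneous case follows by splitting $a$ along the grading.

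For the converse I assume $W$ is a unitary subalgebra and apply the adjoint modes to the vacuum. Fixing homogeneous $a\in W$ with $d_a=n$ and inserting $Y(b,z)\Omega=e^{zL_{-1}}b$ into the formula above, I will extract, for $0\le j\le n$, the vector $\gamma_j:=a^+_{(2n-j-1)}\Omega=(-1)^n\sum_{l\ge j}\frac{1}{l!\,(l-j)!}L_{-1}^{\,l-j}L_1^l\theta a$, which lies in $W$ by condition $(ii)$ (taking $\Omega$ as the second argument). Every summand is homogeneous of weight $n-j$, and the leading term $l=j$ equals $(-1)^n\frac1{j!}\theta L_1^j a$.

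The step I expect to be the real obstacle is the apparent circularity here: recovering $\theta a$ from $\gamma_0$ seems to presuppose that the vectors $L_1^l a$ already belong to $W$, which is part of the desired conclusion. I plan to break this by a downward induction on $j$ at fixed weight $n$, invoking only $L_{-1}W\subset W$ and nothing at lower weights. For $j=n$ the sum reduces to its single leading term, giving $\theta L_1^n a\in W$; assuming $\theta L_1^l a\in W$ for all $l>j$, the membership $\gamma_j\in W$ combined with $L_{-1}W\subset W$ lets me solve the relation for the leading term and deduce $\theta L_1^j a\in W$. Taking $j=0$ gives $\theta a\in W$, hence $\theta W\subset W$ after extending over the grading; taking $j=1$ gives $\theta L_1 a\in W$, whence $L_1 a=\theta(\theta L_1 a)\in W$ by the $\theta$-invariance just proved, so that $L_1W\subset W$ as well.
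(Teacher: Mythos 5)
Your proposal is correct and follows essentially the same route as the paper: both directions rest on the explicit adjoint formula $a_{(n)}^+=(-1)^{d_a}\sum_l\frac{1}{l!}(L_1^l\theta a)_{(\cdot)}$, with the ``only if'' part obtained by applying the adjoint modes to $\Omega$ and running the same downward induction (the paper indexes it by the mode number $n=0,1,\dots$, extracting $L_1^{d_a}\theta a$, then $L_1^{d_a-1}\theta a$, ``and so on'', which is exactly your induction on $j=d_a-n$). Your explicit handling of the apparent circularity, and the final step $L_1a=\theta(\theta L_1a)$, merely spell out details the paper leaves implicit.
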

\begin{proof}
Let $W$ be a unitary subalgebra of $V$. 
If $a\in W$ is homogeneous, by Eq. (\ref{INVadjoint}) we have
\begin{equation}
\label{a_n^+}
a_n^+ = (-1)^{d_a} \sum_{j=0}^\infty \frac{1}{j!}
(L_1^j\theta a)_{-n},
\end{equation}
for all $n\in \ZZ$. Hence 
$\sum_{j=0}^\infty \frac{1}{j!}(L_1^j\theta a)_{-n}\Omega \in W$
for all $n\in \ZZ$. For $n=0$ we find that $L_1^{d_a}\theta a \in W$.
For $n=1$ that also $L_1^{d_a-1}\theta a \in W$ and so on. 
Hence, $L_1^j\theta a\in W$ for all $j\in \ZZ_{\geq 0}$. Since the homogeneous 
vector $a\in W$ was arbitrary it follows that $\theta W \subset W$
and $L_{1}W \subset W$. 

Conversely let us assume that $W$ is a vertex subalgebra of 
$V$ such that $\theta W \subset W$ and $L_{1}W \subset W$. 
Since every vertex subalgebra is $L_{-1}$ invariant we also have 
$$L_0W= \frac{1}{2}[L_1,L_{-1}]W \subset W.$$ 
Moreover, Property $(ii)$ in the definition of unitary
subalgebras is an easy consequence of Eq. (\ref{INVadjoint}). 
\end{proof}

Using the definition and the above proposition one can give 
various examples of unitary subalgebras of a unitary vertex operator 
algebra $V$. 
\begin{example}
The vertex subalgebra $L(c,0) \subset V$ generated by the conformal vector $\nu$ of 
the unitary VOA $V$ having central charge $c$ is a unitary subalgebra. We call it the {\bf Virasoro subalgebra} of $V$. 

\end{example}

\begin{example}
{\rm For a closed subgroup 
$G\subset {\rm Aut}_{(\cdot | \cdot)}(V)$, 
the fixed point subalgebra $V^G$ (i.e.\! the set of fixed elements of $V$
under the action of elements of $G$) is unitary. In fact every $g\in G$ 
commutes with $\theta$ and $L_1$ and hence $\theta V^G \subset V^G$ 
and $L_1 V^G \subset V^G$. When $G$ is finite $V^G$ is called 
{\bf orbifold subalgebra}. 
}
\end{example}

\begin{example}
{\rm A vertex subalgebra $W\subset V$ generated by a $\theta$ 
invariant family of quasi-primary
vectors, is clearly invariant for $\theta$ and from 
Eq. (\ref{l_1commutation}) it easily follows that it is also invariant 
for $L_1$. Hence $W$ is unitary.}
\end{example}

\begin{example} 
{\rm Let $W$ be a vertex subalgebra of a unitary vertex operator algebra
$V$. Then 
$W^c = \{b \in V: [Y(a,z),Y(b,w)]=0 \; {\rm for \; all}\; a\in W \}$
is a vertex subalgebra of $V$, and we call it {\bf coset subalgebra} 
(see \cite[Remark 4.6b]{Kac} where $W^c$ is called centralizer). 
By the Borcherds commutator formula Eq. (\ref{commutatorformula})
$b\in V$ belongs to $W^c$ if and only if $a_{(j)}b=0$ for 
all $a \in W$ and $j\in \ZZ_{\geq 0}$, cf. \cite[Cor.4.6. (b)]{Kac}. Now if $W$ is a unitary subalgebra 
and $a, b\in W^c$ then, for all $c \in W$  and all 
$n,m \in \ZZ$, we have 
$[a_{(n)}^+, c_{(m)}]= [c_{(m)}^+, a_{(n)}]^+=0$, as a consequence of Eq. (\ref{a_n^+}). Hence for all $c\in W$, 
$j\in \ZZ_{\geq 0}$, $n \in \ZZ$ we have 
$c_{(j)}a_{(n)}^+b=$ $a_{(n)}^+c_{(j)} b=0$ so that $a_{(n)}^+b \in W^c$. Moreover, if $a\in W$ is homogeneous and 
$b \in W^c$ then
$ a_{(j)}L_0b = a_{j - d_a +1}L_0b =L_0a_{j - d_a +1}b + (j - d_a +1)a_{j - d_a +1}b 
= L_0a_{(j)}b  + (j - d_a +1)a_{(j)}b =0 $
for all  $j\in \ZZ_{\geq 0}$. Hence $L_0W^c \subset W^c$.  It follows that if $W\subset V$ is an unitary subalgebra then the 
corresponding coset subalgebra $W^c \subset V$ is also unitary.  
}
\end{example}

Now, suppose that $W\subset V$ is a unitary subalgebra. Then $W$, 
is a vertex algebra and it inherits from $V$ the normalized scalar product $(\cdot |\cdot )$.
We want to show that when $V$ is simple can we find a conformal vector 
for the vertex algebra $W$ making the pair $(W,(\cdot |\cdot))$  
into a simple unitary VOA. 
In order to do so, let us first note that the orthogonal
projection $e_W$ onto $W$, is a well-defined element in ${\rm End}(V)$. 
This is an easy consequence of the
fact that $W$ is compatible with the grading, and that the
subspaces $V_n\;(n\in\ZZ)$ are finite-dimensional. Note also 
that $e_W^+=e_W$. 
\begin{lemma}
\label{lemmaSubalgebraCommutation}
Let $W\subset V$ be a unitary subalgebra. Then $[Y(a,z),e_W] = 0$ 
for all $a\in W$, $[L_n,e_W]=0$ for $n=-1,0,1$ and $[\theta,e_W]=0$. 
Moreover, for every $a\in V$, $e_WY(a,z)e_W=Y(e_Wa,z)e_W$. 
\end{lemma}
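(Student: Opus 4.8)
The plan is to exploit that $e_W$ is the orthogonal projection onto $W$ along $W^\perp$, so that $V = W \oplus W^\perp$ as graded spaces, and to reduce every assertion to the elementary observation that $e_W$ commutes with an operator $A \in {\rm End}(V)$ as soon as both $A$ and its adjoint $A^+$ map $W$ into $W$. Indeed, if $AW \subset W$ and $A^+W \subset W$, then for $a \in W^\perp$ and $w \in W$ one has $(w|Aa) = (A^+w|a) = 0$, so $A$ preserves $W^\perp$ as well; since $A$ then respects the decomposition $V = W \oplus W^\perp$, it commutes with $e_W$. The same works, with the obvious antilinear modification, for $\theta$.

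First I would dispose of the three commutators. For $a \in W$ the modes $a_{(n)}$ map $W$ into $W$ because $W$ is a vertex subalgebra, while $a_{(n)}^+$ maps $W$ into $W$ by property $(ii)$ in the definition of a unitary subalgebra; hence $[a_{(n)}, e_W] = 0$ for all $n$, i.e. $[Y(a,z), e_W] = 0$. For $n \in \{-1,0,1\}$ the operators $L_n$ and their adjoints $L_n^+ = L_{-n}$ all preserve $W$: $L_{-1} = T$ preserves every vertex subalgebra, $L_0 W \subset W$ by compatibility with the grading, and $L_1 W \subset W$ by Prop.~\ref{prop:QW<W}; the observation above then yields $[L_n, e_W] = 0$. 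Finally $\theta W \subset W$ by Prop.~\ref{prop:QW<W}, and since $\theta$ is an antiunitary involution (Prop.~\ref{thetaProperties}), writing $w \in W$ as $w = \theta(\theta w)$ gives $(\theta a | w) = (\theta a | \theta(\theta w)) = (\theta w | a) = 0$ for every $a \in W^\perp$; thus $\theta W^\perp \subset W^\perp$ and $[\theta, e_W] = 0$.

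The substantive step is the last identity $e_W Y(a,z) e_W = Y(e_W a, z) e_W$ for arbitrary $a \in V$. Writing $a = e_W a + a'$ with $a' = (1-e_W)a \in W^\perp$ and applying both sides to an arbitrary $w \in W$, the term $(e_W a)_{(n)} w$ already lies in $W$, so the claim reduces to showing that $a'_{(n)} w \in W^\perp$ whenever $a' \in W^\perp$ and $w \in W$. Here the naive approach of pairing $a'_{(n)} w$ against $u \in W$ and passing to the adjoint is circular: by Eq.~(\ref{INVadjoint}) together with the just-proved invariance of $W^\perp$ under $\theta$, $L_1$ and $L_0$, the operator $(a'_{(n)})^+$ is again a mode of the vector $\theta e^{L_1}(-1)^{L_0}a' \in W^\perp$, so one merely reproduces an expression of the same shape.

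I expect this circularity to be the main obstacle, and the plan is to break it using skew-symmetry. By the identity $Y(a',z)w = e^{zL_{-1}} Y(w,-z)a'$ (see \cite{Kac}), each coefficient $a'_{(n)} w$ is a finite linear combination of vectors $L_{-1}^k w_{(m)} a'$. Since $w \in W$, the already-established commutation $[Y(w,z), e_W] = 0$ shows that $w_{(m)}$ preserves $W^\perp$, so $w_{(m)} a' \in W^\perp$; and $[L_{-1}, e_W] = 0$ shows that $L_{-1}$ preserves $W^\perp$. Hence $a'_{(n)} w \in W^\perp$, giving $e_W a'_{(n)} w = 0$ and therefore $e_W a_{(n)} w = (e_W a)_{(n)} w$ for all $w \in W$; as $w$ ranges over $e_W V$ this is precisely $e_W Y(a,z) e_W = Y(e_W a, z) e_W$.
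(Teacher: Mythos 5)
Your proof is correct. The first three commutators are handled exactly as in the paper: both arguments reduce to the observation that an operator preserving $W$ whose adjoint also preserves $W$ commutes with $e_W$, combined with the invariance of $W$ under $a_{(n)}$, $a_{(n)}^+$, $L_{-1}$, $L_0$, $L_1$ and $\theta$. Where you genuinely diverge is the final identity $e_WY(a,z)e_W=Y(e_Wa,z)e_W$. The paper observes that $e_WY(a,z)e_W\restriction_W$ is a field on the vertex algebra $W$, mutually local with every $Y(b,z)\restriction_W$, $b\in W$ (thanks to the already-proved commutation of $e_W$ with those fields), with vacuum value $e^{zL_{-1}}e_Wa$, and then invokes the uniqueness theorem \cite[Thm.4.4]{Kac}. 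You instead split $a=e_Wa+a'$ with $a'\in (1-e_W)V$ and prove directly that $a'_{(n)}w\perp W$ for $w\in W$ via the skew-symmetry identity $Y(a',z)w=e^{zL_{-1}}Y(w,-z)a'$, whose coefficients are finite sums of vectors $L_{-1}^k w_{(m)}a'$, each manifestly orthogonal to $W$ by the first part of the lemma. Both routes rest on the same ingredients (locality and the vacuum axiom, plus the already-established commutators); the paper's is shorter because it outsources the work to the uniqueness theorem, while yours is more self-contained and makes explicit a genuine point that you correctly diagnose: the naive adjoint argument $(u\,|\,a'_{(n)}w)=((a'_{(n)})^+u\,|\,w)$ is circular, since by Eq.~(\ref{INVadjoint}) the operator $(a'_{(n)})^+$ is again built from modes of a vector orthogonal to $W$, reproducing a statement of the same shape.
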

\begin{proof}
Let $a \in W$. Since, for every $n\in \ZZ$, $W$ is invariant
for $a_{(n)}$ and $a_{(n)}^+$ we have $e_Wa_{(n)}= e_Wa_{(n)}e_W 
=(e_Wa_{(n)}^+e_W)^+ = (a_{(n)}^+e_W)^+=e_Wa_{(n)}$ for integer 
$n$. Hence $[Y(a,z),e_W] = 0$.  
By Prop. \ref{prop:QW<W}, $W$ is also invariant for $L_n$, 
$n=-1,0,1$ and for $\theta$. Since $L_1^+=L_{-1}$, $L_0^+=L_0$ and 
$\theta$ is an antiunitary involution it follows that $[L_n,e_W]=0$ for 
$n=-1,0,1$
and $[\theta,e_W]=0$. 

Now let $a\in V$. Then $e_WY(a,z)e_W\restriction_W$ is a field on $W$ which is 
mutually local with all vertex operators $Y(b,z)\restriction_W$ , 
$b \in W$. Moreover, $e_WY(a,z)e_W\Omega=e_We^{zL_{-1}}a
=e^{zL_{-1}}e_Wa$. By the uniqueness theorem 
\cite[Thm.4.4]{Kac}, we have $e_WY(a,z)e_W\restriction_W = 
Y(e_Wa,z)\restriction_W$. Thus $e_WY(a,z)e_W=Y(e_Wa,z)e_W$.
\end{proof}  
 
\begin{proposition}
\label{subalbebraconformalvector}
Let $(V, (\cdot|\cdot))$ be a simple unitary VOA with conformal 
vector $\nu$, $W$ be a unitary subalgebra of $V$ and  
$\nu^W=e_W\nu$. Then  $\theta \nu^W= \nu^W$ and $Y(\nu^W,z)=\sum_{n \in \ZZ}L^W_n z^{-n-2}$ 
is a Hermitian Virasoro field on $V$ such that $L^W_n\restriction_W=L_n\restriction_W$ for 
$n=-1,0,1$. In particular $\nu^W$ is a conformal 
vector for the vertex algebra $W$ and $W$ endowed with $\nu^W$ is 
a vertex operator algebra. 
Moreover, $(W,(\cdot |\cdot))$ with the conformal vector $\nu^W$ is 
a simple unitary VOA with PCT operator $\theta\restriction_W$.
\end{proposition}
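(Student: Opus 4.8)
The plan is to build $\nu^W$ up to a conformal vector for $W$ in stages, relying throughout on Lemma~\ref{lemmaSubalgebraCommutation}, which gives that $e_W$ commutes with $L_{-1},L_0,L_1$ and with $\theta$, and that $e_WY(a,z)e_W=Y(e_Wa,z)e_W$ for every $a\in V$. First I would dispose of the easy assertions. Since $\theta\nu=\nu$ and $[\theta,e_W]=0$, we get $\theta\nu^W=e_W\theta\nu=\nu^W$ at once. Applying $e_WY(a,z)e_W=Y(e_Wa,z)e_W$ with $a=\nu$ gives $Y(\nu^W,z)e_W=e_WY(\nu,z)e_W$, so that $L^W_ne_W=e_WL_ne_W$, whence $L^W_nw=e_WL_nw$ for every $w\in W$ and $n\in\ZZ$. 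For $n\in\{-1,0,1\}$ the operator $L_n$ preserves $W$ by Prop.~\ref{prop:QW<W}, so $e_W$ acts as the identity and we obtain $L^W_n\restriction_W=L_n\restriction_W$. Since $e_W$ preserves the grading and $\nu\in V_2$, we have $\nu^W\in V_2\cap W=W_2$, and $L^W_1\nu^W=L_1\nu^W=e_WL_1\nu=0$, so $\nu^W$ is quasi-primary of weight $2$.

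The crux is to show that $Y(\nu^W,z)$ is a Virasoro field on all of $V$. By the Borcherds commutator formula~(\ref{commutatorformula}) the operators $L^W_n=\nu^W_{(n+1)}$ satisfy the Virasoro relations on $V$ as soon as $\nu^W_{(0)}\nu^W=T\nu^W$, $\nu^W_{(1)}\nu^W=2\nu^W$, $\nu^W_{(2)}\nu^W=0$, $\nu^W_{(3)}\nu^W\in\CC\Omega$, and $\nu^W_{(j)}\nu^W=0$ for $j\geq 4$. These I would verify by a grading argument combined with the restriction identity. Since $\nu^W_{(j)}\nu^W\in V_{3-j}\cap W$ and $V$ is of CFT type by Prop.~\ref{simpleunitary}, the products vanish automatically for $j\geq 4$ and lie in $W_0=\CC\Omega$ for $j=3$, which defines the central charge $c_W$ via $\nu^W_{(3)}\nu^W=\tfrac{c_W}{2}\Omega$. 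For $j=0,1,2$ the products equal $L^W_{-1}\nu^W$, $L^W_0\nu^W$, $L^W_1\nu^W$, and since these involve $L^W_n$ only for $n\in\{-1,0,1\}$ evaluated on $\nu^W\in W$, I may replace $L^W_n$ by $L_n$, obtaining $T\nu^W$, $2\nu^W$, and $0$ respectively. The point that makes the argument work is that although $L^W_n$ need not agree with $L_n$ on $W$ for $|n|\geq 2$, the only large-index term ever required, $\nu^W_{(3)}\nu^W$, is forced into $\CC\Omega$ purely by the grading. Hermiticity then follows from Eq.~(\ref{QPadoint}): $\nu^W$ is quasi-primary of weight $2$ with $\theta^{-1}\nu^W=\theta\nu^W=\nu^W$, so $(L^W_na|b)=(a|L^W_{-n}b)$ for all $a,b\in V$.

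It remains to assemble the VOA and unitarity statements. That $\nu^W$ is a conformal vector for $W$ is immediate, since $L^W_{-1}\restriction_W=T\restriction_W$ and $L^W_0\restriction_W=L_0\restriction_W$ is diagonalizable with eigenspaces $W_n=W\cap V_n$; the three VOA axioms for $(W,\nu^W)$ are inherited from those of $V$ because $W_n\subset V_n$, so integrality of the grading, boundedness below, and finite-dimensionality all transfer. Finally, to see that $(W,(\cdot|\cdot))$ is a simple unitary VOA with PCT operator $\theta\restriction_W$, I would restrict the invariant bilinear form $(\theta\,\cdot\,|\,\cdot\,)$ of $V$ to $W$. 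Since $\theta W\subset W$, $L_1W\subset W$, $L_0W\subset W$, and $L^W_0=L_0$, $L^W_1=L_1$ on $W$, the invariance relation~(\ref{invbil}) for $V$ restricts verbatim to the corresponding relation for $W$ with respect to $\nu^W$. Hence $\theta\restriction_W$ is a PCT operator for the normalized scalar product $(\cdot|\cdot)\restriction_W$, so $(W,(\cdot|\cdot))$ is a unitary VOA whose PCT operator is $\theta\restriction_W$ by the uniqueness in Prop.~\ref{thetaProperties}; simplicity follows from $W_0=\CC\Omega$ via Prop.~\ref{simpleunitary}. The only genuinely delicate step is the middle paragraph, where one must exploit that the agreement $L^W_n=L_n$ on $W$ for $|n|\leq 1$, together with the CFT-type grading, suffices to run the Virasoro-vector criterion despite the failure of this agreement for larger $|n|$.
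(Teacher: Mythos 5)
Your proposal is correct and follows essentially the same route as the paper: both rest on Lemma \ref{lemmaSubalgebraCommutation} to get $\theta\nu^W=\nu^W$, $L^W_ne_W=e_WL_ne_W$ and the agreement $L^W_n\restriction_W=L_n\restriction_W$ for $|n|\leq 1$, then run the Borcherds commutator formula with the CFT-type grading forcing $\nu^W_{(3)}\nu^W\in\CC\Omega$ and $\nu^W_{(j)}\nu^W=0$ for $j\geq 4$, and finally deduce unitarity and simplicity of $W$ from $\theta$-, $L_{\pm1}$-, $L_0$-invariance and $W_0=\CC\Omega$. The only cosmetic difference is that you package the Virasoro verification as a list of conditions on $\nu^W_{(j)}\nu^W$ while the paper writes out the commutator expansion explicitly; the substance is identical.
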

\begin{proof}
By Lemma \ref{lemmaSubalgebraCommutation} $\nu_W$ is a
quasi-primary vector in $V_2$
and the coefficients in the expansion 
$Y(\nu^W,z)=\sum_{n\in \ZZ}L^W_nz^{-n-2}$ satisfy 
$L^W_ne_W=e_WL_ne_W$. Moreover, for $n=-1,0,1$ we also 
have $L^W_ne_W= L_ne_W$ and hence $L^W_n\restriction_W=L_n\restriction_W$.

From Borcherds commutator formula Eq. (\ref{commutatorformula})
and the fact that $L^W_j\nu^W=0$ for $j>2$ we have $(m, n \in \ZZ)$
\begin{eqnarray*}
[L^W_m, L^W_n] &=& (L^W_{-1}\nu^W)_{(m+n+2)} + 
(m+1)(L^W_0\nu^W)_{(m+n+1)} \\
&+& \frac{m(m+1)}{2}(L^W_1\nu^W)_{(m+n)} + 
\frac{m(m^2-1)}{6}(L^W_2\nu^W)_{(m+n-1)}.
\end{eqnarray*}
Now, $L^W_{-1}\nu^W=L_{-1}\nu^W$, $L^W_0\nu^W=L_0\nu^W=2\nu_W$ 
and $L^W_1\nu^W=L_1\nu^W=0$. Moreover, since $V$ is simple, we 
have $V_0=\CC \Omega$ by Prop. \ref{simpleunitary} so that 
$L^W_2\nu^W=\frac{c_W}{2}\Omega$ for some $c_W \in \CC$.  
Hence
\begin{eqnarray*}
[L^W_m, L^W_n] &=& -(m+n+2)(\nu^W)_{(m+n+1)} +
2(m+1)(\nu^W)_{(m+n+1)} \\
&+& \frac{c_W}{12}(m^3-m)\delta_{m,-n} 1_V \\
&=& (m-n)L^W_{m+n} + \frac{c_W}{12}(m^3-m)\delta_{m,-n}1_V, 
\end{eqnarray*}
i.e. $Y(\nu^W,z)$ is a Virasoro field with central charge
$c_W$. That $(W,(\cdot |\cdot))$ is a unitary VOA with PCT 
operator $\theta\restriction_W$ now follows directly from the fact that $W$ 
is invariant for $\theta$, and $L_n$, $n=-1,0,1$. Moreover, 
$W$ is simple by Prop. \ref{simpleunitary} because 
$W_0=W\cap V_0 =\CC\Omega$. 
\end{proof}

\begin{remark}
\label{RemarkSubalgebraCentralCharge} 
{\rm Let $W$ be a unitary subalgebra of a simple unitary vertex operator algebra. Then the following are equivalent:
\begin{itemize}
\item[$(i)$] $W=\CC \Omega$. 
\item[$(ii)$] $\nu_W=0$, where $\nu_W=e_W\nu$.
\item[$(iii)$] $c_W=0$, where $c_W$ is the central charge of $\nu_W$.
\end{itemize}
}
\end{remark}

\begin{proposition}
\label{cosetsconformalvector}
Let $(V, (\cdot|\cdot))$ be a simple unitary VOA with conformal
vector $\nu$, let $W$ be a unitary subalgebra of $V$ and
le $W^c$ be the corresponding coset subalgebra. 
Then we have $\nu = \nu^W+ \nu^{W^c}$. Moreover, the operators 
$L^W_0=\nu^W_{(1)}$ and $L^{W^c}_0=\nu^{W^c}_{(1)}$ are 
simultaneously diagonalizable on $V$ with non-negative eigenvalues.  
\end{proposition}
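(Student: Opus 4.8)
The plan is to prove $\nu=\nu^W+\nu^{W^c}$ by showing that the defect $\rho:=\nu-\nu^W-\nu^{W^c}$ vanishes, and then to deduce the simultaneous diagonalization from the resulting identity $L_0=L^W_0+L^{W^c}_0$. Since $W^c$ is a unitary subalgebra (by the Example preceding this proposition), Prop.~\ref{subalbebraconformalvector} applies to it as well: $\nu^{W^c}=e_{W^c}\nu$ is a Hermitian Virasoro vector with modes $L^{W^c}_n=\nu^{W^c}_{(n+1)}$ agreeing with $L_n$ on $W^c$ for $n=-1,0,1$. From the mutual commutativity of $W$ and $W^c$ I read off two facts: because $\nu^W\in W$ and $\nu^{W^c}\in W^c$, one has $\nu^W_{(k)}b=0$ and $\nu^{W^c}_{(k)}a=0$ for all $a\in W$, $b\in W^c$, $k\geq 0$ (so $L^W_m$ kills $W^c$ and $L^{W^c}_m$ kills $W$ for $m\geq-1$), and the two fields $Y(\nu^W,z)$, $Y(\nu^{W^c},z)$ commute. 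I would also record an orthogonality relation: $L^{W^c}_0$ is self-adjoint and annihilates $W$, so its range is orthogonal to $W$; as $\nu^{W^c}=\tfrac12 L^{W^c}_0\nu^{W^c}$ lies in that range, $\nu^{W^c}\perp W$, and symmetrically $\nu^W\perp W^c$. Hence $e_W\nu^{W^c}=e_{W^c}\nu^W=0$, which gives $e_W\rho=e_{W^c}\rho=0$: the defect is orthogonal to both $W$ and $W^c$.

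The next step is to show that $\rho$ is a Hermitian Virasoro vector commuting with both $\nu^W$ and $\nu^{W^c}$. For $k\geq0$ I compute $\rho_{(k)}\nu^W=L_{k-1}\nu^W-L^W_{k-1}\nu^W-\nu^{W^c}_{(k)}\nu^W$; the last term vanishes by commutativity, and since $\nu^W$ has conformal weight $2$ one has $L_{k-1}\nu^W\in W$ for every $k\geq0$ (for $k\geq3$ it falls into $V_{\le 0}=\CC\Omega\subset W$, for $k\le2$ it is $\mathfrak{sl}_2$-invariance of $W$), so $L^W_{k-1}\nu^W=e_WL_{k-1}\nu^W=L_{k-1}\nu^W$ and $\rho_{(k)}\nu^W=0$; symmetrically $\rho_{(k)}\nu^{W^c}=0$. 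Thus $\rho$ commutes with $\hat\nu:=\nu^W+\nu^{W^c}$, whence $\nu_{(k)}\rho=\rho_{(k)}\rho$ for $k\geq0$; evaluating $\nu_{(k)}\rho=L_{k-1}\rho$ on the quasi-primary weight-$2$ vector $\rho$ reproduces precisely the Virasoro operator products ($\rho_{(1)}\rho=2\rho$, $\rho_{(0)}\rho=T\rho$, $\rho_{(2)}\rho=0$, $\rho_{(3)}\rho=\tfrac{c_\rho}{2}\Omega$, all higher products zero), so $\rho$ is a Hermitian Virasoro vector and $\nu=\nu^W+\nu^{W^c}+\rho$ is a sum of three mutually commuting Virasoro vectors.

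The crux --- and the place where unitarity is indispensable --- is to show $\rho=0$. Put $\omega:=\nu-\nu^W=\nu^{W^c}+\rho$; being a sum of two commuting Hermitian Virasoro vectors it is again one, and $L_0-L^W_0=\omega_{(1)}=L^\omega_0$. This operator is non-negative, since it is the conformal Hamiltonian of a unitary Virasoro action on $V$: as $L_0\geq0$ has finite-dimensional eigenspaces and the lowering operator $\omega_{(2)}=L^\omega_1$ strictly decreases the $L_0$-grading, every vector reduces after finitely many steps to a lowest-weight vector, whose weight is $\geq0$ by unitarity. Now for homogeneous $a\in W$ and $k\geq0$, the vector $\rho_{(k)}a$ has $L_0$-weight $d_a+1-k$ and, because $\rho$ commutes with $\nu^W$, has $L^W_0$-weight $d_a$; hence $(L_0-L^W_0)\rho_{(k)}a=(1-k)\rho_{(k)}a$. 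For $k\geq2$ the negative eigenvalue $1-k$ forces $\rho_{(k)}a=0$, and for $k=0,1$ one has $\rho_{(k)}a=(1-e_W)L_{k-1}a=0$ because $L_{k-1}a\in W$. By skew-symmetry $a_{(k)}\rho=0$ for all $a\in W$, $k\geq0$, i.e. $\rho\in W^c$; together with $\rho\perp W^c$ this gives $\rho=0$ and $\nu=\nu^W+\nu^{W^c}$.

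The simultaneous diagonalization then follows quickly. From $\nu=\nu^W+\nu^{W^c}$ and the commutativity of $Y(\nu^W,z)$ and $Y(\nu^{W^c},z)$ we obtain $L_0=L^W_0+L^{W^c}_0$ with $[L^W_0,L^{W^c}_0]=0$. Both operators are self-adjoint and commute with $L_0$, hence preserve every finite-dimensional homogeneous subspace $V_n$, on which two commuting self-adjoint operators are simultaneously diagonalizable. Their eigenvalues are non-negative by the positive-energy argument of the previous paragraph, applied to the Hermitian Virasoro vectors $\nu^W$ and $\nu^{W^c}$, so $L^W_0$ and $L^{W^c}_0$ are simultaneously diagonalizable on $V$ with non-negative eigenvalues.
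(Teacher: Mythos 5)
Your proof is correct, and it reaches the identity $\nu=\nu^W+\nu^{W^c}$ by a genuinely different route than the paper. The paper sets $\nu'=\nu-\nu^W$, checks $\nu'_{(j)}a=0$ for $j=0,1,2$ and $a\in W$, and then upgrades this to full commutation $[Y(\nu',z),Y(a,w)]=0$ by a rescaling argument: conjugating by $z_1^{L^W_0}$ shows the commutator would have to be supported on $z=z_1w$ for every $z_1$, which is incompatible with locality unless it vanishes. This places $\nu'\in W^c$ (and symmetrically $\nu-\nu^{W^c}\in W^{cc}$) using only locality and the $\mathfrak{sl}_2$-compatibility of $W$; the difference $\nu'-\nu^{W^c}$ then lies in $W^c\cap W^{cc}$, commutes with $Y(\nu,z)$ and hence with everything, and is killed by simplicity plus Schur's lemma. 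You instead work with the defect $\rho=\nu-\nu^W-\nu^{W^c}$ and exploit unitarity from the start: the orthogonality relations $\nu^W\perp W^c$, $\nu^{W^c}\perp W$ (a clean Hilbert-space observation the paper never makes, following from $\nu^W=\tfrac12 L^W_0\nu^W$ and $L^W_0\restriction_{W^c}=0$) give $e_W\rho=e_{W^c}\rho=0$; you then verify that $\rho$ is a Hermitian quasi-primary Virasoro vector commuting with $\nu^W$ and $\nu^{W^c}$ (the same Borcherds-formula computation as in Prop.~\ref{subalbebraconformalvector}), establish $L_0-L^W_0\geq 0$ by the standard lowest-weight reduction, and use the resulting eigenvalue constraint $(L_0-L^W_0)\rho_{(k)}a=(1-k)\rho_{(k)}a$ to force $\rho_{(k)}a=0$ for $k\geq 2$, the cases $k\leq 1$ being immediate from $L^W_n\restriction_W=L_n\restriction_W$; quasi-symmetry then gives $\rho\in W^c$, and $\rho\perp W^c$ finishes. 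The trade-off: the paper's membership $\nu-\nu^W\in W^c$ needs no positivity, whereas your argument leans on unitarity earlier but replaces the distributional scaling step with a purely algebraic sign argument and yields the orthogonal decomposition of $\nu$ as a byproduct. The final simultaneous diagonalization and non-negativity argument is essentially identical in both proofs. Two small points you should make explicit in a written version: the repeated use of the equivalence $a_{(k)}b=0$ for all $k\geq0$ iff $b_{(k)}a=0$ for all $k\geq0$ (quasi-symmetry, \cite[Eq.~(4.3.1)]{Kac}), and the diagonalizability of $L_0-L^W_0$ (a difference of commuting Hermitian operators preserving the finite-dimensional $V_n$), which is needed before one can speak of its eigenvalues.
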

\begin{proof} Let $\nu'=\nu - \nu^W$ and let $a\in W$. By Prop. 
\ref{subalbebraconformalvector} we have $\nu'_{(j)}a=0$ for 
$j=0,1,2$. Hence by the Borcherds commutator formula 
Eq. (\ref{commutatorformula}) we have 
$[\nu'_{(m)}, a_{(n)}]=0$, for $m=0,1,2$ and all $n\in \ZZ$. 
Note also that since $\nu^W$ is quasi-primary then $[L_0,L^W_0]=0$ and hence
$z_1^{L^W_0}= e^{\log (z_1) L^W_0}$ is well defined on $V$.    As a consequence we find 
\begin{equation}
z_1^{L_0}Y(a,z)z_1^{-L_0}=z_1^{L^W_0}Y(a,z)z_1^{- L^W_0}
\end{equation}
and 
\begin{equation}
z_1^{ L^W_0}Y(\nu',z)z_1^{-L^W_0}=Y(\nu',z).
\end{equation}
Hence if $a\in W$ is homogeneous then 
\begin{eqnarray}
z_1^{L^W_0}[ Y(\nu',z),Y(a,w)]z_1^{L^W_0} &=&
[Y(\nu',z),z_1^{ L_0}Y(a,w)z_1^{- L_0}]  
\nonumber
\\
&=& w^{d_a}[Y(\nu',z),Y(a,z_1w)].
\end{eqnarray}
 
Hence, by locality, $[Y(\nu',z),Y(a,w)]=0$ for all homogeneous 
$a \in W$ so that $\nu' \in W^c$. The same argument also shows that 
$\nu-\nu^{W^c} \in W^{cc}$.
Accordingly, for every $b\in W^c$ we have 
\begin{equation}
[Y(\nu',z),Y(b,w)]=[Y(\nu,z),Y(b,w)]= [Y(\nu^{W^c},z),Y(b,w)]
\end{equation}
and thus $\nu' - \nu^{W^c} \in W^c\cap W^{cc}$. It follows 
that $Y(\nu' - \nu^{W^c},z)$ commutes 
with the energy-momentum field $Y(\nu,z)$ and hence with 
all vertex operators $Y(a,z)$, $a\in V$. Since $V$ is simple 
we have  $Y(\nu' - \nu^{W^c},z) \in \CC 1$ and hence 
$\nu'= \nu^{W^c}$ so that $\nu =\nu^W+\nu^{W^c}$.

Now, $L^W_0$ and $L^{W^c}_0$ coincide with their adjoints on $V$ 
and commute. Moreover, they commute with $L_0$ which is diagonalizable 
with finite-dimensional eigenspaces. Hence $L^W_0$ and $L^{W^c}_0$ 
are simultaneously diagonalizable on $V$ with real eigenvalues. 
It remains to show that these eigenvalues are in fact non-negative. 
Let $a \in V$ be a non-zero vector such that 
$L^W_0a=sa$ and $L^{W^c}_0a=ta$, $s,t \in \RR$. Assume that $s<0$. Then 
as a consequence of unitarity and of the fact that $Y(\nu^W,z)$ is a 
Virasoro field it easy to show that $(L^W_1)^na$ is non-zero for every 
positive integer $n$. Moreover, $L_0(L^W_1)^na=(s+t-n)(L^W_1)^na$ in 
contradiction with the fact that $L_0$ has non-negative eigenvalues. 
Hence $s\geq 0$ and similarly $t\geq 0$.       
\end{proof}

\begin{corollary} 
\label{CorollaryMinVirVOA}
Let $W$ be a unitary subalgebra of the unitary Virasoro VOA $L(c,0)$. Then, either $W=\CC\Omega$ or $W=L(c,0)$. 
\end{corollary}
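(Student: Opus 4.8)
The plan is to exploit the conformal vector $\nu^W = e_W\nu$ of the unitary subalgebra $W$, provided by Proposition~\ref{subalbebraconformalvector}, together with the extreme simplicity of the low-weight part of $V := L(c,0)$. We may assume at the outset that $V\neq \CC\Omega$, i.e.\ that $\nu\neq 0$, since otherwise the statement is vacuous (both alternatives collapse to $W=\CC\Omega$). The whole argument then reduces to showing that $\nu^W$ can only be $0$ or $\nu$, after which Remark~\ref{RemarkSubalgebraCentralCharge} handles the first case and the generation property of $L(c,0)$ handles the second.

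The key structural input I would record first is that the weight-two subspace $V_2$ of $L(c,0)$ is one-dimensional, spanned by $\nu$. This follows from the highest-weight (PBW-type) description of the irreducible vacuum module $L(c,0)$: it is spanned by the vectors $L_{-n_1}\cdots L_{-n_k}\Omega$ with $n_1\geq\cdots\geq n_k\geq 1$, and since $L_{-1}\Omega = T\Omega = 0$ the only weight-two monomial that survives is $L_{-2}\Omega=\nu$. As $\nu\neq 0$, this gives $V_2=\CC\nu$.

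Next I would combine this with Proposition~\ref{subalbebraconformalvector}, which tells us that $\nu^W=e_W\nu$ is a (quasi-primary) vector of $V_2$. Hence $\nu^W=\lambda\nu$ for some $\lambda\in\CC$. Using that $e_W$ is an orthogonal projection, so $e_W^2=e_W$, I get
\begin{equation}
\nu^W = e_W\nu^W = \lambda\, e_W\nu = \lambda\,\nu^W = \lambda^2\nu,
\end{equation}
and comparing with $\nu^W=\lambda\nu$ together with $\nu\neq 0$ forces $\lambda^2=\lambda$, i.e.\ $\lambda\in\{0,1\}$. Thus $\nu^W\in\{0,\nu\}$. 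In the case $\nu^W=0$, Remark~\ref{RemarkSubalgebraCentralCharge} immediately gives $W=\CC\Omega$. In the case $\nu^W=\nu$ we have $\nu\in W$; since $W$ is a vertex subalgebra and $L(c,0)$ is by construction generated as a vertex algebra by its conformal vector $\nu$ (cf.\ Example~\ref{ExampleUnitaryVir}), it follows that $W\supseteq L(c,0)=V$, so $W=V$.

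I expect the only point requiring genuine care to be the structural claim $V_2=\CC\nu$, which rests on the irreducibility of the vacuum Virasoro module and on the identity $L_{-1}\Omega=0$; everything else is a short deduction from the propositions already established, so the corollary should follow cleanly once that weight-two computation is in place.
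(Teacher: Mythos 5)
Your proof is correct and takes essentially the same route as the paper's: both arguments rest on the one-dimensionality $L(c,0)_2=\CC\nu$, deduce that $\nu^W=e_W\nu$ must be either $0$ or $\nu$, and conclude via Remark \ref{RemarkSubalgebraCentralCharge} in the first case and the generation of $L(c,0)$ by $\nu$ in the second. The paper phrases the dichotomy as $W_2=\{0\}$ versus $W_2=\CC\nu$ rather than through the scalar $\lambda$ and the idempotency of $e_W$, but this is only a cosmetic difference, and your explicit justification of $L(c,0)_2=\CC\nu$ (via the PBW spanning set and $L_{-1}\Omega=0$) simply fills in what the paper delegates to a citation.
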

\begin{proof} Since $L(c,0)_2 = \CC L_{-2}\Omega =\CC\nu$, see e.g. \cite{KaRa}, then either $W_2= \{0\}$ and hence $\nu_W=0$ 
so that $W=0$ by Remark  \ref{RemarkSubalgebraCentralCharge} 
or $W_2 = \CC\nu$ and hence $W=L(c,0)$, because  $L(c,0)$ is generated by $\nu$. 
\end{proof}

We conclude this section with the following example. 

\begin{example} 
\label{ExampleBabyMonster}
{\rm Let $V^\natural$ be the moonshine VOA. Then, $V^\natural$ is a framed VOA of rank 24 namely it is 
an extension of $L(1/2,0)^{\otimes 48}$, \cite{Miyamoto}.  In fact , $V^\natural$ contains the corresponding copy 
of $L(1/2,0)^{\otimes 48}$ as a unitary subalgebra. Now, let $W  \subset V^\natural$ be the unitary subalgebra of $V^\natural$ 
isomorphic to 
$L(1/2,0)$ corresponding to the embedding $L(1/2,0)\otimes \Omega^{\otimes47} \subset L(1/2,0)^{\otimes 48}\subset V^\natural$. 
Then $W^c$ is a simple unitary framed VOA of rank $47/2$, namely, the even shorter Moonshine vertex operator algebra 
$VB^\natural_{(0)}$ constructed by H\"{o}hn, 
see \cite[Sect. 1]{H02}. It has been proved by H\"{o}hn that the atomorphism group ${\rm Aut}(VB^\natural_{(0)})$ of $VB^\natural_{(0)}$
is the Baby Monster group $\mathbb{B}$, the second largest sporadic simple finite group, see \cite[Thm.1]{H02}.}
\end{example}

\section{Energy bounds and strongly local vertex operator algebras}
\label{sectionstronglylocalVOA}

Let $(V,(\cdot | \cdot))$ be a unitary VOA.   
We say that $a \in V$ (or equivalently the corresponding field 
$Y(a, z)$) satisfies (polynomial) {\bf energy bounds} if there exist 
positive integers $s, k$ 
and a constant $M >0$ such that, for all $n \in \ZZ$ and all 
$b \in V$

\begin{equation} 
\label{EnergyBounds}
\|a_n b \|\leq M (|n|+1)^s \|(L_0 +1_V)^k b \|.
\end{equation}  

If every $a\in V$ satisfies energy bounds we say 
that $V$ is {\bf energy-bounded}. Note that if $V$ is energy-bounded then, obviously, every unitary subalgebra 
$W\subset V$ is energy-bounded.

The following proposition will be useful.

\begin{proposition} 
\label{GenBoundedProp}
If $V$ is generated by a family of
homogeneous elements satisfying energy bounds then $V$ 
is energy-bounded. 
\end{proposition}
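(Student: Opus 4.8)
The plan is to prove that the subspace
\[
V_{\mathrm{eb}} \equiv \mathrm{span}\{a\in V : a \text{ homogeneous and satisfying energy bounds}\}
\]
is a vertex subalgebra of $V$. It contains $\Omega$ trivially, since $Y(\Omega,z)=1_V$ gives $\Omega_{(n)}=\delta_{n,-1}1_V$, and by hypothesis it contains a generating family; so once it is shown to be a vertex subalgebra it must coincide with $V$, which is exactly the assertion. Since the $n$-th products are bilinear, the only nontrivial point is closure under products, and by decomposing into homogeneous components this reduces to showing: if $a,b$ are homogeneous and satisfy energy bounds, then $a_{(n)}b$ (again homogeneous, of dimension $d_a+d_b-n-1$) satisfies energy bounds for every $n\in\ZZ$. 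Throughout I use that energy bounds may be stated equivalently in the $a_{(n)}$ or the $a_n$ convention, the indices differing only by the fixed shift $d_a-1$.

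First I would narrow the range of $n$. From $(Ta)_{(m)}=-m\,a_{(m-1)}$ one sees that $V_{\mathrm{eb}}$ is $T$-invariant, the energy bound for $Ta$ following from that for $a$ with the exponent raised by one. Combined with the identity $a_{(-m-1)}b=\frac{1}{m!}(T^m a)_{(-1)}b$ for $m\geq 1$, this shows it suffices to treat the two cases $n\geq 0$ and $n=-1$.

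For $n\geq 0$ the Borcherds identity (\ref{B-id}) with $m=0$ expresses $(a_{(n)}b)_{(k)}$ as a \emph{finite} sum (the binomial $\binom{n}{j}$ terminating at $j=n$) of terms $a_{(n-j)}b_{(k+j)}$ and $b_{(n+k-j)}a_{(j)}$. Each term is estimated by applying the energy bounds for $a$ and $b$ in succession, moving the intermediate factor $(L_0+1_V)$ past a mode $x_{(p)}$ through the relation $[L_0,x_{(p)}]=(d_x-1-p)x_{(p)}$; this produces only polynomial-in-$|k|$ factors, and since the number of summands depends only on $n$, one obtains an energy bound for $a_{(n)}b$.

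The hard part will be the normally ordered product $n=-1$, where the analogous expansion
\[
(a_{(-1)}b)_{(k)}c=\sum_{j\geq 0}a_{(-1-j)}b_{(k+j)}c+\sum_{j\geq 0}b_{(k-1-j)}a_{(j)}c
\]
involves genuinely infinite sums, so that uniform control is the real issue. The key observation is that for $c$ an $L_0$-eigenvector of eigenvalue $p$ both sums are in fact finite: since $V_n=\{0\}$ for $n<0$ (a consequence of unitary M\"obius symmetry, Eq.~(\ref{uniVir})), the intermediate vectors $b_{(k+j)}c$ and $a_{(j)}c$ vanish unless their energy is non-negative, which bounds the number of surviving terms linearly in $p+|k|$. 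Estimating each term by the energy bounds for $a$ and $b$ while tracking the energy of the intermediate vector, every factor becomes polynomial in $p+|k|$; multiplying by the number of terms and re-expressing powers of $(p+1)$ as $(L_0+1_V)$ acting on the eigenvector $c$ yields a bound of the required form $M(|k|+1)^{s}\|(L_0+1_V)^{k'}c\|$. Passing from eigenvectors to arbitrary $c$ is harmless, since for fixed $k$ the operator $(a_{(-1)}b)_{(k)}$ maps distinct eigenspaces into mutually orthogonal ones, so the eigenvector estimates add up in the $\ell^2$ sense. With the case $n=-1$ settled, all cases are covered, $V_{\mathrm{eb}}$ is a vertex subalgebra, and the proof is complete.
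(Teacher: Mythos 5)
Your proof is correct and follows essentially the same route as the paper's: reduce to closure under $(n)$-products for $n\geq 0$ and $n=-1$ using $T$-invariance, estimate compositions $a_{(p)}b_{(q)}$ by commuting $(L_0+1_V)$ past modes, use the $m=0$ Borcherds identity to get a finite sum when $n\geq 0$, and for $n=-1$ exploit finiteness on each $L_0$-eigenspace (absorbing the grade into a higher power of $L_0+1_V$) together with orthogonality of homogeneous components. The only cosmetic difference is that you package the reduction as the statement that the span of energy-bounded homogeneous elements is a vertex subalgebra, with the explicit identity $a_{(-m-1)}b=\tfrac{1}{m!}(T^m a)_{(-1)}b$, where the paper instead invokes the general description of how a generated subalgebra is built from derivatives, non-negative products and normally ordered products.
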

\begin{proof}
A linear combination of elements satisfying energy bounds also satisfies energy bounds. 
Moreover, if $a\in V_{d}$, then $(Ta)_n = -(n+d)a_n$ and hence if $a$ satisfies energy bounds, 
then so does $Ta$. However, starting from a generating set,
any element of $V$ can be obtained by a repeated use of:
derivatives (multiplication by $T=L_{-1}$), $(n)$-products with $n\geq 0$, $(n)$-product with
$n=-1$ (which correspond to {\bf normally ordered product} of vertex operators \cite[Sect.3.1]{Kac}), and linear combinations.
This follows from Eqs. (\ref{EqT1}) and (\ref{EqT2}), see also \cite[Sect.3.1 and Prop.4.4]{Kac}.

Derivatives or $(n)$-products of homogeneous elements are
homogeneous, and taking linear combinations ``commutes'' with
taking derivatives and with forming $(n)$-products. Thus it is
enough to show, that if $a$ and $b$ are homogeneous elements satisfying energy bounds, 
then $a_{(n)}b$ satisfies energy bounds for all $n\geq 0$ and for $n=-1$.

So suppose that $a,b\in V$ are homogeneous elements of conformal
weight $d_a$ and $d_b$, respectively, and that there exist some
positive $M_x, s_x$ and $r_x$ (where $x=a,b$) such that for all
$c\in V$ and $m\in \ZZ$, we have
\begin{equation}
\|x_m c\| \leq M_x (1+|m|)^{s_x} \|(1_V+L_0)^{r_x}c\|\;\;\;(x=a,b).
\end{equation}
As $[L_0,y_m]=-m y_m$, we have that $(1_V+L_0)^{r_x}y_m
= y_m \left((1-m)1_V+L_0\right)^{r_x}$ and so from the assumed energy
bounds it follows that for every $c\in V$
\begin{eqnarray}
\label{ABbound} \nonumber \|x_{m_1} y_{m_2}c\|\! &\leq& \! M_x
(1\! +\! |m_1|)^{s_x} \|({1_V}+L_0)^{r_x}y_{m_2}c\| \\
\nonumber \! &=& \! M_x (1\! +\! |m_1|)^{s_x} \|y_{m_2}((1\! -\!
m_2){1_V}+L_0)^{r_x}c\|
\\ \nonumber\! &\leq&\! M_x M_y (1\! +\! |m_1|)^{s_x} (1\! +\! |m_2|)^{s_y}
\|({1_V}+L_0)^{r_y}
((1\! - \! m_2){1_V}+L_0)^{r_x} c\| \\
\! &\leq&\! M_x M_y (1\! +\! |m_1|)^{s_x} (1\! +\! |m_2|)^{1+s_y}
\|({1_V}+L_0)^{r_x+r_y}c \|.
\end{eqnarray}
To have a bound for $(a_{m_1}b)_{m_2}$ rather than for
$a_{m_1}b_{m_2}$, we use the special case of the Borcherds
identity obtained by substituting $m=0$ into (\ref{B-id}):
\begin{equation}\label{specialBor}
(a_{(n)}b)_{(k)} = \sum_{j=0}^{\infty} (-1)^j\left(\begin{matrix}
n \\ j
\end{matrix}\right)\big(a_{(n-j)}b_{(k+j)} - (-1)^n
b_{(n+k-j)}a_{(j)}\big).
\end{equation}
When $n\geq 0$, there are at most $n+1$ possibly non-zero terms in
the sum appearing on the right-hand side, since if $j > n\geq 0$
then $\left(\begin{matrix} n \\ j
\end{matrix}\right) = 0$.
So using (\ref{ABbound}), it is straightforward to show that in this
case $a_{(n)}b$ satisfies energy bounds.

If $n=-1$, then in general $:\! ab\! :_m \equiv (a_{(-1)}b)_m$
cannot be reduced to a finite sum. As $:\! ab\! :$ is of conformal
weight $d_a+d_b$, by (\ref{specialBor}) we have
\begin{equation}
:\! ab\! :_m = \sum_{j\geq d_a} a_{-j} b_{m+j} +\sum_{j < d_a}
b_{m+j} a_{-j}.
\end{equation}
Nevertheless, to estimate $\|:\! ab\! :_m c\|$ for a $c\in
V_{(k)}$, we only have to deal with a finite sum, since $a_{-j}c
=0$ for $j< k$ and $b_{m+j}c=0$ for $j> k-m$. This, together with
(\ref{ABbound}), gives a $k$-depending bound for $\|:\! ab\! :_m
c\|$. But as $kc = L_0c$, the $k$-dependence can be easily
``integrated'' into the degree of $({1_V} + L_0)$.

Finally, if $c$ is not homogeneous, then it is a sum $c=\sum
c^{(k)}$ of homogeneous elements. Correspondingly, we may try to
``sum up'' our already obtained inequality for the homogeneous
vectors appearing in the sum.

Of course, in general the norm inequalities $\|v_k\|\leq \|w_k\|$
$(k=0,1,...)$ do not imply that $\|\sum_k v_k\| \leq \|\sum_k
w_k\|$. They do however, if one has some extra conditions; for
example that both $\{v_k: k=0,1,...\}$ and $\{w_k: k=0,1,...\}$
are sets of pairwise orthogonal vectors.

This is exactly our case, since by the corresponding eigenvalues
of $L_0$, one has that both $\{:\! ab\! :_mc^{(k)}\,: k=0,1,...\}$
and $\{({1_V} + L_0)^{r}c^{(k)}:k=0,1,...\}$ are sets of
pairwise orthogonal vectors. Hence the obtained bound is
applicable to every $c\in V$.
\end{proof}

\begin{corollary}\label{tensorbounded} If $V^\alpha$ and $V^\beta$ are energy-bounded VOAs then $V^\alpha\otimes V^\beta$ is 
energy-bounded.
\end{corollary}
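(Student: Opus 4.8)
The plan is to reduce the statement to Proposition~\ref{GenBoundedProp}: I will exhibit a family of \emph{homogeneous} vectors that generate $V^\alpha\otimes V^\beta$ as a vertex algebra and verify that each of them satisfies the energy bounds~(\ref{EnergyBounds}) in the tensor product. Recall first that $V^\alpha\otimes V^\beta$ is again a unitary VOA, that its state-field correspondence is $Y(a\otimes b,z)=Y(a,z)\otimes Y(b,z)$, and that its conformal Hamiltonian is $L_0=L_0^\alpha\otimes 1+1\otimes L_0^\beta$. Expanding the product of the two fields gives the modes
\begin{equation}
(a\otimes b)_{(k)}=\sum_{m+n=k-1}a_{(m)}\otimes b_{(n)}.
\end{equation}
Since $Y(\Omega^\beta,z)=1$, i.e. $\Omega^\beta_{(n)}=\delta_{n,-1}1$, for homogeneous $a\in V^\alpha$ the vector $a\otimes\Omega^\beta$ is homogeneous of weight $d_a$ and its modes act simply as $(a\otimes\Omega^\beta)_{(k)}=a_{(k)}\otimes 1$; symmetrically for $\Omega^\alpha\otimes b$. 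These two families generate $V^\alpha\otimes V^\beta$, because $(a\otimes\Omega^\beta)_{(-1)}(\Omega^\alpha\otimes b)=a\otimes b$. Hence it is enough to establish energy bounds for each generator.

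I will treat $a\otimes\Omega^\beta$, the other case being symmetric. In terms of the homogeneous-normalized modes one has $(a\otimes\Omega^\beta)_n=a_n\otimes 1$. Fix an orthonormal basis $\{e_i\}$ of $V^\beta$ consisting of $L_0^\beta$-eigenvectors and write an arbitrary $c\in V^\alpha\otimes V^\beta$ as a finite sum $c=\sum_i c_i\otimes e_i$ with $c_i\in V^\alpha$. If $a$ satisfies $\|a_n x\|\le M(|n|+1)^s\|(L_0^\alpha+1)^k x\|$ for all $x\in V^\alpha$, then by orthonormality of the $e_i$,
\begin{equation}
\|(a_n\otimes 1)c\|^2=\sum_i\|a_n c_i\|^2\le M^2(|n|+1)^{2s}\sum_i\|(L_0^\alpha+1)^k c_i\|^2=M^2(|n|+1)^{2s}\,\big\|\big((L_0^\alpha+1)^k\otimes 1\big)c\big\|^2.
\end{equation}

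The final and only delicate point is to dominate $(L_0^\alpha+1)^k\otimes 1$ by the tensor-product operator $(L_0+1)^k$. Expanding each $c_i$ in a homogeneous orthonormal basis of $V^\alpha$, this comparison reduces on joint $L_0^\alpha$-, $L_0^\beta$-eigenvectors to the scalar inequality $(p+1)^k\le(p+q+1)^k$ valid for all eigenvalues $p,q\ge 0$ (the energies are non-negative by unitary M\"obius symmetry), which gives $\big\|\big((L_0^\alpha+1)^k\otimes 1\big)c\big\|\le\|(L_0+1)^k c\|$. Consequently $a\otimes\Omega^\beta$ satisfies~(\ref{EnergyBounds}) in $V^\alpha\otimes V^\beta$ with the same constants, and likewise $\Omega^\alpha\otimes b$. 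Applying Proposition~\ref{GenBoundedProp} to the resulting generating family of homogeneous energy-bounded vectors then yields that $V^\alpha\otimes V^\beta$ is energy-bounded. I expect no real obstacle here: the generation claim is immediate from the mode formula, and the Hamiltonian comparison is an elementary eigenvalue estimate.
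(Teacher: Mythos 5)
Your proof is correct and is essentially the argument the paper intends: the corollary is stated without proof immediately after Proposition~\ref{GenBoundedProp}, precisely because the generating family $\{a\otimes\Omega^\beta\}\cup\{\Omega^\alpha\otimes b\}$ of homogeneous vectors with modes $a_n\otimes 1$ and $1\otimes b_n$ visibly inherits the energy bounds (using $(L_0^\alpha+1)^k\otimes 1\leq (L_0+1)^k$, valid since the spectra are non-negative by unitarity). Your write-up fills in exactly those details.
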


\begin{proposition}
\label{DimensionOne/VirasoroBounded}
If V is a simple unitary VOA generated by $V_1 \cup {\mathscr F}$, where ${\mathscr F}\subset V_2$ is a family of quasi-primary $\theta$-invariant Virasoro vectors, then $V$ is energy-bounded. 
\end{proposition}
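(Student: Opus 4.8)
The plan is to reduce the statement to the generators by means of Prop.~\ref{GenBoundedProp} and then to verify energy bounds separately for the weight-one generators in $V_1$ and for the weight-two Virasoro generators in $\mathscr{F}$. Since $V_1 \cup \mathscr{F}$ is a generating set of homogeneous elements, Prop.~\ref{GenBoundedProp} shows it suffices to prove that each $a \in V_1$ and each $\nu' \in \mathscr{F}$ satisfies the energy bounds \eqref{EnergyBounds}.

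I would treat the Virasoro generators first. Fix $\nu' \in \mathscr{F} \subset V_2$ and set $L_n' \equiv \nu'_{(n+1)}$, so that the $L_n'$ obey the Virasoro relations with some central charge $c'$; since $\nu'$ is quasi-primary, $\theta$-invariant and of weight $2$, Eq.~\eqref{QPadoint} gives $(L_n')^+ = L_{-n}'$, whence the $L_n'$ define a unitary representation of $\mathfrak{Vir}$ on $V$. To bring Prop.~\ref{PropGoWaEstimates} to bear I must check that this representation is positive-energy and, crucially, that $0 \le L_0' \le L_0$. For this I would observe that $W \equiv W(\{\nu'\})$ is a unitary subalgebra, by Prop.~\ref{prop:QW<W} (it is generated by the $\theta$-invariant quasi-primary vector $\nu'$, so $\theta W \subset W$ and $L_1 W \subset W$), and then identify $\nu'$ with the canonical conformal vector $\nu^W = e_W\nu$ of Prop.~\ref{subalbebraconformalvector}: indeed the $L_0^W$-weight-two subspace of the simple module $W$ is $\CC\,\nu^W$ (spanned by $L_{-2}^W\Omega$, cf.~Cor.~\ref{CorollaryMinVirVOA}), and since $\nu' \in W$ satisfies $L_0^W \nu' = L_0 \nu' = 2\nu'$ we get $\nu' = \lambda\nu^W$; comparing the normalisations of the two sets of Virasoro relations forces $\lambda^2 = \lambda$, hence $\lambda = 1$. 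Now Prop.~\ref{cosetsconformalvector} applied to $W$ yields $\nu = \nu' + \nu^{W^c}$ together with the simultaneous diagonalizability of $L_0' = L_0^W$ and $L_0^{W^c}$ with non-negative eigenvalues, so that $L_0 = L_0' + L_0^{W^c}$ gives $0 \le L_0' \le L_0$. Since $c' = 2\|\nu'\|^2 \ge 0$, Prop.~\ref{PropGoWaEstimates} applies and yields $\|\nu'_n b\| = \|L_n'b\| \le \sqrt{c'/2}\,(|n|+1)^{3/2}\|(L_0'+1_V)b\| \le \sqrt{c'/2}\,(|n|+1)^{3/2}\|(L_0+1_V)b\|$, which is the desired energy bound for $\nu'$.

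For the weight-one generators, note first that, as in the proof of Thm.~\ref{unitarityTheorem}, $V_0 = \CC\Omega$ forces $L_1 V_1 = \{0\}$, so every $a \in V_1$ is quasi-primary; decomposing $a$ into its $\theta$-eigenvectors I may moreover assume $a_n^+ = \pm a_{-n}$. The Borcherds commutator formula \eqref{commutatorformula}, combined with $V_0 = \CC\Omega$ and the vanishing $a_{(j)}b = 0$ for $j \ge 2$ when $a,b \in V_1$, shows that the homogeneous modes of weight-one vectors satisfy the relations of an affine Kac--Moody algebra, namely $[a_m, b_n] = (a_{(0)}b)_{m+n} + m\,\kappa(a,b)\,\delta_{m+n,0}1_V$ with $a_{(1)}b = \kappa(a,b)\Omega$. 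For such currents I would then establish the linear energy bound $\|a_n b\| \le M(|n|+1)^{1/2}\|(L_0+1_V)^{1/2}b\|$, the standard estimate for positive-energy unitary representations of affine algebras, by a Goodman--Wallach-type commutator induction: one controls $\|a_n b\|^2 = |(b\,|\,a_{-n}a_n b)|$ through the above relations and the finite-dimensionality of the $L_0$-eigenspaces.

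The main obstacle is precisely this last step. Unlike the Virasoro case, there is no ready-made inequality to quote, and the linear energy bounds for affine currents must be derived by hand (or imported from the loop-group / affine-VOA literature, cf.~\cite{BS-M}); the $L_0$-dependence of the constants arising on each energy level has to be absorbed carefully into a power of $(L_0+1_V)$. A secondary but essential technical point is the identification $\nu' = \nu^W$, without which Prop.~\ref{cosetsconformalvector} could not be invoked to secure both the positivity of $L_0'$ and the comparison $L_0' \le L_0$ that Prop.~\ref{PropGoWaEstimates} requires.
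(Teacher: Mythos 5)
Your proposal is correct, and its skeleton coincides with the paper's: reduce to the generators via Prop.~\ref{GenBoundedProp} and establish (linear) energy bounds for the elements of $V_1$ and of $\mathscr{F}$ separately, the $V_1$ part resting on the affine Lie algebra commutation relations $[a_m,b_k]=(a_{(0)}b)_{m+k}+m\,\kappa(a,b)\,\delta_{m,-k}1_V$ extracted from the Borcherds commutator formula. Where you genuinely diverge is in the treatment of the Virasoro generators: the paper disposes of both families in one line by citing the linear energy bounds of \cite[Sect.2]{BS-M}, whereas you give a self-contained argument internal to the paper --- identify $\nu'$ with the canonical conformal vector $\nu^W$ of the unitary subalgebra $W=W(\{\nu'\})$ (the identification is correct, though the one-dimensionality of $W_2$ follows more directly from the spanning set $L'_{n_1}\cdots L'_{n_k}\Omega$, $n_i\le -2$, than from Cor.~\ref{CorollaryMinVirVOA}, which concerns subalgebras of $L(c,0)$), invoke Prop.~\ref{cosetsconformalvector} to get $0\le L_0'\le L_0$ with simultaneous diagonalizability, and then apply the Goodman--Wallach estimate of Prop.~\ref{PropGoWaEstimates}. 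This buys a proof of the Virasoro bound that uses only results already established in the paper, at the cost of the exponent $3/2$ in place of the sharper constants of \cite{BS-M}; either suffices for energy-boundedness. For the weight-one currents you are in exactly the same position as the paper: both ultimately appeal to the standard linear energy bounds for positive-energy representations of affine algebras (the $a_{-n}a_n$ versus $a_na_{-n}$ commutator trick), so the step you flag as the ``main obstacle'' is precisely the step the paper also outsources to \cite{BS-M}; no gap beyond what the paper itself leaves to the literature.
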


\begin{proof} From the commutator formula in Eq. (\ref{commutatorformula}) it follows that $V_1$ is a Lie algebra with brackets 
$[a,b] = a_0b$. Again from Eq. (\ref{commutatorformula})  we have that for $a, b \in V_1$,  $m, k \in \ZZ$,
\begin{equation*}
[a_{m},b_{k}] = [a,b]_{(m+k)} - m(\theta a | b) \delta_{m,-k}1_V, 
\end{equation*}
i.e. the operators $a_k$, $a\in V_1$, $k\in \ZZ$ satisfy affine Lie algebra commutator relations. 
As a consequence the vectors $a \in V_1 \cup {\mathscr F}$ satisfy the energy bounds in Eq. (\ref{EnergyBounds}) with $k=1$ (linear energy bounds), 
see e.g.  \cite[Sect.2]{BS-M}, and the conclusion follows from Prop. \ref{GenBoundedProp}.
\end{proof}

The first step in the construction of a conformal net associated with
the unitary VOA $(V, (\cdot|\cdot))$ is the definition of the complex 
Hilbert space ${\mathcal{H}}={\mathcal{H}}_{(V,(\cdot|\cdot))}$ as the 
completion 
of $V$ with respect to $(\cdot |\cdot)$. 
For every $a \in V$ and $n\in \ZZ$ we can consider $a_{(n)}$ has an 
operator on ${\mathcal H}$ with dense domain $V\subset {\mathcal H}$.
Due to the invariance of the scalar product $a_{(n)}$ has densely 
defined adjoint and hence it is closable.   
Now let $V$ be energy-bounded and let $f(z)$ be a smooth 
function on $S^1 = \{ z\in \CC: |z|=1 \}$ with Fourier coefficients 
\begin{equation}
\hat{f}_n = \int_{-\pi}^{\pi} 
f(e^{i\vartheta})e^{-in\vartheta}\frac{{\rm d}\vartheta}{2\pi}
=\oint_{S^1}f(z)z^{-n}\frac{{\rm d}z}{2\pi i z}
\end{equation}
For every $a \in V$ we define the operator $Y_0(a,f)$ 
with domain $V$ by
\begin{equation} 
Y_0(a,f)b = \sum_{n\in \ZZ} \hat{f}_n a_n b \quad {\rm for}\; b\in V. 
\end{equation} 
The sum converges in ${\mathcal H}$ due to the energy bounds and hence 
$Y_0(a,f)$ is a densely defined operator on ${\mathcal H}$ .
From the invariance of the scalar product it follows that $Y_0(a,f)$ has 
densely defined adjoint and hence it is closable. We denote $Y(a,f)$
the closure of $Y_0(a,f)$ and call it {\bf smeared vertex operator}. 
Note also that if the vector $a$ satisfies
the energy bounds
                                                                                
\begin{equation}
\|a_n b \|\leq M (|n|+1)^s \|(L_0 +1_V)^k b\|, \quad b\in V,
\end{equation}
then the operator $Y(a,f)$ satisfies 
\begin{equation}
\|Y(a,f) b \|\leq M \|f \|_s \|(L_0 +1_\H)^k b \|, \quad b\in V
\end{equation}
where 
\begin{equation}
\| f \|_s =\sum_{n \in \ZZ} (|n|+1)^s |\hat{f}_n |
\end{equation}
In particular the domain ${\mathcal H}^k$ of $(L_0+1_\H)^k$ is contained 
in the domain of $Y(a,f)$ and every core for the first operator is a core 
for the second.  
It follows that 
\begin{equation}
{\mathcal H}^\infty =\bigcap_{k \in \ZZ_{\geq 0}} {\mathcal H}^k
\end{equation}  
is a common core for the operators $Y(a,f)$, 
$f \in C^\infty (S^1)$, $a\in V$.   
Moreover, the map $f \mapsto Y(a,f)b$, 
$b\in {\mathcal H}^\infty$ is continuous and linear from 
$C^\infty (S^1)$ to ${\mathcal H}$ namely  $f \mapsto Y(a,f)$ is an operator valued distribution. 
Moreover, using the straightforward equality  
\begin{equation}
e^{itL_0}Y(a,f)e^{-itL_0}=Y(a,f_t), \quad t\in \RR,
\end{equation}
where $f_t$ is defined by $f_t(z) = f(e^{-it}z)$, 
and the energy bounds it is rather easy to show that, if $b \in \H^\infty$ then  
$Y(a,f)b \in \H^1$ and 
$$L_0Y(a,f)b=-iY(a,f')b + iY(a,f)L_0b ,$$
where $f'(e^{i\vartheta})= \frac{\rm d}{{\rm d}\vartheta}f(e^{i\vartheta})$.
It follows that $Y(a,f)b \in \H^\infty$ so that the common core ${\mathcal H}^\infty$ is invariant for all  
the smeared vertex operators. 

If $a\in V$ is homogeneous we can use the formal notation  

\begin{equation}
Y(a,f) = \oint_{S^1}Y(a,z)f(z)z^{d_a}\frac{{\rm d}z}{2\pi i z}.
\end{equation}

Note that if $a\in V$ is homogeneous and $L_1a=0$ we have the 
usual relation for the quasi-primary field $Y(a,z)$:
\begin{equation} 
(-1)^{d_a}Y(\theta a, \bar{f}) \subset Y(a,f)^*.
\end{equation}

If $a \in V$ is arbitrary $Y(a,f)^*$ still contains 
${\mathcal H}^\infty$ in its domain as a consequence of Eq. (\ref{Y(a,z)^+}).

Now we can associate with every interval $I \in {\mathcal I}$ 
a von Neumann algebra $\A_{(V,(\cdot|\cdot))} (I)$ by 
\begin{equation}
\A_{(V,(\cdot|\cdot))} (I) \equiv W^*(\{Y(a,f): a\in V, f \in C^\infty(S^1),\; {\rm supp}f \subset I 
\}).
\end{equation} 
The map $I\mapsto \A_{(V,(\cdot|\cdot))}(I)$ is obviously inclusion 
preserving. 
Moreover, it is not hard to show that $\Omega$ is cyclic for the von 
Neumann algebra 
\begin{equation} 
\A_{(V,(\cdot|\cdot))}(S^1) \equiv \bigvee_{I \in \I}\A_{(V,(\cdot|\cdot))}(I).
\end{equation} 

We now discuss covariance. The crucial fact here is that the unitary representation 
of the Virasoro algebra on $V$ associated with the conformal vector $\nu \in V$ gives rise to a strongly continuous 
unitary projective positive-energy representation of the covering group $\widetilde{\diff}$  of $\diff$
on $\H$ by \cite{GoWa1985,Tol1999} which factors through $\diff$ because
$e^{i2\pi L_0}=1$, see Subsect. \ref{SubsectPositiveEnergy}.

Hence there is a strongly continuous projective 
unitary representation $U$ of $\diff$ on $\H$ such that, 
for all $f \in C^\infty(S^1,\RR)$ and all $A \in B(\H)$,  
\begin{equation}
U(\Exp(tf\frac{\rm d}{{\rm d}\vartheta}))AU(\Exp(tf\frac{\rm d}{{\rm d}\vartheta}))^*= 
e^{itY(\nu,f)}Ae^{-iY(\nu,f)}, 
\end{equation}
Moreover, for all $\gamma \in \diff$ we have $U(\gamma)\H^\infty = \H^\infty$.

For any $\gamma \in \diff$ consider the function 
$X_\gamma :S_1 \to \RR$ defined by
\begin{equation}
X_\gamma(e^{i\vartheta})= -i\frac{{\rm d}}{{\rm d}\vartheta}\log (\gamma(e^{i\vartheta})).
\end{equation}
Since $\gamma$ is a diffeomorphism of $S^1$ preserving the orientation 
then $X_\gamma(z)>0$ for all $z \in S^1$. Moreover, $X_\gamma \in C^\infty(S^1)$. 
Another straightforward consequence of the definition is that 
\begin{equation}
X_{\gamma_1\gamma_2}(z)=X_{\gamma_1}(\gamma_2(z))X_{\gamma_2}(z). 
\end{equation}
It follows that, for any $d \in \ZZ_{>0}$ the family of continuous linear operators 
$\beta_d(\gamma)$, $\gamma \in \diff$  on the Fr\'echet space $C^\infty(S^1)$ 
defined by 
\begin{equation}
\label{covariancefunctions}
(\beta_d(\gamma)f)(z)= \left(X_\gamma(\gamma^{-1}(z))\right)^{d-1}f(\gamma^{-1}(z))
\end{equation}
gives a strongly continuous representation of $\diff$ leaving 
the real subspace of real functions invariant. 

\begin{proposition}
\label{covariancevertexoperator} 
If $V$ is a simple energy-bounded unitary VOA and $a\in V$ is a quasi-primary vector then 
$U(\gamma)Y(a,f)U(\gamma)^* = Y(a, \beta_{d_a}(\gamma)f)$
for all {\rm $\gamma \in \mob$.}
If $a\in V$ is a primary vector then 
$U(\gamma)Y(a,f)U(\gamma)^* = Y(a, \beta_{d_a}(\gamma)f)$ 
for all $\gamma \in \diff$.
\end{proposition}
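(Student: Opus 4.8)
The plan is to deduce the finite covariance relation from its infinitesimal form by integrating along one-parameter subgroups, and then to propagate it to all of $\mob$ (respectively $\diff$) using that these groups are generated by such subgroups. Throughout I work on the common invariant core $\H^\infty$, where all the relevant operators are defined, the representation $U$ acts smoothly, and $U(\gamma)\H^\infty=\H^\infty$.

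First I would record the infinitesimal relation. For real $g\in C^\infty(\s1,\RR)$ the operator $Y(\nu,g)$ is the self-adjoint generator (up to a factor $i$) of the one-parameter group $t\mapsto U(\Exp(tg\frac{\rm d}{{\rm d}\vartheta}))=e^{itY(\nu,g)}$. Smearing the commutation relations (\ref{EqQuasi-Primary/PrimaryCommutation}), namely $[L_m,a_n]=((d_a-1)m-n)a_{m+n}$, against $\hat g_m\hat f_n$ and collecting Fourier coefficients yields, on $\H^\infty$,
\begin{equation*}
i[Y(\nu,g),Y(a,f)] = Y(a,\mathcal{L}_g f),\qquad \mathcal{L}_g f = (d_a-1)\,g'f - g\,f',
\end{equation*}
where $g',f'$ are $\frac{\rm d}{{\rm d}\vartheta}$-derivatives. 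The decisive point is the range of $m$: for a quasi-primary vector (\ref{EqQuasi-Primary/PrimaryCommutation}) holds only for $m\in\{-1,0,1\}$, so $g$ must lie in the M\"obius subalgebra (i.e. $\hat g_m=0$ for $|m|>1$), whereas for a primary vector it holds for all $m$ and hence for arbitrary $g$. This dichotomy is exactly what confines the first assertion to $\mob$ while allowing the second over all of $\diff$. The energy bounds together with Remark \ref{RemarkGoWaEstimates} guarantee that the series converge and that the term-by-term smearing is legitimate.

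Next I would identify $\mathcal{L}_g$ as the infinitesimal generator of $\beta_{d_a}$ from (\ref{covariancefunctions}): writing $\gamma_t(e^{i\vartheta})=e^{i\psi_t(\vartheta)}$ one has $X_{\gamma_t}(e^{i\vartheta})=\psi_t'(\vartheta)$, and a direct differentiation gives $\frac{\rm d}{{\rm d}t}\big|_{t=0}\beta_{d_a}(\gamma_t)f = (d_a-1)g'f-gf'=\mathcal{L}_g f$. Putting $\gamma_t=\Exp(tg\frac{\rm d}{{\rm d}\vartheta})$ and using $U(\gamma_t)=e^{itY(\nu,g)}$ and the self-adjointness of $Y(\nu,g)$, I would show that for $b,c\in\H^\infty$ the function
\begin{equation*}
t\longmapsto \big(b\,\big|\,U(\gamma_t)^*\,Y(a,\beta_{d_a}(\gamma_t)f)\,U(\gamma_t)\,c\big)
\end{equation*}
has vanishing derivative. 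Differentiating produces three contributions: the two from the $U(\gamma_t)$-factors assemble into $-i[Y(\nu,g),Y(a,\beta_{d_a}(\gamma_t)f)]$, while differentiating the test function contributes $Y(a,\mathcal{L}_g\beta_{d_a}(\gamma_t)f)$, since $\frac{\rm d}{{\rm d}t}\beta_{d_a}(\gamma_t)f=\mathcal{L}_g\beta_{d_a}(\gamma_t)f$ because $\beta_{d_a}$ is a representation. By the infinitesimal relation these cancel exactly, so the quadratic form is constant; evaluating at $t=0$ gives $U(\gamma_t)^*Y(a,\beta_{d_a}(\gamma_t)f)U(\gamma_t)=Y(a,f)$ on $\H^\infty$, i.e. $U(\gamma_t)Y(a,f)U(\gamma_t)^*=Y(a,\beta_{d_a}(\gamma_t)f)$, which passes to the closures as $\H^\infty$ is a core for both sides.

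Finally I would globalize. Since the cocycle identity $X_{\gamma_1\gamma_2}(z)=X_{\gamma_1}(\gamma_2(z))X_{\gamma_2}(z)$ makes $\beta_{d_a}$ a genuine representation of $\diff$, and $U$ is a homomorphism, the identity $U(\gamma)Y(a,f)U(\gamma)^*=Y(a,\beta_{d_a}(\gamma)f)$ is stable under composition. By Remark \ref{simplediff}, $\diff$ is generated by the exponentials $\Exp(f\frac{\rm d}{{\rm d}\vartheta})$, and $\mob$ is generated by the one-parameter subgroups coming from $\mathfrak{sl}(2,\RR)$; hence for primary $a$ the relation extends from these subgroups to all of $\diff$, and for quasi-primary $a$ from the M\"obius subgroups to all of $\mob$. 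I expect the principal obstacle to be the analytic bookkeeping in the integration step --- justifying term-by-term differentiation of the operator-valued family while controlling domains --- which is precisely where the energy bounds, the invariance $U(\gamma)\H^\infty=\H^\infty$, and the distributional continuity of $f\mapsto Y(a,f)$ on $\H^\infty$ enter.
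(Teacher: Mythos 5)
Your proposal is correct and follows essentially the same route as the paper, which simply cites the differentiation-and-integration argument of \cite{CKL} (pp.~1100--1101): smear the commutation relations (\ref{EqQuasi-Primary/PrimaryCommutation}) to get the infinitesimal covariance, match it with the generator of $\beta_{d_a}$, show the relevant quadratic form is constant in $t$, and globalize using that $\mob$ (resp.\ $\diff$) is generated by the exponentials, with the energy bounds and the invariance $U(\gamma)\H^\infty=\H^\infty$ supplying the domain control. You have in effect written out the cited argument, including the two points the paper singles out for the $\diff$ case (invariance of $\H^\infty$ under $e^{iY(\nu,f)}$ and generation of $\diff$ by exponentials via simplicity).
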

\begin{proof}
Let $Y(\nu,z)=\sum_{n\in Z}L_n z^{-n-2}$ be the Virasoro field associated to the conformal vector $\nu$. 
The case in which $a$ is quasi-primary follows by a straightforward adaptation of the argument in pages 1100--1001 of \cite{CKL} and recalling the commutation relations between $a_n$ and $L_m$, $n\in \ZZ$, $m=-1, 0, 1$ given in Eq. 
(\ref{EqQuasi-Primary/PrimaryCommutation}).
 
The case in which $a$ is primary can be treated in a similar but taking into account the commutation relations  $a_n$ and $L_m$, 
$n,m \in \ZZ$ given again in Eq. (\ref{EqQuasi-Primary/PrimaryCommutation}). Note that for expository reasons in the proof in \cite{CKL} complete argument is given only for $\gamma \in \mob$ but the proof can be adapted to cover the case $\gamma \in \diff$ by noticing that as a consequence of the results in \cite{Tol1999} we have $e^{iY(\nu,f)}\H^\infty \subset \H^\infty$ for all $f\in C^\infty(\s1,\RR)$ and that $\diff$ is generated by exponentials of vector fields because it is a simple group \cite{Milnor}.
\end{proof}

We now discuss locality. It follows from Prop. \ref {PropVAandWightmannLocality} in Appendix \ref{AppendixVAandWightmannLocality}
that for any $a,b \in V$ the fields $Y(a,z)$ and $Y(b,z)$ are mutually local in the Wightman sense, i.e.  
for any $f, \tilde{f} \in C^\infty(\s1)$ with $\mathrm{supp} f \subset I$, $\mathrm{supp} \tilde{f} \subset I'$, $I \in \I$ we have 
\begin{equation}
[Y(a,f),Y(b,\tilde{f})]c=0
\end{equation}
for all $c\in \H^\infty$. As discussed in the Introduction and in Subsect.\ref{SubsectUnboundedOp&VNA}
this is {\it a priori} not enough to ensure the locality condition for the map $I\mapsto \A_{(V,(\cdot|\cdot))}(I)$.

\begin{lemma} 
\label{weakcommutantLemma}
Let $A$ be a bounded operator on ${\mathcal H}$, $a\in V$, and 
$I\in {\mathcal I}$ . 
Then $AY(a,f) \subset Y(a,f)A$ for all $f\in C^\infty(S^1)$ with 
${\rm supp}f \subset I$ if and only if 
$(A^*b|Y(a,f)c) = (Y(a,f)^*b|Ac)$
for all   $b,c \in V$,  and all real $f \in C^\infty (S^1)$ with
${\rm supp}f \subset I$.
\end{lemma}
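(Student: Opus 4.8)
The plan is to read this statement as the standard bridge between the \emph{weak} commutation of $A$ with the smeared field, expressed through the sesquilinear forms of $Y(a,f)$ and $Y(a,f)^*$ on the algebraic domain $V$, and the \emph{strong} operatorial commutation $AY(a,f)\subset Y(a,f)A$. Both implications are obtained by exploiting that $V$ is a core for the closed operators in play, the only genuinely analytic input being the energy bounds. Throughout I use that $V\subset\H^\infty\subset\D(Y(a,f))$ and, by the remark following Eq. (\ref{Y(a,z)^+}), that $\H^\infty\subset\D(Y(a,f)^*)$ as well.

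The forward implication is immediate. Assuming $AY(a,f)\subset Y(a,f)A$ for all $f$ supported in $I$, I fix a real such $f$ and $b,c\in V$. The commutation hypothesis gives $Ac\in\D(Y(a,f))$ with $AY(a,f)c=Y(a,f)Ac$, so
\[
(A^*b\,|\,Y(a,f)c)=(b\,|\,AY(a,f)c)=(b\,|\,Y(a,f)Ac)=(Y(a,f)^*b\,|\,Ac),
\]
the last step being the definition of the adjoint applied to $b\in\D(Y(a,f)^*)$ and $Ac\in\D(Y(a,f))$.

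For the converse I first upgrade the hypothesis from real to arbitrary $f$ supported in $I$: writing $f=f_1+if_2$ with $f_1,f_2$ real and supported in $I$, the conjugate-linearity of the adjoint in the smearing function gives $Y(a,f)^*b=Y(a,f_1)^*b-iY(a,f_2)^*b$ for $b\in V$, so the asserted identity for $f$ follows by linearity from its instances for $f_1$ and $f_2$. Fixing then any $f$ supported in $I$ and using $(A^*b\,|\,Y(a,f)c)=(b\,|\,AY(a,f)c)$, the hypothesis reads $(Y(a,f)^*b\,|\,Ac)=(b\,|\,AY(a,f)c)$ for all $b,c\in V$; call this $(\ast)$. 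Recalling $Y(a,f)=Y(a,f)^{**}$, the conjunction ``$Ac\in\D(Y(a,f))$ and $Y(a,f)Ac=AY(a,f)c$'' is, for fixed $c\in V$, exactly the assertion that $(Y(a,f)^*\zeta\,|\,Ac)=(\zeta\,|\,AY(a,f)c)$ for every $\zeta\in\D(Y(a,f)^*)$. Identity $(\ast)$ delivers this for $\zeta=b\in V$; once it is known for all $\zeta\in\D(Y(a,f)^*)$ I conclude $Ac\in\D(Y(a,f))$ with the correct value for each $c\in V$, and then, since $V$ is a core for $Y(a,f)=\overline{Y_0(a,f)}$, the closedness of $Y(a,f)$ together with the boundedness of $A$ upgrades this to $AY(a,f)\subset Y(a,f)A$ on all of $\D(Y(a,f))$.

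Everything therefore hinges on extending $(\ast)$ from $\zeta\in V$ to all $\zeta\in\D(Y(a,f)^*)$, i.e.\ on showing that $V$ is a core for $Y(a,f)^*$; this is the step I expect to be the main obstacle, since it is an essential-self-adjointness-type statement (the closure of the formal adjoint of $Y_0(a,f)$ must exhaust the Hilbert-space adjoint) that cannot follow from the algebraic relation $(\ast)$ alone. The route I would follow uses the energy bounds. By Eq. (\ref{Y(a,z)^+}) the restriction of $Y(a,f)^*$ to $V$ is, for homogeneous $a$ (the general case following by linearity), a finite sum $\sum_{l}(-1)^{d_a}\tfrac1{l!}Y_0(L_1^l\theta a,f_l)$ of smeared vertex operators, the $f_l$ being obtained from $f$ by conjugation and multiplication by powers of $z$ and hence still supported in $I$. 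Each summand is energy-bounded; and because $\H^\infty$ is a core for $(L_0+1_\H)^k$ while the relation $[L_0,Y(a,f)]=-iY(a,f')$ controls the commutators with the conformal Hamiltonian, the same argument that makes $\H^\infty$ a core for $Y(a,f)$ shows that the closure of this sum, which a priori is contained in $Y(a,f)^*$, in fact equals $Y(a,f)^*$ and admits $V$ as a core. Granting this, $(\ast)$ extends by graph-norm approximation and the argument concludes.
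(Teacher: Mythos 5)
Your forward implication and the upgrade from real to complex $f$ are fine and match the paper. The genuine problem is in the converse, at exactly the point you flag: you need the weak identity $(\ast)$ to hold for all $\zeta$ in a \emph{core} of $Y(a,f)^*$, and you propose to get this by showing that $V$ is a core for $Y(a,f)^*$. That claim does not follow from the argument you invoke. The energy bounds show that $\H^\infty$ (hence $V$) is a core for $Y(a,f)=\overline{Y_0(a,f)}$ because that operator is \emph{defined} as a closure and the bound extends its graph norm to $\H^k$; applied to the formal adjoint they only give the inclusion $\overline{Y(a,f)^*\restriction_V}\subset Y(a,f)^*$, and nothing controls the domain of the Hilbert-space adjoint from above. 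In the special case of a Hermitian quasi-primary field your claim would assert essential self-adjointness of an arbitrary energy-bounded smeared vertex operator on $V$, which is known only under \emph{linear} energy bounds (via the commutator theorem) and is precisely the kind of statement this paper is structured to avoid assuming. So the last step of your proof is a gap, not a routine verification.

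The paper closes the gap by regularizing $A$ instead of the vector $\zeta$. Setting $A(\varphi)=\int_\RR e^{itL_0}Ae^{-itL_0}\varphi(t)\,{\rm d}t$ with $\varphi$ supported in $(-\delta,\delta)$, the rotation covariance $e^{itL_0}Y(a,f)e^{-itL_0}=Y(a,f_t)$ and the fact that $f_t$ is still supported in $I$ for $|t|<\delta$ transfer the weak hypothesis to $A(\varphi)$; the decisive point is that $A(\varphi)c\in\H^\infty\subset\D(Y(a,f))$ for $c\in V$, so the weak identity immediately yields the strong relation $Y(a,f)A(\varphi)c=A(\varphi)Y(a,f)c$ with no statement about $\D(Y(a,f)^*)$ needed. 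Letting $\varphi_s$ shrink to a point mass, $A(\varphi_s)\to A$ strongly, and the closedness of $Y(a,f)$ gives $Ac\in\D(Y(a,f))$ with $Y(a,f)Ac=AY(a,f)c$; since $V$ is a core for $Y(a,f)$ this extends to $AY(a,f)\subset Y(a,f)A$. You should replace your core-for-the-adjoint step with this averaging argument (or an equivalent device); as written, the proof is incomplete.
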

\begin{proof}
The {\em only if} part is obvious.
The proof of {\em if } part is based on a rather straightforward adaptation 
of the proof of \cite[Lemma 5.4]{DSW}. Let us assume that 
$(A^*b|Y(a,f)c) = (Y(a,f)^*b|Ac)$
for all   $b,c \in V$,  and all real valued $f \in C^\infty (S^1)$ with ${\rm supp}f \subset I$. Then the same relation holds also for all complex valued $f \in C^\infty (S^1)$ with ${\rm supp}f \subset I$. Now let $f$ be a given function in 
$C^\infty (S^1)$ with ${\rm supp}f \subset I$. Then there is a $\delta >0$ 
such that the support of the function $f_t(z) \equiv f(e^{-it}z)$ is again 
contained in the open interval $I$ for all real numbers $t$ such 
that $|t| <\delta$. From the relation 
$e^{itL_0}Y(a,f)e^{-itL_0} = Y(a,f_t)$ for all $t\in \RR$ and the fact that $e^{itL_0}V=V$ for all $t\in \RR$ it then follows 
that, for all $b,c \in V$ and every smooth function $\varphi$ on $\RR$ 
with support in the open interval $(-\delta,\delta)$,
$(A(\varphi)^*b|Y(a,f)c) = (Y(a,f)^*b|A(\varphi)c)$, 
where $A(\varphi)= \int_\RR e^{itL_0}Ae^{-itL_0} \varphi(t) {\rm d}t$. 
Now, a standard argument shows that $A(\varphi)c \in {\mathcal H}^\infty$ 
for every $c\in V$ and from the fact that ${\mathcal H}^\infty$ is 
contained in the domain of $Y(a,f)$ we can conclude that 
$A(\varphi)Y(a,f)c = Y(a,f) A(\varphi)c$ for every smooth function
$\varphi$ on $\RR$ with support in $(-\delta,\delta)$ and every $c\in V$. 

For any real number $s \in (0,\delta)$ we fix a smooth positive function
$\varphi_s$ on $\RR$ with support in $(-s,s)$ and such that 
$\int_\RR \varphi_s(t){\rm d}t = 1$. 
For every $c\in V$ we then have 
$A(\varphi_s)Y(a,f)c = Y(a,f) A(\varphi_s)c$. Now, a standard 
argument shows that  if $s$ tends to $0$ $A(\varphi_s)$ tends to $A$ in 
the strong operator topology. Accordingly 
$\lim_{s \to 0} Y(a,f)A(\varphi_s)c = AY(a,f)c$ and 
$\lim_{s \to 0}A(\varphi_s)c = Ac$ for every $c\in V$. Since $Y(a,f)$ 
is closed it follows that $Ac$ is in domain of $Y(a,f)$ 
and $Y(a,f)Ac=AY(a,f)c$ for every $c\in V$ and since $V$ is a core 
for the closed operator $Y(a,f)$ it follows that 
$AY(a,f) \subset Y(a,f)A$.
\end{proof} 

The following proposition shows that the 
algebras $\A_{(V,(\cdot|\cdot))}(I)$ are generated by quasi-primary 
fields. 
\begin{proposition} 
\label{primarynetproposition}
Let $A$ be a bounded operator on $\H$ and let $I\in {\mathcal I}$. 
Then $A \in \A_{(V,(\cdot|\cdot))} (I)'$ if and only if  
$(A^*b|Y(a,f)c) = (Y(a,f)^*b|Ac)$ for all quasi-primary 
$a\in V$, all $b,c \in V$ and all real $f \in C^\infty(S^1)$ with 
${\rm supp}f \subset I$.
In particular 
\begin{equation}
\A_{(V,(\cdot|\cdot))} (I) = W^*(\{Y(a,f): a\in \bigcup_{k \in 
\ZZ}V_k,\;L_1a=0,\; f \in C^\infty(S^1, \RR), \; {\rm supp}f \subset I \}).
\end{equation}
\end{proposition}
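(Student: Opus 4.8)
The plan is to prove both assertions at once by identifying the commutant. Write $\B(I)$ for the von Neumann algebra on the right-hand side of the displayed ``in particular'' equation, i.e.\ the one generated by the smeared quasi-primary fields $Y(a,f)$ with $a$ homogeneous, $L_1a=0$ and $f\in C^\infty(S^1,\RR)$ with $\mathrm{supp}\,f\subset I$. Since these operators are among the generators of $\A_{(V,(\cdot|\cdot))}(I)$, one gets $\B(I)\subset\A_{(V,(\cdot|\cdot))}(I)$ for free, so the real content is (i) the reverse inclusion $\A_{(V,(\cdot|\cdot))}(I)\subset\B(I)$ and (ii) the sesquilinear characterisation of the commutant.

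First I would reduce every smeared vertex operator to quasi-primary ones. By unitary M\"obius symmetry the $\mathfrak{sl}_2$-action of $L_{-1},L_0,L_1$ on $V$ is completely reducible, so $V$ is spanned by vectors $L_{-1}^{k}a$ with $a$ quasi-primary (exactly as used in the proof of Thm.~\ref{unitarityTheorem}). By translation covariance $Y(L_{-1}a,z)=\partial_z Y(a,z)$, so an integration by parts gives $Y(L_{-1}^{k}a,f)=Y(a,g)$ for a suitable $g\in C^\infty(S^1)$ with $\mathrm{supp}\,g\subset I$; splitting $g$ into its real and imaginary parts and using that $Y(a,\cdot)$ is linear and that $\H^\infty$ is a common core, each such operator is affiliated with $\B(I)$. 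Since $Y(\cdot,f)$ is also linear in its first slot, every $Y(b,f)$ with $b\in V$ arbitrary and $\mathrm{supp}\,f\subset I$ is then affiliated with $\B(I)$; hence $\A_{(V,(\cdot|\cdot))}(I)=\bigvee_{b,f}W^*(Y(b,f))\subset\B(I)$ and the two nets coincide. This establishes the ``in particular'' clause.

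It then remains to characterise $\B(I)'$. The \emph{only if} direction is immediate from the trivial half of Lemma~\ref{weakcommutantLemma} applied to the quasi-primary generators. For the \emph{if} direction, suppose the stated weak relation holds for every quasi-primary $a$. By Lemma~\ref{weakcommutantLemma} this already gives $AY(a,f)\subset Y(a,f)A$ for all quasi-primary $a$ and all $f$ with $\mathrm{supp}\,f\subset I$, so $A$ commutes with every generator of $\B(I)$; I must still obtain the same for $A^*$. Fixing a quasi-primary $a$ and applying the hypothesis to $\theta a$ (again quasi-primary, with $d_{\theta a}=d_a$), I would combine the resulting commutation $AY(\theta a,f)\subset Y(\theta a,f)A$ with the adjoint identity $(-1)^{d_a}Y(\theta a,\bar f)\subset Y(a,f)^*$. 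Evaluated on the dense subspace $V$ — where this inclusion already identifies $Y(a,f)^*$ with $(-1)^{d_a}Y(\theta a,f)$ — and using $\theta^2=1$ to cancel the two signs $(-1)^{d_a}$, this produces exactly the weak relation with $A$ replaced by $A^*$; Lemma~\ref{weakcommutantLemma} applied to $A^*$ then yields $A^*Y(a,f)\subset Y(a,f)A^*$. Hence $A$ and $A^*$ commute with all generators, so $A\in\B(I)'=\A_{(V,(\cdot|\cdot))}(I)'$, which is the asserted equivalence.

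The genuine analytic difficulty — that commutation of the unbounded smeared fields on the common invariant core $\H^\infty$ does not by itself force the generated von Neumann algebras to commute (the Nelson phenomenon) — is entirely absorbed into Lemma~\ref{weakcommutantLemma}. Granting that lemma, the one delicate point I expect to need care is the adjoint bookkeeping just described: the hypothesis constrains only $A$ and not $A^*$, and its sufficiency rests on the family of quasi-primary fields being stable under taking adjoints through the PCT operator $\theta$. Verifying that this transport is legitimate when the vectors $Ab$ leave $V$, and that only the inclusion (rather than an equality) $(-1)^{d_a}Y(\theta a,\bar f)\subset Y(a,f)^*$ is needed because everything is paired against vectors of $V$, is the step I would write out most carefully.
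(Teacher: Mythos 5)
Your overall strategy and the two key computations --- the reduction of general vertex operators to quasi-primary ones via $Y(L_{-1}^k a,f)=Y(a,f_{(k,a)})$ with $f_{(k,a)}$ a combination of $f,f',\dots,f^{(k)}$ still supported in $I$, and the transport of the weak relation from $A$ to $A^*$ through $\theta$ and the adjoint identity $(-1)^{d_a}Y(\theta a,\bar f)\subset Y(a,f)^*$ --- are exactly those of the paper's proof, and your second computation is carried out there verbatim. The step that does not go through as written is the claim in your first paragraph that $Y(L_{-1}^ka,f)=Y(a,g)$ is affiliated with your $\B(I)$ because $g=\mathrm{Re}\,g+i\,\mathrm{Im}\,g$ and the two real-smeared operators are generators. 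The closed operator $Y(a,g)$ is the closure of the sum $Y(a,\mathrm{Re}\,g)+iY(a,\mathrm{Im}\,g)$ restricted to $V$ (or $\H^\infty$), and the closure of a sum of operators affiliated with a von Neumann algebra need not be affiliated with it: for $B\in\B(I)'$ and $c\in V$ one knows $Bc\in\D(Y(a,\mathrm{Re}\,g))\cap\D(Y(a,\mathrm{Im}\,g))$, but not that $Bc$ lies in the domain of the closure of the restricted sum, since the approximating sequences from the core need not converge simultaneously for both summands. The same issue recurs when you sum over the quasi-primary components of a general $b\in V$. This is a cousin of the Nelson phenomenon that you correctly quarantine inside Lemma~\ref{weakcommutantLemma} elsewhere, but here it has leaked back out.

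The paper avoids this by never leaving the weak (sesquilinear) level until the last moment: from the hypothesis for quasi-primary $a$ it first deduces the same relation for $A^*$ (your $\theta$-computation), then extends the identity $(A^*b|Y(a,f)c)=(Y(a,f)^*b|Ac)$ from quasi-primary $a$ to every $a\in V$ using the $L_{-1}^k$-span and the substitution $f\mapsto f_{(k,a)}$ --- a purely algebraic manipulation of matrix elements on $V\times V$ with no domain issues --- and only then invokes Lemma~\ref{weakcommutantLemma} once for each $a\in V$ to get $AY(a,f)\subset Y(a,f)A$ and $AY(a,f)^*\subset Y(a,f)^*A$ for all generators of $\A_{(V,(\cdot|\cdot))}(I)$. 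The displayed equality of von Neumann algebras then falls out because both algebras have the same commutant. Your argument becomes correct under the same reordering: prove the commutant characterisation first, entirely weakly, and read off the ``in particular'' clause at the end rather than establishing it at the outset by an affiliation argument.
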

\begin{proof}
Given $I\in {\mathcal I}$ we denote by ${\mathcal Q}(I)$ the set
of bounded operators $A$ such that 
$$(A^*b|Y(a,f)c) = (Y(a,f)^*b|Ac)$$ 
for all quasi-primary
$a\in V$, all $b,c \in V$ and all $f \in C_\RR^\infty(S^1)$ with ${\rm supp}f \subset I$. 
Then the same equalities hold also for all complex valued functions $f \in C_\RR^\infty(S^1)$ with 
${\rm supp}f \subset I$. 
It is evident that $\A_{(V,(\cdot|\cdot))}(I)' \subset {\mathcal Q}(I)$
and hence we have to show that 
${\mathcal Q}(I)\subset \A_{(V,(\cdot|\cdot))}(I)'$. Now, if  
$A \in {\mathcal Q}(I)$, $a \in V$ is quasi primary, $b, c \in V$ 
and $f \in C^\infty(S^1)$ has support in $I$ we have,
for all quasi-primary $a\in V$, all $b,c \in V$ and all $f \in 
C^\infty(S^1)$ with ${\rm supp}f \subset I$.

\begin{eqnarray*}
(Ab|Y(a,f)c) & = & (-1)^{d_a}(Ab|Y(\theta a,\bar{f})^*c)=
(-1)^{d_a}\overline{(Y(\theta a, \bar{f})^*c|Ab)} \\
& = & (-1)^{d_a}\overline{(A^*c|Y(\theta a, \bar{f})b)} = 
(-1)^{d_a}\overline{(A^*c|Y(\theta a, \bar{f})b)} \\ 
& = & (-1)^{d_a}(Y(\theta a, \bar{f})b|A^*c) = (Y(a,f)^*b|A^*c).
\end{eqnarray*}
It follows that $A^* \in {\mathcal Q}(I)$. 

Now let $a \in V$ be homogeneous. An elementary calculation shows 
that $(L_{-1}a)_n=-(n+d_a)a_n$ and hence that 
$Y(L_{-1}a,f) = Y(a,if' -d_a f)$ for every smooth function on 
$S^1$, where $f'(e^{i\vartheta})=\frac{d}{d\theta}f(e^{i\vartheta})$. It follows 
that, for a non-negative integer $k$, 
$Y((L_{-1})^ka,f) = Y(a, f_{(k,a)})$, where $f_{(k,a)}$ is a linear 
combination of $f, f', f'',\dots, f^{(k)}$. 
If ${\rm supp} f \subset I$ also
${\rm supp} f_{(k,a)} \subset I$ and hence if $a$ is quasi-primary we have 
\begin{eqnarray*}
(A^*b|Y((L_{-1})^ka,f)c) & = & (A^*b|Y(a,f_{(k,a)})c) = 
(Y(a,f_{(k,a)})^*b|Ac) \\
& = & (Y((L_{-1})^ka,f)^*b|Ac). 
\end{eqnarray*}
Since the Lie algebra representation determined by 
$L_{-1},L_0, L_1$ is completely reducible, $V$ is spanned by elements 
of the form $(L_{-1})^ka$ with $k$ a non-negative integer and 
$a$ quasi-primary. Hence, for all $a,b,c \in V$ we have 
$(A^*b|Y(a,f)c) = (Y(a,f)^*b,Ac)$. It follows from 
Lemma \ref{weakcommutantLemma} that $AY(a,f) \subset Y(a,f)A$
for all $a\in V$ and all $f\in C^\infty(S^1)$ with ${\rm supp}f\subset I$. 
Since also $A^* \in {\mathcal Q}(I)$ we also have 
$A^*Y(a,f) \subset Y(a,f)A^*$ and hence  
$AY(a,f)^* \subset Y(a,f)^*A$
for all $a\in V$ and all $f\in C^\infty(S^1)$ with ${\rm supp}f\subset I$.
It follows that $A \in \A_{(V,(\cdot|\cdot))}(I)'$
\end{proof}

From the covariance properties of quasi-primary fields it follows that 
the net is M\"obius covariant.

\begin{definition} {\rm We say that a unitary VOA $(V,(\cdot|\cdot))$ is 
{\bf strongly local} if it is energy-bounded and 
$\A_{(V,(\cdot|\cdot))}(I) \subset \A_{(V,(\cdot|\cdot))}(I')'$
for all $I\in \I$.} 
\end{definition} 

\begin{theorem}
\label{netV}
 Let ${(V,(\cdot|\cdot))}$ be a simple strongly 
local unitary VOA. Then the map $I\mapsto \A_{(V,(\cdot|\cdot))}(I) $ 
defines 
an irreducible conformal net $\A_{(V,(\cdot|\cdot))}$ on $S^1$. If 
$\{\cdot|\cdot\}$ is another normalized invariant scalar product on $V$ 
then $(V,\{\cdot|\cdot\})$ is again strongly local and 
$\A_{(V,(\cdot|\cdot))}$ and $\A_{(V,\{\cdot|\cdot\})}$ are isomorphic 
conformal nets. 
\end{theorem}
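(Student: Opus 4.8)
The plan is to establish the two assertions in turn: first that $\A_{(V,(\cdot|\cdot))}$ is an irreducible conformal net, and then to produce an explicit unitary intertwiner realizing the isomorphism for a different scalar product. For the first assertion, M\"obius covariance of $I\mapsto\A_{(V,(\cdot|\cdot))}(I)$ has already been obtained from the covariance of the quasi-primary fields (Prop.~\ref{covariancevertexoperator}) together with Prop.~\ref{primarynetproposition}, so I would assemble the remaining M\"obius net axioms. Isotony is immediate; \emph{locality} is exactly strong locality, since $I_1\cap I_2=\varnothing$ gives $I_1\subseteq I_2'$, whence $\A_{(V,(\cdot|\cdot))}(I_1)\subseteq\A_{(V,(\cdot|\cdot))}(I_2')\subseteq\A_{(V,(\cdot|\cdot))}(I_2)'$ by isotony and by taking commutants in $\A_{(V,(\cdot|\cdot))}(I_2)\subseteq\A_{(V,(\cdot|\cdot))}(I_2')'$; positivity of the energy is the positivity of the conformal Hamiltonian $L_0$ of the unitary VOA (with $V_n=0$ for $n<0$); and $\Omega$ is $U$-fixed, being primary with $L_n\Omega=0$ for $n\ge-1$, and cyclic for $\A_{(V,(\cdot|\cdot))}(S^1)$. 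Since $V$ is simple unitary, Prop.~\ref{simpleunitary} gives $V_0=\CC\Omega$, hence $\ker L_0=\CC\Omega$ on $\H$, so the net is irreducible. At this stage I may invoke the general consequences of Subsect.~\ref{SubsectPrelNets}, in particular Haag duality $\A_{(V,(\cdot|\cdot))}(I)'=\A_{(V,(\cdot|\cdot))}(I')$.

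For conformal covariance the representation to use is $U=U_\pi$ attached to the Virasoro representation generated by $\nu$, which factors through $\diff$ because $e^{2\pi iL_0}=1$. The key preliminary point is that localized diffeomorphisms are implemented by localized unitaries: for real $f$ with $\mathrm{supp}\,f\subseteq I$ the operator $Y(\nu,f)$ is self-adjoint and affiliated with $\A_{(V,(\cdot|\cdot))}(I)$, so $e^{iY(\nu,f)}=U(\Exp(f\tfrac{\rm d}{{\rm d}\vartheta}))\in\A_{(V,(\cdot|\cdot))}(I)$; since $\mathrm{Diff}_c(I)$ is generated by such exponentials and is dense in $\mathrm{Diff}(I)$, strong continuity yields $U(\mathrm{Diff}(I))\subseteq\A_{(V,(\cdot|\cdot))}(I)$. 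The second covariance identity $U(\gamma)AU(\gamma)^*=A$ for $A\in\A_{(V,(\cdot|\cdot))}(I)$, $\gamma\in\mathrm{Diff}(I')$, is then immediate from Haag duality, as $U(\gamma)\in\A_{(V,(\cdot|\cdot))}(I')=\A_{(V,(\cdot|\cdot))}(I)'$. For the geometric identity $U(\gamma)\A_{(V,(\cdot|\cdot))}(I)U(\gamma)^*=\A_{(V,(\cdot|\cdot))}(\gamma I)$ I would use that it is multiplicative in $\gamma$ and that $\diff$ is generated by $\bigcup_J\mathrm{Diff}(J)$ (Remark~\ref{simplediff}), reducing to $\gamma\in\mathrm{Diff}(J)$; the cases where $I$ is disjoint from $J$ or contains $J$ follow at once from $U(\gamma)\in\A_{(V,(\cdot|\cdot))}(J)$ together with locality and Haag duality.

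The main obstacle is the remaining case, where $\mathrm{supp}\,\gamma$ meets $\partial I$, i.e. an interval straddling $\partial J$; here one cannot isolate the straddling pieces by additivity alone, since strong additivity is not available in general (for instance $\A_{\mathfrak{Vir},c}$ with $c>1$). To handle it I would return to the vertex operators: primary fields transform covariantly under all of $\diff$ by Prop.~\ref{covariancevertexoperator}, whereas the generating quasi-primary non-primary fields are only controlled under $\mob$, so the transport of localization under a general diffeomorphism must be argued through locality and Haag duality, by transporting the commutation of $U(\gamma)Y(a,f)U(\gamma)^*$ with $\A_{(V,(\cdot|\cdot))}(K)$ for $\overline K$ in a suitable complement, or equivalently through the one-parameter flow $e^{itY(\nu,g)}$ and the smeared commutation relations, showing that the localization is carried to $\gamma I$.

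For the second assertion the argument is clean. Given another normalized invariant scalar product $\{\cdot|\cdot\}$, simplicity of $V$ and Prop.~\ref{uniteq} furnish $h\in\mathrm{Aut}(V)$ with $\{a|b\}=(ha|hb)$, $h\Omega=\Omega$, and $h$ grading-preserving (so $[h,L_0]=0$). Being a VOA automorphism, $h$ intertwines modes, $ha_nh^{-1}=(ha)_n$, so $h$ is an isometry $(V,\{\cdot|\cdot\})\to(V,(\cdot|\cdot))$ extending to a unitary $\Phi\colon\H_{\{\cdot|\cdot\}}\to\H_{(\cdot|\cdot)}$ with $\Phi\Omega=\Omega$. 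Using $\|a\|_{\{\cdot|\cdot\}}=\|ha\|_{(\cdot|\cdot)}$, $[h,L_0]=0$ and the mode intertwining, the energy bounds for $(V,(\cdot|\cdot))$ transfer verbatim to $(V,\{\cdot|\cdot\})$, which is therefore energy-bounded, and one gets $\Phi\,Y_{\{\cdot|\cdot\}}(a,f)\,\Phi^{-1}=Y_{(\cdot|\cdot)}(ha,f)$. Since $h$ is a bijection of $V$, conjugation by $\Phi$ carries the generators of $\A_{(V,\{\cdot|\cdot\})}(I)$ onto those of $\A_{(V,(\cdot|\cdot))}(I)$, so that $\Phi\,\A_{(V,\{\cdot|\cdot\})}(I)\,\Phi^{-1}=\A_{(V,(\cdot|\cdot))}(I)$ for every $I\in\I$. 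As unitary conjugation preserves commutants, strong locality passes from $(V,(\cdot|\cdot))$ to $(V,\{\cdot|\cdot\})$, and $\Phi$ is the required isomorphism of conformal nets.
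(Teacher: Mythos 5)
Your treatment of the M\"obius net axioms and of the second assertion follows the paper's intended route. In particular, locality as strong locality, M\"obius covariance via Prop.~\ref{covariancevertexoperator} combined with Prop.~\ref{primarynetproposition}, irreducibility from $V_0=\CC\Omega$, and the construction of the intertwining unitary $\Phi$ from the automorphism $h$ of Prop.~\ref{uniteq} (with the mode intertwining $ha_nh^{-1}=(ha)_n$, the transfer of energy bounds via $[h,L_0]=0$, and the identification of generators) are all correct and are exactly what the paper relies on, even though its written proof compresses everything except covariance into one line.

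The genuine gap is in diffeomorphism covariance. The paper does not prove this in-house: it invokes \cite[Prop.3.7]{Carpi2004}, a general result stating that an irreducible M\"obius covariant net which contains the Virasoro net in the localized sense (i.e.\ $U(\gamma)\in\A(I)$ for $\gamma\in\mathrm{Diff}(I)$, which you correctly derive from the affiliation of $e^{iY(\nu,f)}$ with $\A(I)$) is automatically diffeomorphism covariant. You instead attempt a direct proof and, as you yourself acknowledge, the argument stalls at precisely the nontrivial case: a diffeomorphism supported in $J$ that moves an endpoint of $I$, where $\gamma I\neq I$ and neither inclusion $U(\gamma)\in\A(I)$ nor $U(\gamma)\in\A(I)'$ is available, and where strong additivity cannot be assumed (indeed it fails for $\A_{\mathfrak{Vir},c}$ with $c>1$). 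Your proposed resolution --- ``transporting the commutation of $U(\gamma)Y(a,f)U(\gamma)^*$ with $\A(K)$ \ldots or equivalently through the one-parameter flow $e^{itY(\nu,g)}$'' --- is a plan, not an argument: it does not explain how the localization region of a quasi-primary (non-primary) smeared field, for which Prop.~\ref{covariancevertexoperator} gives only M\"obius covariance, is controlled under a general $\gamma\in\diff$. Closing this requires the actual content of \cite[Prop.3.7]{Carpi2004} (or the equivalent arguments in \cite{WeiPhD}), which proceeds by a decomposition/fragmentation of $\gamma$ relative to $I$ and outer regularity, not merely by Haag duality and locality. You should either carry out that decomposition argument or cite the result, as the paper does; as written, the first assertion of the theorem is not fully proved.
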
  
\begin{proof} We only discuss covariance. The M\"{obius} covariance of the net follows from 
Prop. \ref{covariancevertexoperator} and Prop. \ref{primarynetproposition}. Then conformal (i.e. diffeomorphism) covariance follows from 
\cite[Prop.3.7]{Carpi2004}.

\end{proof}

Due to the above theorem, when no confusion arises, we shall 
denote the conformal net 
$\A_{(V,(\cdot|\cdot))}$ simply by $\A_V$. We shall say that 
$\A_V$ is the irreducible conformal net associated with the strongly local 
unitary simple vertex operator algebra $V$.

Using the strategy in \cite[Sect.5]{KL06} we can now prove the following theorem. 

\begin{theorem}\label{autnetsVOA} Let $V$ be a strongly local simple unitary VOA and let $\A_V$ be the corresponding irreducible conformal net. Then 
${\rm Aut}(\A_V) = {\rm Aut}_{(\cdot | \cdot)}(V)$. If  ${\rm Aut}(V)$ is finite then ${\rm Aut}(\A_V) = {\rm Aut}(V)$.
\end{theorem}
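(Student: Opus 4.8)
The plan is to prove the set equality ${\rm Aut}(\A_V)={\rm Aut}_{(\cdot|\cdot)}(V)$ by two inclusions, following the strategy of \cite[Sect.~5]{KL06}, and then to read off the statement for finite ${\rm Aut}(V)$ from Thm.~\ref{uniqscalar}. For ${\rm Aut}_{(\cdot|\cdot)}(V)\subseteq{\rm Aut}(\A_V)$ I would start from a unitary VOA automorphism $g$: it fixes $\Omega$ and $\nu$, commutes with $L_0$ (hence extends to a unitary of $\H$ preserving $V$, the grading, and $\H^\infty$) and satisfies $g\,a_{(n)}\,g^{-1}=(ga)_{(n)}$ for all $a,n$. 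For homogeneous $a$ this reads $g\,Y(a,f)\,g^{-1}=Y(ga,f)$ on $V$, so conjugation by $g$ merely permutes the generators of $\A_V(I)=W^*(\{Y(a,f):a\in V,\ {\rm supp}\,f\subset I\})$; thus $g\A_V(I)g^{-1}=\A_V(I)$ and $g\in{\rm Aut}(\A_V)$.

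For the reverse inclusion, fix $g\in{\rm Aut}(\A_V)$. As recalled in Subsect.~\ref{SubsectPrelNets}, every automorphism of an irreducible conformal net commutes with the representation $U$ of $\diff$; hence $g$ commutes with $e^{itL_0}$ and with the smeared field $Y(\nu,f)$, so it commutes with every $L_n$, preserves $V$ and its grading, preserves $\H^\infty$, and fixes $\nu=L_{-2}\Omega$. It then suffices to prove that $g$ is a vertex algebra automorphism, i.e. $g\,a_{(n)}\,g^{-1}=(ga)_{(n)}$ on $V$. Since the $\mathfrak{sl}_2$-representation generated by $L_{-1},L_0,L_1$ is completely reducible, $V$ is spanned by the vectors $L_{-1}^{k}a$ with $a$ quasi-primary; because $g$ commutes with $L_{-1}$ and $(L_{-1}a)_{(n)}=-n\,a_{(n-1)}$ by Eq.~(\ref{EqT1}), the identity $g\,a_{(n)}\,g^{-1}=(ga)_{(n)}$ propagates from quasi-primary $a$ to all of $V$ and is multiplicative in the Borcherds products. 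So I would reduce to quasi-primary $a$, fix an interval $I\supset{\rm supp}\,f$, and set $T_1:=g\,Y(a,f)\,g^{-1}$ and $T_2:=Y(ga,f)$. Here $ga$ is again quasi-primary of weight $d_a$ (as $g$ commutes with $L_0,L_1$), so $T_2$ is a genuine smeared vertex operator, and both $T_1,T_2$ are closed operators affiliated with $\A_V(I)$ --- $T_1$ because $g$ normalises $\A_V(I)$.

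The crux is to upgrade equality of $T_1$ and $T_2$ on the vacuum to equality of operators. Using $Y(a,z)\Omega=e^{zL_{-1}}a$, $[g,L_{-1}]=0$ and the quasi-primarity of $ga$ one gets $g(a_n\Omega)=(ga)_n\Omega$ for all $n$, hence $T_1\Omega=g\,Y(a,f)\Omega=Y(ga,f)\Omega=T_2\Omega$. I would then consider the closed operator $T=\overline{(T_1-T_2)\restriction_{\H^\infty}}$, which annihilates $\Omega$ and is affiliated with $\A_V(I)$. By the Reeh--Schlieder property $\Omega$ is separating for $\A_V(I)$; taking the polar decomposition $T=W|T|$ with $W\in\A_V(I)$ and $|T|$ self-adjoint affiliated with $\A_V(I)$ (see Subsect.~\ref{SubsectUnboundedOp&VNA}), the relation $|T|\Omega=0$ forces $\Omega\in\ker|T|$, so the spectral projection $E_{|T|}\big((0,\infty)\big)\in\A_V(I)$ kills $\Omega$ and therefore vanishes. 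Hence $|T|=0$, $T=0$, and $g\,Y(a,f)\,g^{-1}=Y(ga,f)$, i.e. $g\,a_{(n)}\,g^{-1}=(ga)_{(n)}$ on $V$. Thus $g$ is a vertex algebra automorphism which is unitary and fixes $\nu$, so $g\in{\rm Aut}_{(\cdot|\cdot)}(V)$. I expect the main obstacle to be precisely this last passage: the naive attempt to deduce $T_1=T_2$ from their agreement on the dense subspace $\A_V(I)'\Omega$ (which follows from locality and affiliation) fails, because that subspace need not be a core for the unbounded smeared fields; it is the separating property of $\Omega$, exploited through the polar decomposition of the affiliated operator $T$, that circumvents the difficulty.

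Finally, since $V$ is a simple unitary VOA, the equivalence (ii)$\Leftrightarrow$(iv) of Thm.~\ref{uniqscalar} gives ${\rm Aut}_{(\cdot|\cdot)}(V)={\rm Aut}(V)$ exactly when ${\rm Aut}(V)$ is compact. A finite group is compact, so if ${\rm Aut}(V)$ is finite then ${\rm Aut}(\A_V)={\rm Aut}_{(\cdot|\cdot)}(V)={\rm Aut}(V)$, which completes the proof.
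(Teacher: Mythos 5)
Your overall architecture is sound: the inclusion ${\rm Aut}_{(\cdot|\cdot)}(V)\subseteq{\rm Aut}(\A_V)$ and the reduction of the finite case to Thm.~\ref{uniqscalar} are exactly as in the paper. The reverse inclusion, however, is where you diverge, and your key step has a gap. You set $T=\overline{(T_1-T_2)\restriction_{\H^\infty}}$ and assert that $T$ is affiliated with $\A_V(I)$, which is what licenses putting the spectral projections of $|T|$ inside $\A_V(I)$ and invoking the separating vacuum. But affiliation of $T$ means $uTu^*=T$ for every unitary $u\in\A_V(I)'$, and such a $u$ does not preserve $\H^\infty$; so $uTu^*$ is the closure of $T_1-T_2$ restricted to $u\H^\infty$, and there is no a priori reason this coincides with the closure of the restriction to $\H^\infty$. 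A difference of two unbounded operators affiliated with a type ${\rm III}$ factor, closed off from a domain that is not $\A_V(I)'$-invariant, need not be affiliated --- this is precisely the kind of domain pathology (Nelson's example, Prop.~\ref{PropositionABcommute} and its failed converse) that motivates the whole strong locality framework, so it cannot be waved through. The step is repairable: take instead the difference on the $\A_V(I)'$-invariant domain $\D(T_1)\cap\D(T_2)$, whose closure (it is closable since the adjoint contains $(T_1^*-T_2^*)$ on $\H^\infty$) is then genuinely affiliated, and your polar-decomposition argument goes through. Note also that your stated reason for taking this detour is a false premise: $\A_V(I)'\Omega$ \emph{is} a core for the smeared fields localized in $I$, by Prop.~\ref{corenetproposition} combined with the energy bounds (this is exactly how Thm.~\ref{backVOA} identifies $Y_I(a,f)$ with $Y(a,f)$), so the ``naive'' argument you reject --- both operators agree on $\A_V(I)'\Omega$, which is a common core --- actually works and is shorter.

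For comparison, the paper's proof of the reverse inclusion does not compare $gY(a,f)g^{-1}$ with $Y(ga,f)$ as affiliated operators at all. It only uses $gL_ng^{-1}=L_n$ for $n=-1,0,1$, deduces from locality of the net together with Prop.~\ref{PropositionABcommute} and Prop.~\ref{PropVAandWightmannLocality} that the formal series $gY(a,z)g^{-1}$ is a field mutually local (in the vertex algebra sense) with every $Y(b,z)$, computes $gY(a,z)g^{-1}\Omega=e^{zL_{-1}}ga$, and concludes $gY(a,z)g^{-1}=Y(ga,z)$ by the uniqueness theorem for vertex algebras; Corollary~\ref{autcorollary} then gives $g\nu=\nu$. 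This route buys you freedom from all core and affiliation issues (everything happens on the algebraic space $V$), at the price of invoking the Wightman-locality/vertex-locality equivalence. Your route, once the domain is corrected, is a legitimate purely operator-algebraic alternative, but it is not simpler than either the paper's argument or the core-based one you dismissed.
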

\begin{proof} 
Let  $\H$ be the Hilbert space completion of $V$. Then any $g\in {\rm Aut}_{(\cdot | \cdot)}(V)$ uniquely extends to a unitary operator on 
$\H$ again denoted by $g$. We have $g\Omega =\Omega$. Moreover, since $gY(a,f)g^{-1}=Y(ga,f)$ for all $a\in V$ and all 
$f \in C^\infty(S^1)$ we also have that $g\A(I)g^{-1}=\A(I)$ and hence $g \in  {\rm Aut}(\A_V)$. Conversely let $g \in  {\rm Aut}(\A_V)$.
Then $gL_ng^{-1} = L_n$ for $n=-1,0,1$. It follows that $g$ restricts to a linear invertible map $V \to V$ preserving the invariant scalar product $(\cdot | \cdot)$. For any $a\in V$ the formal series $gY(a,z)g^{-1}$ is a field on $V$ and, since $\A$ is local then, 
by  Prop. \ref{PropositionABcommute} and Prop. \ref {PropVAandWightmannLocality},
$gY(a,z)g^{-1}$ is mutually local (in the vertex algebra sense) with all $Y(b,z)$, $b\in V$. Moreover, $gY(a,z)g^{-1}\Omega = gY(a,z)\Omega = ge^{zL_{-1}}a=e^{zL_{-1}}ga$, where for the last equality we used \cite[Remark 1.3]{Kac}. Hence, by the uniqueness theorem for vertex algebras \cite[Thm.4.4]{Kac} we find 
that $gY(a,z)g^{-1}=Y(ga,z)$ and hence $g$ is a (linear) vertex algebra automorphism of $V$. Since $g$ commutes with $L_0$ we have 
$gV_n=V_n$ for all $n \in \ZZ$ and hence $g\nu=\nu$ by Corollary \ref{autcorollary}  so that $g\in {\rm Aut}_{(\cdot | \cdot)}(V)$. Now, if ${\rm Aut}(V)$ is finite then ${\rm Aut}(V) = {\rm Aut}_{(\cdot | \cdot)}(V)$ by Thm. \ref{uniqscalar} and hence 
${\rm Aut}(\A_V) = {\rm Aut}(V)$.

\end{proof}

We end this section with a new proof of the uniqueness result for diffeomorphism symmetry for irreducible conformal nets 
given in \cite[Thm.6.1.9]{WeiPhD}. The theorem was first proved in \cite{CW2005} using the additional assumption of 
4-regularity.

\begin{theorem}
\label{uniqdifftheorem}
Let $\A$ be an irreducible M\"{obius} covariant net on $S^1$ and let $U$ be the corresponding unitary representation of {\rm $\mob$}. 
If $U_\alpha$ and 
$U_\beta$ are two strongly-continuous projective unitary representations of $\diff$ extending $U$ and making into $\A$ an irreducible conformal net. 
Then $U_\alpha = U_\beta$.
\end{theorem}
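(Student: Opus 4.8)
The plan is to prove that the projective unitary $Z(\gamma):=U_\alpha(\gamma)U_\beta(\gamma)^*$ is a scalar for every $\gamma$ belonging to some $\mathrm{Diff}(I)$, $I\in\I$. Since by Remark \ref{simplediff} the group $\diff$ is generated by $\bigcup_{I\in\I}\mathrm{Diff}(I)$, and since $U_\alpha$ and $U_\beta$ are homomorphisms into $U(\H)/\mathbb T$ agreeing on $\mob$, this at once yields $U_\alpha=U_\beta$. I prefer this route over passing through the Virasoro correspondence of Thm.\ \ref{TheoremVirDiff}, because for a general irreducible conformal net $L_0$ need not have pure point spectrum, so $\H^{fin}$ need not be dense. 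The first reduction is to locate $Z(\gamma)$. Fix $\gamma\in\mathrm{Diff}(I)=\mathrm{Diff}((I')')$; the second conformal covariance relation gives $U_\alpha(\gamma)AU_\alpha(\gamma)^*=A=U_\beta(\gamma)AU_\beta(\gamma)^*$ for all $A\in\A(I')$, so $Z(\gamma)$ commutes with $\A(I')$ and Haag duality forces $Z(\gamma)\in\A(I')'=\A(I)$. I also record that $Z$ is trivial on $\mob$ (there $U_\alpha=U_\beta=U$) and transforms covariantly, $U(g)Z(\gamma)U(g)^*=Z(g\gamma g^{-1})$ up to a phase for $g\in\mob$; all assertions below involving $Z$ are phase-independent.

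Next I would bring in modular theory. By the Bisognano--Wichmann property $\Delta_I^{it}=U(\delta_I(-2\pi t))$, so the covariance relation reads $\Delta_I^{it}Z(\gamma)\Delta_I^{-it}=Z\big(\delta_I(-2\pi t)\,\gamma\,\delta_I(2\pi t)\big)$ up to a phase. The diffeomorphism on the right is supported in $\delta_I(-2\pi t)(\mathrm{supp}\,\gamma)$, and as $t\to+\infty$ this support shrinks into an arbitrarily small neighbourhood of one endpoint $p$ of $I$, because the dilation flow $\delta_I$ attracts every interior point of $I$ to $p$. Consequently, for each interval $K\subset I$ with $p\notin\overline{K}$ there is $t_0$ such that $\Delta_I^{it}Z(\gamma)\Delta_I^{-it}$ lies in some $\A(N_t)$ with $N_t\cap K=\varnothing$ for $t>t_0$, hence commutes with $\A(K)$.

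The main obstacle is to upgrade this commutation at large modular time to commutation at time zero. Given $y\in\A(K)$ and $a,b\in\A(I)$, consider $H(t):=\big(\Omega\,\big|\,a^*[\,\Delta_I^{it}Z(\gamma)\Delta_I^{-it},\,y\,]b\,\Omega\big)$, which vanishes for $t>t_0$ by the previous step. Using $\Delta_I^{it}\Omega=\Omega$ together with the standard fact that $\A(I)\Omega\subset\mathrm{dom}\,\Delta_I^{1/2}$, one checks that $H$ extends to a bounded function, analytic on a horizontal strip adjacent to $\RR$ and continuous up to the real axis. A bounded analytic function on such a strip whose continuous boundary value vanishes on the half-line $(t_0,+\infty)$ vanishes identically, by Schwarz reflection across that half-line; thus $H(0)=\big(\Omega\,\big|\,a^*[Z(\gamma),y]b\,\Omega\big)=0$. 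Since $\A(I)\Omega$ is dense by the Reeh--Schlieder property, this gives $[Z(\gamma),y]=0$ for every $y\in\A(K)$; letting $K$ range over all subintervals of $I$ with $p\notin\overline{K}$ and invoking additivity ($\bigvee_K\A(K)=\A(I)$), I conclude that $Z(\gamma)$ commutes with $\A(I)$ as well. Therefore $Z(\gamma)\in\A(I)\cap\A(I)'=\CC1_\H$ by factoriality of $\A(I)$, which is exactly what was needed. Apart from this analytic-continuation step, everything is bookkeeping with the covariance axioms and the geometry of the dilation flow.
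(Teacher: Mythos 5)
Your strategy is genuinely different from the paper's. The paper differentiates $U_\alpha$ and $U_\beta$ into two unitary Virasoro representations on $\H^{fin}$, assembles a vertex algebra from the two stress--energy fields, and concludes via the uniqueness of the conformal vector (Prop.~\ref{uniquestress}, resting on Roitman's invariant bilinear form). You instead work directly with the cocycle $Z(\gamma)=U_\alpha(\gamma)U_\beta(\gamma)^*$ and modular theory, which is closer in spirit to the earlier operator-algebraic proofs in \cite{CW2005} and \cite{WeiPhD}. Note, however, that your stated reason for avoiding the paper's route is mistaken: since $U$ is a representation of $\mob$ itself (not of its cover), the rotation subgroup is $2\pi$-periodic, so $L_0$ has pure point spectrum contained in $\ZZ_{\geq 0}$ and $\H^{fin}$ is automatically dense. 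Your localization steps are essentially sound, modulo one repair: for a general $\gamma\in{\rm Diff}(I)$ the set where $\gamma\neq{\rm id}$ may accumulate at both endpoints of $I$, so $\delta_I(-2\pi t)({\rm supp}\,\gamma)$ need not shrink to a neighbourhood of one endpoint; you must first restrict to $\gamma$ with ${\rm supp}\,\gamma\subset I_1$, $\overline{I_1}\subset I$, which is harmless since such elements still generate $\diff$ by Remark~\ref{simplediff}.

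The genuine gap is the analytic continuation step, which is the heart of the argument and is asserted rather than proved. Write $H(t)=\omega(a^*\sigma_t(Z)yb)-\omega(a^*y\sigma_t(Z)b)$ with $\sigma_t={\rm Ad}\,\Delta_I^{it}$ and $\omega=(\Omega|\cdot\,\Omega)$. The only domain information at your disposal is $\A(I)\Omega\subset\D(\Delta_I^{1/2})$ and $\A(I')\Omega\subset\D(\Delta_I^{-1/2})$, and however you distribute $a,b$ between $\A(I)$ and $\A(I')$, modular (KMS) analyticity continues the term with $\sigma_t(Z)$ to the \emph{left} of $y$ into one half-strip and the term with $\sigma_t(Z)$ to the \emph{right} of $y$ into the \emph{opposite} half-strip. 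The difference therefore has no evident analytic extension to either side of $\RR$, so the Schwarz-reflection argument (which would otherwise be correct) cannot be launched. This is exactly the obstruction that makes Borchers-type commutation theorems nontrivial: a KMS function whose two boundary values agree on a half-line glues across that half-line, but nothing then forces agreement on the complementary half-line without further input, typically positivity of the generator of a semigroup of endomorphisms (the half-sided modular inclusion/spectrum-condition mechanism), which you never invoke. Until you produce a one-sided analytic extension of $H$ with continuous boundary value on all of $\RR$ --- for instance by passing to the half-line picture and exploiting the positive generator of the translations rather than only $\Delta_I^{it}\Omega=\Omega$ --- the proof is incomplete at its central step.
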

\begin{proof} 
Let $\H$ be the vacuum Hilbert space of $\A$ and let $\H^{fin}$ be the algebraic direct sum of the eigenspaces 
${\rm Ker}(L_0 - n1_\H)$, $n \in \ZZ_{\geq 0}$. Then, by Thm. \ref{TheoremVirDiff},
then one can differentiate the representations $U_\alpha$ and $U_\beta$ in order to define two unitary 
representations of the Virasoro algebra on $\H^{fin}$ by operators $L^\alpha_n$, $n\in \ZZ$ and $L^\beta_n$, $n\in \ZZ$, see also \cite{Carpi2004,CW2005,loke}.  By assumption we have 
$L^\alpha_n=L^\beta_n$ for $n =-1,0,1$. The formal series $L^\alpha(z) = \sum_{n\in \ZZ}L^\alpha_n z^{-n-2}$ and 
$L^\beta (z) = \sum_{n\in \ZZ}L^\beta_n z^{-n-2}$
are fields on $\H^{fin}$ that are local and mutually local in the Wightman sense as a consequence of the locality of $\A$ and of 
Prop. \ref{PropositionABcommute}. 
Hence they are local and mutually local (in the vertex algebra sense) by Prop. \ref {PropVAandWightmannLocality}.
Let $V$ be the cyclic subspace generated from the action of the operators $L^\alpha_n, L^\beta_n$, $n\in \ZZ$ on the vacuum vector $\Omega$. By the existence theorem for vertex algebras, 
cf. \cite[Thm.4.5]{Kac}, $V$ is a conformal vertex algebra  of CFT type and it has two conformal vectors, $\nu^\alpha=L^\alpha_{-2}\Omega$ and 
$\nu^\beta=L^\beta_{-2}\Omega$. It satisfies $V_0=\CC\Omega$ and $L_1V_1=0$. 
Hence by \cite[Thm.1]{roitman} there exists a unique normalized invariant bilinear form $(\cdot,\cdot)$ on $V$ and this form satisfy
$(\Omega,a)=(\Omega|a)$ for all $a\in V$. By the invariance property of $(\cdot,\cdot)$ and the unitarity of the Virasoro algebra 
representations it follows that for any $b\in V$ we have $(a,b)=0$ for all $a\in V$ if and only if $(a|b)=0$ for all $a\in V$ i.e. if and only if 
$b=0$. Therefore, $(\cdot,\cdot)$ is non-degenerate. Accordingly, by Prop. \ref{uniquestress} and Remark \ref{remarkuniquestress} we have that $\nu^\alpha=\nu^\beta$ and hence $U^\alpha=U^\beta$. 
\end{proof}

\section{Covariant subnets and unitary subalgebras}

Let $W \subset V$ be a unitary subalgebra of the simple unitary vertex operator 
algebra $V$. Then, by Prop. \ref{subalbebraconformalvector}, 
$W$ is simple unitary vertex operator algebra. 

\begin{theorem} 
\label{stronglylocalsubalgebra}
Let $W$ be a unitary subalgebra of a strongly local 
simple unitary VOA $(V,(\cdot|\cdot))$.
Then the simple unitary VOA 
$(W,(\cdot|\cdot))$ is strongly local and $\A_W$ embeds canonically as a 
covariant subnet of $\A_V$.
\end{theorem}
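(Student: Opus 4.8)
The plan is to realise $\A_W$ as the reduction to $\H_W\equiv\overline{W}\subseteq\H_V$ of an auxiliary subnet of $\A_V$ generated by the vertex operators coming from $W$. Since $V$ is energy-bounded so is its unitary subalgebra $W$ (as observed right after the definition of energy bounds), hence every smeared operator $Y(a,f)$ with $a\in W$ is well defined on $\H_V$, and by Prop.~\ref{subalbebraconformalvector} the pair $(W,(\cdot|\cdot))$ is a simple unitary VOA with conformal vector $\nu^W=e_W\nu$. For $I\in\I$ I set
\begin{equation*}
\B(I)\equiv W^*\big(\{Y(a,f):a\in W,\ f\in C^\infty(\s1),\ \mathrm{supp}\,f\subset I\}\big),
\end{equation*}
so that $\B(I)\subseteq\A_V(I)$ and $I\mapsto\B(I)$ is isotonous. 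By Lemma~\ref{lemmaSubalgebraCommutation} the projection $e_W$ onto $\H_W$ commutes with every mode $a_{(n)}$, $a\in W$; since $e_W$ maps the core $V$ into $W\subseteq V$ and commutes there with $Y_0(a,f)$, the standard criterion for commutation of a closed operator with a bounded operator on a core yields $Y(a,f)e_W\subset e_WY(a,f)$ for every $a\in W$. Hence $e_W\in\B(I)'$ for all $I$, so $e_W\in\B(\s1)'$.

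First I would identify the reduced net $\B(I)_{e_W}$ on $\H_W$. For $a\in W$ the modes of the $V$-field and of the $W$-field coincide ($a_{(n)}b\in W$ for $b\in W$), so the $V$-smeared operator $Y(a,f)$ agrees on $W$ with the $W$-smeared operator $Y_W(a,f)$; using the energy bounds for $W$ to see that $W$ is a common core, one obtains $Y(a,f)\restriction_{\H_W}=Y_W(a,f)$. Since the reduction $A\mapsto A\restriction_{\H_W}$ is a normal surjection of $\B(I)$ carrying generators to generators, this gives $\B(I)_{e_W}=\A_W(I)$ for all $I\in\I$. Strong locality of $W$ then follows by restriction: on $\H_V$, using $W\subseteq V$ and the strong locality of $V$,
\begin{equation*}
\B(I)\subseteq\A_V(I)\subseteq\A_V(I')'\subseteq\B(I')',\qquad I\in\I,
\end{equation*}
so $[\B(I),\B(I')]=0$; as $e_W$ lies in both $\B(I)'$ and $\B(I')'$, reducing to $\H_W$ gives $[\A_W(I),\A_W(I')]=0$, i.e.\ $\A_W(I)\subseteq\A_W(I')'$. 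Thus $(W,(\cdot|\cdot))$ is strongly local and, by Thm.~\ref{netV}, $\A_W$ is an irreducible conformal net.

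It remains to exhibit $\A_W$ as a covariant subnet of $\A_V$. Because $W$ is a unitary VOA its M\"obius representation is completely reducible, so $W$ is spanned by the vectors $(L_{-1})^k a$ with $a\in W$ quasi-primary; using $Y(L_{-1}a,f)=Y(a,if'-d_af)$ as in the proof of Prop.~\ref{primarynetproposition}, $\B(I)$ is generated by the quasi-primary fields $Y(a,f)$, $a\in W$ quasi-primary, $\mathrm{supp}\,f\subset I$. For these Prop.~\ref{covariancevertexoperator} gives $U(\gamma)Y(a,f)U(\gamma)^*=Y(a,\beta_{d_a}(\gamma)f)$ with $\mathrm{supp}\,\beta_{d_a}(\gamma)f\subset\gamma I$ for $\gamma\in\mob$, whence $U(\gamma)\B(I)U(\gamma)^*=\B(\gamma I)$ and $\B$ is a M\"obius covariant subnet of the irreducible conformal net $\A_V$. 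By the results recalled in Subsect.~\ref{SubsectCovariantSubnet}, $\B$ is then a covariant subnet. Finally, since $e_W\in\B(\s1)'$ and $\Omega\in W$, every $B\in\B(\s1)$ satisfies $B\Omega=e_WB\Omega\in\H_W$, while $\Omega$ is cyclic for $\A_W(\s1)=\B(\s1)_{e_W}$ on $\H_W$; hence $\H_\B=\overline{\B(\s1)\Omega}=\H_W$, so $e_\B=e_W$ and the conformal net $\B_{e_\B}$ on $\H_\B$ has local algebras $\A_W(I)$ and vacuum $\Omega$, i.e.\ it coincides with $\A_W$ (the M\"obius, and by Thm.~\ref{uniqdifftheorem} the full diffeomorphism, symmetry being fixed by $\Omega$).

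The hard part will not be any single deep estimate but the bookkeeping that makes the reduction rigorous: the operator identity $Y(a,f)\restriction_{\H_W}=Y_W(a,f)$ for the restricted smeared vertex operators, together with $\H_\B=\H_W$. These are exactly what guarantee that $\B_{e_\B}$ is $\A_W$ \emph{on the nose} rather than merely an abstractly isomorphic net, and hence that $\A_W$ embeds canonically, and not just up to isomorphism, as a covariant subnet of $\A_V$.
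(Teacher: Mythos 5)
Your proof is correct, but it takes a genuinely different route from the paper's. The paper does not work with the algebras $\B(I)$ generated by the smeared operators $Y(a,f)$, $a\in W$; it instead introduces the a priori larger subnet $\B_W(I)\equiv\A_V(I)\cap\{e_W\}'$, which is automatically M\"obius covariant because $e_W$ commutes with $U(\mob)$, checks that each $Y(a,f)$ with $a\in W$ and ${\rm supp}f\subset I$ is affiliated with $\B_W(I)$, deduces $\H_{\B_W}=\H_W$ and hence Haag duality $\left(\B_W(I)_{e_W}\right)'=\B_W(I')_{e_W}$ for the reduced net, and concludes $\A_W(I)\subset\B_W(I)_{e_W}$ from the affiliation of $\tilde Y(a,f)$ with $\left(\B_W(I')_{e_W}\right)'$; locality of $\A_W$ follows, and a second application of Haag duality (now for $\A_W$ itself, via Thm.~\ref{netV}) gives the equality $\A_W(I)=\B_W(I)_{e_W}$. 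You instead get locality by identifying $\A_W(I)$ outright with the reduction $\B(I)_{e_W}$ and restricting $[\B(I),\B(I')]=\{0\}$ to $\H_W$, with no appeal to Haag duality. The load-bearing step in your version is precisely the identity $W^*(\{Y(a,f)\})_{e_W}=W^*(\{Y(a,f)\restriction_{\H_W}\})$, which you assert as ``a normal surjection carrying generators to generators''; it is true, but deserves a word: $e_W\in W^*(Y(a,f))'$, so $e_W$ commutes with the partial isometry and the spectral projections of $|Y(a,f)|$, giving $W^*(Y(a,f))_{e_W}=W^*\bigl(Y(a,f)\restriction_{\H_W}\bigr)$, and reduction by a projection in the commutant distributes over $\bigvee$ by normality; combined with the core argument showing $Y(a,f)\restriction_{\H_W}=\tilde Y(a,f)$, this is exactly the bookkeeping you flag at the end. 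Both routes are sound; the paper's buys the stronger identification $\A_W(I)=\left(\A_V(I)\cap\{e_W\}'\right)_{e_W}$, which is what is actually exploited later (e.g.\ in Thm.~\ref{subnetsubalgebra}), while yours is more elementary at the locality step and makes the covariant subnet $\B$ completely explicit, at the cost of having to verify its M\"obius covariance by hand through the quasi-primary decomposition.
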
 
\begin{proof} 
Let $\H$ be the Hilbert space completion of $V$ and let $e_W$ be the 
orthogonal projection of $\H$ onto the closure $\H_W$ of $W$.
Then we have $$W=e_W V= \H_W \cap V.$$  
The vertex operator $\tilde{Y}(a,z)$, $a\in W$ of $W$  
coincides with the restriction to $W$ of $Y(a,z)$ and 
therefore it is obvious that $W$ satisfies energy 
bounds. Moreover, for $b\in V$, $f\in C^\infty(S^1)$ we have 
$$Y(a,f)e_W b \in \H_W,\quad Y(a,f)^*e_W b \in \H_W.$$

Hence for $a\in W$, $b, c \in V$ we have 
\begin{eqnarray*}
(b|e_W Y(a,f)c) & = & (Y(a,f)^*e_W b|c) = (Y(a,f)^*e_W b| e_W c) \\
& = & (e_W b|Y(a,f)e_W c) = (b|Y(a,f)e_W c)
\end{eqnarray*}
and being $V$ a core for $Y(a,f)$ it follows that $Y(a,f)$
commutes (strongly) with $e_W$.  

Now, define a covariant subnet $\B_W \subset \A_V$ by  
$$\B_W(I)= \A_V(I)\cap \{e_W \}' \quad I\in \I.$$  
It follows from the previous discussion that $Y(a,f)$ is affiliated with 
$\B_W(I)$ if $a\in W$ and $\mathrm{supp} f \subset I$. As a consequence 
$\H_{\B_W}=\H_W$ and hence the subnet net $\B_W$ is irreducible when 
restricted to $\H_W$. In particular, for all $I \in \I$ we have  
$$\left(\B_W(I)_{e_W}\right)'=\B_W(I')_{e_W}.$$   

Note also that, since for $a\in W$, $Y(a,f)$ commutes
with $e_W$ and $Y(a,f)b = \tilde{Y}(a,f)b$ for all
$b\in W$, then 
$$\D(\tilde{Y}(a,f))=e_W \D(Y(a,f)) = \D(Y(a,f)) \cap \H_W.$$
Hence, if ${\rm supp}f \subset I$, $\tilde{Y}(a,f)$ is affiliated 
with $\left( \B_W(I')_{e_W}\right)'=\B_W(I)_{e_W}$. It follows that the 
von Neumann algebras $\A_W(I)$, $I\in \I$ on $\H_W$ 
defined by 
\begin{equation*}
\A_W (I) \equiv W^*(\{\tilde{Y}(a,f): a\in W, {\rm supp}f \subset I
\})
\end{equation*}
satisfy $\A_W(I) \subset \B_W(I)_{e_W}$ for all $I\in \I$ proving 
that $(W,(\cdot|\cdot)$ is strongly local. Finally 
from Thm. \ref{netV} and Haag duality for conformal 
nets we find $\A_W(I) = \B_W(I)_{e_W}$ for all $I\in \I$. 
\end{proof}  

We now want to prove a converse of Thm. \ref{stronglylocalsubalgebra}. 
We begin with the following lemma. 

\begin{lemma}
\label{corelemma}
Let $A$ be a self-adjoint operator on a Hilbert space $\H$ and let $U(t)\equiv e^{itA}$, 
$t\in \RR$ be the corresponding strongly-continuous one-parameter group of unitary operators on $\H$. For any $k\in \ZZ_{\geq 0}$ let $\H^k$ denote the domain of $A^k$ and let $\H^\infty = \cap_{k \in \ZZ_{\geq 0}} \H^k$. Assume that there exists 
a real number $\delta >0$ and two dense linear subspaces $\D_\delta$ and $\D$ of $\H^\infty$ such that $U(t)\D_\delta \subset \D$ if $|t| <\delta$. Then, for every positive integer $k$, $\D$ is a core for $A^k$. 
\end{lemma}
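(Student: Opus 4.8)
The plan is to reduce the assertion to a density statement and then exploit the invariance $U(t)\D_\delta\subset\D$ together with self-adjointness. Since $A$ is self-adjoint, every power $A^k$ is self-adjoint, and for $b\in\H^k$ one computes $\|(A^k+i)b\|^2=\|A^kb\|^2+\|b\|^2$; thus $A^k+i$ is an isometry from $\H^k$, equipped with the graph norm, onto $\H$. Hence $\D$ is a core for $A^k$ if and only if $(A^k+i)\D$ is dense in $\H$, and it suffices to prove that every $u\in\H$ satisfying $(u\,|\,(A^k+i)v)=0$ for all $v\in\D$ must vanish.

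First I would transport this orthogonality along the one-parameter group. For $c\in\D_\delta$ and $|t|<\delta$ the hypothesis gives $U(t)c\in\D$; since $c\in\H^\infty$, the unitary $U(t)=e^{itA}$ commutes with $A^k$, so $(A^k+i)U(t)c=U(t)(A^k+i)c$. Taking $v=U(t)c$ and using $U(t)^*=U(-t)$ yields $(U(-t)u\,|\,(A^k+i)c)=0$ for every $c\in\D_\delta$ and every $|t|<\delta$; that is, $U(-t)u$ is orthogonal to $(A^k+i)\D_\delta$ throughout $|t|<\delta$.

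Next I would regularise $u$. For $\chi\in C^\infty_c((-\delta,\delta))$ set $u_\chi=\int U(-t)u\,\chi(t)\,dt$. As a Bochner integral of vectors lying in the closed subspace orthogonal to $(A^k+i)\D_\delta$, the vector $u_\chi$ remains orthogonal to $(A^k+i)\D_\delta$; and by the same smoothing argument already used in the proof of Lemma \ref{weakcommutantLemma} one has $u_\chi\in\H^\infty$. Now for the smooth vector $y=u_\chi$ the relation $(y\,|\,(A^k+i)c)=0$ may be rewritten, using $y,c\in\H^k$ and $(A^k)^*=A^k$, as $((A^k-i)y\,|\,c)=0$ for all $c\in\D_\delta$; density of $\D_\delta$ in $\H$ then forces $(A^k-i)y=0$, and injectivity of $A^k-i$ gives $u_\chi=0$. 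Choosing for $\chi$ an approximate identity $\chi_s$ supported in $(-\delta,\delta)$, we get $u_{\chi_s}\to u$ in $\H$ as $s\to0$, so $u=0$ and $(A^k+i)\D$ is dense, as required.

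The subtle point, and the reason the two-subspace hypothesis is genuinely needed rather than plain density of $\D$, is that neither $\D$ nor $\D_\delta$ is assumed to be a core: one cannot pass directly from density in $\H$ to density in the graph norm, and the invariance is available only for the bounded range $|t|<\delta$. The device that overcomes this is precisely the regularisation of $u$ into $\H^\infty$ using only small $t$, which is what makes $A^k$ movable across the inner product and lets ordinary $\H$-density of $\D_\delta$ finish the argument. I expect the verification that the smoothed vector simultaneously stays orthogonal to $(A^k+i)\D_\delta$ and lands in $\H^\infty$ to be the main technical step.
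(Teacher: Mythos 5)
Your proof is correct and rests on the same device as the paper's: mollifying in the group parameter over $(-\delta,\delta)$ so that the spectral theorem turns the restricted invariance $U(t)\D_\delta\subset\D$ into statements about the bounded operators $\widehat{\chi}(A)$, followed by an approximate-identity limit. The only difference is bookkeeping --- the paper shows $B^*\subset A^k$ for $B=A^k\restriction_\D$ by smearing the identity $(U(t)A^k a\,|\,b)=(U(t)a\,|\,B^*b)$ against the mollifier, whereas you show $(A^k+i)\D$ has dense range by regularising a putative orthogonal vector into $\H^\infty$; these are the same computation read from opposite sides of the inner product.
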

\begin{proof} Let $k$ any positive integer and  
let $B$ denote the restriction of $A^k$ to $\D$. 
We have to show that $(A^k)^*= B^*$ and since $(A^k)^*\subset B^*$ it is enough to prove that 
$B^*\subset (A^k)^* =A^k$. 

Let $\D(B^*)$ denote the domain of $B^*$ and let $b \in \D(B^*)$. Then, by assumption
we have  
$$(U(t)A^k a | b) = (A^k U(t)a | b) = (U(t)a | B^*b), $$
for all $a \in \D_\delta$ and all $t\in (-\delta, \delta)$. Now let $\varphi: \RR \to \RR$ be a smooth non-negative function whose support is a subset of the interval $(-\delta, \delta)$. We can assume that 
$$\int_{-\infty}^{+\infty} \varphi(x) {\rm d}x =1.$$
For any positive integer $n$ let $\varphi_n :\RR \to \RR$ be defined by $\varphi_n (x) = n \varphi (nx)$, $x\in \RR$ so that 
${\rm supp}\varphi_n \subset (-\delta, \delta)$ and 
$$\widehat{\varphi_n}(p)  \equiv \int_{-\infty}^{+\infty} \varphi_n(x)e^{-ipx} {\rm d}x = \widehat{\varphi}(\frac{p}{n}),$$
for all $p\in \RR$. From equality $(U(t)A^k a | b) = (U(t)a | B^*b)$, $t\in \RR$ and the spectral theorem from self-adjoint operators it follows that 
$$(A^k\widehat{\varphi_n}(A)a | b)=(\widehat{\varphi_n}(A)a | B^*b),$$
for all $n\in \ZZ_{>0}$ and all $a \in \D_\delta$ and since  $A^k\widehat{\varphi_n}(A)$, and $\widehat{\varphi_n}(A)$ belong to $B(\H)$ for 
for every positive integer $n$ we also have that 
$$(A^k\widehat{\varphi_n}(A)a | b)=(\widehat{\varphi_n}(A)a | B^*b),$$
for all $n\in \ZZ_{>0}$ and all $a \in \H$. Now, it follows from the spectral theorem for self-adjoint operators that 
$\widehat{\varphi_n}(A)a \to a$ and $A^k \widehat{\varphi_n}(A)a \to A^ka$ for $n \to +\infty$, for all $a \in \H^k$. Hence 
$(A^k a | b)=(a | B^*b),$ for all $a \in \H^k$ so that $b\in \H^k$ and $A^k b=B^*b$. Thus, 
since $b \in \D(B^*)$ was arbitrary we can conclude that $B^*\subset A^k$. 
\end{proof}

 We will need the following proposition, cf. the appendix of \cite{Carpi1999} and \cite[Thm.2.1.3]{WeiPhD}

\begin{proposition}
\label{corenetproposition}
Let $\A$ be an irreducible M\"{o}bius covariant net on $S^1$ ant let $\H$ be its vacuum Hilbert space. Then 
$\A(I)\Omega \cap \H^\infty$ is a core for $(L_0 +1_\H)^k$ for all $I\in \I$ and all $k\in \ZZ_{\geq 0}$.
\end{proposition}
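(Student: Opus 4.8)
The plan is to deduce the statement from Lemma \ref{corelemma}, applied to the self-adjoint operator $A = L_0 + 1_\H$. Its associated unitary group is $e^{it(L_0+1_\H)} = e^{it}U(r(t))$, where $r(t)$ is the rotation subgroup of $\mob$ and $U(r(t)) = e^{itL_0}$ on $\H$. Since the domain of $A^k$ is exactly $\H^k$ and $\H^\infty = \bigcap_{k} \H^k$ in the notation of Lemma \ref{corelemma}, it suffices to produce, for the fixed $I\in\I$, dense linear subspaces $\D_\delta, \D \subset \H^\infty$ such that $e^{it(L_0+1_\H)}\D_\delta \subset \D$ for $|t|<\delta$ and with $\D = \A(I)\Omega \cap \H^\infty$.

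First I would establish the following density fact: for every $J\in\I$, the space $\A(J)\Omega \cap \H^\infty$ is dense in $\H$. To see this, fix $A\in\A(J_0)$ with $\overline{J_0}\subset J$ and a smooth function $\varphi$ supported in a small interval around $0$, and set $A(\varphi) = \int_\RR U(r(t))AU(r(t))^*\varphi(t)\,dt$, understood as a weak integral (so $A(\varphi)$ is bounded with $\|A(\varphi)\|\leq \|A\|\,\|\varphi\|_{L^1}$). By M\"obius covariance and isotony, each $U(r(t))AU(r(t))^* \in \A(r(t)J_0)$ lies in $\A(J)$ once $\mathrm{supp}\,\varphi$ is small enough that $r(t)\overline{J_0}\subset J$; as $\A(J)$ is weakly closed and convex, $A(\varphi)\in\A(J)$. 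Using $U(r(t))\Omega = \Omega$ one computes $A(\varphi)\Omega = \int_\RR e^{itL_0}(A\Omega)\varphi(t)\,dt = g(L_0)A\Omega$ with $g(\lambda)=\int_\RR e^{it\lambda}\varphi(t)\,dt$; since $\varphi$ is smooth with compact support, $g$ is rapidly decreasing, whence $g(L_0)A\Omega\in\H^\infty$. Letting $\varphi$ run through an approximate identity gives $A(\varphi)\Omega \to A\Omega$, and since $\A(J_0)\Omega$ is dense in $\H$ by the Reeh-Schlieder property, the vectors $A(\varphi)\Omega \in \A(J)\Omega\cap\H^\infty$ are dense in $\H$.

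With this in hand I would choose an interval $I_0$ with $\overline{I_0}\subset I$ and pick $\delta>0$ so small that $r(t)\overline{I_0}\subset I$ for all $|t|<\delta$, which is possible since $\overline{I_0}$ is compact in the open set $I$ and $r(0)=\mathrm{id}$. Setting $\D_\delta = \A(I_0)\Omega\cap\H^\infty$ and $\D = \A(I)\Omega\cap\H^\infty$, both are dense in $\H$ by the previous step. Since $U(r(t))$ commutes with $L_0$ it preserves $\H^\infty$, so for $|t|<\delta$, $e^{it(L_0+1_\H)}\D_\delta = e^{it}U(r(t))\big(\A(I_0)\Omega\cap\H^\infty\big) = e^{it}\big(\A(r(t)I_0)\Omega\cap\H^\infty\big) \subset \A(I)\Omega\cap\H^\infty = \D$ by isotony. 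Lemma \ref{corelemma} then yields that $\D = \A(I)\Omega\cap\H^\infty$ is a core for $(L_0+1_\H)^k$ for every integer $k\geq 1$, and the case $k=0$ is trivial since $\D$ is dense.

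The only genuinely nontrivial point is the density step, i.e.\ producing enough smooth vectors inside a single local algebra applied to $\Omega$; I expect the rotation-smoothing combined with the Reeh-Schlieder property to be the crux. Everything else—the verification of the covering inclusion $e^{it(L_0+1_\H)}\D_\delta\subset\D$ from rotation covariance and isotony, and the invocation of Lemma \ref{corelemma}—is routine.
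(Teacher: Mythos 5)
Your proof is correct and follows essentially the same route as the paper: first establish density of $\A(I)\Omega\cap\H^\infty$ by smoothing local operators over rotations (using Reeh--Schlieder and the fact that the Fourier transform of a compactly supported smooth function is rapidly decreasing, so $\widehat{\varphi}(L_0)A\Omega\in\H^\infty$), then apply Lemma \ref{corelemma} with $\D_\delta=\A(I_0)\Omega\cap\H^\infty$ for a slightly smaller interval $I_0$ with $\overline{I_0}\subset I$ and $\delta$ chosen so that $e^{it}I_0\subset I$ for $|t|<\delta$. This is precisely the paper's argument, down to the choice of auxiliary interval and the use of rotation covariance plus isotony to verify the hypothesis of the lemma.
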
 
\begin{proof} We first show that $\A(I)\Omega \cap \H^\infty$ is dense in $\H$ for all $I\in\I$. The argument is rather standard. For any 
$I\in \I$, let $I_1 \in \I$ be such that 
$\overline{I_1} \subset I$. Then there is a real number $\delta > 0$ such that $e^{it}I_1 \subset I$ for all 
$t\in (-\delta,\delta)$. Now now let $\varphi_n$, $n\in \ZZ_{>0}$, as in the proof of  Lemma \ref{corelemma}. 
Then, for any $A\in \A(I_1)$ we consider the operators $A_{\varphi_n}$, $n\in \ZZ_{>0}$ defined by 
$$(a | A_{\varphi_n}b) = \int_{-\infty}^{+\infty} \varphi_n(t)(a | e^{itL_0}Ae^{-itL_0}a){\rm d}t ,\quad a,b \in \H.$$
Then $A_{\varphi_n} \in \A(I)$ for all $n\in \ZZ_{>0}$. Moreover, 
$$A_{\varphi_n}\Omega = \widehat{\varphi_n} (L_0)A\Omega \in \H^\infty,\quad n\in \ZZ_{>0}.$$
Since $\widehat{\varphi_n} (L_0)A\Omega \to A\Omega$ for $n\to +\infty$ and $A\in \A(I_1)$ was arbitrary we can 
conclude that the closure of  $\A(I)\Omega \cap \H^\infty$ contains $\A(I_1)\Omega$ and hence it coincides with $\H$
by the Reeh-Schlieder property.  Hence, since $I$ was arbitrary we have shown that 
$\A(I)\Omega \cap \H^\infty$ is dense in $\H$ for all $I\in \I$. 

Now, let $I_1$ and $I$ and $\delta$ as above. We know that $\A(I_1) \cap \H^\infty$ is dense in $\H$.  
Moreover, 
\begin{eqnarray*}
e^{it(L_0+1_\H)}\left(\A(I_1)\Omega \cap \H^\infty\right) &=& \A(e^{it}I_1)\Omega \cap \H^\infty \\
&\subset&  \A(I)\Omega \cap \H^\infty,
\end{eqnarray*}
for all $t\in (-\delta,\delta)$. Hence, the conclusion follows from Lemma \ref{corelemma}.
\end{proof}

\begin{theorem}
\label{subnetsubalgebra} 
Let $(V, (\cdot|\cdot))$ be a simple strongly local unitary
VOA and let $\B$ a M\"{obius} covariant subnet of $\A_V$. Then 
$W = \H_\B \cap V$ is a unitary subalgebra of $V$ such that 
and $\A_W=\B$. 
\end{theorem}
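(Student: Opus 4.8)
The plan is to verify that $W=\H_\B\cap V$ is a unitary subalgebra by means of Prop.~\ref{prop:QW<W}, and then to identify $\A_W$ with $\B$ using Thm.~\ref{stronglylocalsubalgebra} and Haag duality. Write $e_\B$ for the orthogonal projection of $\H$ onto $\H_\B=\overline{\B(S^1)\Omega}$. First I would record the covariance of $e_\B$: since $\B$ is M\"obius covariant, $U(\gamma)\B(S^1)U(\gamma)^*=\B(S^1)$ and $U(\gamma)\Omega=\Omega$ for $\gamma\in\mob$, so $U(\gamma)\H_\B=\H_\B$ and $e_\B$ commutes with $U(\mob)$; in particular it commutes with $L_0,L_1,L_{-1}$ on $V$. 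As $e_\B$ preserves each finite-dimensional eigenspace $V_n=\mathrm{Ker}(L_0-n1_V)$, it maps $V$ into $V$, and $W=\H_\B\cap V=e_\B V$ is graded, satisfies $L_1W\subset W$, and is dense in $\H_\B$ (since $V$ is dense in $\H$ and $e_\B$ is bounded), so $\overline W=\H_\B$.

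The technical heart is the claim
$$(\star)\qquad a\in W,\ f\in C^\infty(S^1),\ \mathrm{supp}\,f\subset I\ \Longrightarrow\ Y(a,f)\ \text{is affiliated with}\ \B(I).$$
Granting $(\star)$, everything else is formal. Indeed $e_\B\in\B(S^1)'\subset\B(I)'$, so $Y(a,f)$ commutes with $e_\B$; hence for homogeneous $a,b\in W$ one has $Y(a,f)b=e_\B Y(a,f)b\in\H_\B$, and since the summands $a_{(n)}b$ lie in distinct $L_0$-eigenspaces whose spectral projections commute with $e_\B$, applying such a projection and choosing $f$ with the relevant Fourier coefficient nonzero yields $a_{(n)}b\in\H_\B\cap V=W$ for every $n$; thus $W$ is a vertex subalgebra. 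Likewise, since $\B(I)$ is a von Neumann algebra, $Y(a,f)^*$ is affiliated with $\B(I)$ and commutes with $e_\B$; for quasi-primary $a$ one has $(-1)^{d_a}Y(\theta a,\bar f)\subset Y(a,f)^*$, so $Y(\theta a,\bar f)\Omega=e_\B Y(\theta a,\bar f)\Omega\in\H_\B$, and extracting its $V_{d_a}$-component gives $\theta a\in W$. Because $W$ is spanned by the $L_{-1}$-descendants of its quasi-primary vectors (unitary M\"obius symmetry gives complete reducibility) and $e_\B,\theta$ commute with $L_{-1}$, we obtain $\theta W\subset W$. By Prop.~\ref{prop:QW<W} $W$ is a unitary subalgebra, and by Prop.~\ref{subalbebraconformalvector} $(W,(\cdot|\cdot))$ is a simple unitary VOA.

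It then remains to prove $\A_W=\B$. By Thm.~\ref{stronglylocalsubalgebra} applied to $W$, the VOA $W$ is strongly local and $\A_W$ is an irreducible conformal net on $\H_W=\overline W=\H_\B$, for which Haag duality holds (Thm.~\ref{netV}). From $(\star)$ the generators $Y(a,f)$ of $\A_W(I)$ are affiliated with $\B(I)$, whence $\A_W(I)\subset\B(I)$ for every $I\in\I$, both nets being realized on $\H_\B$. Taking commutants and using Haag duality for the two irreducible nets on $\H_\B$, the inclusion $\A_W(I)\subset\B(I)$ gives $\B(I)'\subset\A_W(I)'$, i.e. $\B(I')\subset\A_W(I')$; as $I$ is arbitrary this is the reverse inclusion, so $\A_W=\B$.

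\textbf{The main obstacle is $(\star)$}: that the smeared vertex operator of a vector lying in $\H_\B$ is localized in the subnet. I would prove it by reconstructing the field $Y(a,\cdot)$ from $\B$ itself, using cyclicity of $\Omega$ for $\B$ on $\H_\B$ together with a Fredenhagen--J\"{o}r{\ss} type limit of local operators in $\B(I)$; comparing the resulting operator on $\Omega$ through the uniqueness theorem for vertex algebras identifies it with $Y(a,\cdot)$, and the affiliation criterion (approximation by a sequence in $\B(I)$, with adjoints) then yields $(\star)$. An alternative route is to first establish the structural identity $\B(I)=\A_V(I)\cap\{e_\B\}'$ for covariant subnets and then prove the strong commutation $[Y(a,f),e_\B]=0$ for $a\in W$: since $Y(a,f)$ is already affiliated with $\A_V(I)$, commuting with $e_\B$ would place it in $\B(I)$. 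Either way this localization step is where the real work lies, the remainder following from the M\"obius covariance of $e_\B$, Prop.~\ref{prop:QW<W}, and Haag duality.
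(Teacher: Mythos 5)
Your architecture matches the paper's: everything does reduce to the localization claim $(\star)$, and your derivations of the vertex-subalgebra property, the $\theta$-invariance via Prop.~\ref{prop:QW<W}, and the identification $\A_W=\B$ by Haag duality are sound \emph{given} $(\star)$. The problem is that $(\star)$ is precisely the content of the theorem, and neither of your two suggested strategies is carried out. The paper proves the equivalent statement $[Y(a,f),e_\B]=0$ (your ``alternative route'') in three steps, each of which is absent from your sketch. First, one needs $Y(a,f)\Omega\in\H_\B$ for $a\in W$; this is not automatic and is obtained by showing $a_{(-n)}\Omega\in W$ for all $n$ by induction, using $a_{(-n-1)}\Omega=\frac1n L_{-1}a_{(-n)}\Omega$ and the $L_{-1}$-invariance of $W$. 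Second, for $A\in\A_V(I')$ and ${\rm supp}f\subset I$ one computes, using the vacuum-preserving normal conditional expectation $\epsilon_{I'}:\A_V(I')\to\B(I')$ and the identity $e_\B A\Omega=\epsilon_{I'}(A)\Omega$,
$$Y(a,f)e_\B A\Omega=Y(a,f)\epsilon_{I'}(A)\Omega=\epsilon_{I'}(A)Y(a,f)\Omega=e_\B AY(a,f)\Omega=e_\B Y(a,f)A\Omega,$$
where the second equality uses that $\epsilon_{I'}(A)\in\B(I')\subset\A_V(I)'$ commutes with the closed operator $Y(a,f)$ affiliated with $\A_V(I)$ (this is exactly where strong locality of $V$ enters), and the third uses the first step. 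Third, one needs $\A_V(I')\Omega$ to be a core for $Y(a,f)$ --- this is Prop.~\ref{corenetproposition} --- in order to upgrade the identity on the dense subspace $\A_V(I')\Omega$ to the strong commutation of $Y(a,f)$ with $e_\B$. Without these three ingredients the claim $(\star)$, and hence the whole argument, is unproven.

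Your first suggested route (reconstructing $Y(a,\cdot)$ from $\B$ by a Fredenhagen--J\"or{\ss} limit) is much heavier than necessary and would in any case run into the same issues: the FJ fields of $\B$ are defined only for quasi-primary vectors via the modular theory of $(\B(I),\Omega)$, and identifying them with $Y(a,\cdot)$ requires the Bisognano--Wichmann computation of Thm.~\ref{bw2} together with the same core property, which is how Thm.~\ref{backVOA} is proved later in the paper; the conditional-expectation argument above is the short path. A minor remark on the rest: once $Y(a,f)$ and $Y(a,f)^*$ are both known to commute with $e_\B$, one gets $a_{(n)}^+b\in W$ directly and can verify the definition of unitary subalgebra without passing through $\theta W\subset W$ and Prop.~\ref{prop:QW<W}; your detour is correct but not needed.
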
 
\begin{proof}
Since $\H_\B$ is globally invariant for the unitary representation of 
the M\"{o}bius group on $\H$ we have 
$\Omega \in W$ and $L_nW \subset W$ for 
$n=-1,0,1$. In particular $W$ is compatible with the grading of $V$
i.e it is spanned by the subspaces $W\cap V_n$, $n\in \ZZ_{\geq 0}$. 
Now let $a\in W$ and assume that, for a given positive 
integer $n$, $a_{(-n)}\Omega \in W$. Then 
$$a_{(-n-1)}\Omega = \frac{1}{n}[L_{-1},a_{(-n)}]\Omega 
=\frac{1}{n}L_{-1}a_{(-n)}\Omega 
\in W.$$  Since $a_{(-1)} \Omega = a \in W$ it follows that 
$a_{(n)}\Omega \in W$ for all $n\in \ZZ$ and all $a \in W$. 
Hence $Y(a,f)\Omega \in \H_\B$ for every smooth function 
$f$ on $S^1$ and every $a \in W$.  
Now let $e_\B$ be the projection of $\H_V$ onto $\H_\B$, 
$a \in W$, $f\in C^\infty(S^1)$ and, for $I \in \I$ let $\epsilon_{I'}$ be 
the unique vacuum preserving normal conditional expectation 
of $\A_V(I')$ onto $\B(I')$, see e.g. \cite[Lemma 13]{LongoCMP2003}. If ${\rm supp}f \subset I$ 
and $A\in \A_V(I')$ we find 
\begin{eqnarray*} 
Y(a,f)e_\B A\Omega & = & Y(a,f) \epsilon_{I'}(A) \Omega 
= \epsilon_{I'}(A) Y(a,f) \Omega \\
& = & e_\B A Y(a,f) \Omega =e_\B Y(a,f) A\Omega. 
\end{eqnarray*} 
Since $\A_V(I')\Omega $ is a core for $Y(a,f)$ by Prop. \ref{corenetproposition}
it follows that $Y(a,f)$ commutes with $e_\B$. 
Hence, $Y(a,f)$ and $Y(a,f)^*$ are affiliated 
with $\A(I)\cap {e_\B}'=\B(I)$. Now if $f$ is an 
arbitrary smooth function on $S^1$ it is now easy to see that 
$Y(a,f)$ and $e_\B$ again commute if $a \in W$. As a consequence 
we find that $a_nb \in W$ for all $a,b \in W$ and all $n\in \ZZ$ 
and hence $W$ is a vertex subalgebra. 
Moreover, using the fact that also $Y(a,f)^*$ and $e_\B$ commute 
for every smooth function $f$ on $S^1$ and all $a\in W$, we have 
$a_n^*b \in W$ for all $a, b\in W$ and all $n\in \ZZ$. 
Hence, since we also have $L_0W \subset W$, $W$ is a unitary 
subalgebra of $V$.
Finally that $\B(I)= \A_W(I)$ follows easily. 
\end{proof}

As a direct consequence of Thm. \ref{stronglylocalsubalgebra} and Thm. \ref{subnetsubalgebra} we get the following theorem. 

\begin{theorem}
\label{TheoremsubnetsubalgebraOneToOne} 
Let $V$ be a strongly local simple unitary vertex operator algebra. Then the map
$W\mapsto \A_W$ gives a one-to-one correspondence between the unitary subalgebras $W\subset V$ and the M\"obius covariant 
subnets $\B\subset \A_V$. 
\end{theorem}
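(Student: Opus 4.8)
The plan is to produce the inverse of the assignment $W\mapsto\A_W$ explicitly and verify that the two maps compose to the identity in both directions. I would set $\Phi(W)\equiv\A_W$, regarded via Thm.~\ref{stronglylocalsubalgebra} as the covariant subnet $\B_W\subset\A_V$ with $\B_W(I)=\A_V(I)\cap\{e_W\}'$, and set $\Psi(\B)\equiv\H_\B\cap V$. By Thm.~\ref{stronglylocalsubalgebra} the first map sends unitary subalgebras of $V$ to covariant subnets of $\A_V$, and by Thm.~\ref{subnetsubalgebra} the second sends M\"obius covariant subnets of $\A_V$ to unitary subalgebras of $V$. Since $\A_V$ is an irreducible conformal net by Thm.~\ref{netV}, and since every M\"obius covariant subnet of an irreducible conformal net is automatically conformally covariant (as recalled in Subsect.~\ref{SubsectCovariantSubnet}), there is no discrepancy between the two notions of subnet appearing in the statement, and both maps are well defined between the two sets in question.

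The identity $\Phi\circ\Psi=\mathrm{id}$ is immediate: given a M\"obius covariant subnet $\B\subset\A_V$, Thm.~\ref{subnetsubalgebra} asserts precisely that $W=\H_\B\cap V$ is a unitary subalgebra with $\A_W=\B$, which reads $\Phi(\Psi(\B))=\B$.

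For the reverse composition $\Psi\circ\Phi=\mathrm{id}$ I would first recall that in the construction of Thm.~\ref{stronglylocalsubalgebra} one has $\H_{\B_W}=\H_W$, the Hilbert space closure of $W$ in $\H$. Hence $\Psi(\Phi(W))=\H_W\cap V$, and the whole claim reduces to the identity $\H_W\cap V=W$. This is where the defining properties of a unitary subalgebra enter: since $W$ is compatible with the grading, $W=\bigoplus_{n}(W\cap V_n)$, and each $W\cap V_n$ is finite-dimensional, hence closed. Because the eigenspaces $V_n={\rm Ker}(L_0-n1_V)$ are mutually orthogonal for the invariant scalar product, $\H_W$ is the orthogonal Hilbert-space direct sum of the subspaces $W\cap V_n$. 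Given $v\in V$, decompose $v=\sum_n v_n$ with $v_n\in V_n$ as a finite sum; then $v\in\H_W$ forces $v_n\in W\cap V_n$ for every $n$, so $v\in W$, while the reverse inclusion is trivial. Equivalently, the projection $e_W$ commutes with $L_0$ and maps $V$ into $W$, so $\H_W\cap V=e_WV=W$.

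These two displayed identities show that $\Phi$ and $\Psi$ are \emph{mutually inverse} bijections, which is exactly the assertion of the theorem. I do not expect a serious obstacle: the two substantial implications---strong locality of $W$ with $\A_W$ embedded in $\A_V$, and the recovery $W=\H_\B\cap V$ with $\A_W=\B$---are precisely Thm.~\ref{stronglylocalsubalgebra} and Thm.~\ref{subnetsubalgebra}, so the only genuinely new ingredient is the elementary identity $\H_W\cap V=W$. The one point demanding care is keeping track of which Hilbert space each net acts on, so that the subspace $\H_{\A_W}$ associated with $\A_W$ viewed as a subnet of $\A_V$ is correctly identified with $\H_W$ rather than with the ambient space $\H$.
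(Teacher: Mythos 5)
Your proposal is correct and follows essentially the same route as the paper, which simply declares the theorem a direct consequence of Thm.~\ref{stronglylocalsubalgebra} and Thm.~\ref{subnetsubalgebra}; you have merely made explicit the verification that the two assignments are mutually inverse, including the elementary identity $\H_W\cap V=W$ (which follows from the grading compatibility and finite-dimensionality of the $W\cap V_n$) and the identification $\H_{\B_W}=\H_W$ established in the proof of Thm.~\ref{stronglylocalsubalgebra}. No gaps.
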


\begin{proposition} 
\label{AVG}
Let $V$ be a simple unitary strongly local VOA and let $G$ be a closed subgroup of 
${\rm Aut}_{(\cdot | \cdot)}(V) = {\rm Aut}(\A_V)$. Then $\A_V^G=\A_{V^G}$. 
\end{proposition}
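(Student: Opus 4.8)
The plan is to reduce everything to the one-to-one correspondence between unitary subalgebras and M\"obius covariant subnets established in Thm. \ref{subnetsubalgebra} and Thm. \ref{TheoremsubnetsubalgebraOneToOne}. First I would check that $\A_V^G$ is a M\"obius covariant subnet of $\A_V$: every $g\in G\subset {\rm Aut}(\A_V)$ is a unitary fixing $\Omega$, preserving each local algebra $\A_V(I)$ and commuting with the representation $U$ of $\mob$, so the fixed-point algebras $\A_V^G(I)=\A_V(I)^G$ form an isotonous, M\"obius covariant family of von Neumann subalgebras of $\A_V$. By Thm. \ref{subnetsubalgebra} this subnet corresponds to the unitary subalgebra $W=\H_{\A_V^G}\cap V$ of $V$, and $\A_W=\A_V^G$. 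Thus the statement $\A_V^G=\A_{V^G}$ reduces to the identity $W=V^G$, and the crux is to identify the subnet Hilbert space $\H_{\A_V^G}$ with the subspace $\H^G$ of $G$-fixed vectors in $\H$.

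The main step is therefore the equality $\H_{\A_V^G}=\H^G$. Since $G$ is a closed subgroup of the compact group ${\rm Aut}_{(\cdot | \cdot)}(V)$ (Lemma \ref{compact}), it is itself compact and acts strongly continuously by unitaries on $\H$; let $P=\int_G g\,{\rm d}g$ be the orthogonal projection onto $\H^G$ defined through the normalized Haar measure. The inclusion $\H_{\A_V^G}\subset \H^G$ is immediate, since for $A\in\A_V^G(I)$ and $g\in G$ one has $gA\Omega=gAg^{-1}\,g\Omega=A\Omega$, whence $\A_V^G(\s1)\Omega\subset\H^G$ and the same after taking the closure. For the reverse inclusion I would fix $I_0\in\I$ and use the Reeh--Schlieder property to approximate a given $\xi\in\H^G$ by vectors $A_n\Omega$ with $A_n\in\A_V(I_0)$. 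Applying $P$ and using $g\Omega=\Omega$ gives $PA_n\Omega=\int_G gA_n\Omega\,{\rm d}g=m(A_n)\Omega$, where $m(A_n)\equiv\int_G gA_ng^{-1}\,{\rm d}g\in B(\H)$. Because $g\A_V(I_0)g^{-1}=\A_V(I_0)$, each integrand $gA_ng^{-1}$ lies in $\A_V(I_0)$, so $m(A_n)$ commutes with $\A_V(I_0)'$ and hence belongs to $\A_V(I_0)=\A_V(I_0)''$; being manifestly $G$-invariant, $m(A_n)\in\A_V(I_0)^G=\A_V^G(I_0)$. Therefore $\xi=\lim_n m(A_n)\Omega\in\H_{\A_V^G}$, giving $\H^G\subset\H_{\A_V^G}$.

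With $\H_{\A_V^G}=\H^G$ in hand the proof concludes quickly: a vector $a\in V$ lies in $\H^G\cap V$ exactly when $ga=a$ for every $g\in G$, that is, exactly when $a\in V^G$; hence $W=\H_{\A_V^G}\cap V=\H^G\cap V=V^G$. Thm. \ref{subnetsubalgebra} then yields $\A_{V^G}=\A_W=\A_V^G$, which is the claim. I expect the only delicate point to be the reverse inclusion $\H^G\subset\H_{\A_V^G}$, where one must justify simultaneously that the Haar average $m(A_n)$ remains inside the local algebra $\A_V(I_0)$ (so that it defines an element of the fixed-point subnet) and that $\A_V(I_0)\Omega$ furnishes a good approximating set, the latter being exactly the Reeh--Schlieder property; everything else is formal.
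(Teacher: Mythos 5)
Your argument is correct, but it distributes the work differently from the paper. The paper's proof is shorter and avoids the Haar-averaging step entirely: for the inclusion $\A_{V^G}\subset\A_V^G$ it simply observes that $gY(a,f)g^{-1}=Y(a,f)$ for $a\in V^G$ and $g\in G$, so each $g$ lies in $\A_{V^G}(I)'$ and hence $\A_{V^G}(I)\subset\A_V(I)^G$; for the reverse inclusion it invokes Thm.~\ref{subnetsubalgebra} to write $\A_V^G=\A_W$ with $W=\H_{\A_V^G}\cap V$ and then uses only the \emph{easy} containment $W\subset V^G$ (your first inclusion $\H_{\A_V^G}\subset\H^G$) together with monotonicity of $W\mapsto\A_W$ to get $\A_V^G=\A_W\subset\A_{V^G}$. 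You instead prove the full identification $\H_{\A_V^G}=\H^G$, whose nontrivial half $\H^G\subset\H_{\A_V^G}$ you obtain by Reeh--Schlieder plus averaging $A_n\in\A_V(I_0)$ over the compact group $G$; the averaged operator $m(A_n)=\int_G gA_ng^{-1}\,{\rm d}g$ indeed lies in $\A_V(I_0)^G$ (it commutes with $\A_V(I_0)'$ and is $G$-invariant by translation invariance of Haar measure), and the strong continuity of the $G$-action on $\H$ needed to define the integral follows from the definition of the topology on ${\rm Aut}_{(\cdot|\cdot)}(V)$ together with uniform boundedness. Your route is more work but buys the additional fact that the vacuum Hilbert space of the fixed-point subnet is exactly the $G$-invariant subspace $\H^G$, hence $W=V^G$ on the nose rather than merely $W\subset V^G$; the paper's route never needs to touch the smeared vertex operators of $V^G$ being averaged, only their invariance under ${\rm Ad}\,g$. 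Both proofs rest on the same pillar, namely the subnet--subalgebra correspondence of Thm.~\ref{subnetsubalgebra}.
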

\begin{proof} For any $g\in G$ we have $gY(a,f)g^{-1} =Y(a,f)$ for all $a\in V^G$ and all $f\in C^\infty(S^1)$. Hence 
$g \in \A_{V^G}(I)'$ for all $I\in \I$ so that $\A_{V^G}\subset \A_V^G$.  Conversely, by Thm. \ref{subnetsubalgebra}  there is a 
unitary subalgebra $W \subset V$ such that $\A_V^G=\A_W$. Clearly $W\subset V^G$ and hence 
$\A_V^G \subset \A_{V^G}$.  
\end{proof}

We now can prove the following Galois correspondence for compact automorphism groups of strongly local vertex operator algebras 
(``Quantum Galois theory''), cf.  
\cite{DM1999,HMT}. 

\begin{theorem}
\label{GaloisVOA}
Let $V$ be a simple unitary strongly local VOA  and let $G$ be a closed subgroup of ${\rm Aut}_{(\cdot | \cdot)}(V)$. Then the map 
$H \mapsto V^H$ gives a one-to-one correspondence between the closed subgroups $H \subset G$ and the unitary subalgebras 
$W \subset V$ containing $V^G$.
\end{theorem}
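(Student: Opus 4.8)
The plan is to transport the statement to the associated conformal net, where it becomes the Galois correspondence for a compact group acting on an irreducible net. Set $\A\equiv \A_V$ and let $\H$ be the Hilbert space completion of $V$. By Lemma \ref{compact} and Thm. \ref{autnetsVOA}, ${\rm Aut}_{(\cdot|\cdot)}(V)={\rm Aut}(\A)$ is compact, so $G$ is a compact group of automorphisms of $\A$. For every closed subgroup $H\subset G$ each $g\in H$ commutes with $\theta$ and with $L_1$ and preserves the grading, so $V^H$ is a vertex subalgebra with $\theta V^H\subset V^H$ and $L_1V^H\subset V^H$, hence a unitary subalgebra by Prop. \ref{prop:QW<W}, and clearly $V^G\subset V^H\subset V$. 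Moreover $\A_{V^H}=\A^H$ and $\A_{V^G}=\A^G$ by Prop. \ref{AVG}. Thus $H\mapsto V^H$ indeed lands in the intended set of unitary subalgebras.

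First I would record that, by Thm. \ref{TheoremsubnetsubalgebraOneToOne}, $W\mapsto \A_W$ is an order isomorphism from the unitary subalgebras of $V$ onto the M\"obius covariant subnets of $\A$: it is inclusion preserving by construction, and inclusion reflecting since $W=\H_{\A_W}\cap V$ by Thm. \ref{subnetsubalgebra}. Hence it restricts to a bijection between the unitary subalgebras $W$ with $V^G\subset W\subset V$ and the M\"obius covariant subnets $\B$ with $\A^G\subset \B\subset \A$, carrying $V^H$ to $\A^H$. Therefore it suffices to prove that $H\mapsto \A^H$ is a bijection from the closed subgroups $H\subset G$ onto the M\"obius covariant subnets $\B$ with $\A^G\subset \B\subset \A$.

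For this I would apply the Galois correspondence for minimal actions of compact groups on factors (Izumi--Longo--Popa) to each local algebra, gluing by M\"obius covariance. The action of $G$ on $\A(I)$ is faithful: if $g\in G$ fixes $\A(I)$ pointwise then, since $g$ commutes with $U$, it fixes every $\A(\gamma I)=U(\gamma)\A(I)U(\gamma)^*$, hence all of $\A(\s1)=B(\H)$, so $g\in \CC 1_\H$ and $g\Omega=\Omega$ forces $g=1_\H$. Granting minimality of the action, i.e. $\A^G(I)'\cap \A(I)=\CC 1_\H$, the local correspondence gives, for each fixed $I$, a bijection $H\mapsto \A^H(I)$ between closed subgroups of $G$ and intermediate subfactors $\A^G(I)\subset N\subset \A(I)$. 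Injectivity of $H\mapsto \A^H$ is then immediate, since $\A^{H_1}=\A^{H_2}$ gives $\A^{H_1}(I)=\A^{H_2}(I)$ for a fixed $I$. For surjectivity, a subnet $\B$ with $\A^G\subset \B\subset \A$ yields for each $I$ a unique closed $H_I\subset G$ with $\B(I)=\A^{H_I}(I)$; M\"obius covariance together with $[G,U]=0$ gives $\A^{H_{\gamma I}}(\gamma I)=U(\gamma)\B(I)U(\gamma)^*=\A^{H_I}(\gamma I)$, so $H_{\gamma I}=H_I$ for all $\gamma\in\mob$, and since $\mob$ acts transitively on $\I$ the group $H_I=H$ is independent of $I$, whence $\B=\A^H$.

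I expect the main obstacle to be the verification of minimality of the $G$-action on the local factors, namely $\A^G(I)'\cap\A(I)=\CC 1_\H$; this is where the irreducibility of $\A$ enters in an essential way and is precisely what makes the compact-group Galois machinery applicable (note that the index $[\A:\A^G]$ may be infinite, so the finite-index results of Subsect. \ref{SubsectCovariantSubnet} do not suffice). Everything else---the translation to nets via Prop. \ref{AVG} and Thm. \ref{TheoremsubnetsubalgebraOneToOne}, and the gluing via transitivity of $\mob$ on $\I$---is formal given the results already established in Sections \ref{sectionstronglylocalVOA} and \ref{SubsectCovariantSubnet}.
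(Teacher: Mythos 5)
Your proposal is correct and follows essentially the same route as the paper: reduce to the associated net via Prop. \ref{AVG} and Thm. \ref{TheoremsubnetsubalgebraOneToOne}, apply the Izumi--Longo--Popa Galois correspondence to the inclusion $\A_V(I_0)^G\subset \A_W(I_0)\subset \A_V(I_0)$ at a fixed interval, and propagate by covariance. The one point you leave open, minimality $\left(\A_V(I_0)^G\right)'\cap\A_V(I_0)=\CC 1$, is exactly the input the paper supplies by citing \cite[Prop.2.1]{Carpi1999b}.
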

\begin{proof} 
Let $W$ be a unitary subalgebra of $V$ such that $W \supset V^G$. Fix an interval $I_0 \in \I$. By Thm. \ref{stronglylocalsubalgebra} and 
Prop. \ref{AVG} we have 
$$\A_V(I_0)^G \subset \A_W(I_0) \subset \A_V(I_0).$$
Moreover, by \cite[Prop.2.1]{Carpi1999b}, the subfactor  $\A_V(I_0)^G \subset \A_V(I_0)$ is irreducible, i.e. 
$\left(\A_V(I_0)^G\right)' \cap \A_V(I_0) =\CC1$. Since ${\rm Aut}_{(\cdot | \cdot)}(V)$ and 
$G\subset {\rm Aut}_{(\cdot | \cdot)}(V)$ is closed then, $G$ is compact. Hence, by \cite[Thm.3.15]{ILP1998} there is unique closed subgroup $H \subset G$ such that $\A_W(I_0)=\A_V(I_0)^H$. Hence, by conformal covariance $\A_W(I)=\A_V^H(I)$ for all $I\in \I$ and hence, again by Prop. \ref{AVG}, $\A_W(I)=\A_{V^H}(I)$ and thus  $W=V^H$.  
\end{proof}

The following proposition shows that in the strongly local case the coset construction for VOAs corresponds exactly to the coset construction for conformal nets.

\begin{proposition}
\label{PropCosetSunetSubalgebra}
Let $V$ be a strongly local unitary simple VOA and let $W\subset V$ be a unitary subalgebra. 
Then $\A_W^c=\A_{W^c}$.
\end{proposition}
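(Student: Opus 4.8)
The plan is to deduce the identity from the bijection of Theorem~\ref{TheoremsubnetsubalgebraOneToOne}. First I would record that $W^c$ is a unitary subalgebra of $V$ (shown in the coset example of the subsection on unitary subalgebras), so by Theorem~\ref{stronglylocalsubalgebra} it is strongly local and $\A_{W^c}$ is a covariant subnet of $\A_V$ which corresponds, under the correspondence $W\mapsto\A_W$, to the subalgebra $\H_{\A_{W^c}}\cap V=W^c$. On the other hand the coset subnet $\A_W^c$, defined by $\A_W^c(I)=\A_W(\s1)'\cap\A_V(I)$, is again a M\"obius (indeed diffeomorphism) covariant subnet of $\A_V$, since $\A_W(\s1)$ is globally invariant under the representation $U$; by \cite{Koster2004} (cf. the coset example in Subsect.~\ref{SubsectCovariantSubnet}) one also has $\A_W^c(I)=\A_W(I)'\cap\A_V(I)$. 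Hence $\A_W^c=\A_{\widetilde W}$ with $\widetilde W=\H_{\A_W^c}\cap V$, and by injectivity of $W\mapsto\A_W$ it suffices to prove the equality of unitary subalgebras $\widetilde W=W^c$.

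The inclusion $\widetilde W\subset W^c$ is the clean direction. For $b\in\widetilde W$ and $f\in C^\infty(\s1)$ with $\mathrm{supp}\,f\subset I$, the smeared operator $Y(b,f)$ is affiliated with $\A_{\widetilde W}(I)=\A_W^c(I)\subset\A_W(\s1)'$, while for every $a\in W$ and every $g\in C^\infty(\s1)$ supported in some interval the operator $Y(a,g)$ is affiliated with $\A_W(\s1)$. Thus $W^*(Y(b,f))\subset\A_W(\s1)'\subset W^*(Y(a,g))'$, and since $\H^\infty$ is a common invariant core for the smeared vertex operators, Proposition~\ref{PropositionABcommute} yields $[Y(b,f),Y(a,g)]c=0$ for all $c\in\H^\infty$ and all such $f,g$. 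For fixed $f$ and $c\in V$ the map $g\mapsto[Y(b,f),Y(a,g)]c$ is an $\H$-valued distribution vanishing on every proper subinterval of $\s1$, hence identically; doing the same in $f$ gives $[a_{(m)},b_{(k)}]c=0$ on $V$ for all $m,k\in\ZZ$. Feeding $m=0,1,2,\dots$ successively into the Borcherds commutator formula (\ref{commutatorformula}) then forces $a_{(j)}b=0$ for all $a\in W$ and all $j\in\ZZ_{\geq0}$, i.e. $b\in W^c$. This proves $\widetilde W\subset W^c$, and by monotonicity of $W\mapsto\A_W$ it already gives the inclusion $\A_W^c=\A_{\widetilde W}\subset\A_{W^c}$.

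The reverse inclusion $W^c\subset\widetilde W$, equivalently $\A_{W^c}\subset\A_W^c$, amounts to showing that $\A_{W^c}(I)$ commutes with $\A_W(I)$ for every $I\in\I$; concretely, that for $a\in W$, $b\in W^c$ and $f,g$ supported in the \emph{same} interval $I$, the operators $Y(a,g)$ and $Y(b,f)$ commute \emph{strongly}. This is the main obstacle. The vertex operators satisfy $[Y(a,z),Y(b,w)]=0$ identically, so the smeared operators commute on $\H^\infty$ for all test functions; but, exactly as in the discussion of Nelson's example, commutation on a common invariant core does not by itself entail strong commutation of the generated von Neumann algebras, and here the supports need not be disjoint, so plain locality of $\A_V$ does not apply either. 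The route I would take is to exploit the exact factorization of the two families of modes: the canonical map $W\otimes W^c\to V$, $u\otimes v\mapsto u_{(-1)}v$, is a homomorphism of (strongly local) unitary VOAs precisely because the $W$- and $W^c$-fields commute, and using that strong locality is preserved under tensor products one has $\A_{W\otimes W^c}\cong\A_W\otimes\A_{W^c}$, inside which $\A_W\otimes 1$ and $1\otimes\A_{W^c}$ commute by construction. Transporting this factorization into $\A_V$ through the above morphism (so that the two subnets are realized on orthogonal tensor directions) produces the required strong commutation $\A_{W^c}(I)\subset\A_W(I)'$, whence $Y(b,f)$ is affiliated with $\A_W^c(I)$ for $b\in W^c$; extracting the $V_{d_b}$-component of the finite-energy vector $Y(b,f)\Omega\in\H_{\A_W^c}$ then shows $b\in\H_{\A_W^c}\cap V=\widetilde W$. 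Combining this with the inclusion of the previous paragraph gives $\widetilde W=W^c$ and therefore $\A_W^c=\A_{W^c}$, the stabilization of the argument resting, as expected, on converting the overlapping‑support strong commutation into a structural tensor‑product statement rather than trying to prove it by soft operator‑theoretic means.
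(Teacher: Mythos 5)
Your first inclusion, $\A_W^c\subset\A_{W^c}$, is essentially the paper's own argument: identify $\A_W^c$ with $\A_{\widetilde W}$ via Theorem \ref{subnetsubalgebra}, note that the smeared fields of $\widetilde W$ are affiliated with $\A_W(\s1)'$, pass to vanishing of $[Y(a,z),Y(b,w)]$ via Propositions \ref{PropositionABcommute} and \ref{PropVAandWightmannLocality}, and conclude $\widetilde W\subset W^c$ with the Borcherds commutator formula. That part is fine.

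The reverse inclusion $\A_{W^c}\subset\A_W^c$ is where you have a genuine gap. You correctly isolate the obstacle (overlapping supports, so neither locality of $\A_V$ nor commutation on the core $\H^\infty$ suffices), but the mechanism you propose to overcome it --- ``transporting the factorization $W\otimes W^c\to V$ into $\A_V$ so that the two subnets sit on orthogonal tensor directions'' --- is exactly the assertion that needs proof, and you do not supply it. To make it work you would have to show (i) that $u\otimes v\mapsto u_{(-1)}v$ is a \emph{unitary} isomorphism of $W\otimes W^c$ onto a unitary subalgebra $U\subset V$, i.e.\ that $(u_{(-1)}v\,|\,u'_{(-1)}v')=(u|u')(v|v')$; this is true but rests on $\nu=\nu^W+\nu^{W^c}$ (Prop.~\ref{cosetsconformalvector}) and the resulting orthogonality of $W_n$ and $W^c_n$ for $n\geq 1$, none of which you invoke; and (ii) that the induced unitary $\H_W\overline{\otimes}\H_{W^c}\to\H_U$ intertwines $Y(a,f)\otimes 1$ and $1\otimes Y(b,f)$ with the restrictions of the corresponding smeared fields of $V$, and that commutation of the restricted algebras on $\H_U$ lifts to $\H$ (it does, because $e_U$ commutes with $\A_U(I)\supset\A_W(I)\cup\A_{W^c}(I)$ and restriction to $\H_U$ is injective there by Reeh--Schlieder, but this must be said). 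Without (i) the ``orthogonal tensor directions'' picture is not available, since an abstract isomorphism of nets guaranteed by Theorem \ref{netV} need not respect the two embedded subnets.

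For comparison, the paper's proof of this inclusion is much shorter and avoids the tensor product entirely: it uses the projective representation $U_W$ of $\difftilde$ generated by the coset Virasoro field $Y(\nu^W,z)$. By Prop.~\ref{cosetsconformalvector} and the results quoted in Subsect.~\ref{SubsectCovariantSubnet}, ${\rm Ad}\,U_W(\tilde\gamma)$ agrees with ${\rm Ad}\,U(\gamma)$ on $\A_W(I_1)$ and is the identity on $\A_{W^c}(I)$. Conjugating by $U_W(\tilde\gamma)$ with $\gamma$ chosen so that $\gamma I_1\subset I'$ therefore reduces the commutation of $A\in\A_{W^c}(I)$ with $B\in\A_W(I_1)$ --- even for $I_1=I$ --- to ordinary locality of $\A_V$. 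If you want to keep your tensor-product route you must carry out points (i) and (ii) above; otherwise the localization trick via $U_W$ is the efficient way to close the argument.
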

\begin{proof} Let $U_W$ be the projective unitary representation of $\widetilde{\diff}$ on $\H$ obtained from the representation of the 
Virasoro algebra on $V$ given by the operators $L^W_n$, $n \in \ZZ$ defined in Prop. \ref{cosetsconformalvector}. For an element 
$\tilde{\gamma} \in \widetilde{\diff}$ we denote by $\gamma \in \diff$ its image under the covering map
$\widetilde{\diff} \to \diff$. Then for any $\tilde{\gamma}\in\widetilde{\diff}$ and any $I\in \I$ we have 
$U_W(\tilde{\gamma})AU_W(\tilde{\gamma})^* = U(\gamma)AU(\gamma)^*$ for all $A \in \A_W(I)$ and 
$U_W(\tilde{\gamma})AU_W(\tilde{\gamma})^* = A$ for all $A \in \A_{W^c}(I)$. It follows that $A \in \A_{W^c}(I)$ commute with 
$\A_W(I_1)$ for every $I_1 \in \I$ and thus $\A_{W^c}(I) \subset \A_W^c(I)$ so that $\A_{W^c} \subset \A_W^c$.
On the other hand, by Thm. \ref{subnetsubalgebra}  there is a unitary subalgebra $\tilde{W} \subset V$ such that
$\A_W^c = \A_{\tilde{W}}$. Let $a\in \tilde{W}$. Then $Y(a,f)$ is affiliated with  $\A_W(S^1)'$ for all $I \in \I$ and all $f\in C^\infty(S^1)$
with ${\rm supp}f \subset I$. It follows that $Y(a,f)$ is affiliated with  $\A_W(S^1)'$ for all $f\in C^\infty(S^1)$. As a consequence 
$[Y(a,z),Y(b,w)]=0$ for all $b\in W$ and hence $a \in W^c$. Since $a\in \tilde{W}$ was arbitrary we can conclude that 
$\tilde{W} \subset W^c$ and hence that $\A_W^c \subset \A_{W^c}$. 

\end{proof}

We conclude this section with a result on finiteness of intermediate subalgebras for inclusions of strongly local vertex operator algebras, 
cf. \cite{KM_FPX,Xu2013}.

\begin{theorem}
\label{TheoremIntermediateSubalgebras} Let $V$ be a simple unitary strongly local vertex operator algebra and let $W \subset V$ be a unitary subalgebra. Assume that $[\A_V:\A_W] < +\infty$. Then the set of unitary subalgebras $\widetilde{W} \subset V$ such that 
$W \subset \widetilde{W}$ is finite. 
\end{theorem}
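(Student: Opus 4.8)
The plan is to transport the statement from the side of vertex operator algebras to the side of conformal nets by means of the bijection of Theorem \ref{TheoremsubnetsubalgebraOneToOne}, and then to invoke the finiteness of intermediate subnets for a finite-index inclusion, a fact recorded among the preliminaries at the end of Subsect. \ref{SubsectCovariantSubnet}. The whole argument thus reduces to checking that the correspondence $\widetilde{W}\mapsto \A_{\widetilde{W}}$ is an isomorphism of the relevant partially ordered sets and then citing a purely operator-algebraic result.

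First I would verify that the correspondence of Theorem \ref{TheoremsubnetsubalgebraOneToOne} is inclusion-preserving in both directions, i.e. that it is an isomorphism of the lattice of unitary subalgebras of $V$ onto the lattice of M\"obius covariant subnets of $\A_V$. If $W\subset \widetilde{W}$, then for every $a\in W$ and every $f\in C^\infty(\s1)$ with ${\rm supp}f\subset I$ the smeared vertex operator $Y(a,f)$ is affiliated with $\A_{\widetilde{W}}(I)$ by Thm. \ref{stronglylocalsubalgebra} applied to $\widetilde{W}$, whence $\A_W(I)\subset \A_{\widetilde{W}}(I)$ for all $I\in\I$. Conversely, recall from the proof of Thm. \ref{subnetsubalgebra} that each unitary subalgebra is recovered from its subnet through the formula $\widetilde{W}=\H_{\A_{\widetilde{W}}}\cap V$; since $\A_W\subset \A_{\widetilde{W}}$ forces $\H_{\A_W}=\overline{\A_W(\s1)\Omega}\subset \overline{\A_{\widetilde{W}}(\s1)\Omega}=\H_{\A_{\widetilde{W}}}$, intersecting with $V$ yields $W\subset\widetilde{W}$.

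Next I would observe that, under this lattice isomorphism, the unitary subalgebras $\widetilde{W}$ with $W\subset\widetilde{W}\subset V$ are carried bijectively onto the intermediate M\"obius covariant subnets $\A_W\subset\B\subset\A_V$. By Thm. \ref{netV} the net $\A_V$ is irreducible, and $\A_W$ is a M\"obius covariant subnet of $\A_V$ with $[\A_V:\A_W]<+\infty$ by hypothesis. The finiteness statement for intermediate subnets of a finite-index inclusion, which holds as a consequence of \cite[Thm.\ 3]{LongoCMP2003} and was recorded in Subsect. \ref{SubsectCovariantSubnet}, then shows that there are only finitely many such $\B$. Transporting this back along the bijection gives that the set of unitary subalgebras $\widetilde{W}$ with $W\subset\widetilde{W}\subset V$ is finite, which is the assertion.

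The only genuinely delicate point is the bidirectional monotonicity of the correspondence in Thm. \ref{TheoremsubnetsubalgebraOneToOne}; everything else is a direct citation. I expect no real obstacle here, since monotonicity in one direction is immediate from the construction of $\A_{\widetilde{W}}$ out of the generators $Y(a,f)$, $a\in\widetilde{W}$, and in the other direction from the reconstruction formula $\widetilde{W}=\H_{\A_{\widetilde{W}}}\cap V$. The finiteness itself is then entirely a matter of the net framework and is already available once the finite-index hypothesis has been translated into $[\A_V:\A_W]<+\infty$.
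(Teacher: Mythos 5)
Your proof is correct and takes essentially the same route as the paper, which likewise deduces the claim from the bijection of Theorem \ref{TheoremsubnetsubalgebraOneToOne} together with the finiteness of intermediate covariant subnets for a finite-index inclusion recorded in Subsect.~\ref{SubsectCovariantSubnet}. The only difference is that you spell out the order-preserving character of the correspondence in both directions, which the paper leaves implicit; your verification of it (via affiliation of the generators $Y(a,f)$ in one direction and the reconstruction formula $\widetilde{W}=\H_{\A_{\widetilde{W}}}\cap V$ in the other) is sound.
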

\begin{proof}
The claim follows directly from Thm. \ref{TheoremsubnetsubalgebraOneToOne} and the fact that that, since the index
$[\A_V:\A_W]$ is finite, the set of intermediate covariant subnets for the inclusion $\A_W \subset \A_V$ is also finite, see 
Subsect. \ref{SubsectCovariantSubnet}.
\end{proof}

\section{Criteria for strong locality and examples}
\label{Criteria for strong locality and examples}
In this section we consider some useful criteria which imply strong locality. We then apply them in order to give various examples of strongly local vertex operator algebras.  

Let $V$ be a simple unitary VOA satisfying energy bounds. 
If ${\mathscr F}$ is a subset of $V$ and $I\in {\mathcal I}$ we  
define a von Neumann subalgebra ${\mathcal A}_{\mathscr F}(I)$ of 
${\mathcal A}_V(I)$ 
\begin{equation} 
\label{EqA_F}
{\mathcal A}_{\mathscr F} (I) = W^*(\{Y(a,f): a\in {\mathscr F}, {\rm supp}f \subset I\}).
\end{equation}                                                           
                     
The following theorem is inspired by \cite[Thm.6.1]{DSW}.
\begin{theorem}
\label{generatingprimaryTheorem} Let ${\mathscr F} \subset V$ be a subset of 
the simple unitary energy-bounded VOA $V$. Assume that ${\mathscr F}$ 
contains only quasi-primary elements. Assume moreover that ${\mathscr F}$ generates $V$ and that, 
for a given $I \in {\mathcal I}$, ${\mathcal A}_{\mathscr F} (I')\subset 
{\mathcal A}_{\mathscr F} (I)'$. Then, $V$ is strongly local and  ${\mathcal A}_{\mathscr F} (I)={\mathcal A}_V (I)$
for all $I\in {\mathcal I}$. 
\end{theorem}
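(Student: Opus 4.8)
The plan is to introduce the auxiliary net $\A_{\mathscr F}$ generated by the quasi-primary family $\mathscr F$, show that it is a genuine irreducible M\"obius covariant net, and then exploit the Bisognano--Wichmann property to prove that \emph{every} smeared vertex operator $Y(a,f)$ with $\mathrm{supp}\,f\subset I$ is affiliated with $\A_{\mathscr F}(I)$, so that $\A_V=\A_{\mathscr F}$ and $V$ is strongly local. First I would verify the net axioms for $\A_{\mathscr F}$. Isotony is immediate. Since every element of $\mathscr F$ is quasi-primary, Prop.~\ref{covariancevertexoperator} gives $U(\gamma)Y(a,f)U(\gamma)^*=Y(a,\beta_{d_a}(\gamma)f)$ for $\gamma\in\mob$, with $\mathrm{supp}(\beta_{d_a}(\gamma)f)\subset\gamma\,\mathrm{supp}\,f$; hence $U(\gamma)\A_{\mathscr F}(I)U(\gamma)^*=\A_{\mathscr F}(\gamma I)$, i.e. M\"obius covariance, while positivity of the energy holds because $L_0\geq 0$. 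For locality, the hypothesis $\A_{\mathscr F}(I')\subset\A_{\mathscr F}(I)'$ for one interval, transported by M\"obius covariance and the transitivity of $\mob$ on $\I$, yields $\A_{\mathscr F}(J')\subset\A_{\mathscr F}(J)'$ for every $J\in\I$, and with isotony this gives commutativity for all pairs of disjoint intervals. Finally $\Omega$ is cyclic for $\A_{\mathscr F}(\s1)$: each $Y(a,f)$ with $a\in\mathscr F$ and $\mathrm{supp}\,f$ in an interval is affiliated with $\A_{\mathscr F}(\s1)$, so the closure $\H_{\mathscr F}$ of $\A_{\mathscr F}(\s1)\Omega$ is invariant under these operators; isolating the $L_0$-eigencomponents of $Y(a,f)\Omega$ shows $\H_{\mathscr F}$ contains each $a_{(n)}\Omega$ and, inductively, all monomials $a^1_{(n_1)}\cdots a^k_{(n_k)}\Omega$ with $a^i\in\mathscr F$, which span $V$ since $\mathscr F$ generates $V$; thus $\H_{\mathscr F}=\H$. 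Irreducibility follows because $V$ simple gives $\mathrm{Ker}(L_0)=\CC\Omega$ (Prop.~\ref{simpleunitary}).

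With $\A_{\mathscr F}$ established as an irreducible M\"obius covariant net, I would invoke the Bisognano--Wichmann property (property~(2) of Subsect.~\ref{SubsectPrelNets}): the modular group of $(\A_{\mathscr F}(I),\Omega)$ is $\Delta_I^{it}=U(\delta_I(-2\pi t))$ and the modular conjugation is the geometric reflection $J_I=U(j_I)$; in particular Haag duality $\A_{\mathscr F}(I')=\A_{\mathscr F}(I)'$ holds for all $I\in\I$.

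The heart of the argument, following \cite{DSW}, is to show that for an arbitrary quasi-primary $a\in V$ and $\mathrm{supp}\,f\subset I$ the operator $Y(a,f)$ is affiliated with $\A_{\mathscr F}(I)$. By Wightman locality (Prop.~\ref{PropVAandWightmannLocality}), $Y(a,f)$ commutes weakly, on the common invariant core $\H^\infty$, with every $Y(b,g)$, $b\in\mathscr F$, $\mathrm{supp}\,g\subset I'$, and these operators generate $\A_{\mathscr F}(I')=\A_{\mathscr F}(I)'$. Weak commutation alone does not yield affiliation (this is exactly the Nelson phenomenon recalled in the Introduction); the extra ingredient is the geometric modular action together with positivity of the energy. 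Concretely, $Y(a,f)$ is covariant under the modular unitaries $\Delta_I^{it}=U(\delta_I(-2\pi t))$, which preserve $I$, and under $J_I$ it is carried to a quasi-primary field localized in $I'$. Positivity of the energy then supplies the analytic continuation in the dilation parameter that places $Y(a,f)\Omega$ in $\mathcal D(\Delta_I^{1/2})$ and produces the modular relation $S_I\,Y(a,f)\Omega=Y(a,f)^*\Omega$, with $S_I=J_I\Delta_I^{1/2}$ the Tomita operator of $\A_{\mathscr F}(I)$. Combining this with the weak commutation, as in the proof of \cite[Thm.~6.1]{DSW} and using Lemma~\ref{weakcommutantLemma}, upgrades weak to strong commutation and gives that $Y(a,f)$ is affiliated with $\A_{\mathscr F}(I)$.

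Finally, by Prop.~\ref{primarynetproposition} the algebra $\A_V(I)$ is generated by the quasi-primary fields $Y(a,f)$ with $\mathrm{supp}\,f\subset I$, each of which is affiliated with $\A_{\mathscr F}(I)$ by the previous step; hence $\A_V(I)\subset\A_{\mathscr F}(I)$, and since the reverse inclusion is obvious we get $\A_{\mathscr F}(I)=\A_V(I)$ for all $I\in\I$. As $\A_{\mathscr F}$ is local, so is $\A_V$, which is precisely the strong locality of $V$. I expect the genuine difficulty to be concentrated in the third paragraph: the passage from the weak (Wightman) commutativity of the smeared fields to the strong commutativity of the von Neumann algebras they generate, which is false in general and becomes available here only because Bisognano--Wichmann forces the modular group to act geometrically, making the energy-positivity analyticity usable exactly along the directions that preserve the localization region.
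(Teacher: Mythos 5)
Your overall strategy is the same as the paper's: build the auxiliary net $\A_{\mathscr F}$, verify it is an irreducible local M\"obius covariant net, invoke Bisognano--Wichmann and Haag duality for it, establish the Tomita relation $S_I\,Y(a,f)\Omega=Y(a,f)^*\Omega$ for \emph{every} quasi-primary $a\in V$, and then upgrade weak to strong commutation so that Prop.~\ref{primarynetproposition} yields $\A_V(I)\subset\A_{\mathscr F}(I)$. The net-axiom verification and the final DSW-style approximation step are summarized accurately (the paper carries out the latter with the mollified operators $A(\varphi_s)$, the core Lemma~\ref{corelemma}, and outer regularity of $\A_{\mathscr F}$).

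The genuine gap is in your third paragraph, at the sentence claiming that positivity of the energy ``produces the modular relation $S_I\,Y(a,f)\Omega=Y(a,f)^*\Omega$.'' Energy positivity (via Thm.~\ref{bw2}) only controls the \emph{modular operator} half: it shows $Y(a,f)\Omega\in\D(\Delta_I^{1/2})$ and computes $\Delta_I^{1/2}Y(a,f)\Omega=(-1)^{d_a}Y(a,f\circ j_I)\Omega$. To conclude $S_I\,Y(a,f)\Omega=Y(a,f)^*\Omega$ one must still identify the action of the modular \emph{conjugation} $J_I$ on these vectors, i.e.\ show that $J_I$ coincides with the PCT operator $\theta$ of $V$ (composed with the geometric reflection). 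For $a\in\mathscr F$ this is automatic, since $Y(a,f)$ is affiliated with $\A_{\mathscr F}(I)$ by construction and Tomita--Takesaki gives the relation directly; but for a general quasi-primary $a\in V$ affiliation is exactly what is being proved, so your assertion that ``under $J_I$ it is carried to a quasi-primary field localized in $I'$'' is circular as stated. The missing argument (which occupies the central part of the paper's proof) is: (i) show that $\Phi_a(z)=\sum_n J a_{(n)} J\,z^{-n-1}$ defines fields on $V$ that are mutually local with all $Y(b,z)$ --- this uses locality of $\A_{\mathscr F}$, Prop.~\ref{PropVAandWightmannLocality}, Dong's lemma and the fact that $\mathscr F$ generates $V$ --- so that the uniqueness theorem for vertex algebras forces $\Phi_a(z)=Y(Ja,z)$ and $J$ is an antilinear automorphism of $V$; (ii) compare $J$ and $\theta$ on the generating family $\mathscr F$, where both satisfy the same modular identity, to get $Ja=\theta a$ for $a\in\mathscr F$ and hence $J=\theta$ on all of $V$. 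Only after this identification does the modular relation hold for arbitrary quasi-primary $a$, and the rest of your argument goes through. Without step (i)--(ii) the proof does not close, and no amount of analyticity in the dilation parameter substitutes for it.
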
 
\begin{proof} 
As a consequence of Lemma \ref{weakcommutantLemma} we have ${\mathcal A}_{\mathscr F}(I)= 
{\mathcal A}_{{\mathscr F}\cup \theta {\mathscr F}}(I)$, 
for all $I \in \I$. Accordingly we can assume that ${\mathscr F}=\theta {\mathscr F}$. 
We first observe that the map 
$I \mapsto {\mathcal A}_{\mathscr F} (I)$
is obviously isotonous and since every element of 
${\mathscr F}$ is quasi-primary it is also M\"{o}bius covariant as a consequence of Prop. \ref{covariancevertexoperator}.  
Hence ${\mathcal A}_{\mathscr F} (I')\subset{\mathcal A}_{\mathscr F} (I)'$ 
for all $I\in {\mathcal I}$. 

Now, let ${\mathcal P}_{\mathscr F}$ be the algebra generated by the operators 
$Y(a,f)$ with $a\in {\mathscr F}$, and $f \in C^\infty(S^1)$. Moreover, for 
$I\in {\mathcal I}$, let ${\mathcal P}_{\mathscr F}(I)$ 
be the subalgebra of ${\mathcal P}_{\mathscr F}$ corresponding to functions 
$f \in C^\infty(S^1)$ with ${\rm supp}f \subset I$. 
Both algebras have ${\mathcal H}^\infty$ as invariant domain
and are $*$-algebras because ${\mathscr F}$ is $\theta$ invariant.
Moreover, since ${\mathscr F}$ is generating $V\subset {\mathcal P}_{\mathscr F}\Omega$
and hence the latter subspace is dense in ${\mathcal H}$. 
With a slight modification of  the argument in \cite[page 544]{FJ} it 
can be shown that, for every $I\in{\mathcal I}$, 
$\overline{{\mathcal P}_{\mathscr F}(I)\Omega}$  is invariant for the action of the 
M\"{o}bius group and hence it is independent from the choice of $I$ and we 
denote it by ${\mathcal H}_{\mathscr F}$. Then, it can be shown that 
${\mathcal H}_{\mathscr F} \cap {\mathcal H}^\infty $ is left invariant 
by the algebras ${\mathcal P}_{\mathscr F}(I)$ for all $I\in {\mathcal I}$.
As a consequence ${\mathcal P}_{\mathscr F}\Omega \subset {\mathcal H}_{\mathscr F}$ 
and hence ${\mathcal P}_{\mathscr F}(I)\Omega$ is dense in  ${\mathcal H}$
for all $I\in {\mathcal I}$ (Reeh-Schlieder property for fields). 
Now, let $f \in C^\infty(S^1)$ have support in a given  
$I \in {\mathcal I}$ and $a\in {\mathscr F}$. Since $Y(a,f)$ is affiliated 
with ${\mathcal A}_{\mathscr F}(I)$ there is a sequence $A_n\in {\mathcal A}_{\mathscr F}(I)$
such that $\lim_{n \to \infty}A_nb =Y(a,f)b$ for all $b \in {\H^\infty}$. 
It follows that $\overline{{\mathcal A}_{\mathscr F}(I)\Omega}\cap 
{\mathcal H}^\infty$ is left invariant by the action of 
${\mathcal P}_{\mathscr F}(I)$ and hence 
${\mathcal P}_{\mathscr F}(I)\Omega \subset \overline{{\mathcal A}_{\mathscr F}(I)\Omega}$
which implies that also ${\mathcal A}_{\mathscr F}(I)\Omega$ is dense in 
${\mathcal H}$. Accordingly the map $I \mapsto {\mathcal A}_{\mathscr F}(I)$ 
also satisfies the cyclicity of the vacuum conditions and it thus define 
a local irreducible M\"{o}bius covariant net on $S^1$ acting on $\H$.
  
We have to show that ${\mathcal A}_V(I)\subset {\mathcal A}_{\mathscr F}(I)$ 
for all $I\in \I$. By M\"{o}bius covariance it is enough to prove the inclusion 
when $I$ is the upper semicircle $S^1_+$. Let $\Delta$ and $J$ be the 
Tomita's modular operator and modular conjugation associated with 
${\mathcal A}_{\mathscr F}(S^1_+)$ and $\Omega$ and let $S=J\Delta^{\frac{1}{2}}$. 
It follows from \cite[Prop.1.1]{GuLo96} that
$J{\mathcal A}_{\mathscr F}(I)J={\mathcal A}_{\mathscr F}(j(I))$ and 
$JU(\gamma)J= U(j\circ \gamma \circ j)$ for every  $I \in \I$ 
and every M\"{o}bius transformation $\gamma$ of $S^1$, where 
$j: S^1 \mapsto S^1$ is defined by $j(z)=\overline{z}$ ($|z|=1$).
It follows that $JL_nJ=L_n$ for $n=-1,0,1$. In particular $JV =V$ and
for every $a\in V$ 
the formal series $\Phi_a(z) = \sum_{n\in Z}Ja_{(n)}Jz^{-n-1}$ is 
a well defined field on $V$ such that 
$[L_1, \Phi_a(z) ]= \frac{d}{dz}\Phi_a(z)$ and 
$\Phi_a(z)\Omega|_{z=0}=Ja$ so that 
$\Phi_a(z)\Omega = e^{zT}Ja$. 
From the properties of the action 
of $J$ on the net ${\mathcal A}_{\mathscr F}$ one can show that, for $a,b,c \in {\mathscr F}$, 
$\Phi_a(z)$, $Y(b,z)$ and $Y(c,z)$ are pairwise mutually local fields (in the vertex algebra sense) 
as a consequence of the locality of $\A$ of 
Prop. \ref{PropositionABcommute} and Prop. \ref {PropVAandWightmannLocality}.
Hence, since ${\mathscr F}$ generates $V$, $\Phi_a(z)$ and $Y(b,z)$ are 
mutually local for every $a \in {\mathscr F}$ and every $b\in V$ as a consequence 
on Dong's lemma \cite[Lemma 3.2]{Kac}. It then easily follows that 
for all $a\in {\mathscr F}$ and all $b\in V$ also $Y(a,z)$ and $\Phi_b(z)$ 
are mutually local. Using again Dong's lemma and the fact that ${\mathscr F}$
generate $V$ we obtain that $\Phi_a(z)$ and $Y(b,z)$ are mutually local    
for all $a,b \in V$. Hence it follows from the uniqueness theorem 
for vertex algebras \cite[Thm.4.4]{Kac} that 
$\Phi_a(z)=Y(Ja,z)$ for every $a \in V$ and thus that 
$J$ defines an antilinear automorphism of $V$. 

Now let $a \in {\mathscr F}$ and let $f \in C^\infty(S^1)$ with 
${\rm supp}f\subset S^1_+$. Since $Y(a,f)$ is affiliated with 
$\A_{\mathscr F}(S^1_+)$ we have 
$J\Delta^{\frac{1}{2}}Y(a,f)\Omega = Y(a,f)^*\Omega$. On the other hand, 
since $a$ is quasi-primary, using the Bisognano-Wichmann property for $\A_{\mathscr{F}}$ and Thm. \ref{bw2} in Appendix \ref{BWmob}
we find $$\theta \Delta^{\frac{1}{2}}Y(a,f)\Omega = 
\theta e^{\frac{1}{2}K}Y(a,f)\Omega = Y(a,f)^*\Omega.$$ 
By the Bisognano-Wichmann property of M\"{obius} covariant nets on $S^1$ and the fact that 
$\theta L_n \theta = JL_nJ=L_n$ for $n=-1,0,1$ we see that both $J\Delta^{\frac{1}{2}}J$ and $\theta 
\Delta^{\frac{1}{2}}\theta$ 
are equal to $\Delta^{-\frac{1}{2}}$. Hence we find that 
$JY(a,f)\Omega = \theta Y(a,f)\Omega$. Since $\theta$ and 
$J$ commute with $L_0$ we  find that 
$JY(a,f_t)\Omega= \theta Y(a,f_t)\Omega$ for all $t\in \RR$. By partition 
of unity it follows that $JY(a,f)\Omega = \theta Y(a,f)\Omega$  for 
all $f \in C^\infty(S^1)$ and hence that $Ja=\theta a$. Since $a \in {\mathscr F}$ 
was arbitrary, $\theta$ and $J$ are antilinear automorphisms and ${\mathscr F}$ 
generates $V$ it follows that $\theta =J$. 
Hence, again by Thm. \ref{bw1} in Appendix \ref{BWmob}
we find that, for every 
quasi-primary element $a\in V$ and every $f\in C^\infty(S^1)$ with 
${\rm supp}f \subset S^1_+$, $Y(a,f)\Omega$ is in the domain of 
$S$ and $SY(a,f)\Omega =Y(a,f)^*\Omega $.

Now, let $I$ be an open interval containing the closure of 
$S^1_+$ and let $A \in \A_{\mathscr F}(I')$. Then there is a $\delta>0$ such 
that $e^{itL_0}Ae^{-itL_0} \in \A_{\mathscr F}(S^1_-)$ for all $t\in \RR$ such 
that $|t| <\delta$. Hence if $\varphi_s$, $s \in (0, \delta)$ 
and $A(\varphi_s)$ are defined as in the proof of Lemma 
\ref{weakcommutantLemma} we have that $A(\varphi_s) \in \A_{\mathscr F}(S^1_-)$ 
for all $s\in (0, \delta)$. 
Let $X_1, X_2 \in \P_{\mathscr F}(S^1_-)$ and $B \in \A_{\mathscr F}(S^1_+)$. Then we have 
\begin{eqnarray*}
(X_1^*A(\varphi_s)X_2\Omega|SB\Omega) & = & 
(X_1^*A(\varphi_s)X_2\Omega|B^*\Omega) \\
& = & (BX_1^*A(\varphi_s)X_2\Omega|\Omega) \\
& = & (X_1^*A(\varphi_s)X_2B\Omega|\Omega) \\
& = & (B\Omega|X_2^*A(\varphi_s)^*X_1\Omega). 
\end{eqnarray*}
As a consequence $X_1^*A(\varphi_s)X_2\Omega$ is in the domain of 
$S^*$ and 
$$S^*X_1^*A(\varphi_s)X_2\Omega = X_2^*A(\varphi_s)^*X_1\Omega.$$ 
Using this fact we find that, 
for every quasi-primary $a\in V$
every $f\in C^\infty(S^1)$ with ${\rm supp}f \subset S^1_+$ and all
$X_1, X_2 \in \P_{\mathscr F}(S^1_-)$,
\begin{eqnarray*}
(X_1\Omega|A(\varphi_s)Y(a,f)X_2\Omega) & = & 
(X_2^*A(\varphi_s)^*X_1\Omega|Y(a,f)\Omega) \\
& = & (S^*X_1^*A(\varphi_s)X_2\Omega|Y(a,f)\Omega) \\ 
& = & (SY(a,f)\Omega |X_1^*A(\varphi_s)X_2\Omega)\\
& = & (Y(a,f)^*\Omega |X_1^*A(\varphi_s)X_2\Omega) \\ 
& = & (Y(a,f)^*X_1\Omega |A(\varphi_s)X_2\Omega)\\
& = &(X_1\Omega |Y(a,f)A(\varphi_s)X_2\Omega), 
\end{eqnarray*}
$s\in (0,\delta)$. Hence, since $\P_{\mathscr F}(S^1_-)\Omega$ is dense 
we find that 
$A(\varphi_s)Y(a,f)X\Omega$ \linebreak $=Y(a,f)A(\varphi_s)X\Omega$ 
for all $X \in \P_{\mathscr F}(S^1_-)$ and all $s\in(0,\delta)$. 
Now, we have $\lim_{s\to 0}A(\varphi_s)c$ $= Ac$ for all $c\in \H$
and hence, for every $X \in \P_{\mathscr F}(S^1_-)$, 
$AX\Omega$ is in the domain of $Y(a,f)$ and 
$Y(a,f)AX\Omega = AY(a,f)X\Omega$.   
Since $V$ is energy-bounded by assumption, there exists a positive integer $k$ such that any core for $(L_0+1_\H)^k$ is a core 
for $Y(a,f)$. We want to show that $\P_{\mathscr F}(S^1_-)\Omega$ is a core for $(L_0+1_\H)^k$. To this end let $I \in \I$ whose closure is 
contained in $S^1_-$. Then there exists a real number $\delta >0$ such that $e^{it} I \subset S^1_-$ for all $t\in (-\delta,\delta)$. Hence, 
by the M\"{o}bius covariance of the vertex operators we see that $U(t)\P_{\mathscr F}(I)\Omega \subset\P_{\mathscr F}(S^1_-)\Omega$ for all 
$t\in (-\delta,\delta)$ and hence, by Lemma \ref{corelemma}, $\P_{\mathscr F}(S^1_-)\Omega$  is a a core for $(L_0+1_\H)^k$ and consequently a core
for $Y(a,f)$. It follows that $AY(a,f)\subset Y(a,f)A$ and since the latter relation holds for every 
$A \in \A_{\mathscr F}(I')$ it follows that $Y(a,f)$ is affiliated 
with $\A_{\mathscr F}(I)=\A_{\mathscr F}(I')'$ for all quasi-primary $a\in V$ and all 
$f\in C^\infty(S^1)$ with ${\rm supp}f \subset S^1_+$ . Hence 
using Prop. \ref{primarynetproposition} we can conclude 
that $\A_V(S^1_+) \subset \A_{\mathscr F}(I)$ whenever the interval $I\in \I$
contains the closure of $S^1_+$. Now, it follows easily from 
M\"{o}bius covariance that  
$$\A_{\mathscr F}(S^1_+) = \bigcap_{I\supset \overline{S^1_+}}\A_{\mathscr F}(I).$$
Hence we can conclude that 
$\A_V(S^1_+) \subset \A_{\mathscr F}(S^1_+)$. 
\end{proof}

\begin{corollary}\label{tensorcorollary} Let $V^\alpha$ and $V^\beta$ be strongly local simple unitary VOAs. Then 
$V^\alpha \otimes V^\beta$ is strongly local and $\A_{V^\alpha\otimes V^\beta} =\A_{V^\alpha}\otimes \A_{V^\beta}$.
\end{corollary}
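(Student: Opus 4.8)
The plan is to apply Theorem \ref{generatingprimaryTheorem} to a carefully chosen generating family of quasi-primary vectors of $V^\alpha\otimes V^\beta$, so that all the analytic work is delegated to that theorem. First I would record the standing hypotheses. The tensor product of two unitary VOAs is again a unitary VOA, by Corollary \ref{tensorbounded} it is energy-bounded, and since simple unitary VOAs are of CFT type (Prop. \ref{simpleunitary}) we have $(V^\alpha\otimes V^\beta)_0 = V^\alpha_0\otimes V^\beta_0 = \CC(\Omega_\alpha\otimes\Omega_\beta)$, whence $V^\alpha\otimes V^\beta$ is simple, again by Prop. \ref{simpleunitary}. Writing $\H_\alpha$, $\H_\beta$ for the Hilbert space completions of $V^\alpha$, $V^\beta$, the completion of $V^\alpha\otimes V^\beta$ is $\H_\alpha\overline{\otimes}\H_\beta$.

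Next I would set $\mathscr{F} = \mathscr{F}_\alpha\cup\mathscr{F}_\beta$, where $\mathscr{F}_\alpha = \{a\otimes\Omega_\beta : a\in V^\alpha \text{ quasi-primary}\}$ and $\mathscr{F}_\beta = \{\Omega_\alpha\otimes b : b\in V^\beta \text{ quasi-primary}\}$. These vectors are quasi-primary in $V^\alpha\otimes V^\beta$ because $\Omega$ has conformal weight $0$ and $L_1\Omega = 0$. Since the M\"obius representation on a unitary VOA is completely reducible, $V^\alpha$ is spanned by the vectors $L_{-1}^k a$ with $a$ quasi-primary, and as $L_{-1}=T$ each such vector lies in the vertex subalgebra generated by the quasi-primary vectors; hence the quasi-primaries generate $V^\alpha$, and likewise $V^\beta$. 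Using $Y(a\otimes\Omega_\beta, z) = Y_\alpha(a,z)\otimes 1$ together with the product $(a\otimes\Omega_\beta)_{(-1)}(\Omega_\alpha\otimes b) = a\otimes b$, one checks that $\mathscr{F}$ generates all of $V^\alpha\otimes V^\beta$.

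The main computational step is to identify $\A_{\mathscr{F}}(I)$. Because $Y(a\otimes\Omega_\beta, f) = Y_\alpha(a,f)\otimes 1$ and $Y(\Omega_\alpha\otimes b, f) = 1\otimes Y_\beta(b,f)$, Prop. \ref{primarynetproposition} applied to each factor gives $W^*(\{Y(a\otimes\Omega_\beta, f): a\ \text{q.p.},\ {\rm supp}f\subset I\}) = \A_{V^\alpha}(I)\otimes 1$ and, symmetrically, $1\otimes\A_{V^\beta}(I)$ for the $\beta$-part. Invoking the elementary fact $(\M_1\otimes 1)\vee(1\otimes\M_2) = \M_1\overline{\otimes}\M_2$, I would conclude $\A_{\mathscr{F}}(I) = \A_{V^\alpha}(I)\overline{\otimes}\A_{V^\beta}(I) = (\A_{V^\alpha}\otimes\A_{V^\beta})(I)$.

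Finally, for the locality hypothesis of Theorem \ref{generatingprimaryTheorem}: since $V^\alpha$ and $V^\beta$ are strongly local, $\A_{V^\alpha}$ and $\A_{V^\beta}$ are conformal nets by Thm. \ref{netV}, so $\A_{V^\alpha}(I')\subset\A_{V^\alpha}(I)'$ and similarly for $\beta$. Hence, for any fixed $I\in\I$, the commutation theorem $(\M_1\overline{\otimes}\M_2)' = \M_1'\overline{\otimes}\M_2'$ yields $\A_{\mathscr{F}}(I') = \A_{V^\alpha}(I')\overline{\otimes}\A_{V^\beta}(I')\subset \A_{V^\alpha}(I)'\overline{\otimes}\A_{V^\beta}(I)' = \A_{\mathscr{F}}(I)'$. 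Theorem \ref{generatingprimaryTheorem} then gives that $V^\alpha\otimes V^\beta$ is strongly local and $\A_{V^\alpha\otimes V^\beta}(I) = \A_{\mathscr{F}}(I)$ for all $I\in\I$, which combined with the identification above yields $\A_{V^\alpha\otimes V^\beta} = \A_{V^\alpha}\otimes\A_{V^\beta}$. The only genuine subtlety is the correct choice of $\mathscr{F}$ and the bookkeeping in the vertex-algebra tensor product; the hard analytic content is entirely absorbed into Theorem \ref{generatingprimaryTheorem}.
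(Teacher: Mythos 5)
Your proposal is correct and follows essentially the same route as the paper's own proof: the same generating family $\mathscr{F}=(\mathscr{F}_\alpha\otimes\Omega)\cup(\Omega\otimes\mathscr{F}_\beta)$, the same identification $\A_{\mathscr F}(I)=\A_{V^\alpha}(I)\overline{\otimes}\A_{V^\beta}(I)$, and the same appeal to the commutation theorem for tensor products together with Theorem \ref{generatingprimaryTheorem}. You merely spell out a few steps (generation, simplicity via $V_0=\CC\Omega$) that the paper leaves implicit.
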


\begin{proof} By Corollary \ref{tensorbounded} the simple unitary VOA $V^\alpha\otimes V^\beta$ is energy-bounded. Now let ${\mathscr F}_a$ be the 
family of all quasi-primary vectors in $V^\alpha$ and let ${\mathscr F}_\beta$ be the family of all quasi-primary vectors in $V^\beta$. 
Then, $V^\alpha \otimes V^\beta$
is generated by the family ${\mathscr F}$ of quasi-primary vectors in $V^\alpha \otimes V^\beta$ defined by  
${\mathscr F}\equiv \left({\mathscr F}_\alpha \otimes \Omega \right) \cup \left(\Omega\otimes {\mathscr F}_\beta \right)$ and 
$\A_{\mathscr F}(I)=\A_{V^\alpha}(I) \overline{\otimes} \A_{V^\beta}(I)$ 
for all $I\in \I$ so that 
$\A_{\mathscr F}(I') = \A_{V^\alpha}(I') \overline{\otimes} \A_{V^\beta}(I') \subset 
\left( \A_{V^\alpha}(I) \overline{\otimes} \A_{V^\beta}(I) \right)'$. 
Then the conclusion follows from Thm. \ref{generatingprimaryTheorem}.
\end{proof}

The following consequence of Thm.  \ref{generatingprimaryTheorem} is more directly applies to many interesting models.

\begin{theorem}
\label{DimensionOne/VirasoroTheorem}
If V is a simple unitary VOA generated by $V_1 \cup {\mathscr F}$, where ${\mathscr F}\subset V_2$ is a family of quasi-primary $\theta$-invariant Virasoro vectors, then $V$ is strongly local. 
\end{theorem}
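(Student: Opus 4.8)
The plan is to reduce the statement to Theorem~\ref{generatingprimaryTheorem}, so that the only real work is to exhibit a generating family of quasi-primary vectors whose smeared fields satisfy the von Neumann locality condition. First I would fix the generating family. Since $V$ is a simple unitary VOA, Prop.~\ref{simpleunitary} gives $V_0=\CC\Omega$; hence for $a\in V_1$ one has $L_1a\in V_0$ and $(L_1a|\Omega)=(a|L_{-1}\Omega)=0$, so $L_1a=0$ and every vector of $V_1$ is quasi-primary. Thus $\mathscr{G}\equiv V_1\cup\mathscr{F}$ is a generating family consisting of quasi-primary vectors, and it is $\theta$-invariant because $\theta$ preserves the grading (so $\theta V_1=V_1$) and fixes $\mathscr{F}$ by hypothesis. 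By Prop.~\ref{DimensionOne/VirasoroBounded} the VOA $V$ is energy-bounded, and the proof of that proposition shows that the generators in $\mathscr{G}$ actually satisfy \emph{linear} energy bounds, i.e.\ \eqref{EnergyBounds} with $k=1$. Splitting each generator into its Hermitian and anti-Hermitian parts as in the proof of Prop.~\ref{PropUnitaryGenHermitian}, I may assume that $\mathscr{G}$ consists of Hermitian quasi-primary vectors, so that for real $f\in C^\infty(\s1)$ the smeared field $Y(a,f)$ is a symmetric operator with invariant core $\H^\infty$.

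By Theorem~\ref{generatingprimaryTheorem} it then suffices to verify, for a single $I\in\I$, the locality condition $\A_{\mathscr{G}}(I')\subset\A_{\mathscr{G}}(I)'$. So I would fix $a,b\in\mathscr{G}$ and real $f,g\in C^\infty(\s1)$ with $\mathrm{supp}\,f\subset I$ and $\mathrm{supp}\,g\subset I'$. On the common invariant core $\H^\infty$ the fields $Y(a,f)$ and $Y(b,g)$ commute: this is Wightman locality, which holds by Prop.~\ref{PropVAandWightmannLocality} because the two supports are disjoint. Moreover, using the linear energy bounds together with $[L_0,Y(a,f)]=-iY(a,f')$, Nelson's commutator theorem (with dominating operator $N=L_0+1_{\H}$) shows that each $Y(a,f)$ is essentially self-adjoint on $\H^\infty$.

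The crucial and most delicate step is to upgrade this commutativity on $\H^\infty$ to strong commutativity of the self-adjoint closures, equivalently to the commutation of their spectral projections and hence of the generated von Neumann algebras. As the Nelson example recalled after Prop.~\ref{PropositionABcommute} shows, weak commutativity on a common core does not imply this in general. What rescues the conclusion here is precisely the linearity of the energy bounds, which controls all the relevant commutators to first order in $N=L_0+1_{\H}$; under such a first-order estimate the commutator theorem of Driessler and Fr\"ohlich \cite{DF1977} (used for this purpose in \cite{BS-M}, see also \cite[Sect.19.4]{GJ}) yields that $e^{itY(a,f)}$ and $e^{isY(b,g)}$ commute for all $s,t\in\RR$. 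Since these one-parameter unitary groups generate the von Neumann algebras in \eqref{EqA_F}, I would obtain $\A_{\mathscr{G}}(I')\subset\A_{\mathscr{G}}(I)'$, and Theorem~\ref{generatingprimaryTheorem} would then apply to $\mathscr{G}$ to give that $V$ is strongly local (and that $\A_{\mathscr{G}}(I)=\A_V(I)$ for all $I$).

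I expect this last upgrade to be the main obstacle. As useful cross-checks, the dimension-one currents satisfy, on $\H^\infty$, the identity $[Y(a,f),Y(b,g)]=Y([a,b],fg)+\text{const}\cdot(\theta a|b)\,1_{\H}$, which vanishes because $fg=0$ and $\int f'g=0$ for disjoint supports, while each single Virasoro field integrates to a projective positive-energy representation of $\difftilde$ enjoying the locality \eqref{EqCommuteExp}; but neither observation by itself produces the strong commutativity required \emph{across all} generators, which is exactly why the first-order commutator theorem is the indispensable ingredient.
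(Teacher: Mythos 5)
Your proposal is correct and follows essentially the same route as the paper's proof: linear energy bounds for the generators from (the proof of) Prop.~\ref{DimensionOne/VirasoroBounded}, the Driessler--Fr\"ohlich commutator theorem as applied in \cite[Sect.2]{BS-M} to upgrade Wightman locality of the linearly energy-bounded generating fields to commutativity of the generated von Neumann algebras, and then Thm.~\ref{generatingprimaryTheorem}. You merely spell out details (quasi-primarity of $V_1$, $\theta$-invariance, reduction to Hermitian generators, essential self-adjointness on $\H^\infty$) that the paper delegates to the cited references.
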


\begin{proof} By Prop. \ref{DimensionOne/VirasoroBounded} (and its proof) $V$ is energy-bounded and 
the vectors $a \in V_1 \cup {\mathscr F}$ satisfy the energy bounds in Eq. (\ref{EnergyBounds}) with $k=1$ (linear energy bounds). Then, the argument in \cite[Sect.2]{BS-M} based on \cite{DF1977}, see also \cite[Sect.19.4]{GJ}, can be used to show that the von Neumann algebras 
$\A_{V_1 \cup {\mathscr F}}(I)$, $I\in \I$, satisfy the locality condition in Thm. \ref{generatingprimaryTheorem} so that 
$\A_{V_1 \cup {\mathscr F}}(I)=\A_V(I)$ for all $I\in \I$ and thus $V$ is strongly local.
\end{proof}

We now give various examples of VOAs that can be easily shown to be strongly local as a consequence of Thm. 
\ref{DimensionOne/VirasoroTheorem}. 

\begin{example} {\rm  The simple unitary vertex algebra $L(c,0)$ is strongly local. The corresponding irreducible conformal net 
$ \A_{L(c,0)}$ is the Virasoro net $\A_{\mathfrak{Vir}, c}$ defined in Subsect. \ref{SubsectPrelNets}. }
\end{example}

We use the above example to give an application of Thm. \ref{subnetsubalgebra} by giving a a new proof of the main result in 
\cite{Carpi1998}. 

\begin{theorem} 
\label{TheoremMinVirNet} 
Let $\B$ be a M\"{o}bius covariant subnet of the Virasoro net $\A_{\mathfrak{Vir}, c}$. Then, either $\B=\CC1_\H$ or 
$\B =\A_{\mathfrak{Vir}, c}$. 
\end{theorem}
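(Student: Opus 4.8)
The plan is to reduce the statement to the classification of unitary subalgebras of the Virasoro vertex operator algebra, which has already been carried out at the VOA level. First I would recall, as noted in the example immediately preceding the theorem, that the Virasoro net $\A_{\mathfrak{Vir},c}$ is precisely the net $\A_{L(c,0)}$ associated with the vertex operator algebra $L(c,0)$, and that $L(c,0)$ is a \emph{simple strongly local unitary} VOA. This identification is what places us squarely in the hypotheses of Theorem \ref{subnetsubalgebra}.

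Next, given a M\"obius covariant subnet $\B$ of $\A_{\mathfrak{Vir},c}=\A_{L(c,0)}$, I would invoke Theorem \ref{subnetsubalgebra} to produce the unitary subalgebra $W=\H_\B\cap L(c,0)$ of $L(c,0)$ satisfying $\A_W=\B$. The classification of such unitary subalgebras is exactly the content of Corollary \ref{CorollaryMinVirVOA}, which asserts that either $W=\CC\Omega$ or $W=L(c,0)$, using that $L(c,0)_2=\CC\nu$ and that $L(c,0)$ is generated by its conformal vector $\nu$. Thus only two possibilities for $W$ can occur.

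Finally I would translate these two cases back to the net side. If $W=L(c,0)$, then $\B=\A_W=\A_{L(c,0)}=\A_{\mathfrak{Vir},c}$. If instead $W=\CC\Omega$, then the smeared vertex operators generating $\A_W(I)$ arise only from $Y(\Omega,f)$, which are scalar multiples of the identity, so $\A_W(I)=\CC1_\H$ for every $I\in\I$, i.e. $\B=\CC1_\H$. There is essentially no serious obstacle here: all of the analytic content is already absorbed into Theorem \ref{subnetsubalgebra} (via the Fredenhagen--J\"or\ss-type core argument and Haag duality) and into the algebraic classification of Corollary \ref{CorollaryMinVirVOA}. The only points meriting a moment's care are checking that $L(c,0)$ is simple and strongly local so that Theorem \ref{subnetsubalgebra} genuinely applies, and observing that the trivial unitary subalgebra $\CC\Omega$ corresponds to the trivial subnet $\CC1_\H$.
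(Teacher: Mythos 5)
Your proposal is correct and follows exactly the paper's own argument: identify $\A_{\mathfrak{Vir},c}$ with $\A_{L(c,0)}$, apply Theorem \ref{subnetsubalgebra} to obtain a unitary subalgebra $W\subset L(c,0)$ with $\A_W=\B$, and conclude via Corollary \ref{CorollaryMinVirVOA}. The extra remarks on the trivial case and on verifying the hypotheses are fine but add nothing beyond what the paper already establishes.
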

\begin{proof} By Thorem \ref{subnetsubalgebra} there is a unitary subalgebra $W\subset L(c,0)$ such that $\B=\A_W$. The conclusion then follows from Corollary \ref{CorollaryMinVirVOA}.
\end{proof}

\begin{example} 
\label{ExampleStronglyLocalV_H}
{\rm Let $V_H$ be the (rank one) Heisenberg conformal vertex operator algebra \cite{Kac}. Then $V_H$ is generated by the one-dimensional subspace $(V_H)_1={\rm Ker}(L_0-1_{V_H})$ and hence it is strongly local.  The central charge is given by $c=1$. The corresponding conformal net $\A_{V_H}$ coincides with free Bose chiral field net $\A_{{\rm U}(1)}$ considered in \cite{BMT}.}
\end{example}

\begin{example}
\label{ExampleAffineLie}
{\rm Let $\g$ be a complex simple Lie algebra and let $V_{\g_k}$ be the corresponding level $k$ simple unitary 
VOA, see \cite{Kac1990,Kac,LL2004}. Then $V_{\g_k}$ is generated by $(V_{\g_k})_1 \simeq \g$ and hence it is strongly local.  The real Lie subalgebra 
$\g_\RR \equiv \{a \in \g: \theta a=a \}$ is a compact real form for $\g$.  Let $G$ be the compact connected simply connected real Lie group with simple Lie algebra $\g_\RR$.  Then $\A_{V_{\g_k}}$ coincides with the loop group conformal net $\A_{G_k}$ associated to the level 
$k$ positive-energy projective unitary representations of the loop group $LG$ \cite{GoWa1984,PS,Tol1999}, see 
\cite{FrG,JonesSemBourbaki,Tol1997,WasICM,WasA} (see also \cite[Sect.5]{KLX}).}
\end{example}

\begin{example}
\label{ExampleRankOneLattice} 
{\rm Let $n$ be a positive integer and let  $L_{2n} \equiv \ZZ \sqrt{2n}$ be the rank-one positive definite even lattice equipped with the 
$\ZZ$-bilinear form $\langle m_1 \sqrt{2n}, m_2 \sqrt{2n}\rangle \equiv 2n m_1 m_2$. Moreover, let $V_{L_{2n}}$ be the simple unitary lattice VOA with central charge $c=1$ associated with $L_{2n}$, see e.g. \cite[Sect.2]{DG1998}. Then $V_{L_{2n}}$ contains  the Heisenberg vertex operator algebra $V_H$ as a unitary subalgebra. Moreover, $V_{L_{2n}}$ describes the same CFT model as the irreducible conformal net $\A_{\mathrm{U}(1)_{2n}} \supset \A_{\mathrm{U}(1)}$ with $c=1$  and $\mu$-index equal to $2n$ considered in 
\cite{Xu2005}.  The net $\A_{\mathrm{U}(1)_{2n}}$ is denoted by $\A_N$, $N=n$ in \cite{BMT}. 
We have $V_{L_2} \simeq V_{\mathfrak{g}_1 }$ for $\mathfrak{g}=\mathfrak{sl}(2,\CC) = A_1$. For $n>1$ $V_{L_{2n}}$ can be realized, by a coset construction, as  a unitary subalgebra of  $V_{\mathfrak{g}_1 }$ for $\mathfrak{g}= D_{2n}$, see \cite[Sect.5B]{BMT}. It follows that 
$V_{L_{2n}}$ is strongly local for every $n \in \ZZ_{>0}$ and using the classification results in \cite{BMT} and \cite{Xu2005} it is not difficult to show that $\A_{V_{L_{2n}}} = \A_{\mathrm{U}(1)_{2n}}$.}
\end{example}

\begin{example}

{\rm The known $c=1$ simple unitary vertex operator algebras are 
\begin{equation}
\label{c=1VOA}
V_{L_2}^G, \; V_{L_{2n}}, \; V_{L_{2n}}^{\ZZ_2},
\end{equation}
where $G$ is a closed subgroup of $\mathrm{SO}(3)$ and $n$ is not the square of an integer, see \cite[Sect.7]{DVVV1989} and 
\cite[Sect.4]{Xu2005}. It follows from Example 
\ref{ExampleRankOneLattice} that all these vertex operator algebras are strongly local.
The corresponding $c=1$ irreducible conformal nets are the $c=1$ irreducible conformal nets classified in \cite{Xu2005} by assuming a certain ``spectrum condition''.
}
\end{example}

We now show another application of our general results by giving a new proof of \cite[Thm.3.2]{Carpi1999b}. 
Let us consider the case $\g = \mathfrak{sl}(2,\CC)$ and level $k=1$. Then $V_{\mathfrak{sl}(2,\CC)_1}$ has central charge 
$c=1$ and hence we have the embedding $L(1,0)\subset V_{\mathfrak{sl}(2,\CC)_1}$.

\begin{lemma} 
\label{LemmaVirMinSU(2)_1)}
Let $W$ be a unitary subalgebra of $V_{\mathfrak{sl}(2,\CC)_1}$. Then either $W=\CC \Omega$ or 
$W\supset L(1,c)$. 
\end{lemma}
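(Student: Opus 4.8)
The plan is to reduce the whole statement to the additivity of the central charge under the coset construction, and then to dispose of the one genuinely non-trivial case by a graded-dimension count. Write $V = V_{\mathfrak{sl}(2,\CC)_1}$, which has central charge $c=1$ and $V_1 \cong \mathfrak{sl}(2,\CC)$, and recall $V \cong V_{L_2}$ by Example \ref{ExampleRankOneLattice}. If $W = \CC\Omega$ there is nothing to prove, so assume $W \neq \CC\Omega$. By Prop. \ref{subalbebraconformalvector} the pair $(W,(\cdot|\cdot))$ is a simple unitary VOA with conformal vector $\nu^W = e_W\nu$ and some central charge $c_W$, and the same holds for the coset subalgebra $W^c$, with conformal vector $\nu^{W^c}$ and central charge $c_{W^c}$. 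By Prop. \ref{cosetsconformalvector} we have $\nu = \nu^W + \nu^{W^c}$, and since $\nu^W \in W$ and $\nu^{W^c}\in W^c$ the two Virasoro fields commute; hence $L_n = L^W_n + L^{W^c}_n$ and, comparing the central terms of the commutators $[L_m,L_{-m}]$, one gets $c_W + c_{W^c} = c = 1$.

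Because $W$ and $W^c$ are unitary, the Friedan--Qiu--Shenker restriction recalled in Subsect. \ref{SubsectPositiveEnergy} forces each of $c_W, c_{W^c}$ into $\{1 - 6/((m+2)(m+3)) : m \in \ZZ_{\geq 0}\} \cup [1,+\infty)$. Together with $c_W, c_{W^c} \geq 0$ and $c_W + c_{W^c} = 1$, the only admissible pairs are $(0,1)$, $(1,0)$ and $(\tfrac12,\tfrac12)$. If $(c_W,c_{W^c}) = (0,1)$ then $\nu^W = 0$, whence $W = \CC\Omega$ by Remark \ref{RemarkSubalgebraCentralCharge}, contrary to our assumption. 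If $(c_W,c_{W^c}) = (1,0)$ then $\nu^{W^c} = 0$, again by Remark \ref{RemarkSubalgebraCentralCharge}, so $\nu = \nu^W \in W$; as $L(1,0)$ is by definition the vertex subalgebra generated by $\nu$, this yields $W \supset L(1,0)$, which is the desired conclusion. Thus everything reduces to excluding the symmetric split $(\tfrac12,\tfrac12)$.

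To rule it out I would argue that $c_W = c_{W^c} = \tfrac12$ would produce a conformal embedding of two commuting Ising models. Indeed each of $W, W^c$ then contains the vertex subalgebra generated by its own conformal vector, i.e. a copy of $L(\tfrac12,0)$; these copies commute and their conformal vectors sum to $\nu$, so the natural homomorphism $L(\tfrac12,0)\otimes L(\tfrac12,0) \to V$ is injective (both factors being simple) and conformal. Since $L(\tfrac12,0)$ is rational, $V$ would then decompose into a finite direct sum of irreducible $L(\tfrac12,0)\otimes L(\tfrac12,0)$-modules $L(\tfrac12,h_1)\otimes L(\tfrac12,h_2)$ with $h_1,h_2 \in \{0,\tfrac12,\tfrac1{16}\}$; integrality of the $L_0$-grading of $V$ discards every sector except $(0,0)$ and $(\tfrac12,\tfrac12)$ (the sector $(\tfrac1{16},\tfrac1{16})$ sits in weights $\tfrac18 + \ZZ_{\geq 0}$). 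I would then compare low graded dimensions, namely $\dim V_0 = 1$, $\dim V_1 = \dim\mathfrak{sl}(2,\CC) = 3$ and $\dim V_2 = 4$, the last read off from the character of $V_{L_2}$. Matching weight $0$ fixes the multiplicity of $(0,0)$ to be $1$; since only $(\tfrac12,\tfrac12)$ contributes at weight $1$, with one-dimensional lowest weight space, matching weight $1$ fixes its multiplicity to be $3$. But these two sectors already contribute $1\cdot 2 + 3\cdot 2 = 8$ at weight $2$, contradicting $\dim V_2 = 4$. Hence $(\tfrac12,\tfrac12)$ cannot occur, and the lemma follows.

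The main obstacle is exactly this last exclusion: the central-charge bookkeeping by itself permits $1 = \tfrac12 + \tfrac12$, corresponding a priori to a pair of commuting Ising subalgebras, and only finer information about $V_{\mathfrak{sl}(2,\CC)_1}$ rules them out. The graded-dimension comparison above seems the most transparent way to supply this information; alternatively one could work in the lattice realization and verify by hand that $\nu = \tfrac14\,\alpha(-1)^2\Omega$ admits no decomposition into two commuting central-charge-$\tfrac12$ Virasoro vectors, but the module-multiplicity count avoids the explicit computation.
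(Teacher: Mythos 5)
Your argument is correct, but it takes a genuinely different route from the one in the paper. The paper splits into two cases according to whether $W_1$ vanishes: if $W_1\neq\{0\}$, a Hermitian weight-one vector generates a Heisenberg subalgebra of central charge $1$ inside $W$, which forces $\nu\in W$; if $W_1=\{0\}$, an orthogonality computation inside the four-dimensional space $V_2$ shows directly that $W_2\subset\CC\nu$, whence $W=\CC\Omega$ or $\nu\in W$. You instead invoke the coset decomposition $\nu=\nu^W+\nu^{W^c}$ of Prop.~\ref{cosetsconformalvector}, use additivity of the central charge together with the Friedan--Qiu--Shenker list to reduce to the three splittings $(0,1)$, $(1,0)$, $(\tfrac12,\tfrac12)$, and eliminate the last by decomposing $V$ over the two commuting unitary $c=\tfrac12$ Virasoro actions: integrality of the $L_0$-grading leaves only the sectors $(0,0)$ and $(\tfrac12,\tfrac12)$, whose forced multiplicities $1$ and $3$ would contribute $8>4=\dim V_2$ at weight two. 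Your route is more systematic --- the central-charge arithmetic does most of the work and the only model-specific input is the character of $V_{\mathfrak{sl}(2,\CC)_1}$ --- but it leans on more representation theory (complete reducibility of the unitary positive-energy action of the two commuting Virasoro algebras, the classification $h\in\{0,\tfrac12,\tfrac1{16}\}$ at $c=\tfrac12$, and the low-level characters of $L(\tfrac12,0)$ and $L(\tfrac12,\tfrac12)$), all standard and consistent with Subsect.~\ref{SubsectPositiveEnergy}. The paper's proof is more elementary, using only the orthogonality of $W_2$ to $L_{-1}V_1$ rather than a module decomposition. A minor remark: the injectivity of $L(\tfrac12,0)\otimes L(\tfrac12,0)\to V$ plays no role in your counting; all that is used is that $V$ is a unitary positive-energy module over the two commuting Virasoro algebras with $L_0=L_0^W+L_0^{W^c}$.
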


\begin{proof} 
Assume first that $W_1 \neq \{0\}$. Then we can find a vector $a\in W$ such that $L_0a=a$, $\theta a=a$ and 
$\|a\|=1$. By the proof of Prop. \ref{DimensionOne/VirasoroBounded} we see that the operators $a_n$ satisfies the Heisenberg Lie algebra commutation relations
\begin{equation*}
[a_{m},a_{n}] = m\delta_{m,-n}1, 
\end{equation*}
for all $m, n \in \ZZ$ and hence a generate a copy of the Heisenberg vertex operator algebra $V_H$ inside $W$, cf. Example 
\ref{ExampleUnitaryV_H}. Since the central charge of $V_H$ is 1, $V_H$ have to contain the Virasoro subalgebra  $L(1,0)$ of $V$.  
Accordingly, $L(1,0) \subset W$.  

Assume now that $W_1 =\{0\}$. The characters formulae in \cite{Kac1990} gives for $q\in (0,1)$,
$${\rm Tr}_{V_{\mathfrak{sl}(2,\CC)_1}}  q^{L_0}=  \sum_{j \in \ZZ} q^{j^2}p(q), $$
where $p(q)=\prod_{n\in \ZZ_{>0}}(1-q^n)^{-1}$. Hence, 
\begin{equation}
{\rm Tr}_{V_{\mathfrak{sl}(2,\CC)_1}}  q^{L_0}= 1+3q +4q^2 + \cdots 
\end{equation}
so that the dimension of $\left({V_{\mathfrak{sl}(2,\CC)_1}}\right)_2$ is $4$.

 Since $W_1=\{0\}$, then 
$$(a | b_{-2}\Omega)= (a |L_{-1}b)=(L_1a|b)=0,$$
for all $a\in W$ and all $b \in  \left({V_{\mathfrak{sl}(2,\CC)_1}}\right)_1$. Hence 
$W_2$ is orthogonal to the three-dimensional subspace $\{a_{-2}\Omega : a\in \left({V_{\mathfrak{sl}(2,\CC)_1}}\right)_1 \}$. But also the conformal vector $\nu$ is orthogonal to the latter subspace since for any $a\in \left({V_{\mathfrak{sl}(2,\CC)_1}}\right)_1$ we have 
$$(\nu | a_{-2}\Omega)=(\Omega | [L_2,a_{-2}]\Omega ) = (\Omega | 2a_0\Omega)=0.$$ 

Hence $W_ 2 \subset \CC \nu$. Now, by Remark \ref{RemarkSubalgebraCentralCharge} if $\nu^W=0$ then $W=\CC\Omega$. Hence if 
$W\neq \CC\Omega$ then $W_2 =\CC \nu$ and hence $L(1,0) \subset W$. 
\end{proof}

Now, let $a\in \left({V_{\mathfrak{sl}(2,\CC)_1)}}\right)_1$. Then, by \cite[Remark 4.9c]{Kac} $e^{a_0}$  converges on
$V_{\mathfrak{sl}(2,\CC)_1}$ and defines an element in ${\rm Aut}\left(V_{\mathfrak{sl}(2,\CC)_1}\right)$. In fact, if $\theta a=a$
then $e^{a_0}$ is unitary i.e. $e^{a_0} \in {\rm Aut}_{(\cdot | \cdot)}\left(V_{\mathfrak{sl}(2,\CC)_1}\right)$,  and the group generated by such unitaries is isomorphic to ${\rm SO}(3)$. 
The following proposition was first proved in \cite{DG1998}, see also \cite{Reh1994}.

\begin{proposition} 
\label{PropVirFixedPoint}
The fixed point subalgebra  $V_{\mathfrak{sl}(2,\CC)_1}^{{\rm SO}(3)}$ coincides with the Virasoro subalgebra $L(1,0)$.
\end{proposition}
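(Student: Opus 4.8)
The plan is to prove the two inclusions separately, the easy one directly and the nontrivial one by a graded dimension count. Set $V := V_{\mathfrak{sl}(2,\CC)_1}$ and $W := V^{{\rm SO}(3)}$. First note that $W$ is a unitary subalgebra, being the fixed-point subalgebra of a (compact) group of unitary automorphisms. Every $g \in {\rm SO}(3) \subset {\rm Aut}(V)$ fixes the conformal vector $\nu$ by the very definition of ${\rm Aut}(V)$; equivalently, such a $g$ commutes with all $L_n$ and fixes $\Omega$, hence fixes every vector $L_{-n_1}\cdots L_{-n_k}\Omega$ and so fixes $L(1,0)$ pointwise. In particular $\nu \in W$, so $W \neq \CC\Omega$, and by Lemma \ref{LemmaVirMinSU(2)_1)} we get $L(1,0) \subseteq W$.

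For the reverse inclusion I would show that $\dim W_n = \dim L(1,0)_n$ for every $n$; together with $L(1,0)\subseteq W$ and finite-dimensionality of the homogeneous components this forces $W = L(1,0)$. Since ${\rm SO}(3)$ is connected and is obtained by integrating the zero-mode action of the real form $\mathfrak{su}(2) = (V_1)_\RR$, the fixed-point space $W$ is exactly the $\mathfrak{sl}(2,\CC)$-invariant (spin $0$) isotypic component of $V$ under the zero modes $a_{0}$, $a\in V_1$. Using the Frenkel--Kac realization $V \simeq V_{L_2}$ with $L_2 = \ZZ\alpha$, $\langle\alpha,\alpha\rangle = 2$ (Example \ref{ExampleRankOneLattice}), one has $V = \bigoplus_{m\in\ZZ} M(m)$ where $M(m)$ is the Heisenberg Fock module built on $e^{m\alpha}$: the Cartan zero mode $h_0 = \alpha_{(0)}$ acts as the scalar $2m$ on $M(m)$, the oscillators $\alpha_{(-k)}$ preserve the $h_0$-weight, and the lowest $L_0$-eigenvalue of $M(m)$ is $m^2$. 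Writing $p(q) = \prod_{k\geq 1}(1-q^k)^{-1} = \sum_n p_n q^n$, this yields $\dim M(m)_n = p_{n-m^2}$, so the multiplicity of the $h_0$-weight $2m$ at grade $n$ is $p_{n-m^2}$.

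The multiplicity of spin $0$ at grade $n$ equals the weight-$0$ multiplicity minus the weight-$2$ multiplicity, namely $\dim M(0)_n - \dim M(1)_n = p_n - p_{n-1}$, whence $\dim W_n = p_n - p_{n-1}$. On the other hand, from the free-boson decomposition $M(0) = \bigoplus_{n\geq 0} L(1,n^2)$ of the $c=1$ Heisenberg vacuum module (whose character is $p(q)$) and the telescoping identity $\sum_{n\geq 0}(q^{n^2}-q^{(n+1)^2}) = 1$, one reads off $\chi_{L(1,0)}(q) = (1-q)\,p(q)$, i.e. $\dim L(1,0)_n = p_n - p_{n-1}$. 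Thus the two graded dimensions agree for all $n$, and combined with the inclusion from the first step this gives $W = L(1,0)$, as desired.

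The main obstacle is the representation-theoretic bookkeeping of the second step: justifying that the connected-group fixed points coincide with the spin $0$ isotypic component, and correctly extracting that component from the Cartan-graded character via the subtraction of adjacent weight multiplicities. Establishing the $c=1$ Virasoro vacuum character $(1-q)p(q)$ — equivalently the branching $M(0) = \bigoplus_n L(1,n^2)$ — is the other point requiring care, though it is standard. The remaining ingredients (the first inclusion, and the passage from ``equal finite graded dimensions plus inclusion'' to equality) are routine.
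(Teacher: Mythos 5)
Your proof is correct and follows essentially the same route as the paper: establish the inclusion $L(1,0)\subseteq V_{\mathfrak{sl}(2,\CC)_1}^{{\rm SO}(3)}$ and then match the graded characters, both equal to $(1-q)p(q)$. The only difference is that the paper cites the character formulae from the literature, whereas you derive them explicitly via the lattice realization and the $\mathfrak{sl}(2)$ weight-multiplicity subtraction $p_n-p_{n-1}$, which is a sound (and standard) way to fill in those details.
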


\begin{proof} By characters formulae for the unitary representations of affine Lie algebras, see e.g. \cite{Kac1990}, and for the unitary representations of the Virasoro algebra, see e.g.  \cite{KaRa}, one finds 
$${\rm Tr}_{V_{\mathfrak{sl}(2,\CC)_1}^{{\rm SO}(3)}}  q^{L_0}=  (1-q)p(q) =   
{\rm Tr}_{L(1,0)} q^{L_0},$$
see  \cite{DG1998,Reh1994}. Since $L(1,0) \subset V_{\mathfrak{sl}(2,\CC)_1}^{{\rm SO}(3)}$ the conclusion follows. 
\end{proof}

\begin{corollary} ${\rm Aut}_{(\cdot | \cdot)}\left(V_{\mathfrak{sl}(2,\CC)_1}\right)={\rm SO}(3)$. 
\end{corollary}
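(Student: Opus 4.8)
The plan is to exploit that $V_{\mathfrak{sl}(2,\CC)_1}$ is generated by its weight-one subspace $\left(V_{\mathfrak{sl}(2,\CC)_1}\right)_1\simeq \mathfrak{sl}(2,\CC)$, so that every VOA automorphism is completely determined by its restriction to this subspace. The inclusion $\so3 \subseteq {\rm Aut}_{(\cdot|\cdot)}\left(V_{\mathfrak{sl}(2,\CC)_1}\right)$ is already furnished by the unitaries $e^{a_0}$ with $\theta a=a$, $a\in \left(V_{\mathfrak{sl}(2,\CC)_1}\right)_1$, whose generated group was identified with $\so3$ just before Prop. \ref{PropVirFixedPoint}. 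Hence it remains to establish the reverse inclusion, and the strategy is to match an arbitrary unitary automorphism with one of these on the generators.

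First I would take $g \in {\rm Aut}_{(\cdot|\cdot)}\left(V_{\mathfrak{sl}(2,\CC)_1}\right)$ and note that, being a VOA automorphism, $g$ fixes $\nu$ and therefore commutes with $L_0$; thus it preserves the grading and in particular maps $\left(V_{\mathfrak{sl}(2,\CC)_1}\right)_1$ onto itself. Since $g$ is unitary it commutes with the PCT operator $\theta$, equivalently it preserves the real form $V_\RR$ of Remark \ref{RemarkV_R}, so it restricts to an automorphism of the compact real form $\mathfrak{g}_\RR = \left(V_{\mathfrak{sl}(2,\CC)_1}\right)_1\cap V_\RR \simeq \mathfrak{su}(2)$. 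Because $g(a_0b)=g(a)_0g(b)$ and $a_0b=[a,b]$ is the Lie bracket on $\left(V_{\mathfrak{sl}(2,\CC)_1}\right)_1$ (as recorded in the proof of Prop. \ref{DimensionOne/VirasoroBounded}), the restriction $g\restriction_{\mathfrak{g}_\RR}$ is a real Lie algebra automorphism of $\mathfrak{su}(2)$. As $\mathfrak{su}(2)$ is the compact simple Lie algebra of type $A_1$, which admits no outer automorphisms, one has ${\rm Aut}(\mathfrak{su}(2))={\rm Inn}(\mathfrak{su}(2))=\so3$.

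Next I would observe that the already-constructed copy of $\so3$ acts on $\left(V_{\mathfrak{sl}(2,\CC)_1}\right)_1$ through $e^{a_0}\restriction=e^{{\rm ad}\,a}$, i.e. via the adjoint representation, and therefore realizes all of ${\rm Inn}(\mathfrak{su}(2))={\rm Aut}(\mathfrak{su}(2))$ on the generating subspace. Consequently there exists $h$ in this $\so3$ with $h\restriction_{\mathfrak{g}_\RR}=g\restriction_{\mathfrak{g}_\RR}$, hence $h$ and $g$ agree on all of $\left(V_{\mathfrak{sl}(2,\CC)_1}\right)_1$. Then $h^{-1}g$ is a VOA automorphism fixing $\left(V_{\mathfrak{sl}(2,\CC)_1}\right)_1$ pointwise; since the fixed-point set of an automorphism is a vertex subalgebra and $\left(V_{\mathfrak{sl}(2,\CC)_1}\right)_1$ generates $V_{\mathfrak{sl}(2,\CC)_1}$ (see Example \ref{ExampleAffineLie}), it fixes all of $V_{\mathfrak{sl}(2,\CC)_1}$. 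Thus $g=h\in\so3$, yielding the reverse inclusion and hence the asserted equality.

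The only genuinely delicate points are the identification ${\rm Aut}(\mathfrak{su}(2))=\so3$ (the absence of outer automorphisms for $A_1$) together with the verification that the constructed unitaries exhaust the inner automorphisms on the weight-one space; the remaining step is the standard fact that a VOA automorphism is pinned down by its action on a generating subspace. I expect the main obstacle to be presenting this surjectivity onto ${\rm Aut}(\mathfrak{su}(2))$ cleanly, but it follows directly since $\{e^{{\rm ad}\,a}:a\in\mathfrak{su}(2)\}$ generates the connected group ${\rm Inn}(\mathfrak{su}(2))=\so3$, which coincides with the full automorphism group.
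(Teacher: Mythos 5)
Your proof is correct, but it follows a genuinely different route from the one the paper intends. The corollary is placed immediately after Prop.~\ref{PropVirFixedPoint} because the intended argument is operator-algebraic/Galois-theoretic: since every VOA automorphism fixes $\nu$, one has $L(1,0)\subset V^{G}\subset V^{{\rm SO}(3)}=L(1,0)$ for $G={\rm Aut}_{(\cdot|\cdot)}(V_{\mathfrak{sl}(2,\CC)_1})$, so $V^G=V^{{\rm SO}(3)}$, and the injectivity of $H\mapsto V^H$ on closed subgroups (Thm.~\ref{GaloisVOA}, which rests on strong locality and the subfactor Galois correspondence of \cite{ILP1998}) forces $G={\rm SO}(3)$; the character computation of Prop.~\ref{PropVirFixedPoint} is the essential input. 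You instead argue entirely at the Lie-algebra level: a unitary automorphism commutes with $\theta$, hence preserves the compact real form $\mathfrak{su}(2)\subset (V_{\mathfrak{sl}(2,\CC)_1})_1$ and restricts to an element of ${\rm Aut}(\mathfrak{su}(2))={\rm Inn}(\mathfrak{su}(2))={\rm SO}(3)$ (no outer automorphisms for $A_1$), while the unitaries $e^{a_0}$ realize exactly ${\rm Inn}(\mathfrak{su}(2))$ on the weight-one space via $e^{{\rm ad}\,a}$; injectivity of the restriction follows because the fixed-point set of an automorphism is a vertex subalgebra and $(V_{\mathfrak{sl}(2,\CC)_1})_1$ generates. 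Your argument is more elementary — it needs neither the character identity $V^{{\rm SO}(3)}=L(1,0)$ nor strong locality nor Thm.~\ref{GaloisVOA} — and it makes transparent why the answer is ${\rm SO}(3)$ rather than something larger (absence of outer automorphisms of $A_1$). The paper's route, by contrast, is the one that fits into the general machinery it is illustrating and feeds directly into the subsequent classification of all unitary subalgebras (Thm.~\ref{TheoremClassSubalgebrasSU(2)_1}), for which Prop.~\ref{PropVirFixedPoint} is needed anyway. Both proofs are valid.
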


\begin{theorem}
\label{TheoremClassSubalgebrasSU(2)_1}
The map $H \mapsto V_{\mathfrak{sl}(2,\CC)_1}^H$ gives a one-to-one correspondence between the closed subgroups 
$H \subset {\rm SO}(3)$ and the unitary subalgebras $W \subset V_{\mathfrak{sl}(2,\CC)_1}$ such that $W\neq \CC\Omega$.
\end{theorem}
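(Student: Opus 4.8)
The plan is to obtain the statement as an essentially immediate consequence of the quantum Galois correspondence of Theorem \ref{GaloisVOA}, once it is fed the three facts about $V_{\mathfrak{sl}(2,\CC)_1}$ established just above. First I would note that $V_{\mathfrak{sl}(2,\CC)_1}$ is a simple unitary strongly local VOA by Example \ref{ExampleAffineLie}, and that its unitary automorphism group is all of ${\rm SO}(3)$ by the preceding corollary. Thus Theorem \ref{GaloisVOA} applies with the choice $G = {\rm SO}(3) = {\rm Aut}_{(\cdot|\cdot)}(V_{\mathfrak{sl}(2,\CC)_1})$, and it yields a one-to-one correspondence $H \mapsto V_{\mathfrak{sl}(2,\CC)_1}^H$ between the closed subgroups $H \subset {\rm SO}(3)$ and the unitary subalgebras $W \subset V_{\mathfrak{sl}(2,\CC)_1}$ containing the fixed point subalgebra $V_{\mathfrak{sl}(2,\CC)_1}^{{\rm SO}(3)}$.

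The remaining task is only to recognize the right-hand side of this correspondence as the family of \emph{nontrivial} unitary subalgebras. By Proposition \ref{PropVirFixedPoint} one has $V_{\mathfrak{sl}(2,\CC)_1}^{{\rm SO}(3)} = L(1,0)$, so the correspondence is with those unitary subalgebras $W$ satisfying $L(1,0) \subset W$. Here I would invoke Lemma \ref{LemmaVirMinSU(2)_1)}: every unitary subalgebra $W \subset V_{\mathfrak{sl}(2,\CC)_1}$ is either $\CC\Omega$ or contains $L(1,0)$. Since $L(1,0) \neq \CC\Omega$, the two conditions $L(1,0) \subset W$ and $W \neq \CC\Omega$ are equivalent, so the unitary subalgebras containing $V_{\mathfrak{sl}(2,\CC)_1}^{{\rm SO}(3)}$ are precisely those different from $\CC\Omega$. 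This is exactly the asserted bijection. Consistently, no closed $H \subset {\rm SO}(3)$ produces $\CC\Omega$, because $V_{\mathfrak{sl}(2,\CC)_1}^H \supset V_{\mathfrak{sl}(2,\CC)_1}^{{\rm SO}(3)} = L(1,0)$.

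I do not expect a genuine obstacle in this final step: all of the analytic and structural content has already been absorbed into Theorem \ref{GaloisVOA}, Proposition \ref{PropVirFixedPoint} and Lemma \ref{LemmaVirMinSU(2)_1)}, and the argument is a direct assembly of these. The only points deserving care are purely a matter of bookkeeping, namely verifying that the hypotheses of Theorem \ref{GaloisVOA} are met (strong locality of $V_{\mathfrak{sl}(2,\CC)_1}$ and the identification of its full unitary automorphism group with ${\rm SO}(3)$), and the elementary observation, supplied by Lemma \ref{LemmaVirMinSU(2)_1)}, that the excluded algebra $\CC\Omega$ matches exactly the excluded condition $W \neq \CC\Omega$.
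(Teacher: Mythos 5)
Your proposal is correct and follows essentially the same route as the paper: the paper's proof likewise observes that by Lemma \ref{LemmaVirMinSU(2)_1)} and Proposition \ref{PropVirFixedPoint} every nontrivial unitary subalgebra contains $V_{\mathfrak{sl}(2,\CC)_1}^{{\rm SO}(3)}=L(1,0)$, and then concludes by Theorem \ref{GaloisVOA}. Your write-up merely spells out the bookkeeping (strong locality from Example \ref{ExampleAffineLie} and the identification of the unitary automorphism group with ${\rm SO}(3)$) that the paper leaves implicit.
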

\begin{proof}
Let $W \subset V_{\mathfrak{sl}(2,\CC)_1}$ be a unitary subalgebra such that $W\neq \CC\Omega$. 
By Lemma \ref{LemmaVirMinSU(2)_1)} and by 
Prop. \ref{PropVirFixedPoint} $W$ contains the fixed point subalgebra $V_{\mathfrak{sl}(2,\CC)_1}^{{\rm SO}(3)}$
and the conclusion follows from Thm. \ref{GaloisVOA}.
\end{proof}

The following theorem is \cite[Thm.3.2]{Carpi1999b}

\begin{theorem} 
\label{TheoremClassSubnetsSU(2)_1}

The map $H \mapsto \A_{{\rm SU}(2)_1}^H$ gives a one-to-one correspondence between the closed subgroups 
$H \subset {\rm SO}(3)$ and the subnets  $\B \subset \A_{{\rm SU}(2)_1}$ of the loop group net  $\A_{{\rm SU}(2)_1}$ such that 
$\B \neq \CC 1$.
\end{theorem}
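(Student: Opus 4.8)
The plan is to deduce the statement by transporting the classification of unitary subalgebras of $V_{\mathfrak{sl}(2,\CC)_1}$ (Thm. \ref{TheoremClassSubalgebrasSU(2)_1}) to the net side via the dictionary between unitary subalgebras and covariant subnets (Thm. \ref{TheoremsubnetsubalgebraOneToOne}). The key identification is that $\A_{{\rm SU}(2)_1}$ is exactly the conformal net $\A_V$ associated with the strongly local simple unitary VOA $V=V_{\mathfrak{sl}(2,\CC)_1}$, as recorded in Example \ref{ExampleAffineLie}. Moreover, by the Corollary immediately preceding Thm. \ref{TheoremClassSubalgebrasSU(2)_1} together with Thm. \ref{autnetsVOA}, one has ${\rm Aut}(\A_{{\rm SU}(2)_1})={\rm Aut}_{(\cdot|\cdot)}(V_{\mathfrak{sl}(2,\CC)_1})={\rm SO}(3)$, so the groups labelling the objects on the two sides genuinely coincide and no rescaling of parameters is needed.

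First I would invoke Thm. \ref{TheoremClassSubalgebrasSU(2)_1} to get a bijection $H\mapsto V_{\mathfrak{sl}(2,\CC)_1}^H$ between closed subgroups $H\subset{\rm SO}(3)$ and unitary subalgebras $W\subset V_{\mathfrak{sl}(2,\CC)_1}$ with $W\neq\CC\Omega$. Next, since $V_{\mathfrak{sl}(2,\CC)_1}$ is a strongly local simple unitary VOA, Thm. \ref{TheoremsubnetsubalgebraOneToOne} provides a bijection $W\mapsto\A_W$ between all unitary subalgebras of $V_{\mathfrak{sl}(2,\CC)_1}$ and all M\"obius covariant subnets of $\A_{{\rm SU}(2)_1}$ (which, $\A_{{\rm SU}(2)_1}$ being conformal, are automatically covariant subnets). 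Composing the two maps and applying Prop. \ref{AVG} to rewrite $\A_{V_{\mathfrak{sl}(2,\CC)_1}^H}=\A_{V_{\mathfrak{sl}(2,\CC)_1}}^H=\A_{{\rm SU}(2)_1}^H$, I obtain precisely the map $H\mapsto\A_{{\rm SU}(2)_1}^H$ appearing in the statement.

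The only point requiring care is to match the exclusion $W\neq\CC\Omega$ on the algebra side with the exclusion $\B\neq\CC1$ on the net side under Thm. \ref{TheoremsubnetsubalgebraOneToOne}. This is immediate once one observes that, because $Y(\Omega,z)=1_V$, for every $I\in\I$ one has $\A_{\CC\Omega}(I)=W^*(\{Y(\Omega,f):{\rm supp}f\subset I\})=\CC1_\H$; thus the trivial subalgebra corresponds to the trivial subnet, and removing this single pair from the bijection of Thm. \ref{TheoremsubnetsubalgebraOneToOne} restricts it to a bijection between unitary subalgebras $W\neq\CC\Omega$ and M\"obius covariant subnets $\B\neq\CC1$. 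Combining this restriction with the bijection of Thm. \ref{TheoremClassSubalgebrasSU(2)_1} yields the asserted one-to-one correspondence. There is essentially no analytic obstacle here: all the substance has already been absorbed into the strong locality of $V_{\mathfrak{sl}(2,\CC)_1}$, the computation of its unitary automorphism group, and the subnet/subalgebra dictionary, so the remaining work is the purely formal composition of two previously established bijections.
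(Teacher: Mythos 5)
Your proposal is correct and follows essentially the same route as the paper: identify $\A_{{\rm SU}(2)_1}$ with $\A_{V_{\mathfrak{sl}(2,\CC)_1}}$ via Example \ref{ExampleAffineLie}, then compose the bijection of Thm. \ref{TheoremClassSubalgebrasSU(2)_1} with the subalgebra/subnet correspondence of Thm. \ref{TheoremsubnetsubalgebraOneToOne}, using Prop. \ref{AVG} to identify $\A_{V^H}$ with $\A_V^H$. Your explicit check that the trivial subalgebra corresponds to the trivial subnet is a detail the paper leaves implicit, but it is the right observation and nothing more is needed.
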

\begin{proof} It follows from Example \ref{ExampleAffineLie} that $\A_{{\rm SU}(2)_1}$ is the irreducible conformal net associated with the strongly local simple unitary  vertex operator algebra  $V_{\mathfrak{sl}(2,\CC)_1}$. The claim then follows from Thm. 
\ref{TheoremsubnetsubalgebraOneToOne}  and Thm. \ref{TheoremClassSubalgebrasSU(2)_1}.
\end{proof}

The next example is given by the moonshine vertex operator algebra $V^\natural$. As explained in Example \ref{ExampleUnitaryMoonshine} 
$V^\natural$ is a simple unitary VOA. We now show that it is strongly local. Note that the following theorem also gives a 
a new proof of \cite[Thm.5.4]{KL06}.

\begin{theorem} 
\label{TheoremStronglyLocalMoonshine} The moonshine vertex operator algebra $V^\natural$ is a simple unitary strongly local VOA. 
If $\A_{V^\natural}$ denotes the corresponding irreducible conformal net then ${\rm Aut}(\A_{V^\natural})$ is the Monster group $\mathbb{M}$. Moreover, up to unitary equivalence, $\A_{V^\natural} = \A^\natural$ where $\A^\natural$ is the moonshine conformal net constructed in \cite{KL06}. 
\end{theorem}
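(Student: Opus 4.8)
The plan is to reduce all three assertions to the framed structure of $V^\natural$ together with the machinery already in place, proving in turn strong locality, the identification $\A_{V^\natural}=\A^\natural$, and the computation of the automorphism group. Recall from Example \ref{ExampleUnitaryMoonshine} that $V^\natural$ is a simple unitary VOA, and from Example \ref{ExampleBabyMonster} that it is a framed VOA of rank $24$ containing a copy $F$ of $L(1/2,0)^{\otimes 48}$ as a unitary subalgebra; by Corollary \ref{tensorcorollary}, $F$ is strongly local with $\A_F=\A_{L(1/2,0)}^{\otimes 48}$. The crucial structural input is that $(V^\natural)_1=\{0\}$, so that the entire weight-two space $\mathscr{F}:=(V^\natural)_2$ consists of quasi-primary vectors and is $\theta$-invariant, $\theta$ being grading preserving. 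Since $V^\natural$ is generated by its weight-two (Griess algebra) subspace, a classical fact from \cite{FLM}, I would aim to apply Theorem \ref{generatingprimaryTheorem} to this generating family $\mathscr{F}$ of quasi-primary vectors.

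Two hypotheses of Theorem \ref{generatingprimaryTheorem} must then be verified. The energy bounds follow from the framed decomposition: in the $F$-module structure the generators in $\mathscr{F}$ are built out of $c=1/2$ minimal-model fields, which satisfy linear energy bounds, so Proposition \ref{GenBoundedProp} shows that $V^\natural$ is energy-bounded. The remaining hypothesis is the locality inequality $\A_{\mathscr{F}}(I')\subset\A_{\mathscr{F}}(I)'$ for a single $I\in\I$, and this I would obtain by identifying the net $I\mapsto\A_{\mathscr{F}}(I)$ generated by the smeared weight-two vertex operators with the moonshine conformal net $\A^\natural$ of \cite{KL06}. The latter is constructed as the framed extension of $\A_{L(1/2,0)}^{\otimes 48}$ attached to the same binary framing code as $V^\natural$ and is local by construction; since the frame $\A_F$ sits inside both nets and the extension is determined by the common framing data, the generators $Y(a,f)$ with $a\in\mathscr{F}$ and $\mathrm{supp}\,f\subset I$ are affiliated with $\A^\natural(I)$ and generate it, whence $\A_{\mathscr{F}}(I)=\A^\natural(I)$ and the required commutation follows from locality of $\A^\natural$. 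Theorem \ref{generatingprimaryTheorem} then delivers at once the strong locality of $V^\natural$ and the equalities $\A_{V^\natural}(I)=\A_{\mathscr{F}}(I)=\A^\natural(I)$ for all $I\in\I$, so that $\A_{V^\natural}=\A^\natural$ up to unitary equivalence and, by Theorem \ref{netV}, is an irreducible conformal net.

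The automorphism group is the cheapest part. Once strong locality is established, Theorem \ref{autnetsVOA} yields ${\rm Aut}(\A_{V^\natural})={\rm Aut}_{(\cdot|\cdot)}(V^\natural)$; and since ${\rm Aut}(V^\natural)=\mathbb{M}$ is finite by Example \ref{ExampleUnitaryMoonshine}, the same theorem upgrades this to ${\rm Aut}(\A_{V^\natural})={\rm Aut}(V^\natural)=\mathbb{M}$, recovering in particular \cite[Thm.5.4]{KL06}.

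The hard part will be the locality step, that is the verification of the hypothesis of Theorem \ref{generatingprimaryTheorem}. All the genuine analytic and combinatorial content sits there: one needs both that the weight-two fields are energy-bounded and that they generate precisely the framed net $\A^\natural$, which is exactly where the explicit construction of \cite{KL06} and the energy bounds for $c=1/2$ minimal-model fields become indispensable. By contrast, with $\A_{V^\natural}=\A^\natural$ in hand, the identification of the automorphism group with the Monster is a purely formal consequence of Theorem \ref{autnetsVOA}.
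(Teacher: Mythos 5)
Your overall architecture (generate by quasi-primary weight-two vectors, invoke Theorem \ref{generatingprimaryTheorem}, then get the automorphism group from Theorem \ref{autnetsVOA} and finiteness of $\mathbb{M}$) matches the paper's, and the automorphism-group part is exactly right. But the two steps you yourself flag as "the hard part" contain genuine gaps, and the paper closes them by a different choice of generating family. The key input you are missing is \cite[Lemma 5.1]{KL06}: $V^\natural$ is generated by a family $\mathscr{F}^\natural$ of Hermitian quasi-primary \emph{Virasoro} vectors in $V^\natural_2$. Since $V^\natural_1=\{0\}$, this puts $V^\natural$ squarely under the hypotheses of Theorem \ref{DimensionOne/VirasoroTheorem}: Virasoro fields satisfy \emph{linear} energy bounds (Prop. \ref{DimensionOne/VirasoroBounded}), and for linearly energy-bounded fields the locality hypothesis $\A_{\mathscr{F}}(I')\subset\A_{\mathscr{F}}(I)'$ of Theorem \ref{generatingprimaryTheorem} follows from the Driessler--Fr\"ohlich commutator-of-exponentials argument (\cite{BS-M}, \cite{DF1977}), with no reference whatsoever to the net $\A^\natural$ of \cite{KL06}. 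Strong locality is thus established intrinsically; only afterwards is $\A^\natural=\A_{\mathscr{F}^\natural}$ quoted from \cite[Cor. 5.3]{KL06} to get the identification $\A_{V^\natural}=\A^\natural$.

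By contrast, your plan takes $\mathscr{F}=(V^\natural)_2$ and proposes to verify locality by first identifying $\A_{\mathscr{F}}$ with $\A^\natural$. This is essentially circular: $\A^\natural$ is constructed in \cite{KL06} abstractly as a framed extension (via a Q-system over $\A_{L(1/2,0)}^{\otimes 48}$), not from smeared vertex operators, so the claim that the $Y(a,f)$, $a\in(V^\natural)_2$, are affiliated with $\A^\natural(I)$ and generate it is precisely the nontrivial content one is trying to establish --- "the extension is determined by the common framing data" does not supply an argument. Your energy-bound step has the same problem: the $196884$-dimensional Griess algebra $(V^\natural)_2$ is not "built out of" the $48$ frame Virasoro fields; most of its elements are lowest-weight vectors of nontrivial $L(1/2,0)^{\otimes 48}$-modules, so linear energy bounds for the frame do not transfer to them, and Prop. \ref{GenBoundedProp} cannot be applied as you describe. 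Both gaps disappear once the generating family is taken to consist of Virasoro vectors.
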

\begin{proof} 
By \cite[Lemma 5.1]{KL06} the moonshine vertex operator algebra $V^\natural$ is generated by a family ${\mathscr{F}}^\natural$ 
of  Hermitian quasi-primary Virasoro vectors in 
$V^\natural_2$ and hence , it is strongly local by Thm. \ref{DimensionOne/VirasoroTheorem}.  Moreover, by 
Thm. \ref{generatingprimaryTheorem}, 
$\A_{V^\natural}=\A_{\mathscr{F}^\natural}$, where $\A_{\mathscr{F}^\natural}$ is defined as in Eq. (\ref{EqA_F}). Since 
${\rm Aut}(V^\natural)=\mathbb{M}$ is finite then, by Thm. \ref{autnetsVOA}, ${\rm Aut}(\A_{V^\natural}) = \mathbb{M}$.  Moreover, by \cite[Corollary 5.3]{KL06}, 
$\A^\natural = \A_{\mathscr{F}^\natural}$ and hence $\A^\natural = \A_{V^\natural}$.
\end{proof}

As a consequence of Thm. \ref{stronglylocalsubalgebra}  also the unitary subalgebras of the above examples, such as orbifolds, cosets, etc., are strongly local. Further examples of strongly local VOAs are obtained by taking tensor products. 
All these examples give a rather large and interesting class of  strongly local VOAs. Moreover, they show that our results gives a uniform procedure to construct conformal nets associated to the corresponding CFT models. 
As an example we consider here the case of the even shorter moonshine vertex operator algebra $VB^\natural_{(0)}$, 
cf. Example \ref {ExampleBabyMonster}.

\begin{theorem}
\label{TheoremBabyMonster}
The even shorter moonshine vertex operator algebra $VB^\natural_{(0)}$ is a a simple unitary strongly local VOA. 
If $\A_{VB^\natural_{(0)}}$ denotes the corresponding net then ${\rm Aut}(\A_{BV^\natural_{(0)}})$ is the Baby Monster group
 $\mathbb{B}$.
\end{theorem}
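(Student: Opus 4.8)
The plan is to realize $VB^\natural_{(0)}$ as a coset (equivalently, unitary) subalgebra of the moonshine vertex operator algebra $V^\natural$ and then to invoke the general machinery relating unitary subalgebras of strongly local VOAs to covariant subnets. Recall from Example \ref{ExampleBabyMonster} that $VB^\natural_{(0)} = W^c$, where $W \subset V^\natural$ is the unitary subalgebra isomorphic to $L(1/2,0)$ arising from the first tensor factor of the frame $L(1/2,0)^{\otimes 48} \subset V^\natural$. Since $W$ is a unitary subalgebra, the coset subalgebra $W^c$ is again a unitary subalgebra of $V^\natural$ (the coset subalgebra of a unitary subalgebra is unitary, as established earlier), and by Prop. \ref{subalbebraconformalvector} the pair $(W^c,(\cdot|\cdot))$, equipped with the conformal vector $\nu^{W^c} = e_{W^c}\nu$, is a simple unitary VOA. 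This already yields the simplicity and unitarity asserted in the statement.

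For strong locality, first I would recall that $V^\natural$ is a simple strongly local unitary VOA by Thm. \ref{TheoremStronglyLocalMoonshine}. Then strong locality of $VB^\natural_{(0)}=W^c$ follows immediately from Thm. \ref{stronglylocalsubalgebra}, which asserts that every unitary subalgebra of a strongly local simple unitary VOA is itself strongly local and that its associated net embeds as a covariant subnet. Moreover, by Prop. \ref{PropCosetSunetSubalgebra} one obtains the concrete identification $\A_{VB^\natural_{(0)}} = \A_{W^c} = \A_W^c$, so that the net associated with the even shorter moonshine VOA is precisely the coset subnet, inside $\A_{V^\natural}$, of the Virasoro net $\A_W \cong \A_{\mathfrak{Vir},1/2}$.

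It then remains to identify the automorphism group of the net. Here I would invoke H\"ohn's theorem, quoted in Example \ref{ExampleBabyMonster}, that ${\rm Aut}(VB^\natural_{(0)}) = \mathbb{B}$ is the Baby Monster group. Since $\mathbb{B}$ is finite, Thm. \ref{autnetsVOA} applies directly and gives ${\rm Aut}(\A_{VB^\natural_{(0)}}) = {\rm Aut}(VB^\natural_{(0)}) = \mathbb{B}$, which completes the argument.

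Essentially all the analytic content has been front-loaded into the earlier results, so the only genuine point requiring care is the first step: verifying that H\"ohn's construction of $VB^\natural_{(0)}$ coincides, as a unitary VOA, with the coset unitary subalgebra $W^c \subset V^\natural$, so that the inheritance theorems genuinely apply. Once that identification is in place, the proof is a direct assembly of Thm. \ref{TheoremStronglyLocalMoonshine}, Thm. \ref{stronglylocalsubalgebra} and Thm. \ref{autnetsVOA}, and I do not expect any further computational obstacle.
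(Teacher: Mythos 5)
Your proposal is correct and follows essentially the same route as the paper: realize $VB^\natural_{(0)}$ as the unitary (coset) subalgebra $W^c\subset V^\natural$ from Example \ref{ExampleBabyMonster}, deduce simplicity and unitarity from Prop. \ref{subalbebraconformalvector}, get strong locality from Thm. \ref{TheoremStronglyLocalMoonshine} together with Thm. \ref{stronglylocalsubalgebra}, and conclude ${\rm Aut}(\A_{VB^\natural_{(0)}})=\mathbb{B}$ from finiteness of ${\rm Aut}(VB^\natural_{(0)})$ via Thm. \ref{autnetsVOA}. The extra identification $\A_{W^c}=\A_W^c$ via Prop. \ref{PropCosetSunetSubalgebra} is a harmless addition the paper does not need.
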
 
\begin{proof}
As explained in  Example \ref {ExampleBabyMonster} $VB^\natural_{(0)}$ is a unitary subalgebra of the moonshine 
vertex operator algebra $V^\natural$ and hence $VB^\natural_{(0)}$ is a simple unitary VOA. Since $V^\natural$ is strongly local 
by Thm. \ref{TheoremStronglyLocalMoonshine} then, also  $VB^\natural_{(0)}$ is strongly local as a consequence of 
Thm. \ref{stronglylocalsubalgebra}. Since ${\rm Aut}(VB^\natural_{(0)})=\mathbb{B}$ is finite then, by Thm. \ref{autnetsVOA}, 
${\rm Aut}(\A_{VB^\natural_{(0)}}) = \mathbb{B}$. 
\end{proof}

We conclude this section with two conjectures.

\begin{conjecture}
Let $L$ be an even positive definite lattice. Then the corresponding sumple unitary lattice VOA $V_L$ is strongly local and 
the corresponding conformal net $\A_{V_L}$ coincides with the lattice conformal net $\A_L$ constructed in 
\cite{DX2006}.
\end{conjecture}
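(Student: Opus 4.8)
The plan is to avoid a direct treatment of the analytically delicate energy bounds for the lattice vertex operators $Y(e^\alpha,z)$, whose conformal weights $\langle\alpha,\alpha\rangle/2$ are unbounded, and instead to deduce strong locality of $V_L$ by realizing it as a \emph{unitary subalgebra} of a lattice VOA that is generated by weight one fields, so that Thm.~\ref{stronglylocalsubalgebra} applies. First I would use a lattice embedding result of Nikulin type: every even positive definite lattice $L$ embeds isometrically, for $k$ sufficiently large, as a sublattice of $M=E_8^{\oplus k}$, an even lattice generated by its norm two vectors. Such a sublattice embedding induces a vertex algebra embedding $V_L\hookrightarrow V_M$, with $V_M\cong V_{E_8}^{\otimes k}$, under which $V_L$ is identified with the vertex subalgebra generated by the Heisenberg currents along $L\otimes_\ZZ\CC$ together with the primary vectors $e^\alpha$, $\alpha\in L$.

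Next I would verify that $V_L$ is a unitary subalgebra of $V_M$. By Prop.~\ref{prop:QW<W} this amounts to showing $\theta V_L\subset V_L$ and $L_1 V_L\subset V_L$, where $\theta$ and $L_1$ are the PCT operator and the Virasoro mode of $V_M$. Both inclusions are immediate on the chosen generators: the current vectors and the vectors $e^\alpha$ are $L_1$-primary, while $\theta$ preserves the sublattice structure, sending $e^\alpha\mapsto\pm e^{-\alpha}$ and fixing the Heisenberg subspace over $L\otimes\RR$. Grading compatibility is clear since the conformal weights in $V_M$ of these generators coincide with those in $V_L$, and by Prop.~\ref{subalbebraconformalvector} the induced conformal vector $e_{V_L}\nu$ is the standard one of $V_L$. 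Now $V_{E_8}$ is generated by $(V_{E_8})_1\cong\mathfrak e_8$ and hence is strongly local by Thm.~\ref{DimensionOne/VirasoroTheorem} (cf. Example~\ref{ExampleAffineLie}), so by Corollary~\ref{tensorcorollary} the tensor product $V_M=V_{E_8}^{\otimes k}$ is strongly local. Thm.~\ref{stronglylocalsubalgebra} then yields at once that $V_L$ is strongly local and that $\A_{V_L}$ embeds as a covariant subnet of $\A_{V_M}$.

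It remains to identify $\A_{V_L}$ with the Dong--Xu lattice net $\A_L$ of \cite{DX2006}, and this is where I expect the main obstacle to lie. Here I would compare the two nets over their common rank $n$ Heisenberg (free boson) subnet $\A_{{\rm U}(1)}^{\otimes n}$, $n=\mathrm{rk}\, L$, which is generated by the smeared $U(1)$-currents in both cases: for $\A_{V_L}$ by Example~\ref{ExampleStronglyLocalV_H} and Corollary~\ref{tensorcorollary}, and for $\A_L$ by construction. Both nets are extensions of this Heisenberg net by charged fields labelled by $L$, and such charged fields are determined, up to the cocycle of the lattice, by their $U(1)$-charge and their conformal covariance; matching normalizations, one checks that the smeared vertex operators $Y(e^\alpha,f)$ are affiliated with the corresponding local algebras of $\A_L$ and generate them, whence $\A_{V_L}(I)=\A_L(I)$ for all $I\in\I$. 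The genuine difficulty is to reconcile the intrinsic VOA construction of these charged fields with the sector-theoretic construction of \cite{DX2006}, including the precise choice of $2$-cocycle on $L$, together with securing the lattice embedding input in full generality. The rank one case is already settled in Example~\ref{ExampleRankOneLattice}, which also illustrates the alternative route of realizing $V_L$ by a coset construction inside a root lattice VOA.
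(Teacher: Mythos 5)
First, a point of comparison that is really a non-comparison: the paper does \emph{not} prove this statement --- it is explicitly left as a conjecture --- so there is no proof of record to measure yours against, and your proposal has to stand entirely on its own. Its overall strategy (realize $V_L$ as a unitary subalgebra of a strongly local VOA generated in degree one and invoke Thm.~\ref{stronglylocalsubalgebra}) is indeed the mechanism the paper itself uses in the rank-one case of Example~\ref{ExampleRankOneLattice}, where $V_{L_{2n}}$ is exhibited as a coset inside $V_{\mathfrak{g}_1}$ for $\mathfrak{g}=D_{2n}$, and the verification that the image of $V_L$ is a unitary subalgebra via Prop.~\ref{prop:QW<W} ($\theta e^{\alpha}=\pm e^{-\alpha}$, $L_1$-primarity of the generators, identification of $e_{V_L}\nu$ with the Heisenberg conformal vector of $V_L$) is unproblematic. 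The genuine gap is your very first step. Nikulin's embedding theorems produce a primitive embedding of $L$ into \emph{some} even unimodular positive definite lattice of large rank $8k$, but in the definite case they give no control over the isomorphism class of the overlattice: already in rank $16$ there are two classes and in rank $24$ twenty-four, including the rootless Leech lattice. Your argument needs the overlattice $M$ to be generated by its norm-two vectors, since otherwise $V_M$ is not generated by $(V_M)_1$ and Thm.~\ref{DimensionOne/VirasoroTheorem} does not apply; the assertion that every even positive definite lattice embeds into $E_8^{\oplus k}$ for some $k$ is a representability statement in the arithmetic of quadratic forms that you neither prove nor cite, and the obvious repairs fail (for instance, a rootless $L\neq\{0\}$ is never an orthogonal summand of a root lattice, because the roots of $L\oplus K$ are exactly the roots of $L$ together with those of $K$). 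Without this input the approach does not get off the ground.

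The second gap, which you partly acknowledge, is the identification $\A_{V_L}=\A_L$. Even granting strong locality, ``charged fields are determined by their charge and covariance up to the cocycle'' is not an argument: both nets are extensions of the Heisenberg net $\A_{{\rm U}(1)}^{\otimes n}$ of \emph{infinite} index (the sector decomposition runs over all of $L$), so one cannot appeal to uniqueness of finite-index extensions with a prescribed Q-system, and a direct comparison requires actually showing that the Dong--Xu charged intertwiners are affiliated with the algebras $\A_{V_L}(I)$ (or conversely) --- which is the real content of the second half of the conjecture. Note that even in the rank-one case the paper settles this identification only by invoking the classification results of \cite{BMT} and \cite{Xu2005}, not by matching fields directly. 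In summary: your plan correctly identifies where the paper's machinery could bite and genuinely generalizes its rank-one trick, but both essential inputs --- the embedding of $L$ into a root lattice (or any lattice whose VOA the paper's criteria cover) and the comparison with the net of \cite{DX2006} --- are missing, so the conjecture remains open under your approach.
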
 

\begin{conjecture} 
Every simple unitary vertex operator algebra is strongly local and hence generates an irreducible conformal net $\A_V$.
\end{conjecture}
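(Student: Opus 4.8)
The statement is a conjecture and, as the authors note, genuinely open; what follows is a plan of attack rather than a proof. The natural strategy is to factor the problem through Thm.~\ref{generatingprimaryTheorem}. Every simple unitary VOA $V$ is generated by its quasi-primary vectors: by unitary M\"obius symmetry (Eq.~(\ref{uniVir})) the representation of $\mathfrak{sl}(2,\CC)=\CC\{L_{-1},L_0,L_1\}$ on $V$ is completely reducible, so $V$ is spanned by the vectors $L_{-1}^k a$ with $a$ quasi-primary. Since $\theta$ commutes with $L_0$ and $L_1$, the family $\mathscr F$ of \emph{all} quasi-primary vectors is $\theta$-invariant, and Prop.~\ref{primarynetproposition} already gives $\A_{\mathscr F}(I)=\A_V(I)$. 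Hence the conjecture reduces to two independent ingredients bundled into strong locality: \emph{(A)} that $V$ is energy-bounded, and \emph{(B)} the Haag--Kastler locality $\A_V(I')\subset\A_V(I)'$ of the von Neumann algebras generated by the quasi-primary smeared fields. Note that Thm.~\ref{generatingprimaryTheorem} assumes this locality and only promotes it to $\A_{\mathscr F}=\A_V$, so (B) is the genuine content.

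For ingredient (A), by Prop.~\ref{GenBoundedProp} it suffices to bound a generating family, and the discussion above lets me take that family quasi-primary. I would therefore aim at the sub-statement that every quasi-primary vector of conformal weight $d$ satisfies an energy bound of the form (\ref{EnergyBounds}) with exponent $k$ depending only on $d$. One cannot invoke the diffeomorphism covariance of Prop.~\ref{covariancevertexoperator}, which presupposes energy bounds; the estimate must come from the VOA data directly. The line I would pursue is to write $\|a_n b\|^2=(b\,|\,a_n^+a_n b)$, expand $a_n^+a_n$ via the Borcherds commutator formula (\ref{commutatorformula}) and the adjoint-mode expression (\ref{Y(a,z)^+}), and reduce the right-hand side to two-point data controlled by the Virasoro representation, which is always present via Thm.~\ref{TheoremVirDiff}. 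The difficulty is to make the resulting bound polynomial in $n$ with a power \emph{independent of} $b$; the naive expansion produces a number of terms growing with the weight of $b$, and obtaining a uniform estimate is the crux of (A).

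For ingredient (B), energy bounds yield, through Prop.~\ref{PropVAandWightmannLocality}, Wightman (weak) locality of all vertex operators: $[Y(a,f),Y(b,\tilde f)]c=0$ on $\H^\infty$ for disjointly supported $f,\tilde f$. The task is to upgrade this to commutation of the \emph{generated} von Neumann algebras, i.e.\ the Wightman-to-Haag--Kastler problem, which is obstructed in general by Nelson's phenomenon (\cite{Nelson1959}; cf.\ the proposition following Prop.~\ref{PropositionABcommute}). The plan is to imitate the method based on \cite{DF1977} (see \cite{BS-M,GJ}) that proves Thm.~\ref{DimensionOne/VirasoroTheorem}, but now for arbitrary quasi-primary generators: show that for a Hermitian quasi-primary $a$ and real $f$ the operator $Y(a,f)$ is essentially self-adjoint on $\H^\infty$, and that for disjointly supported real $f,g$ the unitary groups $e^{it\overline{Y(a,f)}}$ and $e^{is\overline{Y(b,g)}}$ commute. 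Equivalently, one may attempt the modular route underlying Thm.~\ref{generatingprimaryTheorem}: verify abstractly that the Tomita operator of $(\A_V(S^1_+),\Omega)$ satisfies $S\,Y(a,f)\Omega=Y(a,f)^*\Omega$ with $S=J\Delta^{1/2}$, $\Delta^{it}=U(\delta(-2\pi t))$ and $J$ the geometric realisation of $\theta$ (Thm.~\ref{bw1} of Appendix~\ref{BWmob}), and run a Bisognano--Wichmann argument to deduce Haag duality, hence locality.

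The main obstacle is ingredient (B) beyond the linear-energy-bound regime. When the generators satisfy \emph{linear} energy bounds ($k=1$) the essential self-adjointness and commuting-exponentials input is classical (\cite{DF1977,GJ}) and underlies Thm.~\ref{DimensionOne/VirasoroTheorem}; for quasi-primary fields of higher conformal weight only higher-degree polynomial bounds are available, and it is precisely here that the passage from commutation on the core $\H^\infty$ to commutation of the generated algebras is not known to hold. Ingredient (A), the uniform polynomial energy bound, is a close second obstacle. Since no simple unitary VOA is known to violate either (A) or (B), the conjecture is plausible, but a proof would require a genuinely new mechanism forcing the locality upgrade in the presence of merely polynomial energy bounds.
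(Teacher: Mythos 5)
This statement is one of the paper's two closing conjectures: the paper offers no proof of it, and indeed states explicitly that no simple unitary VOA is known to fail strong locality. Your refusal to manufacture a proof is therefore the correct call, and your structural analysis is faithful to the paper's machinery: the reduction to quasi-primary generators via complete reducibility of the unitary $\mathfrak{sl}(2,\CC)$-action is exactly the mechanism used inside the proof of Prop.~\ref{primarynetproposition}; the identification of (A) energy bounds and (B) von Neumann locality as the two open ingredients matches the definition of strong locality; and your observation that Thm.~\ref{generatingprimaryTheorem} only converts locality of a generating quasi-primary family into strong locality, rather than producing locality, correctly locates where the paper's positive results (Thm.~\ref{DimensionOne/VirasoroTheorem} via linear energy bounds and \cite{DF1977}) stop short of the conjecture.

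One refinement: your framing of (A) and (B) as ``two independent ingredients'' is slightly off, since (B) cannot even be formulated without (A) --- the smeared vertex operators $Y(a,f)$, their common core $\H^\infty$, and hence the algebras $\A_V(I)$ are only defined for energy-bounded $V$, and Prop.~\ref{primarynetproposition} (which you invoke to get $\A_{\mathscr F}(I)=\A_V(I)$) is proved under that standing hypothesis. So the logical order is (A) first, then (B), and the conjecture bundles both. Your sketch for (A) via $\|a_nb\|^2=(b\,|\,a_n^+a_nb)$ and the Borcherds commutator formula (\ref{commutatorformula}) is a reasonable opening move, but note that Prop.~\ref{GenBoundedProp} already shows energy bounds propagate from any generating set, so the real question is whether unitarity alone forces bounds on \emph{some} generating family; nothing in the paper, including the Goodman--Wallach estimates of Prop.~\ref{PropGoWaEstimates} (which cover only the Virasoro modes), addresses this for weights $d_a\geq 2$ outside the Virasoro and current-algebra cases. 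Likewise your diagnosis of (B) is accurate: Wightman locality follows from energy bounds by Prop.~\ref{PropVAandWightmannLocality}, and the obstruction to upgrading it is precisely Nelson's phenomenon, with both known mechanisms (commuting exponentials under linear bounds, and the modular/Bisognano--Wichmann route of Thm.~\ref{generatingprimaryTheorem} with Thm.~\ref{bw1}) requiring input that merely polynomial bounds do not supply. As a plan of attack your proposal is sound and correctly calibrated; as a proof it does not exist, and none is expected to from the techniques of this paper.
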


\section{Back to vertex operators}\label{FJsection} 
In this section we discuss problem of (re-) constructing vertex operator algebras starting from a given irreducible conformal net $\A$. This problem 
is related to the problem of constructing quantum fields from local net of von Neumann algebras. In particular we will prove that
for any strongly local vertex unitary operator algebra $V$ it is possible to recover all the vertex operators, and hence $V$ together with its VOA structure, from the conformal net $\A_V$.  To this end we will crucially rely on the ideas developed by Fredenhagen and J\"{o}r{\ss} in 
\cite{FJ} where pointlike-localized fields where defined starting from irreducible M\"{o}bius covariant nets. In fact we will give a variant of the construction in \cite{FJ} which avoids the scaling limit procedure considered there and completely relies on Tomita-Takesaki modular theory 
together with the results in Appendix  \ref{BWmob} of this article.

We first need to recall some facts by the Tomitata-Takesaki theory, see e.g. \cite[Sect.1.2]{Stratila} for details and proofs. Let 
${\mathcal M}$ be a von Neumann algebra on a Hilbert space $\H$ and let $\Omega \in \H$ be cyclic and separating for ${\mathcal M}$. As usual we denote by $S$ the Tomita operator associated with the pair $({\mathcal M},\Omega)$ and by $\Delta$ and $J$ the corresponding modular operator and modular conjugation respectively. 
Hence $S=J\Delta^{1/2}$. For $a\in \H$ consider the operator ${\mathscr L}_a^0$ with dense domain ${\mathcal M}'\Omega$ and  defined by 
${\mathscr L}_a^0A\Omega=Aa$, $A\in {\mathcal M}'$. If $a$ is in the domain $\D(S)$ it is straightforward to see that 
${\mathscr L}_{Sa}^0 \subset ({\mathscr L}_a^0)^*$ and hence 
${\mathscr L}_{Sa}^0$ and  $\mathscr{L}_{a}^0$ are closable and their closures ${\mathscr L}_{Sa}$ and ${\mathscr L}_a$ satisfy 
${\mathscr L}_{Sa} \subset {\mathscr L}_{a}^*$. Moreover, ${\mathscr L}_{Sa}$ and ${\mathscr L}_a$ are affiliated with ${\mathcal M}$. 
As pointed out in \cite{Carpi2005}, in certain situations the operators ${\mathscr L}_a$, $a \in \D(S)$ can be considered as abstract analogue of the smeared vertex operators, see also \cite{BBS}. Our variant of the Fredenhagen and J\"{o}r{\ss} construction will clarify this point of view.

Let $\A$ be an irreducible M\"{o}bius covariant net on $\s1$ acting on its vacuum Hilbert space $\H$. For any $I$ we can consider the Tomita operator $S_I=J_I \Delta_I^{1/2}$. The covariance of the net implies that for any $\gamma \in \mob$ we have 
$U(\gamma)S_IU(\gamma)^*=S_{\gamma I}$, $U(\gamma)J_IU(\gamma)^*=J_{\gamma I}$ and 
$U(\gamma)\Delta_I U(\gamma)^*=\Delta_{\gamma I}$.  Moreover, by the Bisognano-Wichmann property we have 
$\Delta_{S^1_+}^{it}=e^{iKt}$, $t\in \RR$. where $K\equiv i\pi \overline{(L_{1}-L_{-1})}$. Hence $\Delta_{S^1_+}^{1/2}=e^{\frac12 K}$. 
We will denote $J_{S^1_+}$ by $\theta$ (PCT operator). Then $\theta$ commutes with $L_{-1}$, $L_0$ and $L_1$. 

Now, let $a \in \H$ be a quasi-primary vector of conformal weight $d_a \in \ZZ_{\geq 0}$. 
Then, for every $f \in C^\infty(\s1)$ we can consider the vector $a(f)$ defined in  Appendix \ref{BWmob}, namely 
\begin{equation}
a(f)=\sum_{n\in \ZZ_{\geq 0}} \hat{f}_{-n-d_a}  \frac{1}{n!}L_{-1}^na.
\end{equation}
In the following for unexplained notations and terminology we refer the reader to Appendix \ref{BWmob}.

By Thm. \ref{bw1}, if ${\rm supp} f \subset S^1_+$ then $a(f)$ is in the domain of $S_{S^1_+}$ and 
\begin{equation}
S_{S^1_+}a(f)=(-1)^{d_a}(\theta a)(\overline{f}). 
\end{equation}
Hence the operator $A\Omega \mapsto Aa(f)$, $A \in \A(S^1_+)'$, is closable and its closure ${\mathscr L}^{S^1_+}_{a(f)}$ is affiliated with $\A(S^1_+)$. 
By the above stated covariance property of the modular operators $\Delta_I$, $I\in \I$ and Prop. \ref{covariancea(f)} we see that 
we can define in a similar way an operator ${\mathscr L}^I_{a(f)}$ for any $I\in \I$ and any $f\in C^\infty(S^1)$ with ${\rm supp}f \subset I$. Then by the discussion above and Prop. \ref{covariancea(f)} we have 
\begin{equation} 
\label{covarianceL}
U(\gamma){\mathscr L}^I_{a(f)}U(\gamma)^*={\mathscr L}^{\gamma I}_{a(\beta_{d_a}(\gamma)f)},
\end{equation}
for all $I\in \I$, all $f \in C^\infty(S^1)$ with ${\rm supp}f \subset I$ and all $\gamma \in \mob$. Moreover, 
\begin{equation}
\label{adjointL}
(-1)^{d_a} {\mathscr L}^I_{(\theta a)(\overline{f})} \subset ({\mathscr L}^I_{a(f)})^*
\end{equation}
for all $I\in \I$, and all $f \in C^\infty(S^1)$ with ${\rm supp}f \subset I$. 
Note also that also that for any $I\in \I$ and any $b\in \A(I)'\Omega$ the linear map $:C^\infty_c(I) \to \H$ given by  
$f \mapsto {\mathscr L}^I_{a(f)}b$ is continuous, namely $f \mapsto {\mathscr L}^I_{a(f)}$ is an operator valued distribution on 
$C^\infty_c(I_1)$. Note also that 
if $I_1 \subset I_2$, $I_1,I_2 \in \I$, and $f \in C^\infty_c(I_1)$ then ${\mathscr L}^{I_2}_{a(f)} \subset {\mathscr L}^{I_1}_{a(f)}$. 

All the above properties justify the following notation and terminology. For every quasi-primary vector $a\in \H$ and all $f\in C^\infty_c(I)$ 
we define  $Y_I(a,f)$ by $Y_I(a,f)\equiv {\mathscr L}^I_{a(f)}$. We call the operators $Y_I(a,f)$, $I \in \I$, $f \in C^\infty_c(I)$ 
Fredenhagen-J\"{o}r{\ss} (shortly FJ) smeared vertex operators or FJ fields.

The FJ smeared vertex operators have many properties in common with the smeared vertex operators. These are obtained 
simply by a change of notations for the corresponding properties of the operators ${\mathscr L}^I_{a(f)}$, $I\in \I$, $f\in C^\infty(S^1)$. 
First of all, for any $I\in \I$, $f \mapsto Y_I(a,f)$ is an operator valued distribution on $C^\infty_c(I_1)$ in the sense that 
the map $:C^\infty_c(I) \to \H$ given by  $f \mapsto Y_I(a,f)b$ is linear and continuous for every $b \in \A(I)'\Omega$. 
Moreover, the following compatibility condition holds
\begin{equation}
\label{EqFJcompatibility}
Y_{I_2}(a,f) \subset Y_{I_1}(a,f)
\end{equation}
if $I_1 \subset I_2$, $I_1,I_2 \in \I$, and $f \in C^\infty_c(I_1)$ so that if 
$b\in \A(I_2)'\Omega$ the vector valued distribution $C^\infty_c(I_2) \ni f \mapsto Y_{I_2}(a,f)b$ extends 
$C^\infty_c(I_1) \ni f \mapsto Y_{I_1}(a,f)b$. Finally, from Eq. (\ref{covarianceL}) and Eq. (\ref{adjointL}) we get the following covariance
and hermiticity relations
\begin{equation} 
\label{covarianceFJ}
U(\gamma)Y_I(a,f)U(\gamma)^*=Y_{\gamma I}(a, \beta_{d_a}(\gamma)f)),
\end{equation}
for all $I\in \I$, all $f \in C^\infty(S^1)$ with ${\rm supp}f \subset I$ and all $\gamma \in \mob$. Moreover, 
\begin{equation}
\label{adjointFJ}
(-1)^{d_a} Y_I(\theta a, \overline{f}) \subset Y_I(a,f)^*
\end{equation}
for all $I\in \I$, and all $f \in C^\infty(S^1)$ with ${\rm supp}f \subset I$. 

As usual for distributions we can use the formal notation 
\begin{equation}
Y_I(a,f) = \int_{I}Y_I(a,z)f(z)z^{d_a}\frac{{\rm d}z}{2\pi i z}.
\end{equation}
Then we say that the family $\{ Y_I(a,z) : I\in \I\}$ is an FJ vertex operator or an FJ field. Unfortunately there it is not known if the FJ smeared vertex operators admit a common invariant domain. Hence we cannot extend the family of distributions 
$\{ Y_I(a,z), z\in I : I\in \I\}$ to a unique distribution $\tilde{Y}(a,z)$. In particular the FJ fields cannot in general be considered as quantum fields in the sense of Wightman \cite{StrWight}.
\smallskip

The following proposition is a slightly weaker form of the result vi) stated in \cite[Sect.2]{FJ} and proved in \cite[Sect.4]{FJ}. 

\begin{proposition} The FJ smeared vertex operators generate the irreducible M\"{o}bius covariant net $\A$, namely 
$$\A(I)=
W^*(\{Y_{I_1}(a,f):a\in\bigcup_{k \in \ZZ_{\geq 0}}{\rm Ker}(L_0 - k1_\H),\,  L_1a=0,\, f \in C^\infty_c(I_1),\, I_1 \in \I,\, I_1\subset I\})$$
for all $I \in \I$.
\end{proposition}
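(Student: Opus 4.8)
The plan is to show that the assignment $\B(I)\equiv W^*(\{Y_{I_1}(a,f):a\ \text{quasi-primary},\ f\in C^\infty_c(I_1),\ I_1\in\I,\ I_1\subset I\})$ is a M\"obius covariant subnet of $\A$ whose vacuum subspace exhausts $\H$; the conclusion then drops out of the general theory of covariant subnets. First I would record the trivial inclusion: each generator $Y_{I_1}(a,f)$ with $I_1\subset I$ and ${\rm supp}f\subset I_1$ is affiliated with $\A(I_1)\subset\A(I)$, so $\B(I)\subset\A(I)$. Isotony of $I\mapsto\B(I)$ is built into the definition, while M\"obius covariance $U(\gamma)\B(I)U(\gamma)^{*}=\B(\gamma I)$ is immediate from the covariance relation (\ref{covarianceFJ}), since $\beta_{d_a}(\gamma)$ carries $C^\infty_c(I_1)$ onto $C^\infty_c(\gamma I_1)$. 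Thus $\B$ is a M\"obius covariant subnet of $\A$, and by the dictionary recalled in Subsect.~\ref{SubsectCovariantSubnet} it suffices to prove that $\H_\B\equiv\overline{\B(\s1)\Omega}=\H$, i.e.\ that $\Omega$ is cyclic for $\B(\s1)$; this forces $\B(I)=\A(I)$ for every $I\in\I$.

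The heart of the matter is therefore to locate enough vectors inside $\H_\B$. Here I would use that, for a quasi-primary $a$ of weight $d_a$ and $f\in C^\infty_c(I_1)$, applying the FJ operator to the vacuum gives $Y_{I_1}(a,f)\Omega=a(f)=\sum_{n\geq 0}\hat f_{-n-d_a}\tfrac{1}{n!}L_{-1}^{n}a$, and since $Y_{I_1}(a,f)$ is affiliated with $\B(I_1)$ one has $a(f)\in\overline{\B(I_1)\Omega}\subset\H_\B$. The key observation is that these coefficients only see $\hat f_m$ for $m\leq -d_a$, so a single monomial isolates one term. Concretely, for each $n\geq 0$ I would write $z^{-d_a-n}=g_1+g_2$ with ${\rm supp}\,g_1,{\rm supp}\,g_2$ proper intervals (a smooth partition of unity on $\s1$); then, by linearity of $f\mapsto a(f)$ and $\widehat{(g_1)}_m+\widehat{(g_2)}_m=\delta_{m,-d_a-n}$, the two well-defined vectors $a(g_1),a(g_2)\in\H_\B$ satisfy $a(g_1)+a(g_2)=\tfrac{1}{n!}L_{-1}^{n}a$. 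Hence $L_{-1}^{n}a\in\H_\B$ for every quasi-primary $a$ and every $n\geq 0$, with no convergence issue since the surviving sum is a single term.

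Finally I would invoke that, for an irreducible M\"obius covariant net, $U$ is a genuine positive-energy representation of $\mob\simeq\psl2$, so $\sigma(L_0)\subset\ZZ_{\geq 0}$, $\H^{fin}$ is dense, and the associated unitary representation of the M\"obius Lie algebra $\CC\{L_{-1},L_0,L_1\}\simeq\sltwoC$ is completely reducible into lowest-weight modules. Consequently $\H^{fin}={\rm span}\{L_{-1}^{n}a:a\ \text{quasi-primary},\ n\geq 0\}$, which by the previous paragraph lies in $\H_\B$; being dense, this yields $\H_\B=\H$ and hence the proposition. The only genuinely non-formal point is the reverse inclusion $\A(I)\subset\B(I)$, which is exactly what the cyclicity $\H_\B=\H$ delivers; all the real work is thus concentrated in the vacuum computation $Y_{I_1}(a,f)\Omega=a(f)$ together with the monomial/partition-of-unity trick, which here replaces the scaling-limit density argument of \cite[Sect.~4]{FJ}.
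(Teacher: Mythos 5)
Your proposal is correct and follows essentially the same route as the paper: define the candidate subnet $\B$, use $Y_{I_1}(a,f)\Omega=a(f)$ together with a partition of unity to see that $\H_\B$ contains enough vectors, and conclude $\H_\B=\H$ from complete reducibility of $U$, whence $\B=\A$ by the general theory of covariant subnets. Your monomial trick isolating $\tfrac{1}{n!}L_{-1}^{n}a$ is just a more explicit version of the paper's observation that $a(f)\in\H_\B$ for every $f\in C^\infty(\s1)$ and that these vectors span a dense subspace.
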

\begin{proof} For any $I\in \I$ we define $\B(I)$ by 
$$\B(I)\equiv 
W^*(\{Y_{I_1}(a,f):a\in\bigcup_{k \in \ZZ_{\geq 0}}{\rm Ker}(L_0 - k1_\H),\, L_1a=0,\, f \in C^\infty_c(I_1),\, I_1 \in \I,\, I_1\subset I\}).$$
Clearly the family $\{\B(I) : I\in \I \}$ is a M\"{o}bius covariant subnet of $\A$. Let $\H_\B \equiv \overline{\B(S^1)\Omega}$ 
be the corresponding vacuum Hilbert space. Then $a(f) \in \H_\B$ for every quasi-primary vector $a\in \H$ and every 
$f\in C^\infty(S^1)$. Since the representation $U$ of $\mob$ is completely reducible the linear span of the vectors 
$a(f)$ with $a$ quasi-primary and  $f\in C^\infty(S^1)$ is dense in $\H$ so that $\H_\B =\H$ and thus $\B=\A$. 
\end{proof}

Our next goal in this section is to prove that the FJ smeared vertex operators of a conformal net $\A_V$ associated with 
a strongly local simple unitary VOA $V$ coincide with the ordinary smeared vertex operator of $V$. 

\begin{theorem} 
\label{backVOA}
Let $V$ be a simple unitary strongly local VOA and let $\A_V$ be the corresponding irreducible conformal net. 
Then, for any quasi-primary vector $a\in V$ we have $Y_I(a,f) = Y(a,f)$ for all $I\in \I$ and all 
$f\in C^\infty_c(I)$, i.e. the smeared vertex operator of $V$ coincide with the FJ smeared vertex operator of $\A_V$. 
In particular one can recover the VOA structure on $V= \H^{fin}$ from the conformal net $\A_V$. 
\end{theorem}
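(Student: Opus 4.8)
The plan is to fix the upper semicircle $S^1_+$, prove the operator identity $Y_{S^1_+}(a,f)=Y(a,f)$ there for every quasi-primary $a\in V$ and every $f\in C^\infty_c(S^1_+)$, and then propagate it to arbitrary $I\in\I$ by M\"obius covariance. The reduction is clean: both families transform identically, since $U(\gamma)Y(a,f)U(\gamma)^*=Y(a,\beta_{d_a}(\gamma)f)$ by Prop.~\ref{covariancevertexoperator}, while the FJ fields obey the matching relation (\ref{covarianceFJ}); as $\mob$ acts transitively on $\I$ and $\beta_{d_a}(\gamma)$ maps $C^\infty_c(S^1_+)$ bijectively onto $C^\infty_c(\gamma S^1_+)$, equality on $S^1_+$ forces equality everywhere. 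The computational heart is the observation that the vector defining the FJ operator is exactly the vacuum value of the ordinary smeared field: writing $a_n\Omega=a_{(n+d_a-1)}\Omega$, which equals $\tfrac{1}{m!}L_{-1}^m a$ when $n=-m-d_a$ with $m\ge 0$ and vanishes otherwise, one gets $Y(a,f)\Omega=\sum_{m\ge 0}\hat f_{-m-d_a}\tfrac{1}{m!}L_{-1}^m a=a(f)$, precisely the vector used to build $Y_{S^1_+}(a,f)=\mathscr{L}^{S^1_+}_{a(f)}$.

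First I would establish the easy inclusion $Y_{S^1_+}(a,f)\subset Y(a,f)$. Put $\M\equiv\A_V(S^1_+)$; by construction $Y(a,f)$ is affiliated with $\M$ for $\mathrm{supp}f\subset S^1_+$, and $\Omega\in V\subset\H^\infty$ lies in its domain. Hence for every $A\in\M'$ one has $A\Omega\in\D(Y(a,f))$ and $Y(a,f)A\Omega=AY(a,f)\Omega=A\,a(f)$. Thus $Y(a,f)$ restricts on the whole of $\M'\Omega$ to the preclosed map $A\Omega\mapsto A\,a(f)$ (well defined since $\Omega$ is separating for $\M'$ by Reeh--Schlieder), whose closure is by definition $\mathscr{L}^{S^1_+}_{a(f)}=Y_{S^1_+}(a,f)$; as $Y(a,f)$ is closed, the inclusion follows.

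The reverse inclusion is the main point, and the idea is to locate a core for the \emph{full} smeared operator already inside the natural domain $\M'\Omega=\A_V(S^1_-)\Omega$ of the FJ operator (here $\M'=\A_V(S^1_-)$ by Haag duality). I would invoke Prop.~\ref{corenetproposition}, which says $\A_V(S^1_-)\Omega\cap\H^\infty$ is a core for $(L_0+1_\H)^k$ for every $k$; since $V$ is energy-bounded, any core for $(L_0+1_\H)^k$ with $k$ the energy-bound exponent of $a$ is a core for $Y(a,f)$, so $\A_V(S^1_-)\Omega\cap\H^\infty$ is a core for $Y(a,f)$. On this set $Y(a,f)$ agrees with the preclosed FJ operator: for $A\in\A_V(S^1_-)=\M'$ with $A\Omega\in\H^\infty$ one has again $Y(a,f)A\Omega=A\,a(f)$. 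Hence $Y(a,f)$, being the closure of its restriction to this core, is contained in the closure $\mathscr{L}^{S^1_+}_{a(f)}=Y_{S^1_+}(a,f)$. Together with the previous paragraph this gives $Y(a,f)=Y_{S^1_+}(a,f)$, and then $Y_I(a,f)=Y(a,f)$ for all $I\in\I$ by covariance.

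I expect the delicate step to be exactly this core argument: a priori $Y_{S^1_+}(a,f)$ is only the closure of $Y(a,f)$ on the comparatively small domain $\M'\Omega$, and the purely modular inclusion $\mathscr{L}_{S\,a(f)}\subset\mathscr{L}_{a(f)}^*$ supplies just one direction, so it is the interplay of Prop.~\ref{corenetproposition} with the energy bounds that forces this domain to recover all of $Y(a,f)$. Once the operator identity holds, the final assertion is routine: since $f\mapsto Y_I(a,f)$ is an $\A_V$-intrinsic operator-valued distribution and now coincides with $f\mapsto Y(a,f)$, one recovers the modes $a_n$ and hence the field $Y(a,z)$ for every quasi-primary $a\in\H^{fin}=V$; as $V$ is generated by its quasi-primary vectors together with their $L_{-1}$-descendants and the remaining vertex operators are produced algebraically by derivatives and $(n)$-products, together with the vacuum $\Omega$, the conformal vector $\nu$ and the intrinsic $L_0$-grading, the full VOA structure of $V$ is reconstructed from $\A_V$.
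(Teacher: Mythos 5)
Your proposal is correct and follows essentially the same route as the paper: the identity $Y(a,f)\Omega=a(f)$ (Prop.~\ref{Y(a,f)Omega=a(f)}), affiliation of $Y(a,f)$ with $\A_V(I)$ to get $Y(a,f)A\Omega=Aa(f)$ on $\A_V(I)'\Omega$, and Prop.~\ref{corenetproposition} combined with the energy bounds to show $\A_V(I)'\Omega$ is a core for $Y(a,f)$ as well as (by definition) for $Y_I(a,f)$, whence the two closed operators agree. Your splitting into two inclusions and the initial reduction to $S^1_+$ by covariance are harmless reformulations of the paper's one-line "equal closed operators coinciding on a common core" conclusion, applied there directly to an arbitrary $I$.
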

\begin{proof} We first observe that, for any $f\in C^\infty_c(I)$, $Y(a,f)$ is affiliated with $\A(I)$ and hence its domain 
contains $\A(I)'\Omega \supset \A(I') \cap \H^\infty$. Since the latter is a core for $Y(a,f)$, by Prop. 
\ref{corenetproposition} then 
also $\A(I)'\Omega$ is a core for the same operator. On the other hand $\A(I)'\Omega$ is a core for $Y_I(a,f)$ by definition. 
Using  Prop. \ref{Y(a,f)Omega=a(f)} in Appendix \ref{BWmob}, for any $A \in \A(I)'$ we find 
$$Y(a,f)A\Omega=AY(a,f)\Omega=Aa(f)= Y_I(a,f)A\Omega.$$
Accordingly the closed operators $Y(a,f)$ and $Y_I(a,f)$ coincides on a common core and hence they must be equal.
\end{proof}

We now consider a general irreducible conformal net $\A$. We want to find conditions on $\A$ which allow to prove that 
$\A = \A_V$ for some simple unitary strongly local VOA $V$. As a consequence of Thm. \ref{backVOA} a necessary 
condition is that for every primary vector $a\in \H$ the corresponding FJ vertex operator $\{ Y_I(a,z) : I\in \I\}$ satisfies energy bounds i.e. there exist a real  number $M>0$ and positive integers $k$ and $s$ such that 
\begin{equation}
\|Y_I(a,f)b\| \leq M \|f \|_s\|(L_0 +1_\H )^kb\| 
\end{equation} 
for all $I\in \I$, all $f\in C^\infty_c(I)$ and all $b\in \A(I)'\Omega \cap \H^\infty$. We will see that the condition is also sufficient and that actually it can be replaced by an apparently weaker condition. 
\smallskip

We say that a family ${\mathscr F} \subset \H$ of quasi-primary vectors generates $\A$ if  the corresponding 
FJ smeared vertex operators generates the local algebras i.e. if 
\begin{equation}
\A(I)= W^*(\{Y_{I_1}(a,f): a\in \mathscr{F},\; f \in C^\infty_c(I_1),\; I_1 \in \I,\; I_1\subset I\}).\end{equation} 

\begin{theorem}
\label{constructVOA}
Let $\A$ be an irreducible conformal net that is generated by a family of quasi-primary vectors ${\mathscr F}$. Assume 
$\theta {\mathscr F} ={\mathscr F}$ and that 
for every $a \in {\mathscr F}$ the FJ vertex operator $\{ Y_I(a,z) : I\in \I\}$ satisfies energy bounds. Moreover, assume that 
${\rm Ker}(L_0 - n1_\H)$ is finite-dimensional for all $n\in \ZZ_{\geq 0}$. Then, the vector space $V\equiv \H^{fin}$ admits 
a VOA structure making $V$ into a simple unitary strongly local VOA such that $\A_V = \A$. 
\end{theorem}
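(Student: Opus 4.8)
The plan is to reverse-engineer a vertex operator algebra structure on $V\equiv \H^{fin}$ from the FJ fields and then invoke the already-proved machinery, especially Thm. \ref{generatingprimaryTheorem} and Thm. \ref{backVOA}. First I would use the energy bounds on the FJ fields $Y_I(a,z)$, $a\in \mathscr{F}$, to produce genuine operator-valued distributions with a common invariant domain. The key point is that by assumption the FJ fields satisfy energy bounds $\|Y_I(a,f)b\|\le M\|f\|_s\|(L_0+1_\H)^k b\|$, so the operators extend from $\A(I)'\Omega\cap\H^\infty$ to all of $\H^\infty$, and $\H^\infty$ becomes a common invariant core (using the commutation of $L_0$ with the smeared fields together with the covariance relation Eq.~(\ref{covarianceFJ}) applied to rotations, exactly as in the paragraph following the definition of smeared vertex operators in Sect.~\ref{sectionstronglylocalVOA}). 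This removes the obstruction noted after Eq.~(\ref{EqFJcompatibility}), namely that FJ fields are only known as distributions on $\A(I)'\Omega$: on $\mathscr{F}$ the energy bounds upgrade them to bona fide fields on $\H^\infty$ with a global invariant domain.

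Next I would extract the mode expansion. For each $a\in\mathscr{F}$ (quasi-primary of weight $d_a$) and each $n$ define $a_n$ on $\H^{fin}$ by the usual contour/Fourier formula $a_n=\oint Y_I(a,z)z^{n+d_a-1}\frac{dz}{2\pi i}$, well defined because $\H^{fin}\subset\H^\infty$ and because $Y_I$ restricted to finite-energy vectors is independent of $I$ by the compatibility Eq.~(\ref{EqFJcompatibility}) and Möbius covariance. The $a_n$ preserve $\H^{fin}$ (they shift $L_0$-eigenvalues by $-n$, by the rotation covariance), so we obtain endomorphisms of the algebraic space $V=\H^{fin}$. I would then set $Y(a,z)=\sum_n a_n z^{-n-d_a}$ for $a\in\mathscr{F}$, extend $T=L_{-1}$, take $\Omega$ the vacuum and $\nu$ the conformal vector (coming from the Virasoro structure $L_n$ underlying the diffeomorphism representation of the net, which exists by Thm.~\ref{TheoremVirDiff} since $\H^{fin}$ is dense). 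The locality in the vertex-algebra sense of these fields follows from the locality of $\A$ together with Prop.~\ref{PropositionABcommute} and Prop.~\ref{PropVAandWightmannLocality}: mutual Wightman-locality of $Y_I(a,\cdot)$ and $Y_{I'}(b,\cdot)$ for disjoint intervals (a consequence of the commutation of the affiliated operators) translates into $(z-w)^N[Y(a,z),Y(b,w)]=0$. Then I would invoke the existence theorem for vertex algebras, \cite[Thm.~4.5]{Kac}, applied to the generating set $\{Y(a,z):a\in\mathscr{F}\}\cup\{Y(\nu,z)\}$, to conclude that the vertex subalgebra $V$ they generate inside $\H^{fin}$ is a genuine vertex algebra, in fact a conformal vertex algebra of CFT type.

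With the vertex algebra in hand, unitarity is the next step. The hypothesis $\theta\mathscr{F}=\mathscr{F}$ together with the hermiticity relation Eq.~(\ref{adjointFJ}), $(-1)^{d_a}Y_I(\theta a,\overline f)\subset Y_I(a,f)^*$, shows precisely that every generating field has a local adjoint, i.e. $\mathscr{F}\subset V^{(\cdot|\cdot)}$; since $V^{(\cdot|\cdot)}$ is a vertex subalgebra by Prop.~\ref{VscalarAlgebra} and $\mathscr{F}$ generates $V$, we get $V^{(\cdot|\cdot)}=V$. Combined with unitary Möbius symmetry (which holds because $J_{S^1_+}=\theta$ commutes with $L_{-1},L_0,L_1$ by the Bisognano--Wichmann property and Eq.~(\ref{uniVir})), Thm.~\ref{unitarityTheorem} then yields that $(V,(\cdot|\cdot))$ is a unitary VOA; finiteness of each $V_n$ is assumed, $V_0=\CC\Omega$ follows from irreducibility of $\A$ via Uniqueness of the vacuum, and simplicity follows from Prop.~\ref{simpleunitary}. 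Energy-boundedness of $V$ then follows from Prop.~\ref{GenBoundedProp} since $V$ is generated by the energy-bounded homogeneous family $\mathscr{F}$.

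Finally, to identify the nets, I would apply Thm.~\ref{generatingprimaryTheorem} to the strongly-local-candidate $V$ with generating quasi-primary family $\mathscr{F}$: the hypothesis that $\mathscr{F}$ generates $\A$ means $\A(I)=\A_{\mathscr F}(I)=W^*(\{Y_I(a,f):a\in\mathscr{F},\operatorname{supp}f\subset I\})$, and since these $Y_I(a,f)$ are now seen (via the constructed modes and Thm.~\ref{backVOA}'s argument comparing cores on $\A(I)'\Omega$) to be exactly the smeared vertex operators $Y(a,f)$ of $V$, the locality $\A_{\mathscr F}(I')\subset\A_{\mathscr F}(I)'$ holds automatically. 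Thm.~\ref{generatingprimaryTheorem} then gives that $V$ is strongly local and $\A_{\mathscr F}(I)=\A_V(I)$ for all $I$, hence $\A_V=\A$. I expect the main obstacle to be the middle step: rigorously promoting the FJ \emph{distributions} $Y_I(a,\cdot)$, which a priori live only on the reference domain $\A(I)'\Omega$, to fields with a single common invariant domain $\H^\infty$ on which the algebraic mode operators $a_n$ are defined and satisfy the Borcherds/locality relations — i.e. showing the energy bounds genuinely force the FJ fields of different intervals to glue into globally defined modes compatible with the vertex-algebra axioms. Once that domain-and-modes bookkeeping is done, the identification with smeared vertex operators and the appeal to Thm.~\ref{generatingprimaryTheorem} and Thm.~\ref{backVOA} are comparatively routine.
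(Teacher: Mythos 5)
Your overall strategy coincides with the paper's: build mode operators from the FJ fields, invoke the existence theorem for vertex algebras, establish unitarity via the local-adjoint/Hermitian-field criterion, and close with Thm.~\ref{generatingprimaryTheorem}. But there are two places where the argument as written has a genuine hole. First, the mode formula $a_n=\oint Y_I(a,z)z^{n+d_a-1}\,\frac{{\rm d}z}{2\pi i}$ is not well defined from any single FJ field: the test function $e_n(z)=z^n$ is not supported in any proper interval, and the compatibility relation Eq.~(\ref{EqFJcompatibility}) only compares $Y_{I_1}$ and $Y_{I_2}$ for \emph{nested} intervals with the test function supported in the smaller one, so it does not by itself let you smear against a globally supported function. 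You correctly flag this gluing problem as ``the main obstacle,'' but you do not resolve it. The paper's resolution is concrete: choose a two-interval cover $\{I_1,I_2\}$ of $S^1$ with a subordinate partition of unity $\{\varphi_1,\varphi_2\}$, set $\tilde Y(a,f)=\sum_j Y_{I_j}(a,\varphi_j f)$ on $\H^\infty$, verify independence of all choices using Eq.~(\ref{EqFJcompatibility}), and check that $\tilde Y(a,\cdot)$ still satisfies energy bounds via the estimate $\|\varphi f\|_s\le\|\varphi\|_s\|f\|_s$; only then does $a_n\equiv\tilde Y(a,e_n)$ make sense. This is the technical heart of the proof and cannot be waved through.

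Second, the theorem asserts that the VOA structure lives on \emph{all} of $V\equiv\H^{fin}$, whereas your construction only produces a vertex algebra on the subspace generated from $\Omega$ by the modes $a_n$, $a\in\mathscr F$, and you never show that this subspace exhausts $\H^{fin}$. This is precisely where the remaining hypotheses are consumed: the paper takes $e_V$ to be the projection onto the closure of that subspace, uses the finite-dimensionality of ${\rm Ker}(L_0-n1_\H)$ to get $e_V\H^{fin}=V$, shows $e_V$ commutes with all FJ smeared fields (using that $\H^{fin}$ is a core for them), and concludes $e_V=1_\H$ from the irreducibility of $\A$ together with the hypothesis that $\mathscr F$ generates $\A$. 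Without this argument the identification $V=\H^{fin}$, and hence the statement of the theorem, is unproved. The remaining steps of your proposal (locality via Prop.~\ref{PropVAandWightmannLocality}, unitarity via Thm.~\ref{unitarityTheorem} or equivalently Prop.~\ref{PropUnitaryGenHermitian}, energy bounds via Prop.~\ref{GenBoundedProp}, and the final appeal to Thm.~\ref{generatingprimaryTheorem}) match the paper and are sound.
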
  

\begin{proof} By the same argument used for the ordinary smeared vertex operator in Sect. \ref{sectionstronglylocalVOA} it can be shown that the energy bounds imply that $\H^\infty$ is a common invariant core for the operators $Y_I(a,f)$, $I\in \I$, $f\in C^\infty_c(I)$, 
$a\in {\mathscr F}$.
Let $\{I_1, I_2\}$, $I_1, I_2 \in \I$ be a cover of $S^1$ and let $\{\varphi_1, \varphi_2\}$, 
$\varphi_1, \varphi_2 \in C^\infty(S^1,\RR)$ be a partition of unity on $\s1$ subordinate to $\{I_1, I_2\}$, namely 
${\rm supp} \varphi_k \subset I_k$, $k=1,2$, and $\sum_{j=1}^2\varphi_k(z)=1$ for all $z\in S^1$.
For any $a\in {\mathscr F}$ and any $f\in C^\infty(\s1)$ we define an operator $\tilde{Y}(a,f)$ on $\H$ with domain $\H^\infty$ by 
$$\tilde{Y}(a,f)b=\sum_{j=1}^2Y_{I_j}(a,\varphi_j f)b, \quad b \in \H^\infty.$$
Let $\{\tilde{I}_1, \tilde{I}_2\}$, $\tilde{I}_1, \tilde{I}_2 \in \I$ be another cover of $S^1$ and 
$\tilde{\varphi}_1, \tilde{\varphi}_2 \in C^\infty(S^1,\RR)$ be a partition of unity on $\s1$ subordinate to 
$\{\tilde{I}_1, \tilde{I}_2\}$. Then, using the compatibility conditions in Eq. (\ref{EqFJcompatibility}) for the FJ smeared vertex operator 
we find that 
\begin{eqnarray*}
\sum_{j=1}^2Y_{I_j}(a,\varphi_j f)b &=& \sum_{j,m=1}^{2} Y_{I_j}(a,\tilde{\varphi}_m\varphi_j f)b \\
&=& \sum_{j,m=1}^{2} Y_{\tilde{I}_m}(a,\tilde{\varphi}_m\varphi_j f)b \\
&=&  \sum_{m=1}^{2} Y_{\tilde{I}_m}(a,\tilde{\varphi}_m f)b
\end{eqnarray*}
for all $b \in \H^\infty$. 
Hence, $\tilde{Y}(a,f)$ does not depend on the choice of the partition of unity $\{\varphi_1, \varphi_2 \}$ nor on the choice of the 
cover $\{I_1, I_2 \}$. It follows that for any $I\in \I$ and any $f\in C^\infty_c(I)$ we have 
$\tilde{Y}(a,f)b=Y_{I}(a,f)b$ for all $b\in \H^\infty$. Moreover, we have the covariance property
\begin{equation*} 
U(\gamma)\tilde{Y}(a,f)U(\gamma)^*=\tilde{Y}(a, \beta_{d_a}(\gamma)f)),
\end{equation*}
the adjoint relation 
$$(-1)^{d_a} \tilde{Y}(\theta a, \overline{f}) \subset \tilde{Y}(a,f)^*$$
and the state field correspondence ${\tilde Y}(a,f)\Omega=a(f)$
for all $f \in C^\infty(S^1)$ and all $\gamma \in \mob$.  

By assumption the FJ vertex operator  $\{ Y_I(a,z) : I\in \I\}$ satisfies energy bounds with a real number $M>0$ and positive integers $s,k$. 
Given $\varphi ,f \in C^\infty(\s1)$ we have 
$$
(|n| + 1)^s|\widehat{(\varphi f)}_n| \leq \sum_{j \in \ZZ}(|n| + 1)^s |\hat{f}_j|\cdot |\hat{\varphi}_{n-j}|  
$$
hence 
\begin{eqnarray*}
\|\varphi f \|_s &=& \sum_{n\in \ZZ} (|n| + 1)^s|\widehat{(\varphi f)}_n|   \\
&\leq& \sum_{n,j \in \ZZ}(|n| + 1)^s |\hat{f}_j|\cdot |\hat{\varphi}_{n-j}| \\
&=& \sum_{j,m \in \ZZ}(|m+j| + 1)^s |\hat{f}_j|\cdot |\hat{\varphi}_{m}| \\
&\leq& \|\varphi \|_s  \|f \|_s.
\end{eqnarray*}
It follows that 
\begin{eqnarray*}
\|\tilde{Y}(a,f)b\| &=& \|\sum_{j=1}^2Y_{I_j}(a,\varphi_j f)b \| \\
&\leq& M \left(\sum_{j=1}^2  \|\varphi_j \|_s\right) \|f \|_s\|(L_0+1_\H)^kb \|
\end{eqnarray*}
for all $f\in C^\infty(\s1)$ and all $b \in \H^\infty$, i.e. the operators $\tilde{Y}(a,f)$, $f\in C^\infty(\s1)$ satisfy energy bounds
with the same positive integers $s,k$ and the positive constant $\tilde{M}\equiv M \left(\sum_{j=1}^2  \|\varphi_j \|_s\right)$.  

Now, let $e_n \in C^\infty(\s1)$, $n\in \ZZ$, be defined by $e_n(z)=z^n$, $z\in \s1$.  For every $a\in {\mathscr F}$ we define 
$a_n \equiv \tilde{Y}(a,e_n)$, $n\in \ZZ$. 
We have
$$\|a_nb\| \leq 2^s \tilde{M}(|n|+1)^s\|(L_0 +1_\H)^kb\|,$$
for all $n\in \ZZ$ and all $b\in \H^\infty$.  By the covariance property we have $e^{itL_0}a_ne^{-itL_0} = e^{-int} a_n$ for all 
$t \in \RR$. It follows that  $[L_0,a_n]b =-na_nb$ for all $n\in \ZZ$ and all $b\in \H^\infty$ and hence that $a_n\H^{fin} \subset \H^{fin}$ for all $n \in \ZZ$. The covariance properties also implies that $[L_{-1},a_n]b=(-n-da+1)a_{n-1}b$ and $[L_{1},a_n]b=-(n-da+1)a_{n+1}b$
for all $n\in \ZZ$ and all $b\in \H^\infty$. Moreover, we have $a_{-d_{a}} \Omega = a(e_{-d_a})=a$ for all $a \in {\mathscr F}$. 
Now let, $V\subset \H^{fin}$ be the linear span of the vector of the form 
$$a^1_{n_1}a^2_{n_2} \cdots a^k_{n_k}\Omega,$$
with $a^1, a^2, \cdots, a^k \in {\mathscr F}$ and $n_1, n_2, \cdots, n_k \in \ZZ$.
We want to show that $V=\H^{fin}$. Let $\H_V \subset \H$ be the closure of $V$ and $e_V$ be the orthogonal projection onto $\H_V$. First note that the series $\sum_{n \in \ZZ}\hat{f}_n e_n$ converges to $f$ in $C^\infty(S^1)$ and thus
$$\sum_{n \in \ZZ}\hat{f}_n  a_nb = \tilde{Y}(a,f) b$$
for all $a\in {\mathscr F}$, all $b\in \H^\infty$ and all $f\in C^\infty(\s1)$. It follows that $\tilde{Y}(a,f)b$ and $\tilde{Y}(a,f)^*b$ belong to 
$\H_V$ for all $a\in {\mathscr F}$, all $b\in V$ and all $f\in C^\infty(\s1)$. 

From the fact that $\equiv {\rm Ker}(L_0 -n1_\H)$ is finite-dimensional for all $n \in \ZZ_{\geq 0}$ it follows that $e_V \H^{fin} =V$. 
As consequence we have $[e_V,\tilde{Y}(a,f)]b=0$ for all $a\in {\mathscr F}$, all $b\in \H^{fin}$ and all $f\in C^\infty(\s1)$. 
Recalling that $\H^{fin}$ is a core 
for every FJ smeared vertex operator we can conclude that $e_VY_I(a,f) \subset Y_I(a,f)e_V$ for all $a \in {\mathscr F}$, all $I \in \I$ and all 
$f \in C^\infty_c(I)$. Hence, since the family ${\mathscr F}$ generates the net $\A$, we see that $ e_V=1_\H$ by the irreducibility of $\A$ so that 
$V = \H^{fin}$. 

The above properties imply that the formal series 
$$\Phi_a(z) \equiv \sum_{n\in \ZZ} a_n z^{-n-d_a} , \quad a \in {\mathscr F}$$
are fields on $V$ that are local and mutually local (in the vertex algebra sense) as a consequence of the locality of the conformal net $\A$ 
and Prop. \ref{PropVAandWightmannLocality}.
In fact they satisfy all the assumption of the existence theorem for vertex algebras \cite[Thm.4.5]{Kac}. Accordingly $V$ is a vertex algebra whose vertex operators satisfy $Y(a,z)=\Phi_a(z)$ for all $a\in {\mathscr F}$. A unitary representation of the Virasoro algebra on $V$ by operators $L_n$, 
$n\in \ZZ$ is obtained by differentiating the representation $U$ of $\diff$ making $\A$ covariant, see Thm. \ref {TheoremVirDiff} and
\cite{Carpi2004, CW2005,loke}. 
Then, $L(z)=\sum_{n \in \ZZ} L_n z^{-n-2}$ is a local field on $V$, which, as a consequence of the locality of $\A$, is mutually local with all 
$Y(a,z)$, $a\in V$. Moreover, $L(z)\Omega= e^{zL_{-1}}L_{-2}\Omega$. By the uniqueness theorem for vertex algebras \cite[4.4]{Kac}
we have $L(z)= Y(\nu,z)$ where $\nu\equiv L_{-2}\Omega$. Hence $\nu$ is a conformal vector and hence $V$ is a VOA.

Now, the scalar product on $\H$ restrict to a normalized scalar product on $V$ having unitary M\"{o}bius symmetry in the sense of 
Subsect. \ref{SubsecEquivUnitary}. For every $a \in {\mathscr F}$ the adjoint vertex operator $Y(a,z)^+$ defined in Eq. (\ref{EqAdjointVO})
satisfies 
$$ Y(a,z)^+ = (-1)^{d_a}Y(\theta a, z)$$ 
and hence it is local and mutually local with respect to all the vertex operators $Y(b,z)$, $b\in V$. Now, let 
$${\mathscr F}_{+}= \{ \frac{a+(-1)^{d_a}\theta a}{2}: a \in {\mathscr F}  \} $$ 
and let 
$${\mathscr F}_{-}= \{ -i \frac{a-(-1)^{d_a}\theta a}{2}: a \in {\mathscr F}  \}. $$ 
Then, 
$\{Y(a,z): a \in {\mathscr F}_+ \cup {\mathscr F}_-\}$
is a family of Hermitian quasi-primary fields which generates $V$. Hence, $V$ is unitary by Prop. \ref{PropUnitaryGenHermitian}. 
Moreover, by Prop. \ref{simpleunitary} $V$ is simple because $V_0=\CC\Omega$. By  Prop. \ref{GenBoundedProp}, $V$, 
being generated by the family ${\mathscr F}$ of elements satisfying energy bounds, is energy-bounded. Since the net 
$\A_{\mathscr {\mathscr F}}$, cf. Eq. (\ref{EqA_F}),
coincides, by assumption, with $\A$, we can apply  Thm. \ref{generatingprimaryTheorem} to conclude that $V$ is strongly local and 
$\A_V=\A$.
\end{proof}

We end this section with the following conjecture. 
\begin{conjecture} For every irreducible conformal net $\A$ there exists a simple unitary strongly local vertex operator algebra $V$ such that 
$\A=\A_V$.  

\end{conjecture}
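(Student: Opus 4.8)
The plan is to reduce the conjecture to Theorem \ref{constructVOA}. Fix an irreducible conformal net $\A$ acting on its vacuum Hilbert space $\H$, and take $\mathscr{F}$ to be the family of \emph{all} quasi-primary vectors in $\bigcup_{k\in\ZZ_{\geq 0}}{\rm Ker}(L_0-k1_\H)$. Two of the hypotheses of Theorem \ref{constructVOA} then hold automatically: $\mathscr{F}$ generates $\A$ by the Fredenhagen--J\"or{\ss} generation result (the proposition preceding Theorem \ref{backVOA}), and $\theta\mathscr{F}=\mathscr{F}$ since the PCT operator $\theta=J_{S^1_+}$ commutes with $L_{-1},L_0,L_1$ and hence maps quasi-primary vectors to quasi-primary vectors. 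Thus the conjecture would follow once one establishes, for an \emph{arbitrary} irreducible conformal net, the two remaining assumptions: (a) that ${\rm Ker}(L_0-n1_\H)$ is finite-dimensional for every $n\in\ZZ_{\geq 0}$, and (b) that every FJ vertex operator $\{Y_I(a,z):I\in\I\}$ attached to a quasi-primary $a$ (or at least to the members of some generating quasi-primary subfamily) satisfies polynomial energy bounds.

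For (a) I would seek to derive the finiteness of the eigenspaces from the trace class condition ${\rm Tr}(q^{L_0})<+\infty$: since ${\rm Tr}(q^{L_0})=\sum_n \dim({\rm Ker}(L_0-n1_\H))\,q^n$, convergence for $q\in(0,1)$ forces each summand to be finite, and by \cite[Thm.\ 3.2]{DALR2001} the same condition delivers the split property as a bonus. The genuine difficulty is that the trace class condition is not known to hold for every irreducible conformal net. A natural intermediate target is the split property itself for all such nets --- recall from Subsect.\ \ref{SubsectPrelNets} that no counterexample is known --- from which, together with diffeomorphism covariance, one would hope to extract finiteness of the partition function; absent this, assumption (a) is already an open point.

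The main obstacle, however, is assumption (b). In Theorem \ref{generatingprimaryTheorem} the smeared vertex operators come from a VOA whose generators are \emph{assumed} to obey the energy bounds Eq.\ (\ref{EnergyBounds}), whereas here the operators $Y_I(a,f)={\mathscr L}^I_{a(f)}$ are manufactured intrinsically from the net through Tomita--Takesaki modular theory, and nothing in that construction furnishes an a priori estimate of $\|Y_I(a,f)b\|$ by $\|(L_0+1_\H)^k b\|$. Proving such bounds for the pointlike fields extracted from a general local net is exactly the hard, still-unsolved heart of the Haag--Kastler versus Wightman equivalence problem discussed in the Introduction; I would expect any successful attack to require new quantitative input, for instance phase-space (nuclearity-type) estimates that control the growth of the modular data $\Delta_I^{1/2}$ across the energy grading. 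It is precisely this step that I expect to resist present techniques, which is why the assertion is recorded here as a conjecture rather than proved.
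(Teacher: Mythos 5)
The statement you are addressing is recorded in the paper as a conjecture, and the paper offers no proof of it; accordingly your proposal should not be judged as a failed proof but as a diagnosis, and as a diagnosis it is accurate and matches the paper's own framing. Your reduction to Theorem \ref{constructVOA} is exactly the intended one: taking $\mathscr{F}$ to be all quasi-primary vectors in $\H^{fin}$, the generation hypothesis is supplied by the Fredenhagen--J\"or{\ss} proposition preceding Theorem \ref{backVOA}, and $\theta$-invariance of $\mathscr{F}$ follows since $\theta=J_{S^1_+}$ commutes with $L_{-1},L_0,L_1$. What remains is precisely the two hypotheses you isolate, and both are genuinely open. You should be aware, however, that neither can be closed by the routes you sketch with present knowledge: the trace class condition (and even the split property) is not known for a general irreducible conformal net, so (a) does not follow; and for (b) the modular-theoretic construction of $Y_I(a,f)={\mathscr L}^I_{a(f)}$ gives no control of $\|Y_I(a,f)b\|$ in terms of $\|(L_0+1_\H)^kb\|$ --- this is the analytic core of the Haag--Kastler versus Wightman correspondence that the Introduction flags as unsolved. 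So your proposal is not a proof and does not claim to be one; it correctly identifies why the statement is a conjecture, and Theorem \ref{constructVOA} is exactly the partial result the paper proves in its place, namely that the conjecture holds for every net satisfying your conditions (a) and (b).

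One small refinement worth recording: in applying Theorem \ref{constructVOA} one need not verify energy bounds for \emph{all} quasi-primary vectors; it suffices to find some $\theta$-invariant generating quasi-primary family with energy bounds, as you note parenthetically. This is how the theorem is actually used in the examples of Section \ref{Criteria for strong locality and examples}, where the generating family consists of weight-one vectors and Virasoro vectors satisfying linear energy bounds. Any progress on the conjecture would most plausibly come from producing such a well-behaved generating family intrinsically from the net, rather than from bounding every FJ field at once.
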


\appendix
\section{Vertex algebra locality and Wightman locality}
\label{AppendixVAandWightmannLocality}

The axiom of locality for vertex algebras is a purely algebraic formulation of the locality axiom for Wightman fields, 
see \cite[Chapter 1]{Kac}. In this work we use in various occasions some consequences of the correspondence of these two formulations of the axiom of locality. In the this appendix, using the simplifying assumption of the existence of polynomial energy bounds, we give a proof of the equivalence of these two formulations in a framework which is sufficiently general for all the applications in this paper.

Let $\H$ be a Hilbert space and let $L_0$ be self-adjoint operator on $\H$ with spectrum contained in 
$\ZZ_{\geq 0}$. We denote by $V$ the algebraic direct sum 
\begin{equation}
\H^{fin} \equiv \bigoplus_{n \in \ZZ_{\geq 0}} {\rm Ker}(L_0 - n 1_\H).
\end{equation}
Then $V$ is dense in $\H$. Moreover, we denote by $\H^\infty \subset \H$, the dense subspace of $C^\infty$ vectors for $L_0$ 
namely 
\begin{equation}
\H^\infty \equiv \bigcap_{k \in \ZZ_{> 0}} \mathcal{D}\left((L_0+1_\H)^k\right).
\end{equation}
Let $a_{n}$, $b_{n}$, $n\in \ZZ$ be operators on $\H$ with common domain $V$ and assume that 
$$e^{itL_0}a_ne^{-itL_0} = e^{-int} a_n , \quad  e^{itL_0}b_ne^{-itL_0} = e^{-int} b_n$$ 
for all $t\in \RR$ and all $n \in \ZZ$. It follows that 
$$a_n \mathrm{Ker}(L_0 - k 1_\H) \subset \mathrm{Ker}(L_0 - (k - n) 1_\H)$$
and 
$$b_n \mathrm{Ker}(L_0 - k 1_\H) \subset \mathrm{Ker}(L_0 - (k - n) 1_\H)$$ 
for all $n \in \ZZ$ and all $k \in \ZZ_{\geq 0}$ so that 
the operators $a_n, b_n$ restrict to endomorphisms of $V$ and for every $c\in V$ we have $a_nc  = b_nc =0$ for $n$ sufficiently large. 
As a consequence the formal series $\Phi_a(z)=\sum_{n\in \ZZ} a_n z^{-n}$ and $\Phi_b(z)=\sum_{n\in \ZZ} b_n z^{-n}$ are fields on $V$ in the sense of Subsect.\ref{susectVA}, see also \cite[Sect.3.1]{Kac}. We assume that the fields $\Phi_a(z)$ and $\Phi_b(z)$ satisfy (polynomial) energy bounds in the sense of Sect.\ref{sectionstronglylocalVOA} i.e. 
there exist positive integers $s, k$ and a constant $M >0$ such that, for all $n \in \ZZ$ and all  $c \in V$
\begin{equation} 
\label{EnergyBoundsAppendix}
\|a_n c \|\leq M (|n|+1)^s \|(L_0 +1_\H)^k c \|, \; \|b_n c \|\leq M (|n|+1)^s \|(L_0 +1_\H)^k c \|.
\end{equation}  
Accordingly we can define the smeared fields 
\begin{equation}
\Phi_a(f) = \sum_{n \in \ZZ} a_n \hat{f_n},\; \Phi_b(f) = \sum_{n \in \ZZ} b_n \hat{f_n}, 
\end{equation}
$f \in C^\infty(\s1)$, having $\H^\infty$ as  common invariant domain. 

According to Subsect.\ref{susectVA} we say that the fields $\Phi_a(z)$ and $\Phi_b(z)$ are {\bf mutually local in the vertex algebra sense} if there exists a non-negative integer $N$ such that  
\begin{equation}
(z-w)^N [\Phi_a(z),\Phi_b(w)] c = 0
\end{equation}
for all $c\in V$. 
Moreover, we say that the fields $\Phi_a(z)$ and $\Phi_b(z)$ are {\bf mutually local in the Wightman sense} if 
\begin{equation}
[\Phi_a(f),\Phi_b(\tilde{f})] c = 0
\end{equation}
for all $c\in \H^{\infty}$ if $\mathrm{supp}f \subset I$ and $\mathrm{supp}\tilde{f} \subset I'$, $I \in \I$, cf. \cite{StrWight}. We now show that under our assumptions these two locality conditions are equivalent. 
\begin{proposition}
\label{PropVAandWightmannLocality} The fields $\Phi_a(z)$ and $\Phi_b(z)$ are mutually local in the vertex algebra sense if and only if they are mutually local in the Wightman sense.
\end{proposition}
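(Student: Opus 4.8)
The plan is to reduce the asserted equivalence to a statement about \emph{one--variable} formal distributions by exploiting the $L_0$--grading, and then to match ``Wightman locality'' with ``the commutator distribution is supported on the diagonal'' and ``vertex algebra locality'' with ``that same distribution is annihilated by a power of $(\zeta-1)$''. First I would pass to matrix elements. Fix $c\in\mathrm{Ker}(L_0-k1_\H)$ and $c'\in\mathrm{Ker}(L_0-k'1_\H)$ and set $d:=k-k'$. Since $e^{itL_0}a_ne^{-itL_0}=e^{-int}a_n$ forces $a_m,b_n$ to lower the $L_0$--grading by $m,n$, the number $(c'\,|\,[a_m,b_n]c)$ vanishes unless $m+n=d$, so that
\[
(c'\,|\,[\Phi_a(z),\Phi_b(w)]c)=\sum_{m\in\ZZ}\mu_m\,z^{-m}w^{-(d-m)}=w^{-d}Q(\zeta),\qquad \mu_m:=(c'\,|\,[a_m,b_{d-m}]c),\ \ \zeta:=z/w,
\]
with $Q(\zeta):=\sum_m\mu_m\zeta^{-m}$. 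Applying the energy bounds (\ref{EnergyBoundsAppendix}) twice (and using that $b_{d-m}c$ lies in the single $L_0$--eigenspace $\mathrm{Ker}(L_0-(k'+m)1_\H)$) gives $|\mu_m|\le C_{c,c'}(1+|m|)^{D}$ with an exponent $D=2s+k$ depending only on $a,b$, not on $c,c'$. Hence each $Q_{c',c}$ is the Fourier expansion of a genuine distribution in $\mathcal{D}'(\s1)$ (variable $\zeta=e^{i\phi}$) of order at most $D$, \emph{uniformly} in $c,c'$.

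Next I would record the two reformulations. Vertex algebra locality asks for a single $N$ with $(z-w)^N[\Phi_a(z),\Phi_b(w)]c=0$ on $V$; testing against $c'\in V$ and using $(z-w)^N w^{-d}Q(\zeta)=w^{N-d}(\zeta-1)^NQ(\zeta)$, this is equivalent to $(\zeta-1)^NQ_{c',c}=0$ for all $c,c'$. For Wightman locality I would introduce the bidistribution $T_{c',c}$ on $\s1\times\s1$ with $\langle T_{c',c},f\otimes\tilde f\rangle=(c'\,|\,[\Phi_a(f),\Phi_b(\tilde f)]c)$. Expanding in Fourier modes, the rapid decay of $\hat f,\hat{\tilde f}$ against the polynomial growth of $\mu_m$ makes every interchange absolutely convergent and identifies $T_{c',c}$ with the pullback of $Q_{c',c}$ under $(\alpha,\beta)\mapsto\alpha-\beta$ (times the smooth factor $e^{-id\beta}$). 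Thus Wightman locality says precisely that $T_{c',c}$ kills every $f\otimes\tilde f$ with $\mathrm{supp}f\cap\mathrm{supp}\tilde f=\varnothing$. By a partition of unity on boxes $U_i\times V_i$ with $\overline{U_i}\cap\overline{V_i}=\varnothing$ together with Stone--Weierstrass approximation by finite sums $\sum g\otimes\tilde g$, such products are $C^\infty$--dense among test functions supported off the diagonal; hence this is equivalent to $\mathrm{supp}\,T_{c',c}\subset\{\alpha=\beta\}$, i.e. to $\mathrm{supp}\,Q_{c',c}\subset\{\zeta=1\}$.

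Finally I would invoke elementary distribution theory on the circle: for $Q\in\mathcal{D}'(\s1)$ one has $(\zeta-1)^NQ=0$ for some $N$ if and only if $\mathrm{supp}\,Q\subset\{1\}$, in which case $Q=\sum_{j=0}^{N-1}q_j\,\delta^{(j)}(\zeta-1)$ and the minimal such $N$ equals one plus the order of $Q$, hence $N\le D+1$. Chaining this with the two reformulations yields the equivalence. The step I expect to be the real obstacle is not either implication in isolation but the passage from the \emph{analytic} ``support on the diagonal'' condition (which is a priori an $(c,c')$--dependent statement) to the \emph{algebraic} identity $(z-w)^N[\Phi_a,\Phi_b]=0$ holding with \emph{one} $N$ for all vectors at once; the uniform order bound $D$ furnished by the energy bounds is exactly what forces $N=D+1$ to work simultaneously, and I would isolate that uniform estimate as the key lemma. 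To conclude, the vanishing of all matrix elements $(c'\,|\,\cdot\,)$ with $c'\in V$ upgrades to the asserted vector identities in both notions of locality, since $V$ is dense in $\H$ and the smeared fields preserve $\H^\infty$ by the energy bounds.
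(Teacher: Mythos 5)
Your proof is correct, and its overall skeleton matches the paper's: pass to matrix elements, use the energy bounds to promote the formal commutator series to a genuine distribution whose order is bounded \emph{uniformly} in the test vectors, identify Wightman locality with support on the diagonal and vertex-algebra locality with annihilation by a power of $(z-w)$, and let the uniform order bound furnish a single $N$ valid for all vectors at once (you are right that this uniformity is the crux). Where you genuinely depart from the paper is in how the structure-theorem step is executed: the paper keeps the two-variable distribution $\varphi_{c,d}(z,w)$ on $\s1\times\s1$ and invokes an adaptation of the theorem on distributions supported at a point (Rudin, Thm.~6.25) to the diagonal submanifold, combined with Kac's Thm.~2.3, whereas you exploit the $L_0$-grading to note that only the modes with $m+n=d$ survive, collapsing everything to a one-variable distribution $Q_{c',c}(\zeta)$, $\zeta=z/w$, supported at the single point $\zeta=1$. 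This buys a completely elementary endgame --- a one-variable distribution of order $\le D$ supported at a point is a finite combination of $\delta^{(j)}$, hence killed by $(\zeta-1)^{D+1}$ --- and makes the uniformity of $N$ transparent, at the cost of a little bookkeeping with the grading. Two minor repairs: Stone--Weierstrass gives uniform rather than $C^\infty$ approximation, so for the density of finite sums $\sum g\otimes\tilde g$ you should instead use truncated double Fourier series, or simply observe that off-diagonal points are covered by boxes of the form $I\times I'$ with $I\in\I$, which is precisely the shape in which Wightman locality is stated; and note that the order of $Q_{c',c}$ is controlled by $D$ plus a fixed additive constant (coming from summability of the Fourier tail), which is harmless since only a uniform finite bound is needed.
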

\begin{proof}
For every $c, d \in V$ the two variable formal series 
\begin{equation*}
(d| [\Phi_a(z),\Phi_b(w)] c) = \sum_{n,m \in \ZZ} (d| [a_n,b_m]c)z^{-n}w^{-m}
\end{equation*}
can be considered as a formal distribution $\varphi_{c,d}(z,w)$ on $\s1 \times \s1$ i.e. a linear functional on the complex vector space of the two variable trigonometric polynomials, see \cite[Sect.2.1]{Kac}. Because of the energy bounds this formal distribution extends by continuity to a unique ordinary distribution, again denoted by $\varphi_{c,d}(z,w)$, on $\s1 \times \s1$, i.e. a continuous linear functional 
on $C^\infty(\s1 \times \s1)$. If the fields $\Phi_a(z)$ and $\Phi_b(z)$ are mutually local in the vertex algebra sense then, by 
\cite[Thm.2.3 (i)]{Kac}, the distribution  $\varphi_{c,d}(z,w)$ has support in the diagonal $z=w$ of $\s1\times \s1$ and hence 
$(d|[\Phi_a(f),\Phi_b(\tilde{f})] c) = 0$ if $\mathrm{supp}f \subset I$ and $\mathrm{supp}\tilde{f} \subset I'$, $I \in \I$. Since $c,d \in V$ where 
arbitrary it follows that $[\Phi_a(f),\Phi_b(\tilde{f})] c = 0$
for all $c\in V$ if $\mathrm{supp}f \subset I$ and $\mathrm{supp}\tilde{f} \subset I'$, $I \in \I$. Now, as a consequence of the energy bounds, 
the same equalities also hold for for any $c\in \H^\infty$ and hence the fields $\Phi_a(z)$ and $\Phi_b(z)$ are mutually local in the Wightman sense.

Conversely let us assume that the fields $\Phi_a(z)$ and $\Phi_b(z)$ are mutually local in the Wightman sense. Then, 
the distribution  $\varphi_{c,d}(z,w)$ has support in the diagonal $z=w$ of $\s1\times \s1$. Moreover, as a consequence of the energy bounds, there is an integer $N >0$ such that, for all $c,d \in V$, $\varphi_{c,d}(z,w)$ is a distribution of order less then $N-1$, i.e. for every 
$c,d \in V$ there is a constant $M_{c,d}>0$ such that 
\begin{equation*}
|\langle \varphi_{c,d}, f \rangle | \leq M_{c,d}\; \mathrm{max} \big\{ \big| \partial^\alpha f(e^{i\vartheta_1},e^{i\vartheta_2}) \big|:\; e^{i\vartheta_1}, 
e^{i\vartheta_2} \in \s1, |\alpha| \leq N-1  \big\}
\end{equation*}
for all $f\in C^\infty (\s1\times \s1)$, where, as usual, for a multi-index $\alpha \equiv (\alpha_1,\alpha_2)$, 
$\alpha_1,\alpha_2 \in \ZZ_{\geq 0}$, $|\alpha|$ denotes the sum $\alpha_1 + \alpha_2$ and $\partial^\alpha$ denotes the 
partial differential operator of order $|\alpha|$ defined by 
\begin{equation*}
\partial^\alpha \equiv \left( \frac{\partial}{\partial \vartheta_1}\right)^{\alpha_1} \left( \frac{\partial}{\partial \vartheta_2}\right)^{\alpha_2}, 
\end{equation*}
see \cite[Chapter 6]{RudinFA}. Then, it follows by a rather straightforward adaptation of \cite[Thm.6.25]{RudinFA} and by \cite[Thm.2.3]{Kac}, that $(z-w)^N \varphi_{c,d}(z,w)=0$ for all $c,d \in V$ and hence that the fields $\Phi_a(z)$ and $\Phi_b(z)$ are mutually local in the vertex algebra sense.
\end{proof}

\section{On the Bisognano-Wichmann property for representations of the M\"{obius} group}
\label{BWmob}

Let $U$ be a strongly continuous unitary positive-energy representation of the M\"{o}bius group $\mob \simeq \psl2$ on a Hilbert space $\H$. Let $L_0$ be the self-adjoint generator of the one parameter subgroup of $U$ of (anti-clockwise) rotations. Then the spectrum 
of $L_0$ is a subset $\ZZ_{\geq 0}$. Accordingly, the (algebraic) direct sum $\H^{fin}$ of the subspaces ${\rm Ker}(L_0 -n1_\H)$, 
$n\in \ZZ_{\geq 0}$ is dense in $\H$. As it is well known the vectors in $\H^{fin}$ are smooth vectors for the representation $U$ and 
it is invariant for the representation of ${{\mathfrak sl}(2,\RR)} $ obtained by differentiating $U$, see \cite{LongoSibiu2008, pukanzsky} and \cite[Prop.A.1]{Carpi2004}. 
Accordingly there is a representation of ${{\mathfrak sl}(2,\RR)} $ on $\H^{fin}$ by essentially skew-adjoint operators and hence a unitary representation of its complexification $\sltwoC$. The latter Lie algebra representation is spanned by $L_0$ together with  operators $L_1$, $L_{-1}$ satisfying $L_1 \subset L_{-1}^*$ and the commutation relations $[L_1,L_{-1}]=2L_0$, $[L_1,L_0]=L_1$ and $[L_{-1},L_0]=-L_{-1}$. 

We say that a vector $a\in \H^{fin}$ is quasi-primary if $L_1a=0$ and $L_0a = d_a a$ for some $d_a \in \ZZ_{\geq 0}$. We say that 
$d_a$ is the conformal wight of $a$. If $a$ is quasi-primary we consider the vectors $a^n \in \H^{fin}$, $n\in \ZZ_{\geq 0}$ defined 
by $a^n \equiv \frac{1}{n!}L_{-1}^na$. The linear span $\H^{a, fin}$ of $\{a^n: n\in \ZZ_{\geq 0} \}$ is invariant for the action of the operators
$L_{-1}$, $L_0$, $L_1$ and the corresponding representation of $\sltwoC$ on $\H^{a, fin}$ is the irreducible unitary representation of 
$\sltwoC$ with lowest conformal energy $d_a$. Note that $L_0a^n=(n+d_a)a^n$ for all $n\in \ZZ_{\geq 0}$. Moreover, the closure $\H^a$ of $\H^{a,fin}$ is an irreducible $U$-invariant subspace of 
$\H$ carrying the unique (up to unitary equivalence) strongly continuous unitary positive-energy representation of $\mob$ with lowest conformal energy $d_a \in \ZZ_{\geq0}$.

If $d_a=0$ then $a^n=0$ for all $n>0$ and the corresponding representation of $\mob$ is the trivial one. 
For $d_a>0$ it is rather straightforward to prove by induction that $L_1a^n =(2d_a +n -1)a^{n-1}$ for all $n \in \ZZ_{>0}$ and that, 
as a consequence, 
\begin{equation}
\|a^n\|^2= \left(\begin{matrix} 2d_a + n -1 \\ n \end{matrix}\right) \| a\|^2, \; \textrm{for all}\; n \in \ZZ_{\geq 0}. 
\end{equation}
The above computation shows that for every $f\in C^\infty(\s1)$ the series 
$$\sum_{n\in \ZZ_{\geq 0}} \hat{f}_{-n-d_a} a^n$$
converges to an element $a(f)\in \H^a \subset \H$. Moreover, $f \mapsto a(f)$ is a linear continuous map $:C^\infty(\s1) \to \H^a$. 
Now, for any $\gamma \in \diff$ let $\beta_{d_a}(\gamma): C^\infty(\s1) \to C^\infty(\s1)$ be the map defined in 
Eq. (\ref{covariancefunctions}). Following the strategy for the proof of Prop. \ref{covariancevertexoperator} 
one can prove the following proposition which in fact can also be easily proved to be
a consequence of Prop. \ref{covariancevertexoperator} together with Prop. \ref{Y(a,f)Omega=a(f)} here below. 
\begin{proposition}
\label{covariancea(f)} Let $a \in \H$ be a quasi-primary vector of of conformal weight $d_a>0$. Then 
$U(\gamma)a(f) = a\big(\beta_{d_a}(\gamma)f \big)$ for all {\rm $\gamma \in \mob$} and all $f \in C^\infty(\s1)$.
\end{proposition}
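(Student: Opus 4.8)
The plan is to prove the identity infinitesimally and then integrate it, following the strategy used for Prop.~\ref{covariancevertexoperator}. First I would reduce to one-parameter subgroups. The set $G_0 \equiv \{\gamma \in \mob : U(\gamma)a(f) = a(\beta_{d_a}(\gamma)f) \text{ for all } f \in C^\infty(\s1)\}$ is a subgroup of $\mob$: the cocycle identity $X_{\gamma_1\gamma_2}(z) = X_{\gamma_1}(\gamma_2(z))X_{\gamma_2}(z)$ shows that $\beta_{d_a}$ is a genuine representation, and since $U$ is an honest unitary representation, closure of $G_0$ under products and inverses is an immediate composition of the two defining relations. Because $\mob \simeq \psl2$ is connected and the subgroup generated by the one-parameter subgroups $\Exp(tX)$, with $X$ ranging over a basis of $\sltwoR$, is open (the parametrization $(t_1,t_2,t_3)\mapsto \Exp(t_1X_1)\Exp(t_2X_2)\Exp(t_3X_3)$ has surjective differential at the origin) and hence all of $\mob$, it suffices to show that each such one-parameter subgroup lies in $G_0$.

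Fix $X \in \sltwoR$ and let $\rho(X)$ denote the corresponding skew-adjoint generator of $U(\Exp(tX))$, expressed through $L_0,L_1,L_{-1}$ via the identifications of Subsect.~\ref{SectDiff}. For fixed $f$ the vector $a(f) = \sum_{n\geq 0}\hat f_{-n-d_a}a^n$ is a smooth (indeed analytic) vector for $U$, since $\|a^n\|^2 = \binom{2d_a+n-1}{n}\|a\|^2$ grows only polynomially while $\hat f_m$ decays rapidly; hence $t\mapsto U(\Exp(tX))a(f)$ is differentiable with derivative $\rho(X)U(\Exp(tX))a(f)$. On the other side $\beta_{d_a}$ is strongly continuous on the Fr\'echet space $C^\infty(\s1)$, so $t\mapsto\beta_{d_a}(\Exp(tX))f$ is smooth with derivative $\mathcal{D}_X\beta_{d_a}(\Exp(tX))f$, where $\mathcal{D}_X$ is the infinitesimal weight-$d_a$ action obtained by differentiating Eq.~(\ref{covariancefunctions}); for a real vector field $h\frac{\rm d}{{\rm d}\vartheta}$ one finds $\mathcal{D}_X f = (d_a-1)h'f - hf'$. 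As $a(\cdot)$ is linear and continuous, $\frac{\rm d}{{\rm d}t}a(\beta_{d_a}(\Exp(tX))f) = a(\mathcal{D}_X\beta_{d_a}(\Exp(tX))f)$. Thus both $g(t)\equiv U(\Exp(tX))a(f)$ and $h(t)\equiv a(\beta_{d_a}(\Exp(tX))f)$ satisfy the same linear equation $v'(t) = \rho(X)v(t)$ with the common initial value $a(f)$, so they coincide (differentiate $U(\Exp(-tX))h(t)$ and check it is constant) — provided I establish the single intertwining identity $\rho(X)a(\psi) = a(\mathcal{D}_X\psi)$ for all $\psi\in C^\infty(\s1)$.

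This intertwining identity is the algebraic core, and it suffices to check it for the complexified generators, i.e. the three operator equalities $L_n a(\psi) = a(\mathcal{D}_{l_n}\psi)$ for $n=-1,0,1$, where $\mathcal{D}_{l_n}$ is the weight-$d_a$ action of the complex vector field $l_n = -ie^{in\vartheta}\frac{\rm d}{{\rm d}\vartheta}$. Using the ladder relations $L_0 a^n = (n+d_a)a^n$, $L_{-1}a^n = (n+1)a^{n+1}$ and $L_1 a^n = (2d_a+n-1)a^{n-1}$ (the last from the $\sltwoR$ commutation relations, consistent with the displayed formula for $\|a^n\|^2$), I would write $L_n a(\psi)$ as an explicit series in the $a^m$ and compare it coefficient by coefficient with the Fourier expansion of $\mathcal{D}_{l_n}\psi = (d_a-1)h'\psi - h\psi'$, $h=-ie^{in\vartheta}$. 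For $n=0$ this is just $L_0 a(\psi) = a(i\psi')$, and for $n=\pm1$ the two sides agree after the elementary index shift $k=-m-d_a$. The only genuine work is thus in these three coefficient matchings together with the analytic bookkeeping (that $a(f)$ is a smooth vector, that differentiation commutes with the defining series and with $\beta_{d_a}$, and that the flow equation has a unique solution within smooth vectors); these analytic points, not the algebra, are the real obstacle, and they are handled exactly as in the proof of Prop.~\ref{covariancevertexoperator}. Finally, when $\H$ is the completion of a strongly local VOA one may instead bypass the computation, deducing $U(\gamma)a(f) = U(\gamma)Y(a,f)\Omega = Y(a,\beta_{d_a}(\gamma)f)U(\gamma)\Omega = a(\beta_{d_a}(\gamma)f)$ from Prop.~\ref{covariancevertexoperator}, Prop.~\ref{Y(a,f)Omega=a(f)} and $U(\gamma)\Omega=\Omega$.
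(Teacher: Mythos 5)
Your proposal is correct and follows essentially the same route the paper indicates: the differentiate-and-integrate strategy of Prop.~\ref{covariancevertexoperator} (reduction to one-parameter subgroups, the infinitesimal intertwining relations for $L_{-1},L_0,L_1$ against the weight-$d_a$ action on $C^\infty(\s1)$, and the uniqueness argument for the resulting flow equation), together with the alternative deduction from Prop.~\ref{covariancevertexoperator} and Prop.~\ref{Y(a,f)Omega=a(f)} that the paper also mentions. As a minor point in your favour, your ladder relation $L_1a^n=(2d_a+n-1)a^{n-1}$ is the one consistent with the norm formula $\|a^n\|^2=\binom{2d_a+n-1}{n}\|a\|^2$, whereas the text of the appendix states $L_1a^n=(2d_a+n)a^{n-1}$, an off-by-one slip.
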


Now, for every $I \in \I$ we define the closed real linear subspace $H^a(I) \subset \H^a$ to be the closure of the real linear subspace subspace 
$$\{a(f): f\in C^\infty(\s1, \RR), {\rm supp}f \subset I \}.$$ 
Then, as a consequence of Prop. \ref{covariancea(f)}, the family 
$\{H^a(I): I\in \I \}$ is M\"obius covariant, namely $U(\gamma)H^a(I)=H^a(\gamma I)$ for all $I\in \I$ and all $\gamma \in \mob$. 
Moreover, the family obviously satisfies isotony, namely $H^a(I_1) \subset H^a(I_2)$ if $I_1 \subset I_2$, $I_1, I_2 \in \I$. 

We now want to show that the family is a  local M\"obius covariant net of real linear subspaces of $\H^a$ in the sense of 
\cite[Def. 4.1]{LongoSibiu2008}, see also  \cite{LongoLectureNotes} and  \cite{BGL2002}. To this end we need to show that the family satisfies locality. 
Let $f_1, f_2 \in C^\infty(\s1,\RR)$. Then 

\begin{eqnarray*}
 \Im (a(f_1)|a(f_2)) = \frac{1}{2i}\sum_{n=0}^\infty \left( \widehat{(f_1)}_{n+d_a}\widehat{(f_2)}_{-n-d_a}  -   
\widehat{(f_2)}_{n+d_a}\widehat{(f_1)}_{-n-d_a} \right) \|a^n\|^2  \\
 = \frac{1}{2i}\sum_{n=0}^\infty \left( \widehat{(f_1)}_{n+d_a} \widehat{(f_2)}_{-n-d_a}  -   
\widehat{(f_2)}_{n+d_a} \widehat{(f_1)}_{-n-d_a} \right)
\left(\begin{matrix} 2d_a + n -1 \\ n \end{matrix} \right) \|a\|^2  \\
= \frac{1}{2i}\sum_{n=d_a}^\infty \left( \widehat{(f_1)}_{n} \widehat{(f_2)}_{-n}  -   
\widehat{(f_2)}_{n} \widehat{(f_1)}_{-n} \right)
\left(\begin{matrix} d_a + n -1 \\ n-d_a \end{matrix} \right) \|a\|^2. 
\end{eqnarray*}

Now let $p_{d_a}(x)$ be the polynomial defined by
\begin{equation}
p_{d_a}(x) \equiv \frac{(d_a + x -1)(d_a + x - 2)\cdots  (d_a + x -2d_a +1)}{2d_a -1}.
\end{equation}

Then
$$
p_{d_a}(n) = \left(\begin{matrix} d_a + n -1\\ n-d_a \end{matrix} \right)
$$
for every integer $n\geq d_a$. Moreover, $p_{d_a}(n)=0$ for $n=0,1,\cdots,d_a-1$. Note also that $p_{d_a}(x)=x$ for $d_a=1$ while 
for $d_a>1$ we have 
\begin{equation}
p_{d_a}(x)=\frac{x}{2d_a -1}\prod_{n=1}^{d_a-1}(x^2-n^2),
\end{equation}
so that $p_{d_a}(x)$ is an odd polynomial.
Hence
\begin{eqnarray*}
\Im (a(f_1)|a(f_2)) &=& \frac{\|a\|^2}{4i}\sum_{n \in \ZZ} \left( \widehat{(f_1)}_{n} \widehat{(f_2)}_{-n}  -   
\widehat{(f_2)}_{n} \widehat{(f_1)}_{-n} \right) p_{d_a}(n)\\ 
&=& \frac{\|a\|^2}{4\pi i} \int_0^{2\pi} f_2(e^{i\vartheta})p_{d_a}(-i\frac{{\rm d}}{{\rm d}\vartheta})f_1(e^{i\vartheta}) {\rm d}\vartheta.
\end{eqnarray*}
As a consequence, if ${\rm supp}f_1 \subset I$  and ${\rm supp}f_2 \subset I'$, $I\in \I$ then $\Im(a(f_1)|a(f_2))=0$ and hence 
the M\"{o}bius covariant isotonous family $\{H^a(I): I\in \I \}$ satisfies locality so that it is a local M\"obius covariant net of real linear subspaces of $\H^a$ in the sense of \cite[Def. 4.1]{LongoSibiu2008}. 

\begin{lemma} 
\label{bwlemma}
Let $a \in \H$ be a quasi-primary vector of conformal weight $d_a>0$ and let $K\equiv i\pi \overline{(L_{1}-L_{-1})}$. Then, 
there exists $\alpha_a \in \CC$ with $|\alpha_a|=1$ such that $a(f)$ is in the domain of $e^{\frac12 K}$ and 
$e^{\frac12 K}a(f) = \alpha_a a(f\circ j)$ for all $f\in C^\infty(\s1)$ with ${\rm supp}f \subset \s1_+$, where $j(z)=z^{-1}$ for all $z\in \s1$.
\end{lemma}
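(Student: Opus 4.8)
The plan is to reduce the claim to the Bisognano--Wichmann property of the local net of real subspaces $\{H^a(I):I\in\I\}$ constructed just above the statement. The key observation is that $K=i\pi\overline{(L_1-L_{-1})}$ is (up to sign and the factor $2\pi$) the generator of the dilation one-parameter group $\delta(t)$ associated with the upper semicircle $\s1_+$: indeed $\delta(t)=\Exp\big(t\frac{l_1-l_{-1}}{2}\big)$ from Eq.~(\ref{Eqdelta(t)2}), and the representation $U$ sends the vector field $\frac{l_1-l_{-1}}{2}$ to the skew-adjoint operator $\frac{1}{2}\overline{(L_1-L_{-1})}$ on $\H^a$. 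Hence $U(\delta(t))=e^{-\frac{t}{2\pi}K}$ and the Tomita operator data of the localized subspace $H^a(\s1_+)$ are governed, via the Bisognano--Wichmann theorem for nets of real subspaces (\cite[Def.~4.1]{LongoSibiu2008} and the references cited there), by $\Delta^{it}=U(\delta(-2\pi t))=e^{itK}$, so that $\Delta^{1/2}=e^{\frac12 K}$. The operator $J$ implementing the net reflection is $U(j_{\s1_+})$.

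First I would verify that the family $\{H^a(I)\}$ is standard (cyclic and separating) for $H^a(\s1_+)$, which follows from irreducibility of the representation on $\H^a$ together with the Reeh--Schlieder-type density already implicit in the construction (the vectors $a(f)$, $f$ supported in a fixed interval, are dense in $\H^a$ by the same complete-reducibility argument used in the proposition preceding the statement). Once standardness is in hand, the one-particle Bisognano--Wichmann property gives that $S=J\Delta^{1/2}$ acts as the Tomita conjugation of $H^a(\s1_+)$, i.e.\ $e^{\frac12 K}$ maps $H^a(\s1_+)$ into the domain with $Je^{\frac12 K}$ fixing real vectors localized in $\s1_+$. Concretely, for $f\in C^\infty(\s1)$ real with ${\rm supp}f\subset\s1_+$, the vector $a(f)$ lies in the domain of $e^{\frac12 K}$ and $Je^{\frac12 K}a(f)=a(f)$, so $e^{\frac12 K}a(f)=J a(f)$. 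The content of the lemma is then to identify $Ja(f)$ with $\alpha_a\,a(f\circ j)$.

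To compute $J=U(j_{\s1_+})$ on the localized vectors I would use the covariance relation of Proposition~\ref{covariancea(f)}, extended to the order-reversing reflection $j(z)=z^{-1}$. Since $j$ reverses orientation it does not lie in $\mob$, but $U$ extends to an antiunitary representation of $\mob\rtimes\ZZ_2$ (as recorded in the Bisognano--Wichmann discussion of Subsect.~\ref{SubsectPrelNets}), and the transformation law $U(j)a(f)=\alpha_a\,a(\beta_{d_a}(j)f)$ holds with a phase $\alpha_a$ coming from the antiunitarity; here $\beta_{d_a}(j)f$ reduces to $f\circ j$ because $j$ restricted to $\s1_+\cup\s1_-$ has $|X_j|\equiv 1$, so the Jacobian factor $\big(X_j(j(z))\big)^{d_a-1}$ contributes only a unimodular constant which I absorb into $\alpha_a$. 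The unimodularity $|\alpha_a|=1$ then follows from antiunitarity of $U(j)$ together with the isometry $\|a(f\circ j)\|=\|a(f)\|$ implied by the explicit norm formula $\|a^n\|^2=\binom{2d_a+n-1}{n}\|a\|^2$.

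The main obstacle I anticipate is the careful bookkeeping of the phase $\alpha_a$ and the precise domain statement: passing from the bounded closed net of real subspaces to the unbounded analytic continuation $e^{\frac12 K}$ requires checking that $a(f)$ genuinely lies in $\mathcal D(e^{\frac12 K})$ rather than merely in the form domain, and that the identity extends from real $f$ (where Bisognano--Wichmann directly applies) to all complex $f$ by linearity and the decomposition $f=\Re f+i\,\Im f$. I expect this to be routine once the representation-theoretic identification $U(\delta(-2\pi t))=e^{itK}$ and the reflection formula for $U(j)$ are established, but it is the step where the analytic details must be handled with care.
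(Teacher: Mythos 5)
Your argument is correct and follows essentially the same route as the paper: both reduce the claim to the one-particle Bisognano--Wichmann theorem for the net of standard subspaces $H^a(I)$ via \cite[Thm.~4.2]{LongoSibiu2008}, identify $\Delta_{H^a(\s1_+)}^{1/2}$ with the restriction of $e^{\frac12 K}$, and then read off the action of the modular conjugation on $a(f)$ up to a unimodular phase. The only cosmetic differences are a missing factor of $i$ in your intermediate formula $U(\delta(t))=e^{-\frac{t}{2\pi}K}$ (immediately superseded by your correct relation $\Delta^{it}=U(\delta(-2\pi t))=e^{itK}$) and that the paper pins down $J$ through its commutation with $L_{-1},L_0,L_1$, giving $J a^n=\alpha_a a^n$ directly, rather than through the geometric reflection covariance you invoke, which amounts to the same thing.
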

\begin{proof}

Let $H \equiv H^a(\s1_+)$. Then by \cite[Thm.4.2]{LongoSibiu2008} $H$ is a standard subspace of $\H^a$, see  
\cite[Sect.3]{LongoSibiu2008}. Hence one can define on $\H^a$ the antilinear closed operator $S_H$ having polar decomposition 
$J_{H}\Delta_H^{1/2}$ as in \cite[Sect.3]{LongoSibiu2008}. Le $\delta(t)$ be the one-parameter subgroup of $\mob$ defined by 
$$\delta(t)(z)= \frac{z\cosh t/2  - \sinh t/2}{-z\sinh t/2 + \cosh t/2},$$
(``dilations'') corresponding to the vector field $\sin \vartheta \frac{{\rm d}}{{\rm d}\vartheta}$ on $\s1$, $z=e^{i\vartheta}$. We have 
$\delta(t)S^1_+ = S^1_+$ for all $t\in \RR$. 

Since $e^{iKt} = U(\delta(-2\pi t))$ for all $t\in \RR$, it follows from \cite[Thm.4.2]{LongoSibiu2008} that $\Delta_H^{1/2}$ coincides with the restriction to $\H^a$ of $e^{\frac12 K}$. Accordingly, if $f:S^1\to \CC$ is a smooth function with  ${\rm supp}f \subset \s1_+$ then $a(f)$ is in the domain of $e^{\frac12 K}$ and $J_He^{\frac12 K}a(f) = S_Ha(f) = a(\overline{f})$ and thus $e^{\frac12 K}a(f)=J_Ha(\overline{f})$. Now, again by \cite[Thm.4.2]{LongoSibiu2008}, $J_H$ commutes with the restrictions of $L_{-1}$, $L_0$, $L_1$ to $\H_a$ and hence there exists $\alpha_a \in \CC$ with $|\alpha_a|=1$ such that $J_Ha^n =\alpha_a a^n$ for all $n\in \ZZ_{\geq 0}$. It follows that 
$J_Ha(f)=\alpha_a a\left(\overline{f\circ j}\right)$ for all $f\in C^\infty(\s1)$. Hence, if ${\rm supp}f \subset \s1_+$ then 
$e^{\frac12 K}a(f)= \alpha_a a(f \circ j)$.
\end{proof}

Our next goal is to compute the constant $\alpha_a$ in Lemma \ref{bwlemma} for every quasi-primary $a \in \H$ with conformal weight 
$d_a>0$.

\begin{proposition} 
\label{alpha_a}
$\alpha_a = (-1)^{d_a}$ for every quasi-primary vector $a\in \H$ of conformal weight $d_a>0$. 
\end{proposition}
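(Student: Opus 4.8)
The plan is to reduce everything to the dilation one-parameter group together with a single contour-integral computation. From Lemma \ref{bwlemma} we already know that $e^{\frac12 K}a(f)=\alpha_a\,a(f\circ j)$ for every $f\in C^\infty(\s1)$ with ${\rm supp}f\subset\s1_+$, so the only task is to evaluate the phase $\alpha_a$. The key observation is that, since $e^{iKt}=U(\delta(-2\pi t))$, the positive self-adjoint operator $e^{\frac12 K}$ (which on $\H^a$ coincides with $\Delta_H^{1/2}$, $H=H^a(\s1_+)$) is the boundary value at $s=i\pi$ of the analytic continuation of the $\H^a$-valued function $s\mapsto U(\delta(s))a(f)$; an elementary check gives $\delta(i\pi)(w)=w^{-1}$, i.e.\ $\delta(i\pi)=j$, and $X_{\delta(i\pi)}\equiv -1$. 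Thus I would compute $U(\delta(s))a(f)$ for real $s$ by Prop.\ \ref{covariancea(f)}, namely $U(\delta(s))a(f)=a(\beta_{d_a}(\delta(s))f)$, and then continue its expansion coefficients to $s=i\pi$.

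Concretely, I would first rewrite the coefficients of $a(\beta_{d_a}(\delta(s))f)=\sum_{n}\widehat{(\beta_{d_a}(\delta(s))f)}_{-n-d_a}\,a^n$ as contour integrals and perform the substitution $w=\delta(s)^{-1}(z)$. Using $X_{\delta(s)}(w)=w\,\delta(s)'(w)/\delta(s)(w)$, the automorphy factor of $\beta_{d_a}$ (which carries the power $d_a-1$) combines with the Jacobian $\delta(s)'(w)\,{\rm d}w$ to produce
\begin{equation*}
\widehat{(\beta_{d_a}(\delta(s))f)}_{-n-d_a}=\oint_{\s1}\big(X_{\delta(s)}(w)\big)^{d_a}\,\delta(s)(w)^{\,n+d_a}\,f(w)\,\frac{{\rm d}w}{2\pi i\,w}.
\end{equation*}
It is precisely this Jacobian that upgrades the power from $d_a-1$ to $d_a$, and hence is responsible for the final sign being $(-1)^{d_a}$ rather than $(-1)^{d_a-1}$ (the value one would wrongly obtain by naively substituting $\gamma=\delta(i\pi)$ into the definition of $\beta_{d_a}$).

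The second step is the analytic continuation of this integral from real $s$ to $s=i\pi$. For each fixed $w\in\s1$ the integrand is analytic in $s$, the only singularity in $w$ being the simple pole of $\delta(s)(w)$ at $w=\coth(s/2)$. Along the path $s=i\tau$, $\tau\in[0,\pi]$, this pole travels along the imaginary axis and crosses $\s1$ only at the point $w=-i\in\s1_-$; since ${\rm supp}f\subset\s1_+$, the function $f$ vanishes in a neighbourhood of that point, so no residue is picked up and the continuation can be carried out under the integral sign. Evaluating at $s=i\pi$, where $\delta(i\pi)(w)=w^{-1}$ and $X_{\delta(i\pi)}(w)\equiv-1$, gives $\widehat{(\beta_{d_a}(\delta(i\pi))f)}_{-n-d_a}=(-1)^{d_a}\hat f_{n+d_a}$, whence $e^{\frac12 K}a(f)=(-1)^{d_a}\sum_n\hat f_{n+d_a}\,a^n=(-1)^{d_a}a(f\circ j)$. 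Comparing with Lemma \ref{bwlemma} yields $\alpha_a=(-1)^{d_a}$.

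The main obstacle I anticipate is the rigorous justification of this termwise continuation up to the boundary of the strip: one must control the $\H^a$-valued series uniformly in $s$ near $s=i\pi$ (using the growth estimate $\|a^n\|^2=\binom{2d_a+n-1}{n}\|a\|^2$ and the fact that $a(f)\in\D(\Delta_H^{1/2})$, guaranteed by the standardness of $H^a(\s1_+)$ established before Lemma \ref{bwlemma}) and verify that the moving pole genuinely meets the contour only where $f\equiv0$. Once the interchange of limit and integral is secured, the remainder is the elementary evaluation of $\delta(i\pi)$ and $X_{\delta(i\pi)}$ recorded above.
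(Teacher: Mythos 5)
Your proposal is correct and rests on the same core idea as the paper's proof: analytically continue the dilation flow $U(\delta(\cdot))$ to imaginary parameter, identify $\delta(i\pi)=j$ with automorphy factor $X_{\delta(i\pi)}\equiv -1$, and compare with the a priori continuation guaranteed by $a(f)\in\D(\Delta_H^{1/2})$. Your contour-integral formula for $\widehat{(\beta_{d_a}(\gamma)f)}_{-n-d_a}$ is right (the substitution $\gamma'(w)/\gamma(w)=X_\gamma(w)/w$ does upgrade the exponent from $d_a-1$ to $d_a$, which is exactly where the sign $(-1)^{d_a}$ rather than $(-1)^{d_a-1}$ comes from), and your tracking of the pole of $\delta(i\tau)$ crossing $\s1$ only at $-i\in\s1_-$ is the correct reason the continuation goes through. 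The one place where the paper's route is genuinely leaner, and where it dissolves the ``main obstacle'' you anticipate, is this: since Lemma \ref{bwlemma} already gives $e^{\frac12 K}a(f)=\alpha_a a(f\circ j)$ for a single unimodular constant, it suffices to determine \emph{one} nonzero matrix element. The paper therefore continues only the scalar function $t\mapsto(a\,|\,e^{iKt}a(f))=\|a\|^2\widehat{(f^{d_a,t})}_{-d_a}$, i.e.\ just the $n=0$ coefficient of your expansion. This is a bounded continuous function on the closed strip, holomorphic in the interior, for two independent reasons (the explicit kernel $k_{d_a}(z,\alpha)$ on one hand, the spectral theorem applied to $a(f)\in\D(e^{\frac12 K})$ on the other), and the two continuations agree on $\RR$, hence at $-i/2$ by Schwarz reflection; one then only needs an $f$ supported in $\s1_+$ with $(a\,|\,a(f\circ j))\neq 0$. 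Your vector-valued continuation of the full series $\sum_n\widehat{(\beta_{d_a}(\delta(s))f)}_{-n-d_a}a^n$ can be made rigorous (on $\mathrm{supp}f\subset\s1_+$ one has $|\delta(i\tau)(w)|<1$ for $\tau\in(0,\pi)$, giving geometric decay, and smoothness of $f$ handles the endpoint), but it is work you can avoid entirely by pairing against $a$ before continuing.
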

\begin{proof} Let $f$ be a smooth real function on $\s1$ whose support is a subset of $\s1_+$ and let 
$f^{d_a,t}\equiv \beta_{d_a}(\delta(-2\pi t))f$, $t\in \RR$. 

Consider the function $\varphi :\RR \to \CC$ defined by
$$\varphi(t) \equiv (a | a(f^{d_a,t}))=\|a\|^2\widehat{(f^{d_a,t})}_{-d_a}= \frac{\|a\|^2}{2\pi}\int_{-\pi}^{\pi} f^{d_a,t}(e^{i\vartheta})e^{id_a\theta}
{\rm d}\vartheta.  $$
Recalling the explicit form of $f^{d_a,t}(e^{i\vartheta})$ and Eq. ( \ref{covariancefunctions}) we find 
$$\varphi(t)=\frac{\|a\|^2}{2\pi}\int_{-\pi}^{\pi} \left(X_{\delta(-2\pi t)}(\delta(2\pi t)(e^{i\vartheta}))\right)^{d_a-1}f(\delta(2\pi t)(e^{i\vartheta}))e^{id_a\theta}
{\rm d}\vartheta.$$
We now change the variable in the integral by setting $e^{i\vartheta}=\delta(-2\pi t)(e^{i\alpha})$, $\alpha \in [-\pi,\pi]$ with the following result
\begin{eqnarray*}
\varphi(t)=\frac{\|a\|^2}{2\pi}\int_{-\pi}^{\pi} f(e^{i\alpha})\left(X_{\delta(-2\pi t)}(e^{i\alpha})\right)^{d_a-2}
\left(\delta(-2\pi t)(e^{i\alpha})\right)^{d_a}{\rm d}\alpha \\
=\frac{\|a\|^2}{2\pi} \int_{-\pi}^{\pi} f(e^{i\alpha})
\left(-i\frac{{\rm d}}{{\rm d}\alpha}\log(\delta(-2\pi t)(e^{i\alpha}))\right)^{d_a-2}\left(\delta(-2\pi t)(e^{i\alpha})\right)^{d_a}{\rm d}\alpha\\
=\frac{\|a\|^2}{2\pi} \int_{0}^{\pi} f(e^{i\alpha})
\left(-i\frac{{\rm d}}{{\rm d}\alpha}\log(\delta(-2\pi t)(e^{i\alpha}))\right)^{d_a-2}\left(\delta(-2\pi t)(e^{i\alpha})\right)^{d_a}{\rm d}\alpha\\
\equiv \int_{0}^{\pi} k_{d_a}(t,\alpha) f(e^{i\alpha}){\rm d}\alpha
\end{eqnarray*}
where we used the fact that $f(e^{i\alpha})=0$ for $\alpha\in [-\pi,0]$ by assumption. Now, using the explicit expression 
$$\delta(-2\pi t)(e^{i\alpha})= \frac{e^{i\alpha}\cosh(\pi t)  + \sinh(\pi t)}{e^{i\alpha}\sinh(\pi t) + \cosh(\pi t)},$$
 it is straightforward to check that, for any $\alpha \in [0,\pi]$.  $t\mapsto k_{d_a}(t,\alpha)$ extends to a continuos function $z\mapsto k_{d_a}(z,\alpha)$ on  the closed strip  $\mathfrak{S} \equiv \{z\in \CC: \Im z \in [-1/2,0]\}$, which is holomorphic in the interior of 
 $\mathfrak{S}$. Moreover, the function of two variables $(z,\alpha)\mapsto k_{d_a}(z,\alpha)$ is continuous on $\mathfrak{S}\times [0,\pi]$. Accordingly 
 $$\Phi_1(z) \equiv   \int_{0}^{\pi} k_{d_a}(z,\alpha) f(e^{i\alpha}){\rm d}\alpha $$
is continuous on the strip $\mathfrak{S}$ and holomorphic in its interior. Clearly $\Phi_1(t)=\varphi(t)$ for all $t\in \RR$. 
Moreover, one finds that  
$$k_{d_a}(-i/2,\alpha)=  \frac{\|a\|^2}{2\pi} (-1)^{d_a} e^{-id_a \alpha} $$
and thus 
\begin{eqnarray*}
\Phi_1(-i/2) &=& (-1)^{d_a}\frac{\|a\|^2}{2\pi}\int_{0}^{\pi}f(e^{i\alpha})e^{-id_a \alpha} {\rm d}\alpha \\
&=& (-1)^{d_a} \frac{\|a\|^2}{2\pi}\int_{-\pi}^{0}f(e^{-i\alpha})e^{id_a \alpha} {\rm d}\alpha \\
&=& (-1)^{d_a}(a | a(f\circ j)).
\end{eqnarray*}

On the other hand, since $\varphi(t)=(a|e^{iKt}a(f))$ for all $t\in \RR$ and $a(f)$ is in the domain of $e^{\frac{K}{2}}$ there is a function 
$\Phi_2(z)$ which is continuous on the strip $\mathfrak{S}$ and holomorphic in its interior, such that $\Phi_2(t)=\varphi(t)$. Moreover, 
by Lemma \ref{bwlemma} we have $\Phi_2(-i/2)= \alpha_a (a | a(f\circ j) )$. Now, $\Phi_1(t)=\Phi_2(t)$ for all $t \in \RR$ and hence, 
by the Schwarz  reflection principle we have $\Phi_1(z)=\Phi_2(z)$ for all $z \in \mathfrak{S}$ and hence 
$(-1)^{d_a}(a | a(f\circ j)) = \alpha_a (a | a(f\circ j) )$. The conclusion then follows from the fact that we can take $f \in C^\infty(\s1,\RR)$ 
with support in $\s1_+$ and such that $(a | a(f\circ j) ) \neq 0$ \end{proof}

The following theorem is a straightforward consequence of Lemma \ref{bwlemma} together with Prop. \ref{alpha_a}.

\begin{theorem} 
\label{bw1} 
Let $K\equiv i\pi  \overline{(L_{1}-L_{-1})}$ and let $f\in C^\infty(\s1)$ with ${\rm supp}f \subset \s1_+$. Then $a(f)$ is in the domain 
of $e^{\frac12 K}$ and $e^{\frac12 K}a(f) = (-1)^{d_a}a(f\circ j)$, where $j(z)=z^{-1}$ for all $z\in \s1$.
\end{theorem}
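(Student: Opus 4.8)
The plan is to deduce Theorem \ref{bw1} directly from the two immediately preceding results, Lemma \ref{bwlemma} and Prop. \ref{alpha_a}, in which essentially all of the analytic work has already been carried out. First I would record the substantive input: for a quasi-primary vector $a$ of conformal weight $d_a>0$ and any $f\in C^\infty(\s1)$ with ${\rm supp}f\subset\s1_+$, Lemma \ref{bwlemma} already gives that $a(f)$ lies in the domain of $e^{\frac12 K}$ and that
\begin{equation*}
e^{\frac12 K}a(f)=\alpha_a\,a(f\circ j)
\end{equation*}
for some phase $\alpha_a\in\CC$ with $|\alpha_a|=1$ depending only on $a$. This is where the Bisognano-Wichmann property for the local net of real subspaces $\{H^a(I):I\in\I\}$ enters, through the identification of $\Delta_H^{1/2}$ with the restriction of $e^{\frac12 K}$ to $\H^a$ and the commutation of the modular conjugation $J_H$ with $L_{-1},L_0,L_1$.

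The second step is simply to insert the explicit value of the phase. Prop. \ref{alpha_a} identifies $\alpha_a=(-1)^{d_a}$, obtained there by a holomorphic-continuation argument on the strip, so substituting this value into the displayed identity yields $e^{\frac12 K}a(f)=(-1)^{d_a}a(f\circ j)$, which is exactly the assertion of the theorem in the case $d_a>0$.

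Finally I would dispose of the degenerate case $d_a=0$, which is excluded from the hypotheses of both the lemma and the proposition and so must be handled separately. When $d_a=0$ one has $a^n=0$ for all $n>0$, the subrepresentation on $\H^a$ is trivial, and $a(f)=\hat{f}_0\,a$; hence $\overline{(L_1-L_{-1})}$ annihilates $a$, giving $e^{\frac12 K}a(f)=a(f)$, while the change of variable $\vartheta\mapsto-\vartheta$ in the Fourier coefficient gives $\widehat{(f\circ j)}_0=\hat{f}_0$ and therefore $a(f\circ j)=a(f)$. Since $(-1)^0=1$, the identity holds trivially here as well. I expect no genuine obstacle to remain: the delicate points — standardness of $H^a(\s1_+)$ and the computation of $\alpha_a$ — have already been settled upstream, and the only item demanding a modicum of care is this separate treatment of the weight $d_a=0$.
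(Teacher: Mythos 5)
Your proposal is correct and coincides with the paper's own argument: the paper states Theorem \ref{bw1} as a straightforward consequence of Lemma \ref{bwlemma} combined with Prop. \ref{alpha_a}, exactly the two-step substitution you describe. Your separate treatment of the degenerate case $d_a=0$ (where $a(f)=\hat{f}_0\,a$ is $K$-invariant and $\widehat{(f\circ j)}_0=\hat{f}_0$) is a small extra precaution the paper leaves implicit, and it is handled correctly.
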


\begin{proposition} 
\label{Y(a,f)Omega=a(f)}
Let $V$ be a simple unitary energy-bounded VOA and let $a\in V_{d_a}$ be quasi-primary. Then 
$Y(a,f)\Omega = a(f)$.
\end{proposition}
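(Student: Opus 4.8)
The plan is to prove the identity by showing that the two convergent series in $\H$ defining $Y(a,f)\Omega$ and $a(f)$ agree term by term. First I would record that, since $a\in V_{d_a}$ is homogeneous, the smeared vertex operator acts on the vacuum by
\begin{equation*}
Y(a,f)\Omega = \sum_{n\in\ZZ}\hat{f}_n\, a_n\Omega,
\end{equation*}
where $a_n = a_{(n+d_a-1)}$ in the homogeneous indexing and the series converges in $\H$ precisely because $V$ is energy-bounded (this is the content of the definition of $Y(a,f)$ in Sect.~\ref{sectionstronglylocalVOA}, using $Y(a,f)=\oint_{\s1}Y(a,z)f(z)z^{d_a}\frac{{\rm d}z}{2\pi i z}$).

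Next I would use the vacuum axiom $a_{(m)}\Omega = 0$ for $m\geq 0$ (the remark following the definition of a vertex algebra in Subsect.~\ref{susectVA}) to see that $a_n\Omega$ vanishes unless $n+d_a-1\leq -1$, i.e.\ unless $n\leq -d_a$. Writing $n=-d_a-m$ with $m\in\ZZ_{\geq 0}$, the surviving operator is $a_n=a_{(-m-1)}$. Comparing the two expansions of the vacuum-to-state vector, namely $Y(a,z)\Omega = e^{zL_{-1}}a = \sum_{m\in\ZZ_{\geq 0}}\frac{z^m}{m!}L_{-1}^m a$ and $Y(a,z)\Omega=\sum_{k\in\ZZ}a_{(k)}\Omega\, z^{-k-1}$, one reads off $a_{(-m-1)}\Omega=\frac{1}{m!}L_{-1}^m a$. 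Substituting this into the first display yields
\begin{equation*}
Y(a,f)\Omega = \sum_{m\in\ZZ_{\geq 0}}\hat{f}_{-m-d_a}\,\frac{1}{m!}L_{-1}^m a,
\end{equation*}
which is exactly the series defining $a(f)$ in Appendix~\ref{BWmob}, since there $a^m=\frac{1}{m!}L_{-1}^m a$.

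There is no deep obstacle: the proposition is essentially a bookkeeping identity once the conventions are aligned. The only points needing care are, first, that the operator $L_{-1}=T$ coming from the VOA structure coincides with the generator of the M\"obius representation on $\H$ used to build $a(f)$ — this holds because $\nu$ is a conformal vector, so the Virasoro operators $L_{-1},L_0,L_1$ are the same in both pictures; and second, that both series genuinely converge in $\H$, the left-hand one by the energy bounds and the right-hand one by the norm estimate $\|a^m\|^2=\binom{2d_a+m-1}{m}\|a\|^2$ recorded in Appendix~\ref{BWmob}. Since the two convergent series have identical terms, they have the same sum, which gives $Y(a,f)\Omega=a(f)$.
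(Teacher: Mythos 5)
Your proposal is correct and follows essentially the same route as the paper: both arguments reduce to the two facts that $a_n\Omega=0$ for $n>-d_a$ and that $a_{-m-d_a}\Omega=\frac{1}{m!}L_{-1}^m a$, and then match the resulting series with the definition of $a(f)$. The only cosmetic difference is that you extract the identity $a_{(-m-1)}\Omega=\frac{1}{m!}L_{-1}^m a$ from the expansion $Y(a,z)\Omega=e^{zL_{-1}}a$, whereas the paper derives it by induction from the commutation relation $[L_{-1},a_n]=(-n-d_a+1)a_{n-1}$ of Eq.~(\ref{l_-1commutation}); these are equivalent one-line computations.
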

\begin{proof} It follows directly from Eq. (\ref{l_-1commutation}) that $a_{-n-d_a}\Omega = \frac{1}{n!}L_1^n\Omega$ for all 
$n\in \ZZ_{\geq 0}$. Moreover, $a_n\Omega =0$ for all integers $n > -d_a$. Hence the conclusion follows from the definition of $a(f)$. 
\end{proof}

The following theorem plays a crucial role in the proof of Thm. \ref{generatingprimaryTheorem}.
\begin{theorem} 
\label{bw2} 
Let $V$ be a simple unitary energy-bounded VOA and let $a\in V_{d_a}$ be quasi-primary.
Let $K\equiv i\pi \overline{(L_{1}-L_{-1})}$ and let $f\in C^\infty(\s1)$ with ${\rm supp}f \subset \s1_+$. Then $Y(a,f)\Omega$ is in the domain of $e^{\frac12 K}$ and $e^{\frac12 K}Y(a,f)\Omega = (-1)^{d_a}Y (a, f\circ j ) \Omega$, where $j(z)=z^{-1}$ for all $z\in \s1$.
\end{theorem}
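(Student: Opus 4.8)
The plan is to deduce the statement immediately from the two results already established, namely Proposition \ref{Y(a,f)Omega=a(f)} and Theorem \ref{bw1}, by transporting the computation from the abstract vectors $a(f)$ of Appendix \ref{BWmob} to the smeared vertex operators acting on the vacuum. The whole content is the identification $Y(a,g)\Omega=a(g)$, after which Theorem \ref{bw1} does all the work.

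First I would use Proposition \ref{Y(a,f)Omega=a(f)} to rewrite the relevant vectors. Since $V$ is a simple unitary energy-bounded VOA and $a\in V_{d_a}$ is quasi-primary, that proposition gives $Y(a,g)\Omega=a(g)$ for every $g\in C^\infty(\s1)$, with no restriction on the support of $g$. Applying this to $g=f$ and to $g=f\circ j$ yields the two identities $Y(a,f)\Omega=a(f)$ and $Y(a,f\circ j)\Omega=a(f\circ j)$.

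Next, because $\mathrm{supp}f\subset\s1_+$, Theorem \ref{bw1} applies directly to the quasi-primary vector $a$ and shows that $a(f)$ belongs to the domain of $e^{\frac12 K}$ and that $e^{\frac12 K}a(f)=(-1)^{d_a}a(f\circ j)$. Substituting the two identities of the previous step gives $Y(a,f)\Omega\in\D(e^{\frac12 K})$ and
\begin{equation*}
e^{\frac12 K}Y(a,f)\Omega=(-1)^{d_a}Y(a,f\circ j)\Omega,
\end{equation*}
which is the desired assertion.

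Finally I would note that this argument covers every quasi-primary $a$. For $d_a>0$ Theorem \ref{bw1} applies verbatim. For $d_a=0$, simplicity of $V$ forces $V_0=\CC\Omega$ by Proposition \ref{simpleunitary}, so $a$ is a multiple of $\Omega$; then $K\Omega=0$, $a(f)=\hat f_0\,a$, and the identity reduces to the elementary equality $\hat f_0=\widehat{(f\circ j)}_0$ obtained by the change of variable $\vartheta\mapsto-\vartheta$. The main point is therefore purely a matter of verifying that the hypotheses of the two cited results are met, namely quasi-primality of $a$ and the support condition $\mathrm{supp}f\subset\s1_+$, so I do not expect any genuine obstacle beyond assembling the appendix results in the correct order.
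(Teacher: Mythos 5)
Your proposal is correct and is essentially identical to the paper's own proof, which likewise reduces the statement to Theorem \ref{bw1} via the identification $Y(a,g)\Omega=a(g)$ from Proposition \ref{Y(a,f)Omega=a(f)} and dismisses the case $d_a=0$ as trivial. Your explicit verification of the $d_a=0$ case (using $V_0=\CC\Omega$, $K\Omega=0$ and $\hat f_0=\widehat{(f\circ j)}_0$) merely spells out what the paper leaves as a one-line remark.
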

\begin{proof}
The theorem follows directly from Thm. \ref{bw1} and Prop. \ref{Y(a,f)Omega=a(f)} in the case $d_a>0$. In the case $d_a=0$ it holds true trivially.
\end{proof}

\medskip

\noindent\textbf{Acknowledgements.} 
We thank V. Kac and F. Xu for useful discussions and comments. S. C. would like to thank Y. Tanimoto for pointing him references 
\cite{Epstein1984} and \cite{Mather1974}.

\end{document}